\numberwithin{equation}{section}
\setlist[itemize]{label=\tiny\textbullet}
\setlist[enumerate]{label=\normalfont(\roman*)}
\crefname{equation}{equation}{equations}
\crefname{figure}{figure}{figures}
\crefname{lem}{lemma}{lemmas}
\crefname{ex}{example}{examples}
\tikzset{
  symbol/.style={
    draw=none,
    every to/.append style={
      edge node={node [sloped, allow upside down, auto=false]{$#1$}}}
  }
}
\theoremstyle{plain}
\newtheorem{thm}{Theorem}[section]
\newtheorem{lem}[thm]{Lemma}
\newtheorem{cor}[thm]{Corollary}
\newtheorem{prop}[thm]{Proposition}
\theoremstyle{definition}
\newtheorem{defi}[thm]{Definition}
\newtheorem{assus}[thm]{Assumptions}
\newtheorem{ex}[thm]{Example}
\newtheorem{rk}[thm]{Remark}
\newtheorem{rks}[thm]{Remarks}
\newtheorem*{idea*}{Idea}
\newtheorem*{que*}{Question}
\newtheorem*{ques*}{Questions}
\newtheorem{nota}[thm]{Notation}
\newtheorem*{ackno}{Acknowledgements}
\theoremstyle{remark}
\newtheorem*{pb*}{Problem}
\newtheorem*{intui*}{Intuition}
\newtheorem{claim}[thm]{Claim}
\def\N{{\mathbb N}}    
\def\Z{{\mathbb Z}}    
\def\R{{\mathbb R}}    
\def\Q{{\mathbb Q}}    
\def\C{{\mathbb C}}    
\def\E{{\mathbb E}}    
\def\P{\mathbb{P}}
\def\V{\mathbb{V}}
\def\W{\mathbb{W}}
\def\kk{\R}
\def\d{\,\mathrm{d}}
\def\D{\mathrm{D}}
\def\st{\, ; \,} 
\NewDocumentCommand{\Lloc}{O{\R} O{1}}{%
\mathrm{L}^{#2}_{\textnormal{loc}}(#1)%
}
\NewDocumentCommand{\Lp}{O{\R} O{1}}{%
\mathrm{L}^{#2}(#1)%
}
\newcommand{\expect}[1]{\E\left[#1\right]}
\newcommand{\eps}{\varepsilon} 
\renewcommand{\phi}{\varphi} 
\newcommand{\libr}{\llbracket} 
\newcommand{\ribr}{\rrbracket} 
\newcommand{\integint}[1]{{\libr 1;#1\ribr}} 
\newcommand{\partint}[1]{\left\lfloor #1\right\rfloor}
\DeclareMathOperator{\Ima}{Im}
\DeclareMathOperator{\Ker}{Ker}
\def\supp{\textnormal{supp}}
\def\id{\mathrm{Id}}
\def\sgn{\textnormal{sgn}}
\newcommand{\closure}[1]{\overline{#1}}
\def\Int{\textnormal{Int}}
\DeclareMathOperator*{\limit}{lim}
\def\pt{\{\mathrm{pt}\}}
\newcommand{\ball}[2]{B(#1,#2)}
\def\dom{\mathrm{dom}}
\renewcommand{\lim}[1][ ]{\varprojlim_{#1}}
\def\Rd{\mathrm{R}} 
\def\transpose{{}^{t}}
\def\field{\kk}
\newcommand{\dual}[1]{{#1}^{*}}
\newcommand{\dualdot}[2]{\langle #1, #2\rangle}
\NewDocumentCommand{\projective}{d[]}{
\IfNoValueTF{#1}{\def\argument{}}{\def\argument{\hspace{-0.1em}\left(#1\right)}}%
\mathbb{P}\argument%
}
\NewDocumentCommand{\dprojective}{d[]}{
\IfNoValueTF{#1}{\def\argument{}}{\def\argument{\hspace{-0.2em}\left(#1\right)}}%
\dual{\mathbb{P}}\argument%
}
\NewDocumentCommand{\sphere}{d[]}{
\IfNoValueTF{#1}{\def\argument{}}{\def\argument{\left(#1\right)}}%
\mathbb{S}\argument%
}
\newcommand{\dualcone}[1]{{#1}^{\circ}}
\newcommand{\antipodal}[1]{{#1}^{a}}
\newcommand{\intpolant}[1]{\Int(\dualcone{\antipodal{#1}})}
\newcommand{\Ck}[1][k]{C^{#1}}
\newcommand{\Conv}[1]{\mathrm{Conv}\left(#1\right)}
\newcommand{\LKcurv}[2][j]{\mathcal{L}_{#1}\left(#2\right)}
\newcommand{\GMcurv}[2][j]{\mathcal{M}_{#1}\left(#2\right)}
\NewDocumentCommand{\dist}{O{\V} O{x}}{%
\mathcal{D}_{#2}'(#1)%
}
\NewDocumentCommand{\tempdist}{O{\V} O{x}}{%
\mathcal{S}_{#2}'(#1)%
}
\NewDocumentCommand{\schwartz}{O{\V} O{x}}{%
\mathcal{S}_{#2}(#1)%
}
\NewDocumentCommand{\Fourier}{m O{x}}{%
\mathcal{F}_{#2}(#1)%
}
\def\Laplace{\mathcal{L}}
\def\Fourier{\mathcal{F}}
\def\Bessel{\mathcal{B}}
\def\Rc{\R\textnormal{c}}       					
\newcommand{\gf}[1][\gamma]{{#1}^{a \circ}}       	
\def\PL{\textnormal{PL}}        					
\def\cpct{\mathrm{c}}              			
\NewDocumentCommand{\Mod}{O{} O{\V} O{\field}}{%
\textnormal{Mod}_{#1}({#3}_{#2})%
}
\NewDocumentCommand{\Db}{O{} O{\V} O{\mathbf{k}}}{%
\textnormal{D}^{\textnormal{b}}_{#1}({#3}_{#2})%
}
\def\conv{\star}
\newcommand{\g}[1][\gamma]{{#1}}       			
\NewDocumentCommand{\CF}{O{} O{\V}}{%
\textnormal{CF}_{#1}(#2)%
}
\def\1{{\mathbf{1}}} 
\NewDocumentCommand{\Euler}{d[]}{
\IfNoValueTF{#1}{\def\argument{}}{\def\argument{\left(#1\right)}}%
\chi\argument%
}
\NewDocumentCommand{\Eulerc}{d[]}{
\IfNoValueTF{#1}{\def\argument{}}{\def\argument{\left(#1\right)}}%
\chi_c\argument%
}
\NewDocumentCommand{\EPind}{d[]}{
\IfNoValueTF{#1}{\def\argument{}}{\def\argument{\left(#1\right)}}%
\chi_{\mathrm{loc}}\argument%
}
\NewDocumentCommand{\epigraph}{O{f} O{}}{%
\Gamma_{#1}^{#2}%
}
\NewDocumentCommand{\sublevelCF}{m O{\gamma}}{%
\phi_{#1}^{#2}%
}
\NewDocumentCommand{\levelCF}{m}{%
\phi_{#1}%
}
\def\Radon{\mathcal{R}}
\def\ECT{\mathrm{ECT}}
\NewDocumentCommand{\transform}{s O{\phi} d<> O{\kappa}}{%
	\IfBooleanTF #1%
	{\mathrm{T}_{#4}}%
	{\IfNoValueTF{#3}{\def\argument{}}{\def\argument{\left(#3\right)}}%
	\mathrm{T}_{#4}\left[#2\right]{\argument}
	}%
}
\NewDocumentCommand{\subleveltransform}{O{\kappa} d<> O{f_{|Z}}}{%
\IfNoValueTF{#2}{\def\argument{}}{\def\argument{\left(#2\right)}}%
\mathrm{Sub}_{#1}\left[#3\right]{\argument}%
}
\def\EL{\mathcal{E}\hspace{-0.1em}\Laplace}
\NewDocumentCommand{\ELaplace}{O{\phi} d<>}{%
\IfNoValueTF{#2}{\def\argument{}}{\def\argument{\left(#2\right)}}%
\EL\left[#1\right]{\argument}%
}
\def\EF{\mathcal{E}\hspace{-0.1em}\Fourier}
\NewDocumentCommand{\EFourier}{O{\phi} d<>}{%
\IfNoValueTF{#2}{\def\argument{}}{\def\argument{\left(#2\right)}}%
\EF\left[#1\right]{\argument}%
}
\NewDocumentCommand{\GREFourier}{O{\phi} d<>}{%
\IfNoValueTF{#2}{\def\argument{}}{\def\argument{\left(#2\right)}}%
\EF^\mathrm{GR}\left[#1\right]{\argument}%
}
\def\EB{\mathcal{E}\Bessel}
\NewDocumentCommand{\EBessel}{O{\phi} d<>}{%
\IfNoValueTF{#2}{\def\argument{}}{\def\argument{\left(#2\right)}}%
\EB\left[#1\right]{\argument}%
}
\def\cdEuler{\,\lceil\d\Euler\rceil}
\def\fdEuler{\,\lfloor\d\Euler\rfloor}
\title{Hybrid transforms of constructible functions}
\author{Vadim Lebovici\footnote{\textit{Université Paris-Saclay, CNRS, Inria, Laboratoire de Mathématiques d'Orsay, 91405, Orsay, France.} \texttt{vadim.lebovici@ens.fr}\\
\textnormal{MSC: 32B20, 44A05, 55N31},\\
\textnormal{Keywords:} topological data analysis, integral transforms, constructible functions.}}
\date{\today}
\begin{document}

\maketitle
\vspace{-0.7cm}

\begin{abstract}
    We introduce a general definition of hybrid transforms for constructible functions. These are integral transforms combining Lebesgue integration and Euler calculus. Lebesgue integration gives access to well-studied kernels and to regularity results, while Euler calculus conveys topological information and allows for compatibility with operations on constructible functions. We conduct a systematic study of such transforms and introduce two new ones: the Euler-Fourier and Euler-Laplace transforms. We show that the first has a left inverse and that the second provides a satisfactory generalization of Govc and Hepworth's persistent magnitude to constructible sheaves, in particular to multi-parameter persistent modules. Finally, we prove index-theoretic formulae expressing a wide class of hybrid transforms as generalized Euler integral transforms. This yields expectation formulae for transforms of constructible functions associated to (sub)level-sets persistence of random Gaussian filtrations.
\end{abstract}

\begin{small}
    \renewcommand{\baselinestretch}{0.75}\small
    \tableofcontents
    \renewcommand{\baselinestretch}{1.0}\normalsize
\end{small}

\section{Introduction}
Originally developed by O. Viro \cite{V88} in the complex setting and by P. Schapira \cite{S88, S91} in the real analytic setting, \emph{Euler calculus} --- the integral calculus of constructible functions with respect to the Euler characteristic --- is of increasing interest in topological data analysis (TDA). Already in \cite{S88}, Euler calculus was developed as an alternative definition of convolution for polygonal tracings with multiplicities, a useful notion in robotics \cite{GRS83}. In TDA, one can formulate problems of target detection by sensor networks using the Euler calculus formalism. This paradigm allows to express the number of targets detected by the network as the integral of a constructible function~\cite{BG09}, and even suggests the possibility of target reconstruction thanks to Schapira's inversion result \cite[Thm.~3.1]{S95} for specific networks, such as beam sensor networks \cite[Sec.~20.2]{Cur12}. In persistence theory, Schapira's inversion positively answers an important inverse question~\cite[Thm.~4.11]{CMT18}: are two constructible subsets of~$\R^n$ with same persistent homology in all degrees and for all height filtrations equal? More generally, the constructible functions naturally associated to multi-parameter persistent modules stand as simple, informative and well-behaved, albeit incomplete, invariants of these objects. The most problematic aspect of the Euler calculus for applications is its instability under numerical approximations: errors can (and probably will) be made when calculating the integral of a constructible function, no matter how finely its domain is sampled \cite[Sec.~16]{Cur12}.

\medskip

In this article, we introduce a general definition of integral transforms combining Lebesgue integration and Euler calculus for constructible functions in the hope of getting the best of both worlds. The former calculus offers well-studied kernels (Fourier, Laplace, ...) yielding smooth integral transforms that are stable in numerical approximations. The latter conveys topological information and is compatible with operations on constructible functions. Our transforms generalize the \emph{Bessel} and \emph{Fourier} transforms introduced by R. Ghrist and M. Robinson \cite{GR11}, as well as the \emph{Euler characteristic of barcodes} introduced by O. Bobrowski and M. Borman~\cite{BobBor12}.

We conduct here a systematic study of hybrid transforms. First, we prove their regularity and their compatibility with operations on constructible functions. Then, we introduce two new ones. The \emph{Euler-Laplace} transform, which appears as a satisfactory --- in view of the properties of hybrid transforms --- generalization of D. Govc and R. Hepworth's \emph{persistent magnitude}~\cite[Def.~5.1]{GH21} to constructible sheaves, so in particular to multi-parameter persistent modules. Then, the \emph{Euler-Fourier} transform (see \Cref{fig:EF-spiral}), which has a left inverse and paves the way for a hybrid Fourier theory. Numerous examples are presented for these two transforms, illustrating their characteristics and their differences from their classical analogues. Finally, we prove index-theoretic formulae for a wide class of hybrid transforms on constructible functions arising in (sub)level-sets multi-persistence, generalizing existing ones for the magnitude \cite[Thm.~6.1]{GH21}, the Euler characteristic of barcodes \cite[Prop.~6.2]{BobBor12} and the Bessel and Fourier transforms \cite[Thms.~4.2,~4.4]{GR11}. In particular, this yields expectation formulae for random Gaussian filtrations. 
\begin{figure}[ht] 
    \centering
    \begin{subfigure}[bl]{0.48\linewidth}
        \includegraphics[width=.85\linewidth]{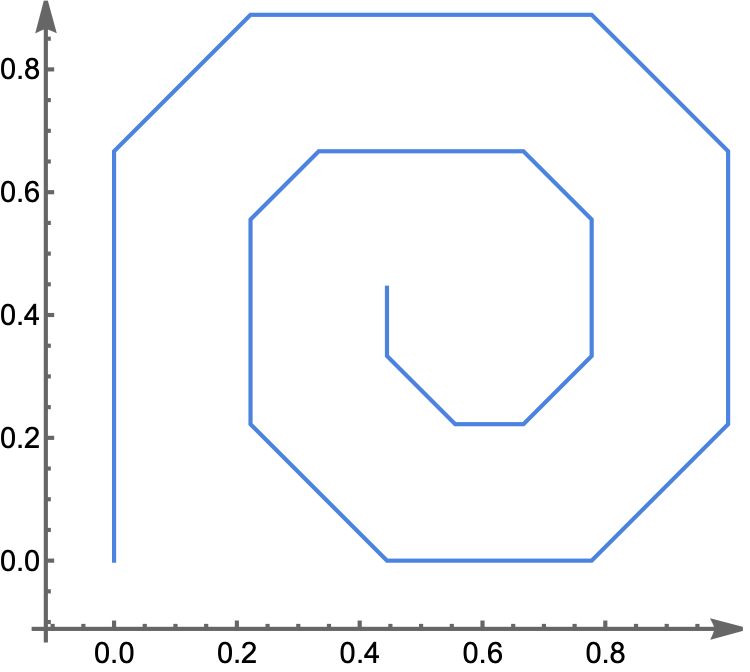} 
        \caption{$\1_C$}
    \end{subfigure}
    \begin{subfigure}[br]{0.48\linewidth}
            \includegraphics[width=\linewidth]{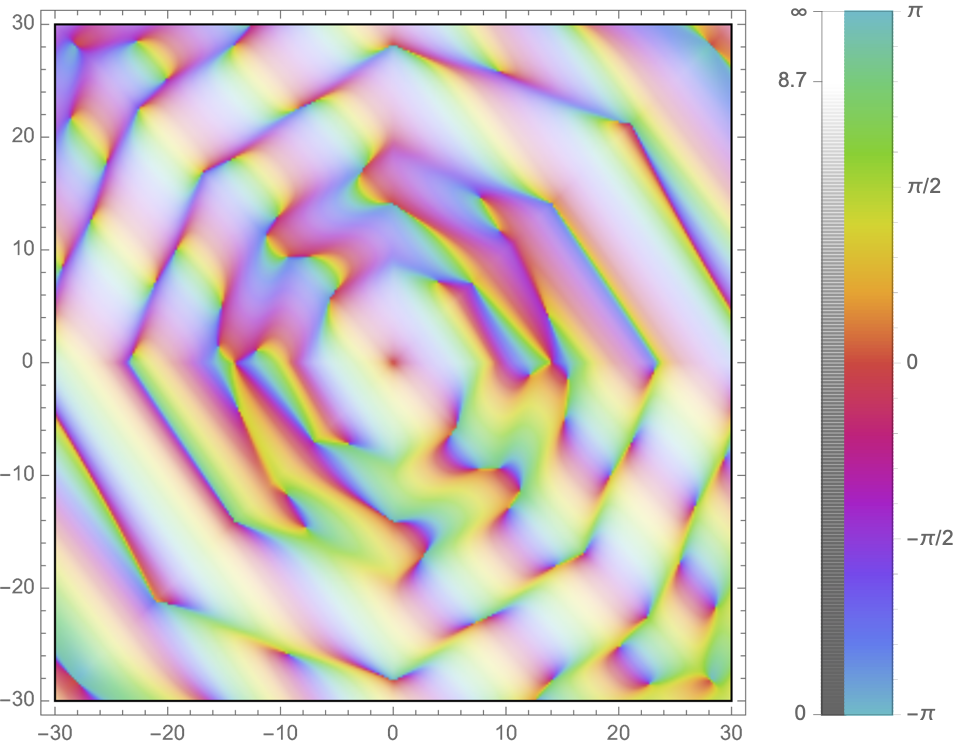}
        \caption{$\EFourier[\1_C]$} 
    \end{subfigure} 
    \begin{subfigure}[b]{0.5\linewidth}
      \centering
      \includegraphics[width=.95\linewidth]{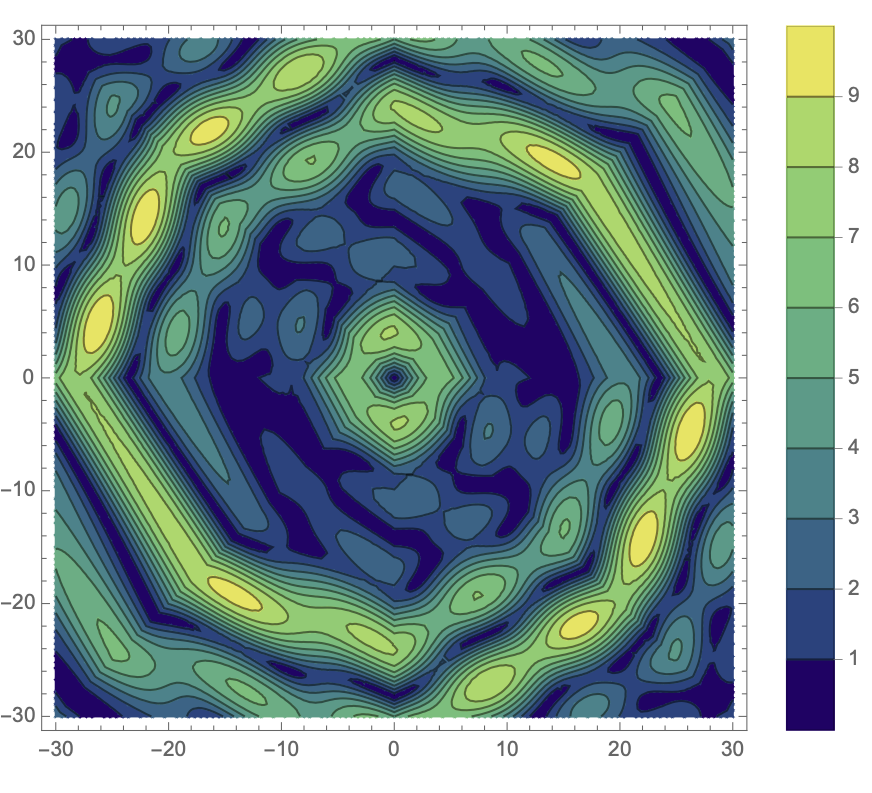} 
      \caption{$|\EFourier[\1_C]|$} 
    \end{subfigure}
    \begin{subfigure}[b]{0.5\linewidth}
      \centering
      \includegraphics[width=.95\linewidth]{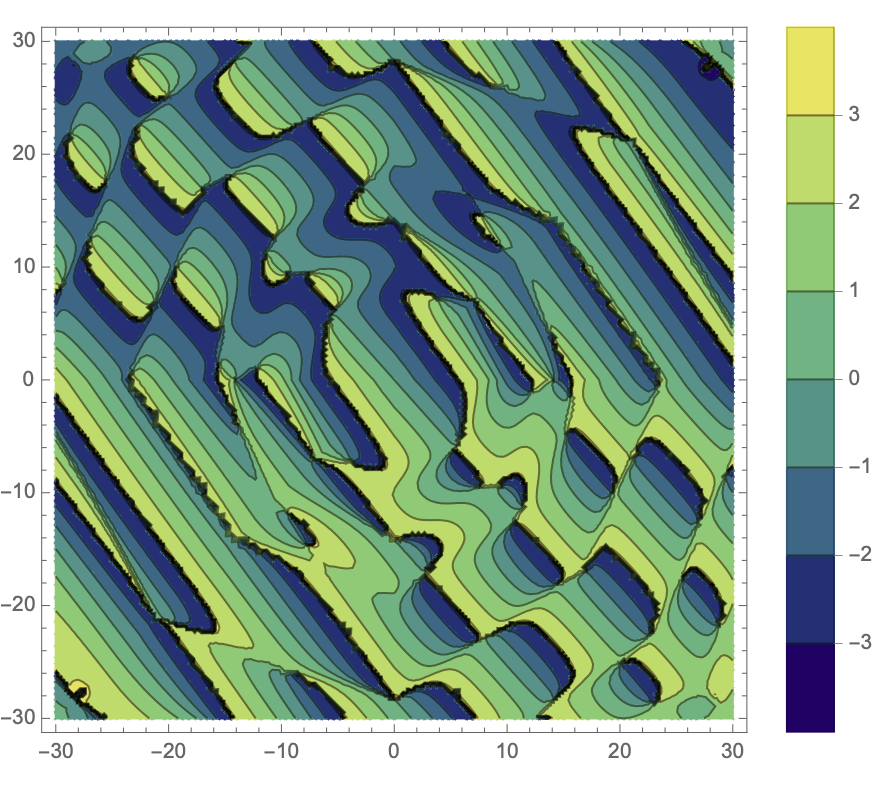} 
      \caption{$\mathrm{Arg}(\EFourier[\1_C])$} 
    \end{subfigure} 
    \caption{(a) a piecewise-linear closed curve~$C$ in~$\R^2$, (b) the Euler-Fourier transform of the constructible function~$\1_C$ as well as (c) its absolute value and (d) its argument. Plots are done following \Cref{rk:computations}.}
    \label{fig:EF-spiral}
\end{figure}

\paragraph{Outline.} 
\begin{itemize}
    \item[Sec.~\ref{sec:preliminaries}.] We set our notations and recall some basic definitions and results on Euler calculus and constructible functions that are not explicitely found elsewhere in the literature. 
    \item[Sec.~\ref{sec:hybrid-transforms}.] We introduce the general definition of hybrid transforms, as well as the \emph{Euler-Laplace} transform and the \emph{Euler-Fourier} transform. Illustrated examples are provided. We also present the method used to numerically compute our transforms on PL-constructible functions, i.e. those whose defining strata are polyhedral.
    \item[Sec.~\ref{sec:regularity}.] We prove that hybrid transforms are continuous when restricted to the set of PL-constructible functions, and even~$\Ck[p+1]$ on the interior of cones partitioning their domain when their kernel is~$\Ck[p]$.
    \item[Sec.~\ref{sec:compatibility}.] We show that hybrid transforms are compatible with operations on constructible functions such as pushforwards and duality. In addition, we show that the Euler-Laplace and Euler-Fourier transforms turn (constructible) convolutions into products under mild assumptions.
    \item[Sec.~\ref{sec:reconstruction-EFourier}.] We establish a reconstruction formula for the Euler-Fourier transform of~$\gamma$\mbox{-}\nobreak\hspace{0pt}constructible functions, i.e. those whose strata are~$\gamma$-locally closed. 
    Using the inverse of the classical Fourier transform, one can recover, from the knowledge of the Euler-Fourier transform, the values of the (constructible) Radon transform on the set of affine hyperplanes whose defining conormal is in the cone~$\gamma$.
    We show that the Radon transform is in fact fully recovered, as it is zero for any other hyperplane. All that remains is to invert the Radon transform with Schapira's formula \cite{S95}.
    \item[Sec.~\ref{sec:persistent-cohomology}.] We define the so-called \emph{sublevel-sets} and \emph{level-sets constructible functions} associated to a continuous subanalytic filtration~$f: M \to \V$ and a cone~$\gamma\subseteq \V$ where~$\V$ is a finite-dimensional vector space and~$M$ a real analytic manifold. 
    They are simply the constructible functions associated to persistent cohomology sheaves introduced by M. Kashiwara and Schapira \cite{KS18A}. 
    We show that one can reduce the study of hybrid transforms of (sub)level-sets constructible functions associated to vector-valued filtrations to those associated to real-valued filtrations. It leads to the definition of \emph{sublevel-sets transforms}, the simple form that hybrid transforms take in the case of multi-persistence. The key ingredient in the proofs is an expression of the sublevel-sets constructible function as a convolution of the level-sets constructible function with the indicator function of the antipodal of~$\gamma$.

    \item[Sec.~\ref{sec:index-theoretic}.] We begin by recalling Bobrowski and Borman's definition of \emph{continuous Euler integral} \cite{BobBor12} that extends Euler calculus to the wider class of \emph{tame functions}. This class contains continuous subanalytic functions on compact real analytic manifolds.
	Then, we prove index-theoretic formulae expressing (sub)level-sets transforms as continuous Euler integral transforms. 
	This allows us to prove an expectation formula for the Euler-Bessel transform using our index-theoretic formula and Bobrowski and Borman's formula for the expectation of continuous Euler integrals of random Gaussian related fields \cite[Thm.~4.1]{BobBor12}.
\end{itemize}

\begin{ackno}
    The author is grateful to Fran\c cois Petit and Steve Oudot for their continuous support and their scientific advice through the development of this paper. We also express our gratitude to Nicolas Berkouk for taking the time to discuss and offer useful explanations on sheaf theory.
\end{ackno}

\section{Preliminaries}\label{sec:preliminaries}
\begin{enumerate}
    \item Throughout the paper, we consider two~$\kk$-vector spaces~$\V$ and~$\V'$ of finite positive dimension, a real analytic manifold~$X$ and a compact real analytic manifold~$M$.
    \item We denote by~$\R_{>0}$ the set of positive real numbers and~$\R_{\geq 0} = \R_{>0}\cup \{0\}$. Similarly, we use the notations~$\R_{<0}$ and~$\R_{\leq 0}$. We also denote by~$\R^\times$ the multiplicative group~$\R\setminus\{0\}$.
    \item The dual of a vector space~$\V$ is denoted by~$\dual{\V}$ and~$\R^n$ will always be identified with its dual under the canonical isomorphism. For~$\xi\in\dual{\V}$ and~$x\in\V$, we often denote by~$\dualdot{\xi}{x} = \xi(x)$.

    \item We say that a subset~$\gamma$ of~$\V$ is a \emph{cone} if~$\R_{>0}\cdot \gamma \subseteq \gamma$. We call \emph{antipodal} of a cone~$\gamma$, denoted by~$\antipodal{\gamma}$, the cone~$-\gamma$. A closed cone $\gamma$ is \emph{proper} if~$\gamma \cap \antipodal{\gamma} = \{0\}$. We call \emph{polar} of a cone~$\gamma$, denoted by~$\dualcone{\gamma}$, the cone of~$\dual{\V}$ defined by:
    \begin{equation*}
        \dualcone{\gamma} = \left\{ \xi\in \dual{\V}\st \xi(\gamma)\subseteq \R_{\geq 0} \right\}.
    \end{equation*}
    \item Let~$I$ be an interval of~$\R$ and denote by~$\Lp[I]$ (resp.~$\Lloc[I]$) the space of integrable (resp. locally integrable) complex-valued functions on~$I$.
    \item Let $\mathbf{k}$ be a field. Throughout the paper, we assume that $\mathbf{k}$ is of characteristic zero. This assumption is necessary for the sheaf-function correspondence to hold, see \cite[Chap.9, Thm.~9.7.1]{KS90}.
    
    \item Following the notations of \cite{KS18A}, we denote by~$\Db[\Rc]$ the derived category of constructible sheaves of $\mathbf{k}$-vector spaces on~$\V$, by~$\Db[\cpct,\Rc]$ its full subcategory generated by compactly supported objects and by~$\Db[\Rc, \gf]$ its full subcategory generated by constructible~$\gamma$-sheaves. 
\end{enumerate}

\subsection{Euler calculus}\label{sec:Euler-calculus}
We refer to \cite[Sec.~8.2]{KS90} for a concise exposition of the useful definitions and results on subanalytic sets. A \emph{constructible function} on a real analytic manifold~$X$ is a function~$\phi : X \to \Z$ such that the sets~$\phi^{-1}(m)$ are subanalytic and the family~$\{\phi^{-1}(m)\}_{m\in \Z}$ is locally finite. 

Using classical results from subanalytic geometry, one can show that the set~$\CF[][X]$ of constructible functions on~$X$ is a commutative unital algebra for the pointwise operations of addition and multiplication. Several operations on constructible functions (e.g. pushforward, pullback, tensor product) coming from sheaf theory have been defined by Viro \cite{V88} and Schapira \cite{S88, S91}, the link between constructible functions and constructible sheaves being made precise by the function-sheaf correspondence \cite[Thm.~9.7.1]{KS90}. We shall use their definition and the classical properties they satisfy without proofs, refering to loc. cit. for more details. 

Let us recall only two definitions, central to this work. Denote by~$\CF[\cpct][X]$ the set of constructible functions with compact support on~$X$. As mentioned in Chap. IX of loc. cit., Hardt's triangulation theorem \cite{H76} allows to write any~$\phi\in \CF[\cpct][X]$ as a finite sum~$\phi = \sum_{i=1}^n m_i \1_{K_i}$, where the~$m_i$'s are integers and the~$K_i$'s are compact contractible subanalytic subsets. We can then define the \emph{integral of~$\phi$ with respect to the Euler characteristic}, by:
\begin{equation*}
    \int_X \phi \d\Euler = \sum_{i=1}^n m_i, 
\end{equation*}
which does not depend on the decomposition of~$\phi$, see loc. cit.. Compared to \cite{S88}, we add the notation~$\d\Euler$ to make this integration easier to distinguish from Lebesgue integration in formulae combining the two.
\begin{ex}
    For any two real numbers~$a < b$, one has:
    \begin{enumerate}
        \item~$\int_\R \1_{[a,b]}\d\Euler = 1$, 
        \item~$\int_\R \1_{[a,b)}\d\Euler = \int_\R \1_{(a,b]}\d\Euler = 0$,
        \item~$\int_\R \1_{(a,b)}\d\Euler = -1$.
    \end{enumerate}
\end{ex}
\begin{ex}
    If~$Z$ is a locally closed relatively compact subanalytic subset of~$X$, then:
    \begin{equation*}
        \int_X \1_Z \d\Euler = \Eulerc[Z],
    \end{equation*}
    where~$\Eulerc[Z]$ is the Euler-Poincaré index with compact support of~$Z$, defined by 
    \begin{equation*}
        \Eulerc[Z] = \sum_{j\in\Z} (-1)^j \dim_\Q\left(H_c^j(Z; \Q_{Z})\right),
    \end{equation*}
    where~$\Q_{Z}$ denotes the constant sheaf on~$Z$ with coefficients in the field~$\Q$ and~$H_c$ denotes cohomology with compact supports, i.e.~$H_c^j(Z; \Q_{Z}) = \Rd^j\Gamma_c(Z; \Q_{Z})$ following the notations of~\cite{KS90}. Note that if~$Z$ is compact, then~$\Eulerc[Z]$ is the Euler characteristic. 
\end{ex}
%

For~$\phi \in \CF[][X]$ and~$f : X \to Y$ a morphism of real analytic manifolds which is proper on~$\supp(\phi)$, we define the \emph{pushforward}~$f_*\phi \in \CF[][Y]$, for any~$y\in Y$, by:
\begin{equation*}
    f_*\phi(y) = \int_X \1_{f^{-1}(y)} \cdot \phi \d\Euler.
\end{equation*}
\begin{rk}
    \label{rk:int-pushforward}
    One has that~$\int_X \phi \d\Euler = a_{X*} \phi$ where~$a_X : X \to \pt$, and where the function~$a_{X*}\phi\in\CF[][\pt]$ is identified with its value at the point. Since the pushforward is functorial \cite[Thm.~2.3~(ii)]{S91}, Euler calculus enjoys a \emph{Fubini theorem}, that is integration is invariant by pushforward: for any morphism of real analytic manifolds~$f:X \to Y$ which is proper on~$\supp(\phi)$, one has

    \begin{equation*}
        \int_Y f_*\phi \d\Euler = \int_X \phi\d\Euler.
    \end{equation*}
\end{rk}

We finish this section with three well-known results useful all along the paper that are not explicitly written elsewhere in the literature.

\begin{lem}
    \label{lem:support-pushforward}
    Let~$\phi \in \CF[][X]$ and~$f:X \to Y$ be a morphism of real analytic manifolds which is proper on~$\supp(\phi)$. Then,~$\supp(f_*\phi)\subseteq f(\supp(\phi))$ with equality if~$f$ is injective. 
\end{lem}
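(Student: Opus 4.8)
The plan is to prove the two assertions separately: first the inclusion $\supp(f_*\phi)\subseteq f(\supp(\phi))$, which holds in general, and then the reverse inclusion under the injectivity hypothesis.

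For the inclusion, I would argue by contraposition: take $y\in Y\setminus f(\supp(\phi))$ and show $f_*\phi(y)=0$. By definition $f_*\phi(y) = \int_X \1_{f^{-1}(y)}\cdot\phi\,\d\Euler$, and the integrand is supported on $f^{-1}(y)\cap\supp(\phi)$. Since $y\notin f(\supp(\phi))$, this intersection is empty, so the integrand is the zero function and its Euler integral vanishes. Hence $f_*\phi$ vanishes on the open set $Y\setminus f(\supp(\phi))$ — here one should note that $f(\supp(\phi))$ is closed because $f$ is proper on $\supp(\phi)$, so its complement is open — and therefore $\supp(f_*\phi)$, being the closure of the set where $f_*\phi\neq 0$, is contained in $f(\supp(\phi))$.

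For the reverse inclusion when $f$ is injective, I would take $y=f(x)$ with $x\in\supp(\phi)$ and show $f_*\phi(y)\neq 0$. By injectivity $f^{-1}(y)=\{x\}$, so $\1_{f^{-1}(y)}\cdot\phi = \phi(x)\1_{\{x\}}$, and $\int_X \phi(x)\1_{\{x\}}\d\Euler = \phi(x)$ since a point is compact and contractible with $\int_X\1_{\{x\}}\d\Euler = 1$. The subtlety is that $x\in\supp(\phi)$ does not immediately force $\phi(x)\neq 0$: the support is the closure of $\{\phi\neq 0\}$, so a priori $x$ could be a boundary point where $\phi$ itself vanishes. This is the main obstacle. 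To handle it, I would instead argue that $\supp(f_*\phi)$ is closed and contains $f(\{\phi\neq 0\}) = f(\{f_*\phi\neq 0\text{ via the above}\})$; more precisely, for every $x$ with $\phi(x)\neq 0$ we get $f_*\phi(f(x)) = \phi(x)\neq 0$, so $f(\{\phi\neq 0\})\subseteq\{f_*\phi\neq 0\}\subseteq\supp(f_*\phi)$. Taking closures and using that $f$ is continuous and proper on $\supp(\phi)$ (hence $f(\closure{\{\phi\neq0\}}) = \closure{f(\{\phi\neq 0\})}$, a closed set), we obtain $f(\supp(\phi)) = f(\closure{\{\phi\neq 0\}}) = \closure{f(\{\phi\neq 0\})}\subseteq\supp(f_*\phi)$, which combined with the first inclusion gives equality.

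One technical point to verify carefully is that $f$ proper on $\supp(\phi)$ indeed sends closed subsets of $\supp(\phi)$ to closed subsets of $Y$ and commutes with closure in the relevant sense; this is a standard fact about proper maps (a proper continuous map to a locally compact Hausdorff space is closed), and I would cite it or spell it out in a line. Everything else is a direct unwinding of the definition of pushforward together with the normalization $\int_X\1_{\{x\}}\d\Euler = 1$ recorded earlier in this section.
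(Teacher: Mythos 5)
Your proof is correct. The first inclusion is established exactly as in the paper: for $y\notin f(\supp(\phi))$ the integrand vanishes identically, and closedness of $f(\supp(\phi))$ (from properness of $f_{|\supp(\phi)}$ into the locally compact Hausdorff space $Y$) finishes the argument. For the reverse inclusion under injectivity you take a genuinely different route. The paper argues pointwise by contrapositive: pick $y\notin\supp(f_*\phi)$, use a neighborhood $U$ of $y$ on which $f_*\phi\equiv 0$, pull it back to the open set $f^{-1}(U)\ni f^{-1}(y)$ on which $\phi\equiv 0$ (via injectivity, $\phi(x)=f_*\phi(f(x))=0$), and conclude $f^{-1}(y)\notin\supp(\phi)$. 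You instead work with sets: you observe $f(\{\phi\neq 0\})\subseteq\{f_*\phi\neq 0\}\subseteq\supp(f_*\phi)$, then upgrade to $f(\supp(\phi))\subseteq\supp(f_*\phi)$ by showing $f(\closure{\{\phi\neq0\}})=\closure{f(\{\phi\neq0\})}$ — which follows from continuity ($f(\closure A)\subseteq\closure{f(A)}$) together with the closedness of $f(\supp(\phi))$. Both arguments are elementary and correct; the paper's is slightly more explicit about where injectivity enters (it yields the pointwise formula $f_*\phi(f(x))=\phi(x)$ \emph{and} identifies $f^{-1}(U)$ as a neighborhood of a single point), while your closure argument is more compact and makes the role of properness (closedness of the image) do the heavy lifting in one place. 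You also rightly flag the key subtlety — that $x\in\supp(\phi)$ need not give $\phi(x)\neq 0$ — which is precisely why neither argument can be a one-liner.
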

\begin{proof}
    For~$y\in Y$, we have:
    \begin{equation}
        \label{eq:expression-pushforward}
        f_*\phi(y) = \int_X \phi \cdot \1_{f^{-1}(y)} \d\Euler = \int_X \phi \cdot \1_{f^{-1}(y)\cap\supp(\phi)} \d\Euler.
    \end{equation}
    Therefore, if~$y\not\in f(\supp(\phi))$, then~$f^{-1}(y)\cap\supp(\phi)= \emptyset$, hence~$f_*\phi(y) = 0$. Thus,~$f_*\phi$ vanishes on the complement of the closed\footnote{Since~$f_{|\supp(\phi)}$ is proper and~$Y$ is Hausdorff and locally compact, it is a closed map.} set~$f(\supp(\phi))$, hence the result. 

    \medskip

    If~$f$ is injective then \eqref{eq:expression-pushforward} becomes:
    \begin{equation}
        \label{eq:expression-pushforward-injective}
        f_*\phi(y) =
        \begin{cases}
            \phi(f^{-1}(y)) &\mbox{if } y\in \Ima(f), \\
            0 &\mbox{else}.
        \end{cases}
    \end{equation}
    Let~$y\not\in \supp(f_*\phi)$. There exists a neighborhood~$U$ of~$y$ such that for any~$z\in U$, we have~$f_*\phi (z) = 0$. If~$y\in\Ima(f)$, the open neighborhood~$f^{-1}(U)$ of~$f^{-1}(y)$ in~$X$ satisfies that for any~$x\in f^{-1}(U)$,~$\phi(x)= \phi(f^{-1}(z)) = f_*\phi(z) = 0$, where~$z= f(x)\in U \cap \Ima(f)$. Therefore,~$f^{-1}(y)\not\in \supp(\phi)$, i.e.~$y\not\in f(\supp(\phi))$. Moreover, if~$y\not\in\Ima(f)$, then obviously~$y\not\in f(\supp(\phi))$. Hence,~$f(\supp(\phi))\subseteq \supp(f_*\phi)$.
\end{proof}
\begin{rk}
    \label{rk:pushforward-indicatingCF-injective}
    The proof of the previous lemma~\eqref{eq:expression-pushforward-injective} also implies that if~$f$ is injective, then for any subanalytic subset~$Z\subseteq X$, we have~$f_*\1_Z = \1_{f(Z)}$.
\end{rk}

\begin{lem}
    \label{lem:int-boxtimes-pushforward}
    Let~$\phi \in \CF[][X]$ and~$\psi \in \CF[][Y]$. Let~$f : X \to W$ and~$g : Y \to Z$ be two morphisms of real analytic manifolds which are respectively proper on~$\supp(\phi)$ and~$\supp(\psi)$. Then, denoting~$f\times g : X\times Y \longrightarrow W\times Z$ the natural map, we have:

    \begin{equation*}
        (f\times g)_*(\phi\boxtimes\psi) = (f_*\phi)\boxtimes(g_*\psi),
    \end{equation*}
    where by definition~$(\phi\boxtimes\psi)(x,y) = \phi(x)\cdot\psi(y)$ for any~$x\in X$ and~$y\in Y$.
\end{lem}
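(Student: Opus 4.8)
The plan is to reduce the identity to a pointwise computation together with a multiplicativity property of Euler integration on products. First I would check that the left-hand side makes sense: since $\supp(\phi\boxtimes\psi) = \supp(\phi)\times\supp(\psi)$ and the restriction $(f\times g)|_{\supp(\phi)\times\supp(\psi)}$ equals the product $f|_{\supp(\phi)}\times g|_{\supp(\psi)}$ of two proper maps, the map $f\times g$ is proper on $\supp(\phi\boxtimes\psi)$, so $(f\times g)_*(\phi\boxtimes\psi)$ is well-defined.

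Next I would record the auxiliary fact that for $\alpha\in\CF[\cpct][X]$ and $\beta\in\CF[\cpct][Y]$ one has
\[
\int_{X\times Y}\alpha\boxtimes\beta\,\d\Euler = \left(\int_X\alpha\,\d\Euler\right)\left(\int_Y\beta\,\d\Euler\right).
\]
This follows directly from Hardt's triangulation theorem: writing $\alpha = \sum_i m_i\1_{K_i}$ and $\beta = \sum_j n_j\1_{L_j}$ with the $K_i$ (resp. $L_j$) compact contractible, bilinearity of $\boxtimes$ gives $\alpha\boxtimes\beta = \sum_{i,j}m_i n_j\1_{K_i\times L_j}$, and each $K_i\times L_j$ is again compact contractible, so by definition of the Euler integral the left-hand side equals $\sum_{i,j}m_i n_j = \bigl(\sum_i m_i\bigr)\bigl(\sum_j n_j\bigr)$. (Alternatively this is an instance of the Fubini theorem of \Cref{rk:int-pushforward}, applied to the projection $X\times Y\to X$, whose fibers over $\supp(\alpha)$ are $\supp(\beta)$, hence compact.)

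Then comes the computation. Fix $(w,z)\in W\times Z$. Since $(f\times g)^{-1}(w,z) = f^{-1}(w)\times g^{-1}(z)$, we have $\1_{(f\times g)^{-1}(w,z)} = \1_{f^{-1}(w)}\boxtimes\1_{g^{-1}(z)}$, and therefore
\[
\1_{(f\times g)^{-1}(w,z)}\cdot(\phi\boxtimes\psi) = \bigl(\1_{f^{-1}(w)}\cdot\phi\bigr)\boxtimes\bigl(\1_{g^{-1}(z)}\cdot\psi\bigr).
\]
The two factors $\1_{f^{-1}(w)}\cdot\phi$ and $\1_{g^{-1}(z)}\cdot\psi$ are constructible — they are exactly the functions occurring in the definitions of $f_*\phi(w)$ and $g_*\psi(z)$ — and they have compact support, since $\supp(\1_{f^{-1}(w)}\cdot\phi)\subseteq f^{-1}(w)\cap\supp(\phi)$ and $\supp(\1_{g^{-1}(z)}\cdot\psi)\subseteq g^{-1}(z)\cap\supp(\psi)$, both of which are compact because $f$ (resp.\ $g$) is proper on $\supp(\phi)$ (resp.\ $\supp(\psi)$) and singletons are compact. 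Applying the auxiliary fact then gives
\[
(f\times g)_*(\phi\boxtimes\psi)(w,z) = \left(\int_X\1_{f^{-1}(w)}\cdot\phi\,\d\Euler\right)\left(\int_Y\1_{g^{-1}(z)}\cdot\psi\,\d\Euler\right) = f_*\phi(w)\cdot g_*\psi(z),
\]
which is precisely $\bigl((f_*\phi)\boxtimes(g_*\psi)\bigr)(w,z)$, as claimed. No step here is a genuine obstacle; the only points requiring a little care are the properness/well-definedness check at the start and verifying the compact-support hypotheses before invoking the multiplicativity of the Euler integral on a box product.
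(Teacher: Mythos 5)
Your proof is correct and follows the same strategy as the paper's: evaluate both sides pointwise, rewrite the fiber indicator as a box product, and use multiplicativity of the Euler integral over a box product. The paper compresses the two steps into ``a direct computation'' and ``the definition of Euler integration,'' while you spell out the properness/compact-support checks and derive the multiplicativity from Hardt's triangulation; these are exactly the details the paper leaves implicit.
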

\begin{proof}
    For~$(w,z)\in W\times Z$, we compute:
    \begin{align*}
        (f\times g)_*(\phi\boxtimes\psi)(w,z) &= \int_{X\times Y} \left(\1_{f^{-1}(w)}\cdot \phi\right)\boxtimes \left(\1_{g^{-1}(z)}\cdot \psi\right) \d\Euler \\
        &= \left(\int_{X} \1_{f^{-1}(w)}\cdot \phi \d\Euler\right) \cdot \left(\int_{Y} \1_{g^{-1}(z)}\cdot \psi \d\Euler\right),
    \end{align*}
    where the first equality follows from a direct computation and the second equality follows from the definition of Euler integration.
\end{proof}

\begin{cor}
    \label{cor:pushforward-linear-conv}
    Let~$\phi,\psi\in \CF$, and~$f : \V \to \V'$ be linear and proper on~$\supp(\phi)+\supp(\psi)$. Then,~$f$ is proper on~$\supp(\phi)$ and on~$\supp(\psi)$, and: 

    \begin{equation*}
        f_*\left(\phi\conv \psi\right) = (f_*\phi)\conv(f_*\psi),
    \end{equation*}
    where by definition~$\phi\conv\psi = s_*(\phi\boxtimes\psi)$ with~$s:\V\times\V\to\V$ the addition.
\end{cor}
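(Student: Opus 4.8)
The plan is to deduce everything from the functoriality of the pushforward of constructible functions (\cite[Thm.~2.3~(ii)]{S91}) together with \Cref{lem:int-boxtimes-pushforward}, using that, $f$ being linear, one has the identity $f\circ s=s'\circ(f\times f)$ of maps $\V\times\V\to\V'$, where $s$ and $s'$ denote the additions of $\V$ and of $\V'$ respectively. The assertion that $f$ is proper on $\supp(\phi)$ and on $\supp(\psi)$ is a soft consequence of the translation-invariance of properness, which I would establish first; the equality of constructible functions is then a short computation, the only genuinely delicate point being to check that each pushforward that occurs is well-defined.

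For the properness claim, since the case where $\phi$ or $\psi$ vanishes is immediate, assume $\supp(\phi)$ and $\supp(\psi)$ are both nonempty. Recall that $\phi\conv\psi$ being defined means $s$ is proper on $\supp(\phi\boxtimes\psi)=\supp(\phi)\times\supp(\psi)$, so that $\supp(\phi)+\supp(\psi)=s\big(\supp(\phi)\times\supp(\psi)\big)$ is closed. Fix $b\in\supp(\psi)$: then $\supp(\phi)+b$ is a closed subset of $\supp(\phi)+\supp(\psi)$, and properness of $f$ restricts to it, because for every compact $K$ the set $f^{-1}(K)\cap(\supp(\phi)+b)$ is closed inside the compact set $f^{-1}(K)\cap(\supp(\phi)+\supp(\psi))$. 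Writing $\tau_v$ for translation by a vector $v$, one has $f|_{\supp(\phi)}=\tau_{-f(b)}\circ f|_{\supp(\phi)+b}\circ\tau_b$, a composition of $f|_{\supp(\phi)+b}$ with two homeomorphisms, so $f$ is proper on $\supp(\phi)$; fixing instead $a\in\supp(\phi)$ gives properness of $f$ on $\supp(\psi)$.

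For the identity, I would compute
\begin{equation*}
  f_*(\phi\conv\psi)=f_*s_*(\phi\boxtimes\psi)=(f\circ s)_*(\phi\boxtimes\psi)=\big(s'\circ(f\times f)\big)_*(\phi\boxtimes\psi)=s'_*\big((f_*\phi)\boxtimes(f_*\psi)\big)=(f_*\phi)\conv(f_*\psi),
\end{equation*}
where the two outer equalities unwind the definition of $\conv$, the middle equality rewrites $f\circ s$ as $s'\circ(f\times f)$ using linearity of $f$, and the remaining two combine functoriality of the pushforward with \Cref{lem:int-boxtimes-pushforward} (applied with $X=Y=\V$ and $W=Z=\V'$).

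The step I expect to be the real obstacle is pure bookkeeping: checking that every pushforward in this chain exists, and in particular that the right-hand side $(f_*\phi)\conv(f_*\psi)=s'_*\big((f_*\phi)\boxtimes(f_*\psi)\big)$ is well-defined, i.e. that $s'$ is proper on $\supp(f_*\phi)\times\supp(f_*\psi)$. Here I would observe that $f\times f$ is proper on $\supp(\phi)\times\supp(\psi)$ by the properness claim, and that $f\circ s=s'\circ(f\times f)$ is proper on $\supp(\phi)\times\supp(\psi)$ as a composition of $s$ (proper there) with the restriction of $f$ to $\supp(\phi)+\supp(\psi)$ (proper by hypothesis); since $f\times f$ maps $\supp(\phi)\times\supp(\psi)$ onto $f(\supp(\phi))\times f(\supp(\psi))$, the elementary fact that if $h\circ g$ is proper on a set $A$ then $h$ is proper on $g(A)$ yields that $s'$ is proper on $f(\supp(\phi))\times f(\supp(\psi))$, hence on its closed subset $\supp(f_*\phi)\times\supp(f_*\psi)$ by \Cref{lem:support-pushforward}. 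The intermediate pushforwards, and thus the two uses of functoriality, are justified in the same fashion.
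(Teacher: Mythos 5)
Your proof is correct and takes essentially the same route as the paper: you write $f\circ s = s'\circ(f\times f)$ by linearity and combine functoriality of the pushforward with \Cref{lem:int-boxtimes-pushforward}. The paper's proof is just this one-line computation; you additionally supply the translation argument for the properness of $f$ on $\supp(\phi)$ and $\supp(\psi)$ and the verification that every pushforward in the chain is well-defined, which the paper leaves implicit.
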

\begin{proof}
    We have~$f\circ s = s \circ (f \times f)$ by linearity of~$f$, so that by \Cref{lem:int-boxtimes-pushforward}, $f_*\left(\phi\conv \psi\right) = s_*\left(f_*\phi\boxtimes f_*\psi\right) = (f_*\phi)\conv(f_*\psi)$.
\end{proof}


\subsection{Constructibility up to infinity}
To ensure well-definedness of our hybrid transforms, we often restrict ourselves to a subclass of constructible functions on~$\V$ defined in \cite{S21}, called constructible up to infinity in the projective compactification of~$\V$, or simply \emph{constructible up to infinity} in the present article. They correspond to constructible functions that are still constructible when extended by~$0$ to the projective compactification of~$\V$. 

Setting~$\W = \V\oplus\R$, we denote by~$\projective[\V]$ the \emph{projective compactification of~$\V$}, i.e. the set of linear subspaces of~$\W$ of dimension~$1$, formally defined as the quotient:
\begin{equation*}
    \projective[\V] \simeq \big(\W\setminus \{0\}\big)/\R^\times.
\end{equation*}
Any point~$x\in \projective[\V]$ can thus be written in homogeneous coordinates as a class~$x = [v:\lambda]$ with~$(v,\lambda)\in \W\setminus\{0\}$ and there is an open embedding~$ j : v\in \V \hookrightarrow [v:1]\in \projective[\V]$. We denote by~$\dprojective[\V]$ the projective compactification of the dual of~$\V$:
\begin{equation*}
    \dprojective[\V] \simeq \big(\W'\setminus \{0\}\big)/\R^\times,
\end{equation*}
where~$\W' = \dual{\V}\oplus\R$. Any element~$y\in \dprojective[\V]$ can be written in homogeneous coordinates as a class~$y = [\xi:t]$ with~$(\xi,t)\in \W'\setminus \{0\}$. We call \emph{hyperplane at infinity}, denoted by~$h_\infty$, the element~$[0:1]\in\dprojective[\V]$ where $0$ is understood here as an element of $\dual{\V}$. There is a bijection between~$\dprojective[\V]\setminus\{h_\infty\}$ and the set of affine hyperplanes of~$\V$ which sends a class~$[\xi:t]\ne [0:1]$ to the affine hyperplane~$\xi^{-1}(t)\subseteq \V$.

\begin{defi}[\textnormal{\cite[Def.~4.1]{S21}}]
    Let~$\phi\in\CF$. We say that~$\phi$ is \emph{constructible up to infinity} if:
    \begin{enumerate}
        \item for all~$m\in \Z$,~$j(\phi^{-1}(m))$ is subanalytic in~$\projective[\V]$,
        \item the family~$\{\phi^{-1}(m)\}_{m\in\Z}$ is finite.
    \end{enumerate}
\end{defi} 
\noindent We denote by~$\CF[\infty]$ the group of functions that are constructible up to infinity.
\begin{ex}\label{ex:cpctly-supped-CF-infty}
	Any compactly supported constructible function on~$\V$ is constructible up to infinity. Indeed, the open embedding~$j : \V \hookrightarrow\projective[\V]$ is proper on any compact subanalytic subset~$K$ of~$\V$ so that~$j(K)$ is subanalytic in~$\projective[\V]$ \cite[Prop.~8.2.2.(iii)]{KS90}.
\end{ex}

\subsection{Convexes, cones and polyhedra}

In this section, we define a subclass of constructible functions of prime interest in applications:~$\PL$-constructible functions. We refer to R. Rockafellar \cite{R15} for a complete study of convex sets and functions and to R. Schneiders \cite[Chap.~1]{Schnei14} for a short and clear exposition. We call \emph{convex polyhedron} an intersection of a finite number of open or closed affine half-spaces of~$\V$, and \emph{convex polytope} a bounded convex polyhedron. A \emph{polyhedral cone} is a convex polyhedron that is also a cone. If~$C$ is a non-empty convex subset of~$\V$, we denote by~$h_C : \dual{\V} \to \R\cup \{+\infty\}$ its support function, defined by~$h_C(\xi) = \sup_{x\in C} \dualdot{\xi}{x}$. The following lemma is clear, yet useful all along the paper, especially in \Cref{sec:regularity,sec:compatibility,sec:persistent-cohomology}.
\begin{lem} 
    \label{lem:pushforward-convex}
    Let~$C\subseteq \V$ be a closed convex subset. Then, for any~$\xi\in \dual{\V}$ proper on~$C$, we have~$\xi_*\1_{C} = \1_{[-h_C(-\xi), h_C(\xi)]}$. 
\end{lem}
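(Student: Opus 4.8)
The plan is to compute the pushforward $\xi_*\1_C$ pointwise using the defining formula $\xi_*\1_C(t) = \int_\V \1_{\xi^{-1}(t)}\cdot\1_C\,\d\Euler = \int_\V \1_{C\cap\xi^{-1}(t)}\,\d\Euler$. Since $\xi$ is proper on $C$, the set $C\cap\xi^{-1}(t)$ is a closed convex subanalytic subset of $\V$ which is moreover compact (properness of $\xi$ on $C$ forces the closed slices to be bounded, hence compact). A nonempty compact convex set is contractible, so by the first example in \Cref{sec:Euler-calculus} its Euler integral is $\int_\V \1_{C\cap\xi^{-1}(t)}\,\d\Euler = \Eulerc[C\cap\xi^{-1}(t)] = 1$; if the slice is empty the integral is $0$. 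Hence $\xi_*\1_C = \1_{S}$ where $S = \{t\in\R \st C\cap\xi^{-1}(t)\ne\emptyset\} = \xi(C)$ is the image of $C$ under $\xi$.

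It then remains to identify $\xi(C)$ with the interval $[-h_C(-\xi), h_C(\xi)]$. First I would note that $\xi(C)$ is convex (image of a convex set under a linear map), hence an interval of $\R$. Its supremum is $\sup_{x\in C}\dualdot{\xi}{x} = h_C(\xi)$ by definition of the support function, and its infimum is $\inf_{x\in C}\dualdot{\xi}{x} = -\sup_{x\in C}\dualdot{-\xi}{x} = -h_C(-\xi)$. Because $\xi$ is proper on $C$, both of these are finite. Finally, these extrema are attained: the continuous function $x\mapsto\dualdot{\xi}{x}$ restricted to the nonempty closed slice $C\cap\xi^{-1}(h_C(\xi))$—which is compact and nonempty exactly because $\xi$ is proper on $C$ and $h_C(\xi)$ is the sup of a nonempty bounded-above set—shows the endpoint $h_C(\xi)$ belongs to $\xi(C)$, and symmetrically for $-h_C(-\xi)$. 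Therefore $\xi(C) = [-h_C(-\xi), h_C(\xi)]$, which gives the claim.

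The only genuinely delicate point is the use of properness: I would make explicit that ``$\xi$ proper on $C$'' means $\xi_{|C}$ is a proper map, so preimages of compact sets (in particular single points and compact intervals) intersected with $C$ are compact. This is what simultaneously guarantees (i) the slices $C\cap\xi^{-1}(t)$ are compact, so their Euler integral is well defined and equals $1$ when nonempty, (ii) the sup and inf defining the endpoints are finite, and (iii) the endpoints are attained. Everything else—convexity of images, contractibility of compact convex sets, the value of $\int\1_K\d\Euler$ for $K$ compact contractible—is routine and already recorded in the preliminaries. One should also remark that $C$ closed convex subanalytic (so that $\1_C\in\CF$) is implicitly assumed, as in the statement's standing hypotheses; if $C$ is not assumed subanalytic a priori, one restricts to the subanalytic setting where the lemma is applied, or observes that the statement is used only for polyhedra and their finite unions.
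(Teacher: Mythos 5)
The paper gives no proof of this lemma (it calls it ``clear''), so the only benchmark is correctness. Your overall approach—evaluate the pushforward pointwise, use that a nonempty compact convex set has Euler characteristic $1$, identify $\xi(C)$ as an interval and compute its endpoints via the support function—is the right one. However, there is a genuine error in the middle.

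You assert that ``because $\xi$ is proper on $C$, both of these are finite'' (the endpoints $h_C(\xi)$ and $h_C(-\xi)$), and your attainment argument only makes sense under that finiteness. But properness of $\xi_{|C}$ does \emph{not} imply $h_C(\pm\xi)<\infty$. Take $\V=\R$, $C=\R_{\geq 0}$, $\xi=\mathrm{id}$: then $\xi^{-1}([a,b])\cap C$ is compact for every $[a,b]$, so $\xi$ is proper on $C$, yet $h_C(\xi)=+\infty$. The lemma's conclusion is still correct in this case, $\xi_*\1_C=\1_{[0,+\infty)}$, precisely because of the abusive notation $[x,+\infty]:=[x,+\infty)$ that the paper declares in the \emph{Notation} block immediately after this lemma. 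That notation is not an incidental convenience—it is exactly what makes the formula hold for unbounded closed convex sets, and the lemma is actually invoked in such cases (e.g.\ $\xi_*\1_{\antipodal{\gamma}}=\1_{\R_{\geq 0}}$ in the proof of Proposition~\ref{prop:proj-sublevelsets-cf}, and in equation~\eqref{eq:conv-closed-intervals}). To repair your proof: keep the finite case as you wrote it, and add the observation that if $h_C(\xi)=+\infty$ (say), then $\xi(C)$ is an interval unbounded above; the infimum (if finite) is attained by the same properness/compactness argument applied to $C\cap\xi^{-1}([{-h_C(-\xi)},{-h_C(-\xi)}+1])$, giving $\xi(C)=[-h_C(-\xi),+\infty)$, which matches $[-h_C(-\xi),h_C(\xi)]$ under the declared abuse of notation. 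The remaining points you raise—subanalyticity of $C$ in the intended applications, compactness of the slices, $\Eulerc=1$ for a nonempty compact convex set—are all fine.
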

\begin{nota}
    For the sake of simplicity, we abusively denoted~$[x,+\infty] := [x,+\infty)$ and~$[-\infty,x] := (-\infty,x]$, for~$x\in\R$, and~$[-\infty,+\infty] := \R$.
\end{nota}

A function~$\phi : \V \to \Z$ is said to be \emph{$\PL$-constructible} if there exists a finite covering~$\V = \bigsqcup_{a\in A}P_a$ by convex polyhedra such that~$\phi$ is constant on~$P_a$, for any~$a\in A$. Any such function can be written as a finite sum of indicator functions of \emph{closed} convex polyhedra. We denote by~$\CF[\PL]$ the group of~$\PL$-constructible functions,~$\CF[\PL,\cpct]$ the subgroup of compactly supported ones. Note that~$\CF[\PL,\cpct] \subset \CF[\PL] \subset \CF[\infty]$.

\subsection{$\gamma$-constructible functions}
We define here a specific class of constructible functions of particular interest in this paper due to their occurence in the context of sublevel-sets persistence, the so-called \emph{$\gamma$-constructible functions}. Let~$\gamma$ be a cone of~$\V$ such that:
\begin{equation}\tag{C1}
    \label{hyp:cone}
    \gamma \textit{ is a subanalytic closed proper convex cone with non-empty interior}.
\end{equation}

We say that a subset $U\subseteq \V$ is \emph{$\gamma$-open} if it is open and $U = U + \gamma$. The collection of $\gamma$-open subsets of $\V$ yields a topology on $\V$ called the \emph{$\gamma$-topology}, see \cite[Sec.~3.5]{KS90}. The closed subset of $\V$ for this topology, called $\gamma$-closed subsets, are the closed subsets~$S$ of~$\V$ such that~$S = S + \antipodal{\gamma}$. A subset of~$\V$ is called \emph{$\gamma$-locally closed} if it is the intersection of a $\gamma$-closed subset and a $\gamma$-open subset.
\begin{defi}
    \label{def:CF-g}
    We say that~$\phi\in\CF$ is \emph{$\gamma$-constructible} if 
    \begin{enumerate}
        \item~$\phi^{-1}(m)$ is subanalytic~$\gamma$-locally closed in~$\V$ for all~$m\in \Z$, and
        \item the family~$\left\{\phi^{-1}(m)\right\}_{m\in\Z}$ is locally finite.
    \end{enumerate}
\end{defi}
We denote by~$\CF[\g]$ the group of~$\gamma$-constructible functions on~$\V$ and~$\CF[\cpct,\g]$ the group of compactly supported~$\gamma$-constructible functions on~$\V$. 
\begin{lem}
    \label{lem:form-CF-g}
    Any compactly supported~$\gamma$-constructible function~$\phi$ on~$\V$ can be decomposed as a sum~$\phi = \sum_{\alpha\in A}m_\alpha \1_{Z_\alpha}$ where the set~$A$ is finite and the subsets~$Z_\alpha\subseteq \V$ are relatively compact subanalytic and~$\gamma$-locally closed.
\end{lem}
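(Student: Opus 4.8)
The plan is to use the tautological decomposition of $\phi$ into its level sets and to discard only the fibre over $0$, which is the single piece that can fail to be relatively compact. So I would begin from the identity $\phi = \sum_{m\in\Z} m\,\1_{\phi^{-1}(m)}$, note that the summand for $m=0$ vanishes identically, and rewrite it as $\phi = \sum_{m\in\Z\setminus\{0\}} m\,\1_{\phi^{-1}(m)}$.

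Next I would show that $A := \{\,m\in\Z\setminus\{0\}\st \phi^{-1}(m)\ne\emptyset\,\}$ is finite. Since $\phi$ is compactly supported, $\supp(\phi)$ is compact; by the local finiteness of the family $\{\phi^{-1}(m)\}_{m\in\Z}$ from \Cref{def:CF-g}, each point of $\V$ has a neighbourhood meeting only finitely many of the sets $\phi^{-1}(m)$, so extracting a finite subcover of $\supp(\phi)$ shows that only finitely many $\phi^{-1}(m)$ meet $\supp(\phi)$. As $\phi^{-1}(m)\subseteq\{x\st\phi(x)\ne 0\}\subseteq\supp(\phi)$ for every $m\ne 0$, this proves both that $A$ is finite and that each $Z_m := \phi^{-1}(m)$ with $m\in A$ is contained in the compact set $\supp(\phi)$, hence relatively compact.

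Finally, for $m\in A$ the set $Z_m=\phi^{-1}(m)$ is subanalytic because $\phi\in\CF$ and $\gamma$-locally closed because $\phi\in\CF[\g]$, both directly by definition. Setting $m_\alpha := \alpha$ and $Z_\alpha := \phi^{-1}(\alpha)$ for $\alpha\in A$ then yields the desired decomposition $\phi=\sum_{\alpha\in A} m_\alpha\1_{Z_\alpha}$. There is no genuine obstacle in this argument; the only points worth isolating are that relative compactness can fail precisely for the fibre over $0$ — which is harmless, as it contributes nothing to the sum — and that compactness of $\supp(\phi)$ is exactly what upgrades the local finiteness of the level-set family to honest finiteness.
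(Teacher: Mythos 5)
Your proof is correct, and it is genuinely more elementary than the paper's. You take the tautological level-set decomposition $\phi=\sum_{m\neq 0}m\,\1_{\phi^{-1}(m)}$ and observe three things directly: each $\phi^{-1}(m)$ is subanalytic and $\gamma$-locally closed by \Cref{def:CF-g}; for $m\neq 0$ one has $\phi^{-1}(m)\subseteq\{\phi\neq 0\}\subseteq\supp(\phi)$, which is compact, so each nonzero fibre is automatically relatively compact; and a finite subcover of $\supp(\phi)$ upgrades the local finiteness of $\{\phi^{-1}(m)\}_m$ to finiteness of the index set. All three steps are sound.

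The paper takes a different route: it constructs an explicit relatively compact, subanalytic, $\gamma$-locally closed ``box'' $Z_v=(v+\Int(\gamma))\cap(-v+\antipodal{\gamma})$ for a suitably scaled $v\in\Int(\antipodal{\gamma})$, arranged so that $\supp(\phi)\subseteq Z_v$, and then intersects the pieces of a decomposition with $Z_v$ (invoking \cite[Lem.~5.7]{BP19} for boundedness and stability of $\gamma$-local closedness under intersection). What this buys that your argument does not is the concrete relatively compact $\gamma$-locally closed neighbourhood $Z_v$ of the support, which is a reusable object; but for the statement of the lemma itself that machinery is not needed, and your approach also has the minor advantage of not relying on $\gamma$ having non-empty interior (which the paper's construction of $v\in\Int(\antipodal{\gamma})$ does). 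In fact, in the paper's argument the intersection with $Z_v$ leaves the fibres $\phi^{-1}(m)$ for $m\neq 0$ unchanged, since they already lie inside $\supp(\phi)\subseteq Z_v$ --- which is precisely the observation your proof makes explicit and leans on directly.
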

\begin{proof}
    Since the class of subanalytic~$\gamma$-locally closed subsets is stable under intersection, one can intersect the~$Z_\alpha$'s with the subset:
    \begin{equation*}
        Z_v := \left(v + \Int(\gamma)\right) \cap \left(-v + \antipodal{\gamma}\right),
    \end{equation*}
    where~$v$ is any element of the non-empty set~$\Int(\antipodal{\gamma})$ chosen so that~$\supp(\phi)\subseteq Z_v$. To choose such a~$v\in \Int(\antipodal{\gamma})$, consider any~$v_0\in \Int(\antipodal{\gamma})$, and remark that since~$\supp(\phi)$ is compact and~$Z_{v_0}$ has non-empty interior \cite[Lem.~5.7]{BP19}, there exists a~$\lambda>0$ such that~$\supp(\phi)\subseteq \lambda \cdot Z_{v_0}$.  
    Moreover, it is easy to show that~$\lambda \cdot Z_{v_0} = Z_{\lambda v_0}$
    so that~$v = \lambda v_0 \in \Int(\antipodal{\gamma})$ works. The subset~$Z_v$ is then clearly subanalytic and~$\gamma$-locally closed. Moreover, loc. cit. yields that~$Z_v$ is bounded, hence relatively compact. The fact that~$A$ can be chosen finite follows then from the locally finiteness of the sum.
\end{proof}
Schapira proved a characterization of $\gamma$-constructible functions of which we state below a specific case. We say that a closed set $A\subseteq \V$ is \emph{$\gamma$-proper} if the addition $s:\V\times\V\to\V$ is proper on $A \times \antipodal{\gamma}$.

\begin{prop}[\textnormal{\cite[Prop.~4.18]{S21}}]
    \label{prop:charac-gamma-CF}
    Let $\phi\in\CF[\infty]$ such that $\supp(\phi)$ is $\gamma$-proper. The function $\phi$ is $\gamma$-constructible if and only if $\phi = \phi \conv \1_{\antipodal{\gamma}}$. 
\end{prop}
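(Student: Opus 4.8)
The plan is to make the convolution with $\1_{\antipodal{\gamma}}$ explicit, then treat the two implications separately. Unwinding the definitions, for $\phi$ with $\gamma$-proper support one has, for every $x\in\V$,
\begin{equation*}
    \left(\phi\conv\1_{\antipodal{\gamma}}\right)(x)=\int_\V\phi(a)\cdot\1_{\antipodal{\gamma}}(x-a)\,\d\Euler(a)=\int_{x+\gamma}\phi\,\d\Euler,
\end{equation*}
since $x-a\in\antipodal{\gamma}=-\gamma$ if and only if $a\in x+\gamma$. I would first record that the $\gamma$-properness of $\supp(\phi)$ is exactly what makes $(x+\gamma)\cap\supp(\phi)$ relatively compact and subanalytic for every $x$, so that the Euler integral above is well-defined and computes $\Eulerc$ of the (locally closed, relatively compact, subanalytic) sets that arise. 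I would also isolate three elementary facts about the $\gamma$-topology: (a) if $S$ is $\gamma$-closed then $x\notin S$ if and only if $(x+\gamma)\cap S=\emptyset$ (from $S=S+\antipodal{\gamma}$); (b) if $U$ is $\gamma$-open and $x\in U$ then $x+\gamma\subseteq U$; and (c) if $U$ is $\gamma$-open, $y\in x+\gamma$ and $x+t_0(y-x)\in U$ for some $t_0\in(0,1]$, then $y\in U$, because $y=\left(x+t_0(y-x)\right)+(1-t_0)(y-x)\in U+\gamma=U$. Fact (c) is the crucial one: it is what makes the sets appearing below star-shaped with respect to $x$.

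For the implication ``$\phi$ $\gamma$-constructible $\Rightarrow$ $\phi=\phi\conv\1_{\antipodal{\gamma}}$'': by linearity and local finiteness it suffices to prove, for each $m\neq 0$ and each $x$, that $\int_{x+\gamma}\1_Z\,\d\Euler=\1_Z(x)$ for $Z=\phi^{-1}(m)$, which is subanalytic, $\gamma$-locally closed, and has $(x+\gamma)\cap Z$ relatively compact (as $Z\subseteq\supp(\phi)$); write $Z=S\cap U$ with $S$ $\gamma$-closed and $U$ $\gamma$-open. If $x\notin S$, then $(x+\gamma)\cap Z=\emptyset$ by (a), so both sides vanish. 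If $x\in S\cap U$, then $x+\gamma\subseteq U$ by (b), so $(x+\gamma)\cap Z=(x+\gamma)\cap S$, which is closed and relatively compact hence compact, and star-shaped about $x$ — the retraction $y\mapsto x+t(y-x)$ preserves $x+\gamma$ since $\gamma$ is a cone, and stays in $S$ since $S-\gamma\subseteq S$ — so its Euler integral equals $1=\1_Z(x)$. The case $x\in S\setminus U$ is where the cancellation intrinsic to Euler calculus appears: choosing a compact convex $B\ni x$ containing $(x+\gamma)\cap\supp(\phi)$, the set $A:=(x+\gamma)\cap S\cap B$ is compact and star-shaped about $x$, so $\Eulerc[A]=1$, while additivity over $A=(A\cap U)\sqcup(A\setminus U)$ gives $1=\Eulerc[A\cap U]+\Eulerc[A\setminus U]$; here $A\cap U=(x+\gamma)\cap Z$, whereas $A\setminus U=(x+\gamma)\cap S\cap B\setminus U$ is again compact and, by (c), star-shaped about $x$, hence $\Eulerc[A\setminus U]=1$; therefore $\Eulerc[(x+\gamma)\cap Z]=0=\1_Z(x)$.

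For the converse, I would start from the observation that $\phi\mapsto\phi\conv\1_{\antipodal{\gamma}}$ is an idempotent operator on constructible functions with $\gamma$-proper support. Indeed $\1_{\antipodal{\gamma}}\conv\1_{\antipodal{\gamma}}=\1_{\antipodal{\gamma}}$ — a direct computation, since $\left(\1_{\antipodal{\gamma}}\conv\1_{\antipodal{\gamma}}\right)(x)=\Eulerc[\antipodal{\gamma}\cap(x+\gamma)]$ and $\antipodal{\gamma}\cap(x+\gamma)$ is a nonempty compact convex set when $x\in\antipodal{\gamma}$ (compactness using that $\gamma$ is proper) and empty otherwise — and the constructible convolution is associative, which follows from the functoriality of the pushforward (\Cref{lem:int-boxtimes-pushforward}) after checking the relevant properness conditions, themselves guaranteed by $\gamma$-properness together with \Cref{lem:support-pushforward}. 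Hence the hypothesis $\phi=\phi\conv\1_{\antipodal{\gamma}}$ says precisely that $\phi$ lies in the image of this projector, and it remains to prove that $\psi\conv\1_{\antipodal{\gamma}}$ is $\gamma$-constructible for every $\psi\in\CF[\infty]$ with $\gamma$-proper support.

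This last point is the main obstacle. The cleanest way I know to settle it goes through the function--sheaf correspondence \cite[Thm.~9.7.1]{KS90}: write $\phi=\Euler[F]$ for a constructible sheaf $F$, use that $\Euler[\cdot]$ intertwines the convolution of sheaves with that of constructible functions (under the support hypotheses at hand, so that the proper and non-proper direct images agree) and that $\1_{\antipodal{\gamma}}=\Euler[\mathbf{k}_{\antipodal{\gamma}}]$, whence $\phi\conv\1_{\antipodal{\gamma}}=\Euler[F\star\mathbf{k}_{\antipodal{\gamma}}]$; since $F\star\mathbf{k}_{\antipodal{\gamma}}$ is a constructible $\gamma$-sheaf — the sheaf-level incarnation of the statement, which one reads off from the description of the $\gamma$-topology and its sheafification in \cite[Sec.~3.5]{KS90} — its Euler characteristic is $\gamma$-constructible, and it equals $\phi$ by hypothesis. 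One can instead attempt a direct argument, producing the $\gamma$-closed and $\gamma$-open sets cutting out each level set of $\psi\conv\1_{\antipodal{\gamma}}$ from the behaviour of $x\mapsto\int_{x+\gamma}\psi\,\d\Euler$ along $\gamma$-directions, but this is appreciably more delicate, and it is the step I expect to be the true difficulty.
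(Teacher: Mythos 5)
The paper does not prove this proposition --- it is quoted from \cite[Prop.~4.18]{S21} --- so there is no internal argument to compare against; I review the attempt on its own. Your forward direction is correct and self-contained: the reduction to the single-set identity $\int_{x+\gamma}\1_Z\,\d\Euler=\1_Z(x)$ for $Z$ subanalytic $\gamma$-locally closed with $(x+\gamma)\cap Z$ relatively compact, the three cases $x\notin S$, $x\in S\cap U$, $x\in S\setminus U$, and the star-shapedness of $A\setminus U$ about $x$ (your fact (c)) combined with additivity of $\Eulerc$ to produce the cancellation in the third case, all check out.

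The converse is where the genuine gap lies, and you say so yourself. Passing through the function--sheaf correspondence reduces it to: $\EPind(G)$ is $\gamma$-constructible for every constructible $\gamma$-sheaf $G$. This does not follow from \cite[Sec.~3.5]{KS90}: the equivalence between $\gamma$-sheaves and sheaves in the $\gamma$-topology is a statement about categories, and it does not by itself yield the locally finite subanalytic stratification by $\gamma$-locally closed sets that $\gamma$-constructibility requires. You have effectively restated the proposition at the sheaf level; deferring to that restatement is circular unless one proves it, which is precisely the contribution of \cite{S21}. Worth flagging too: under the literal reading of \Cref{def:CF-g} --- which asks that \emph{every} level set $\phi^{-1}(m)$, including $m=0$, be $\gamma$-locally closed --- the converse is simply false. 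For $\V=\R$ and $\gamma=\R_{\leq 0}$, take $\phi=\1_{[0,1)}+\1_{[1/2,3/2)}$: then $\phi\conv\1_{\antipodal{\gamma}}=\phi$ while $\phi^{-1}(1)=[0,1/2)\cup[1,3/2)$ is not a single half-open interval, hence not $\gamma$-locally closed. So the converse should be aimed at the decomposition-based notion of $\gamma$-constructibility of \cite{S21}, namely that $\phi=\sum_\alpha m_\alpha\1_{Z_\alpha}$ with each $Z_\alpha$ subanalytic and $\gamma$-locally closed; your forward argument already handles that version (apply the single-set identity to each $Z_\alpha$ rather than each level set), but the converse, whether via sheaves or directly, must actually produce such a decomposition rather than try to control the level sets themselves.
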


As an easy consequence of this characterization, the following lemma states that the group of $\gamma$-constructible functions is closed under specific pushforwards. This fact will be useful in \Cref{sec:comp-conv-and-box,sec:reconstruction-EFourier}.
\begin{lem}
    \label{lem:pushforward-g-CF-polant}
    Let~$\phi\in\CF[\cpct,\g]$. For any~$\xi\in\dualcone{\antipodal{\gamma}}\setminus\{0\}$, one has~$\xi_*\phi\in\CF[\cpct,\g[\lambda]][\R]$ with~$\lambda = \R_{\leq 0}$. 
\end{lem}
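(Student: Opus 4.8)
\emph{Overall plan.} The plan is to describe $\xi_*\phi$ through the convolution criterion of \Cref{prop:charac-gamma-CF}: since $\phi\in\CF[\cpct,\g]$ we have $\phi = \phi\conv\1_{\antipodal{\gamma}}$, and it suffices to deduce that $\xi_*\phi = (\xi_*\phi)\conv\1_{\R_{\geq 0}}$, which is exactly the assertion that $\xi_*\phi$ is $\lambda$-constructible for $\lambda = \R_{\leq 0}$ (note $\antipodal\lambda = \R_{\geq 0}$). Push-forwards of convolutions are computed by \Cref{cor:pushforward-linear-conv}, but that corollary requires $\xi$ to be proper on $\supp(\phi) + \antipodal\gamma$, which fails precisely when $\xi$ lies on the boundary of $\dualcone{\antipodal\gamma}$: then $\xi$ vanishes on a nontrivial face of $\antipodal\gamma$, hence is not proper on $\antipodal\gamma$ at all. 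This is the main obstacle, and the way around it is to replace $\gamma$ by a narrower cone $\gamma'\subseteq\gamma$ for which $\xi$ behaves like an interior covector, exploiting that $\CF[\cpct,\g]\subseteq\CF[\cpct,\g[\gamma']]$ whenever $\gamma'\subseteq\gamma$ (a $\gamma$-locally closed set is $\gamma'$-locally closed, since $\antipodal{\gamma'}\subseteq\antipodal\gamma$ and $\gamma'\subseteq\gamma$).

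\emph{Construction of $\gamma'$.} First note that $\xi\ne 0$ and $\xi\leq 0$ on $\gamma$ (because $\xi\in\dualcone{\antipodal\gamma}$), and a short argument — perturbing a point of $\Int(\gamma)$ in every direction inside $\gamma$ — shows $\xi < 0$ on all of $\Int(\gamma)$. Fix $v_0\in\Int(\gamma)$ and $\delta > 0$ small enough that $\closure{\ball{v_0}{\delta}}\subseteq\Int(\gamma)$, and set $\gamma' := \R_{\geq 0}\cdot\closure{\ball{v_0}{\delta}}$. Then $\gamma'$ is a closed convex semialgebraic cone with non-empty interior, $\gamma'\subseteq\gamma$, and $\gamma'\setminus\{0\}\subseteq\Int(\gamma)$; in particular $\gamma'$ is proper (being contained in $\gamma$) and satisfies \eqref{hyp:cone}, and $\xi < 0$ on $\gamma'\setminus\{0\}$, i.e.\ $\xi\in\Int(\dualcone{\antipodal{\gamma'}})$ and $\xi > 0$ on $\antipodal{\gamma'}\setminus\{0\}$. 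Consequently $\phi$ is $\gamma'$-constructible, and since $\supp(\phi)$ is compact — hence $\gamma'$-proper and in $\CF[\infty]$ (\Cref{ex:cpctly-supped-CF-infty}) — \Cref{prop:charac-gamma-CF} gives $\phi = \phi\conv\1_{\antipodal{\gamma'}}$.

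\emph{Push-forward.} Since $\xi > 0$ on $\antipodal{\gamma'}\setminus\{0\}$ and $\supp(\phi)$ is compact, $\xi$ is proper on $\supp(\phi) + \antipodal{\gamma'}$, so \Cref{cor:pushforward-linear-conv} applies and gives
\begin{equation*}
    \xi_*\phi = \xi_*\bigl(\phi\conv\1_{\antipodal{\gamma'}}\bigr) = (\xi_*\phi)\conv\bigl(\xi_*\1_{\antipodal{\gamma'}}\bigr).
\end{equation*}
The set $\antipodal{\gamma'}$ is closed convex and $\xi$ is proper on it; its support function satisfies $h_{\antipodal{\gamma'}}(-\xi) = 0$ (as $\xi\geq 0$ on $\antipodal{\gamma'}$, vanishing at $0$) and $h_{\antipodal{\gamma'}}(\xi) = +\infty$ (as $\antipodal{\gamma'}$ is a cone on which $\xi$ is somewhere positive), so \Cref{lem:pushforward-convex} yields $\xi_*\1_{\antipodal{\gamma'}} = \1_{[0,+\infty)} = \1_{\R_{\geq 0}}$, whence $\xi_*\phi = (\xi_*\phi)\conv\1_{\R_{\geq 0}}$.

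\emph{Conclusion.} By \Cref{lem:support-pushforward}, $\supp(\xi_*\phi)$ is contained in the compact set $\xi(\supp(\phi))$, so $\xi_*\phi\in\CF[\infty][\R]$ by \Cref{ex:cpctly-supped-CF-infty} and its support is $\lambda$-proper for $\lambda = \R_{\leq 0}$. The identity $\xi_*\phi = (\xi_*\phi)\conv\1_{\antipodal\lambda}$ together with the converse direction of \Cref{prop:charac-gamma-CF} then shows $\xi_*\phi$ is $\lambda$-constructible, and since it is compactly supported, $\xi_*\phi\in\CF[\cpct,\g[\lambda]][\R]$. Beyond the boundary-covector issue resolved by the shrinking step, everything else is routine: verifying that $\gamma'$ satisfies \eqref{hyp:cone} and that $\gamma'\setminus\{0\}\subseteq\Int(\gamma)$, checking the properness of $\xi$ on $\supp(\phi)+\antipodal{\gamma'}$, and reading off the two values of the support function.
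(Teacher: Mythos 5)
Your proof is correct and follows the same high-level plan as the paper's: express $\phi$ as a $\conv$-idempotent via \Cref{prop:charac-gamma-CF}, push the convolution forward via \Cref{cor:pushforward-linear-conv}, then invoke \Cref{prop:charac-gamma-CF} again on $\R$. However, you have been noticeably more careful than the paper itself. The paper's proof simply writes $\xi_*(\phi\conv\1_{\antipodal{\gamma}}) = (\xi_*\phi)\conv\1_{\antipodal{\lambda}}$ citing \Cref{cor:pushforward-linear-conv}, but that corollary requires $\xi$ to be proper on $\supp(\phi)+\antipodal{\gamma}$, and — exactly as you point out — this fails whenever $\xi$ lies on the boundary of $\dualcone{\antipodal{\gamma}}$: such a $\xi$ vanishes on a nontrivial ray of $\antipodal{\gamma}$, so $\xi^{-1}([a,b])\cap(\supp(\phi)+\antipodal{\gamma})$ contains unbounded translates of that ray and is not compact (indeed $\xi_*\1_{\antipodal{\gamma}}$ is not even defined for such $\xi$). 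Since the lemma is stated and applied throughout the paper for all of $\dualcone{\antipodal{\gamma}}\setminus\{0\}$, this is a genuine issue that the paper's proof leaves open. Your cone-narrowing step — replacing $\gamma$ by a subanalytic closed convex proper cone $\gamma'$ with $\gamma'\setminus\{0\}\subseteq\Int(\gamma)$, so that $\phi$ remains $\gamma'$-constructible while $\xi\in\Int(\dualcone{\antipodal{\gamma'}})$ — cleanly restores properness and yields a complete argument. Same strategy, but with a gap in the paper's write-up repaired.
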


\begin{proof}
    \Cref{lem:support-pushforward} shows that~$\supp(\xi_*\phi)\subseteq \xi(\supp(\phi))$, hence~$\xi_*\phi$ is compactly supported. Since $\xi_*\1_{\antipodal{\gamma}} = \1_{\antipodal{\lambda}}$, \Cref{prop:charac-gamma-CF} and \Cref{cor:pushforward-linear-conv} yield:
    \begin{equation*}
        \xi_*\phi = \xi_*(\phi\conv\1_{\antipodal{\gamma}})=(\xi_*\phi) \conv \1_{\antipodal{\lambda}}.
    \end{equation*}
    Hence, $\phi\in\CF[\g[\lambda]][\R]$ by \Cref{prop:charac-gamma-CF}.
\end{proof}

\begin{rk}
    It follows directly from the previous lemma that for any~$\xi\in\dualcone{\gamma}\setminus\{0\}$, one has~$\xi_*\phi\in\CF[\cpct,\g[\lambda]][\R]$ with~$\lambda = \R_{\geq 0}$. We will see in \Cref{prop:supp-Radon-g} that for any other~$\xi\in \dual{\V}\setminus(\dualcone{\gamma}\cup\dualcone{\antipodal{\gamma}})$, one has~$\xi_*\phi = 0$. 
\end{rk}

\section{Hybrid transforms}\label{sec:hybrid-transforms}
\subsection{General definition}
We now introduce the notion of hybrid transform, which is central to this article. Let us denote by~$\mathrm{F}(\dual{\V};\R)$ the set of functions from~$\dual{\V}$ to~$\R$. 
\begin{defi}
    \label{def:hybrid-transforms}
    For~$\kappa\in\Lloc$, the transform~$\transform*:\CF[\cpct]\rightarrow \mathrm{F}(\dual{\V};\R)$ is defined by: 
    \begin{equation*}
        \transform<\xi> = \int_{\R} \kappa(t)\,\xi_*\phi (t)\d t,
    \end{equation*}
    for any~$\phi\in\CF[\cpct]$ and any~$\xi\in\dual{\V}$.
\end{defi}
%
%
\begin{rks}[Generalizations]
    \label{rk:generalization-ht}
    In the rest of the article, we will consider the following generalizations of~$\transform*$ whenever necessary.
    \begin{enumerate}
        \item The transform~$\transform[\phi]$ is well-defined for~$\phi$ with non-compact support on the set of~$\xi\in\dual{\V}$ such that (i)~$\xi$ is proper on~$\supp(\phi)$ and (ii)~$\kappa\cdot \xi_*\phi\in\Lp$. 
        \item More generally, the definition of~$\transform<\zeta>$ still makes sense for any~$\phi\in\CF[][X]$ with~$X$ a real analytic manifold, and any morphism~$\zeta : X\to\R$ of real analytic manifolds such that (i)~$\zeta$ is proper on~$\supp(\phi)$ and (ii)~$\kappa\cdot \zeta_*\phi\in\Lp$.
    \end{enumerate}
\end{rks}

\begin{nota}
    When~$A\subseteq \R$, we use the simpler notation~$\transform[\phi]<\xi>[A] := \transform[\phi]<\xi>[1_A]$.
\end{nota}
%


In the course of this article, we will illustrate on many examples the interest that there can be in considering hybrid transforms. All the examples will illustrate the effect of the combination of a topological operation (the pushforward) and of a classical integral. We start with the following example.

\begin{ex}[Subanalytic curve]
    Let~$c:[0,1]\to\V$ be continuous subanalytic. One can consider the constructible function~$\1_Z$ where~$Z = \Ima(c)$ is compact and subanalytic in~$\V$. Since~$Z$ has volume zero, integral transforms using only the Lebesgue measure are zero when~$\dim(\V) \geq 2$. However, hybrid transforms are generally not, as highlighted by \Cref{fig:EF-spiral} and \Cref{ex:EL-sphere,ex:EL-sqcrck,ex:EF-sphere,ex:EF-sqcrck}. This is due to the fact that the pushforwards of~$\1_Z$ by linear forms convey topological information that is missed by the Lebesgue measure.
\end{ex}

As explained in introduction, \Cref{def:hybrid-transforms} generalizes existing transforms, starting with the following two.

\begin{ex}
	\label{ex:def-GR-EFourier}
    Considering~$\phi\in\CF[\cpct]$ and~$\eta\in\dual{\V}$, the \emph{Fourier transform} defined in \cite{GR11} is the hybrid transform:
    \begin{equation*}
        \GREFourier<\eta> = 
        \int_{0}^{+\infty} \int_\V \1_{\eta^{-1}(t)}\phi\d\Euler \d t = \transform[\phi]<\eta>[{\R_{\geq 0}}].
    \end{equation*}
    We call it \emph{GR-Euler-Fourier transform} in this paper, keeping the terminology \emph{Euler-Fourier} for the transform introduced in \Cref{def:EFourier}. Using \Cref{lem:xi-proper-supp-phi}, one can see that any constructible function~$\phi$ satisfying \Cref{ass:EL-well-def} below for a cone~$C$ satisfies also that~$\1_{\R_{\geq 0}}\cdot\eta_*\phi\in\Lp$ for any~$\eta\in\intpolant{C}$, so that we can extend the definition of the transform to such constructible functions~$\phi$ and linear forms~$\eta$. 
\end{ex}

\begin{ex}
    \label{ex:def-Euler-Bessel}
	Given an analytic norm on~$\V$, the \emph{Euler-Bessel transform} is defined in \cite{GR11} for any~$\phi\in\CF[\cpct]$ and any~$v\in \V$, by:
    \begin{equation*}
        \EBessel<v> 
        = \int_{0}^{+\infty} \int_\V \1_{\partial B(v,t)}\phi\d\Euler \d t,
    \end{equation*}
    where~$\partial B(v,t)$ denotes the sphere of radius~$t$ centered at~$v$ in~$\V$. Considering the morphism of real analytic manifolds~$\zeta_v = \|v-\cdot\|^2$ 
	and the locally integrable kernel~$\kappa : t\mapsto \1_{(0,+\infty)}(t)\cdot \frac{1}{2\sqrt{t}}$, the Euler-Bessel transform of~$\phi\in\CF[\cpct]$ is the hybrid transform~$\EBessel<v> = \transform[\phi]<\zeta_v>$.
\end{ex}

\subsection{Euler-Laplace transform}\label{sec:ELaplace}
\begin{defi}
    \label{def:ELaplace}
    The \emph{Euler-Laplace transform} of~$\phi\in\CF[\cpct]$ is defined for~$\xi\in\dual{\V}$ by:
    \begin{equation*}
        \ELaplace<\xi> = \int_{\R} e^{-t}\xi_*\phi(t) \d t.
    \end{equation*}
\end{defi}
As explained in \Cref{rk:generalization-ht}, we may extend the definition of the Euler-Laplace transform for any constructible function~$\phi\in\CF$ on the set of~$\xi\in\dual{\V}$ that are proper on~$\supp(\phi)$ and for which the right-hand integral is well-defined. We show in \Cref{prop:EL-well-def} that the following assumptions on~$\phi$ are sufficient for the set of such~$\xi$'s to contain an open cone of~$\dual{\V}$. 


\begin{assus}$~$
    \label{ass:EL-well-def}
    \begin{enumerate}
        \item\label{itm:CF-infty}~$\phi$ is constructible up to infinity,
        \item\label{itm:supp}~$\supp(\phi) \subseteq K + C$, where~$K$ is compact and~$C$ is a cone satisfying:
        \begin{equation}\tag{C2}
            \label{hyp:cone-empty-int}
            C \textit{ is a non-empty subanalytic closed proper convex cone}.
        \end{equation}
    \end{enumerate}
\end{assus}
\noindent Note that, compared to~\eqref{hyp:cone}, the cone~$C$ may have empty interior.
\begin{rk}
	\label{rk:ass-conv-cone}
	If there exist a compactly supported constructible function~$\phi_c$ on~$\V$ and a cone~$C$ satisfying~\eqref{hyp:cone-empty-int} such that~$\phi = \phi_c \conv \1_C$, then~$\phi$ satisfies \Cref{ass:EL-well-def} for the cone~$C$. The property on the support is easy to prove. Moreover,~$\phi_c$ is constructible up to infinity (\Cref{ex:cpctly-supped-CF-infty}) and so is~$\1_C$ by \cite[Lem.~2.17]{S21} since~$C$ is a subanalytic cone. The result follows then from the stability of this property by convolution, see Section~3.4 in loc. cit..
\end{rk}
\begin{ex}
	By the previous remark and \Cref{prop:sublevelCF-as-gammaification}, sublevel-sets constructible functions (\Cref{sec:def-sublevel-CF}) satisfy these assumptions. 
\end{ex}
\begin{prop}
    \label{prop:EL-well-def}
    If~$\phi\in\CF$ satisfies \Cref{ass:EL-well-def} for a cone~$C$, then~$\ELaplace$ is well-defined on~$\Int(\dualcone{C})$.
\end{prop}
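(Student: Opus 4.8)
The plan is to check the two conditions from \Cref{rk:generalization-ht} that make the defining integral meaningful: (i) every $\xi\in\Int(\dualcone{C})$ is proper on $\supp(\phi)$, so that $\xi_*\phi\in\CF[][\R]$ is defined, and (ii) the function $t\mapsto e^{-t}\,\xi_*\phi(t)$ lies in $\Lp[\R]$. The first gives a well-defined pushforward and a one-sided support bound; the second gives convergence of the integral.

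For (i), fix $\xi\in\Int(\dualcone{C})$ and a norm $\|\cdot\|$ on $\V$ (the case $C=\{0\}$, where $\supp(\phi)$ is compact, being immediate). Since $C$ is closed, $S:=C\cap\{\|x\|=1\}$ is compact, and $\xi$ is strictly positive on $S$ because, for a closed convex cone, $\Int(\dualcone{C})$ is the set of $\xi$ with $\dualdot{\xi}{x}>0$ for all $x\in C\setminus\{0\}$ (non-empty here since $C$ is proper). Hence $\delta:=\min_S\xi>0$ and, by positive homogeneity, $\xi(x)\ge\delta\|x\|$ for all $x\in C$. Writing $x=k+c$ with $k\in K$, $c\in C$, and setting $R:=\max_{k\in K}|\xi(k)|$, we get $\xi(x)\ge -R+\delta\|c\|$, so $\xi(x)\le b$ forces $\|c\|\le(b+R)/\delta$ and thus $x$ to lie in a bounded set. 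As $K+C$ is closed, $(\xi|_{K+C})^{-1}([a,b])$ is compact, so $\xi$ is proper on $K+C$ and a fortiori on the closed subset $\supp(\phi)$. The same estimate yields $\xi(\supp(\phi))\subseteq[m_0,+\infty)$ with $m_0:=\min_{k\in K}\xi(k)$, hence $\supp(\xi_*\phi)\subseteq[m_0,+\infty)$ by \Cref{lem:support-pushforward}.

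For (ii), the point is to bound $\xi_*\phi$ on $\R$. Since $\phi$ is constructible up to infinity it takes finitely many values, so $\phi=\sum_i m_i\1_{A_i}$ with $A_i$ subanalytic and $j(A_i)$ subanalytic in $\projective[\V]$, and $\xi_*\phi(t)=\sum_i m_i\,\Eulerc[A_i\cap\xi^{-1}(t)]$, each slice being relatively compact subanalytic by properness of $\xi$. Applying the triviality theorem for subanalytic maps to $\xi$ over a compact interval $[m_0,T]$ handles bounded $t$, while the behaviour as $t\to+\infty$ is controlled by subanalytic triviality near the part at infinity of the compact set $\overline{j(\supp(\phi))}\subseteq\projective[\V]$ — which is exactly what constructibility up to infinity makes available. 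One concludes that each $t\mapsto\Eulerc[A_i\cap\xi^{-1}(t)]$ takes finitely many values, i.e. that $\xi_*\phi$ is constructible up to infinity on $\R$, hence bounded; set $M:=\sup_{t\in\R}|\xi_*\phi(t)|<\infty$. Then $\xi_*\phi$ is measurable and $|e^{-t}\xi_*\phi(t)|\le M\,e^{-t}\,\1_{[m_0,+\infty)}(t)$, whose integral over $\R$ equals $Me^{-m_0}<\infty$; so $e^{-t}\xi_*\phi\in\Lp[\R]$ and $\ELaplace<\xi>$ is well-defined on $\Int(\dualcone{C})$.

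I expect the boundedness of $\xi_*\phi$ near $+\infty$ to be the main obstacle: the support bound from step (i) does not bound the values of $\xi_*\phi$, and since $\supp(\phi)$ is non-compact the usual argument for compactly supported $\phi$ (where $\xi_*\phi$ has compact support, hence finitely many jumps) does not apply. Constructibility up to infinity, which tames the family of slices $\xi^{-1}(t)\cap\supp(\phi)$ as $t\to+\infty$, is precisely the hypothesis that closes this gap.
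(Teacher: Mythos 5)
Your proof is correct and follows the same route as the paper: establish that $\xi$ is proper on $\supp(\phi)$ with a one-sided support bound $\supp(\xi_*\phi)\subseteq[m_0,+\infty)$, then deduce that $\xi_*\phi$ is bounded from the fact that it is constructible up to infinity on $\R$, and conclude integrability of $t\mapsto e^{-t}\xi_*\phi(t)$. The two points where you differ are presentational rather than substantive: you rederive the properness estimate of \Cref{lem:xi-proper-supp-phi} via the direct lower bound $\xi(x)\ge\delta\|x\|$ on $C$ instead of the paper's contradiction argument, and you sketch the subanalytic-triviality argument that $\xi_*\phi$ takes finitely many values where the paper instead cites \cite[Lem.~4.10]{S21} (that $\xi_*\phi\in\CF[\infty][\R]$). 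Your final paragraph correctly identifies the boundedness of $\xi_*\phi$ near $+\infty$ as the crux and constructibility up to infinity as the hypothesis that handles it; in a polished write-up the triviality sketch should be replaced by the precise citation, since that step as phrased is the least rigorous part of the argument.
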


\noindent The proposition follows from the next lemma.
\begin{lem}
    \label{lem:xi-proper-supp-phi}
    Let~$\phi\in\CF$. If~$\phi$ satisfies~\Cref{ass:EL-well-def}.\ref{itm:supp}, then any~$\xi\in\Int(\dualcone{C})\cup\intpolant{C}$ is proper on~$\supp(\phi)$. Moreover, for any~$\xi \in \Int(\dualcone{C})$ (resp.~$\xi \in \intpolant{C}$), one has~$\supp(\xi_*\phi) \subseteq [a,+\infty)$ (resp.~$\supp(\xi_*\phi) \subseteq (-\infty,a]$) for some $a\in\R$.
\end{lem}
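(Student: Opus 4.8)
The plan is to reduce everything to the behaviour of a linear form on the cone $C$. Recall $\supp(\phi)\subseteq K+C$ with $K$ compact and $C$ a closed proper convex cone satisfying \eqref{hyp:cone-empty-int}. First I would establish the elementary convex-geometry fact that for $\xi\in\Int(\dualcone{C})$, the restriction $\xi|_{C}$ is \emph{proper and bounded below on sublevel sets}: more precisely, $\xi(x)\geq 0$ for all $x\in C$ (since $\xi\in\dualcone{C}$), and for every $R>0$ the set $\{x\in C : \xi(x)\leq R\}$ is compact. The compactness is the crux: because $\xi$ lies in the \emph{interior} of the polar cone, $\xi$ is strictly positive on $C\setminus\{0\}$, and a standard argument (using that $C\cap \sphere$ is compact and $\xi$ is continuous and positive there, together with homogeneity) shows $\xi(x)\geq c\|x\|$ on $C$ for some $c>0$; hence $\{x\in C:\xi(x)\leq R\}\subseteq \overline{\ball{0}{R/c}}$, and it is closed, so compact. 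Symmetrically, for $\xi\in\intpolant{C}=\Int(\dualcone{\antipodal{C}})$ one gets $\xi(x)\leq -c\|x\|$ on $C$, so $\{x\in C:\xi(x)\geq -R\}$ is compact.

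Next I would transfer this from $C$ to $K+C$. Fix $\xi\in\Int(\dualcone{C})$. For any $x = k + y$ with $k\in K$, $y\in C$, we have $\xi(x) = \xi(k)+\xi(y)\geq \xi(k)\geq \min_{k\in K}\xi(k) =: a$, since $\xi(y)\geq 0$; this already gives $\supp(\xi_*\phi)\subseteq\supp(\phi)\cap$-image $\subseteq\xi(\supp(\phi))\subseteq [a,+\infty)$ once properness is known, via \Cref{lem:support-pushforward}. For properness of $\xi$ on $\supp(\phi)$: it suffices to show $\xi^{-1}([-R,R])\cap (K+C)$ is compact for every $R>0$ (a closed subset of $\V$ mapping to a compact subset of $\R$ under a continuous map, with compact preimage of compacts, is mapped properly). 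If $x=k+y\in K+C$ with $|\xi(x)|\leq R$, then $\xi(y) = \xi(x)-\xi(k)\leq R + \max_K|\xi| =: R'$, so $y$ lies in the compact set $\{y\in C:\xi(y)\leq R'\}$ from the previous paragraph; hence $x\in K + (\text{compact})$, a compact set. Intersecting with the closed set $\supp(\phi)$ and noting $\supp(\phi)$ is closed gives compactness of $\xi^{-1}([-R,R])\cap\supp(\phi)$, which is exactly properness of $\xi|_{\supp(\phi)}$. The case $\xi\in\intpolant{C}$ is identical with signs reversed, yielding $\supp(\xi_*\phi)\subseteq(-\infty,a']$ with $a' = \max_{k\in K}\xi(k)$.

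The main obstacle is the compactness claim $\{x\in C:\xi(x)\leq R\}$ compact for $\xi\in\Int(\dualcone{C})$ — equivalently, that $\xi$ is \emph{strictly} positive on $C\setminus\{0\}$ and in fact bounded below by a positive multiple of the norm. This is where the hypothesis that $C$ is \emph{proper} (so $\dualcone{C}$ has non-empty interior, dual to $C$ being proper) and \emph{closed convex} is used, and one must be a little careful that "$\xi$ in the interior of $\dualcone{C}$" genuinely forces $\langle\xi,x\rangle>0$ for $x\in C\setminus\{0\}$: if $\langle\xi,x_0\rangle = 0$ for some $x_0\in C\setminus\{0\}$, then every $\xi'$ near $\xi$ would have $\langle\xi',x_0\rangle$ of either sign, contradicting $\xi'\in\dualcone{C}$ for $\xi'$ in a neighbourhood of $\xi$. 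Once that is in hand, the homogeneity/compactness-of-$C\cap\sphere$ argument gives the linear lower bound, and the rest is bookkeeping. I would also remark that the hypotheses guarantee $\Int(\dualcone{C})\neq\emptyset$ (properness of $C$), so the statement is non-vacuous.
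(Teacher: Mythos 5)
Your proof is correct, and it rests on the same geometric heart as the paper's: namely that $\xi\in\Int(\dualcone{C})$ forces $\dualdot{\xi}{v}>0$ for every $v\in C\cap S$ (with $S$ the unit sphere), and that $C\cap S$ is compact because $C$ is closed. The difference is one of organization rather than substance: the paper argues properness by contradiction, extracting a divergent sequence $y_n=k_n+x_n\in\xi^{-1}[a,b]\cap(K+C)$, normalizing $x_n/\|x_n\|\to v\in C\cap S$, and deriving $\dualdot{\xi}{v}=0$ against the interior hypothesis; you instead make the positivity quantitative, deducing the coercive bound $\xi(x)\geq c\|x\|$ on $C$ with $c=\min_{C\cap S}\xi>0$, from which compactness of $\{x\in C:\xi(x)\leq R\}$ (and hence of $\xi^{-1}[-R,R]\cap(K+C)\subseteq K+\{x\in C:\xi(x)\leq R'\}$) falls out directly. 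Your version is a little more informative in that it produces the explicit constant, and it has the additional merit of actually proving the second claim of the lemma — $\supp(\xi_*\phi)\subseteq[\min_K\xi,+\infty)$ via $\xi(K+C)\subseteq[\min_K\xi,+\infty)$ together with \Cref{lem:support-pushforward} — which the paper's written proof leaves implicit even though the statement asserts it and \Cref{prop:EL-well-def} relies on it. Both treat the case $\xi\in\intpolant{C}$ by the symmetric sign flip, and both correctly note that properness of $C$ is what guarantees $\Int(\dualcone{C})\neq\emptyset$.
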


\begin{proof}
    Let~$a< b$ be two real numbers and~$\xi\in \Int(\dualcone{C})$. The case~$\xi\in\intpolant{C}$ follows by multiplication by~$-1$. We prove that the space~$\xi^{-1}[a,b]\cap \supp(\phi)$ is compact. Since this space is closed and by assumption:
    \begin{equation*}
        \xi^{-1}[a,b]\cap \supp(\phi) \subseteq \xi^{-1}[a,b]\cap \left(K+C\right),
    \end{equation*}
    it is enough to show that this last space is compact. Suppose that there exist a sequence~$y_n= k_n + x_n\in \xi^{-1}[a,b]\cap \left(K+C\right)$ with~$k_n \in K$ and~$x_n \in C$ such that~$\|y_n\|\longrightarrow +\infty$. Since~$K$ is compact, we also have~$\|x_n\|\longrightarrow +\infty$. If we denote by~$S$ the unit sphere of~$\V$, the subset~$C\cap S$ is compact by closedness of~$C$. We can thus assume without loss of generality that~$x_n/\|x_n\| \longrightarrow v \in C\cap S$. Dividing the inequality~$a \leq \dualdot{\xi}{y_n} \leq b$ by~$\|x_n\|$ and taking the limit, we get that~$\dualdot{\xi}{v} = 0$. Yet,~$\xi\in \Int(\dualcone{C})$ so that~$\dualdot{\xi}{v} >0$, a contradiction.
\end{proof}

\begin{proof}[Proof of \Cref{prop:EL-well-def}]
    For any~$\xi\in \Int(\dualcone{C})$, it follows from \Cref{lem:xi-proper-supp-phi} that~$\xi$ is proper on~$\supp(\phi)$ and that~$\supp(\xi_*\phi)\subseteq [a,+\infty)$ for some $a\in\R$. The function~$\xi_*\phi$ is constructible up to infinity by \cite[Lem.~4.10]{S21}, thus takes only a finite number of distinct values. Together with the property on the support of~$\xi_*\phi$, this yields that~$t\in\R \mapsto e^{-t}\cdot\xi_*\phi(t)$ is integrable over~$\R$, hence the result.
\end{proof}
%
Before moving on to the examples, we present the simple relation between the Euler-Laplace transform and the usual Laplace transform, useful all along the paper.
\begin{lem}
    \label{lem:link-ELaplace-Laplace}
    If~$\phi\in\CF$ satisfies \Cref{ass:EL-well-def} for a cone~$C$, then for any~$\xi\in\Int(\dualcone{C})$ and any~$s>0$, one has 
    \begin{equation*}
        \ELaplace<s\,\xi> = \ELaplace[\xi_*\phi]<s> = s\,\Laplace[\xi_*\phi](s),
    \end{equation*}
    where~$\Laplace f (s) = \int_\R e^{- st} f(t) \d t$ is the classical (bilateral) Laplace transform.
\end{lem}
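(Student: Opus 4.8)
The plan is to unwind the definitions and reduce everything to a one-variable change of variables in a Lebesgue integral. First I would observe that, by \Cref{prop:EL-well-def}, since $\phi$ satisfies \Cref{ass:EL-well-def} for the cone $C$ and $\xi \in \Int(\dualcone{C})$, the transform $\ELaplace<\xi>$ is well-defined; moreover, for $s > 0$ we have $s\,\xi \in \Int(\dualcone{C})$ as well (because $\dualcone{C}$ is a cone and its interior is stable under multiplication by positive scalars), so $\ELaplace<s\,\xi>$ is also well-defined. The same goes for $\ELaplace[\xi_*\phi]<s>$ once we know $\xi_*\phi$ is an admissible one-variable function: by \Cref{lem:xi-proper-supp-phi}, $\xi_*\phi$ is compactly supported from the left, i.e. $\supp(\xi_*\phi) \subseteq [a,+\infty)$, and it is constructible up to infinity by \cite[Lem.~4.10]{S21}, hence bounded with finitely many values, so $t \mapsto e^{-st}\xi_*\phi(t)$ is integrable on $\R$ for every $s > 0$.

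Next I would carry out the key computation. By \Cref{def:ELaplace} applied on $\dual{\V}$,
\begin{equation*}
    \ELaplace<s\,\xi> = \int_\R e^{-t}\,(s\xi)_*\phi(t)\,\d t.
\end{equation*}
The crucial point is the identity $(s\xi)_*\phi = \mu_{s,*}(\xi_*\phi)$, where $\mu_s : \R \to \R$ is multiplication by $s$. This follows from functoriality of the pushforward (\cite[Thm.~2.3~(ii)]{S91}, cf. \Cref{rk:int-pushforward}) applied to the factorization $s\xi = \mu_s \circ \xi$ of maps $\V \to \R$, using that $\mu_s$ is an isomorphism (so properness is automatic) and that $\xi$ is proper on $\supp(\phi)$ by \Cref{lem:xi-proper-supp-phi}. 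Since $\mu_s$ is an injective morphism of real analytic manifolds, \Cref{rk:pushforward-indicatingCF-injective} (applied termwise to a decomposition of $\xi_*\phi$ into indicator functions, or directly via \eqref{eq:expression-pushforward-injective}) gives $\mu_{s,*}(\xi_*\phi)(t) = (\xi_*\phi)(t/s)$ for all $t \in \R$. Substituting and performing the change of variable $u = t/s$, $\d t = s\,\d u$, yields
\begin{equation*}
    \ELaplace<s\,\xi> = \int_\R e^{-t}\,(\xi_*\phi)(t/s)\,\d t = \int_\R e^{-su}\,(\xi_*\phi)(u)\, s\,\d u = s\,\Laplace[\xi_*\phi](s).
\end{equation*}
Finally, the middle equality $\ELaplace[\xi_*\phi]<s> = s\,\Laplace[\xi_*\phi](s)$ is just the one-variable case of the same computation (or its definition combined with the scaling identity), which closes the chain of equalities.

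I expect the only mild obstacle to be the bookkeeping around the pushforward identity $(s\xi)_*\phi = \mu_{s,*}(\xi_*\phi)$ and the formula for pushing forward along the injective scaling map $\mu_s$: one must make sure the properness hypotheses needed for functoriality are in place (guaranteed by \Cref{lem:xi-proper-supp-phi}) and that the ``$\phi = \sum m_i \1_{K_i}$'' type decompositions used to invoke \Cref{rk:pushforward-indicatingCF-injective} are legitimate, which they are since $\xi_*\phi$ is constructible. Everything else is a routine change of variables in a convergent Lebesgue integral.
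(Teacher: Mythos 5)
Your proposal is correct and follows essentially the same route as the paper: functoriality of pushforward gives $(s\xi)_*\phi = s_*(\xi_*\phi)$ (which is exactly the first equality by the definition of $\ELaplace[\xi_*\phi]$), the explicit pushforward formula $s_*\psi(t) = \psi(t/s)$ for the scaling map yields the integrand, and a change of variables produces $s\,\Laplace[\xi_*\phi](s)$. The extra bookkeeping you add about well-definedness and properness is harmless but not needed beyond what \Cref{prop:EL-well-def} and \Cref{lem:xi-proper-supp-phi} already establish.
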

\begin{proof}
    The first equality follows from the functoriality of the pushforward:~$(s\,\xi)_*\phi = s_*\xi_*\phi$, and the second one from the fact that~$s_*\xi_*\phi(t) = \xi_*\phi(t/s)$, leading to the computation:
    \begin{equation*}
        \ELaplace[\xi_*\phi]<s> = s \int_{\R} e^{-su} \xi_*\phi(u)\d u = s\,\Laplace[\xi_*\phi](s).
    \end{equation*}
\end{proof}
We now turn to the examples. Once again, the reader's attention is drawn 
to the effect of the successive application of topological pushforward and of classical integral.
\begin{ex}[Interval]
    \label{ex:EL-interval}
    Let~$-\infty < a\leq b < +\infty$ and consider the constructible function~$\1_{\lceil a,b\rfloor}$ where~$\lceil a,b\rfloor$ is one of the intervals~$[a,b]$,~$(a,b]$,~$[a,b)$, or~$[a,b]$. We have, for any~$\xi\in\R$,
    \begin{equation*}
        \ELaplace[\1_{\lceil a,b\rfloor}]<\xi> = \sgn(\xi)\left(e^{-\xi\cdot a}-e^{-\xi\cdot b}\right),
    \end{equation*}
    and for~$\xi \in \R_{>0}$,
    \begin{equation*}
        \ELaplace[\1_{\lceil a,+\infty)}] <\xi> = e^{-\xi\cdot a}.
    \end{equation*}
\end{ex}
\begin{ex}[Rectangle]
    \label{ex:EL-char-rectangle}
    Let~$a < b$ and~$c < d$ be real numbers. We have, for~$\xi\in\dual{\left(\R^2\right)}$ such that~$\xi(a,c) < \xi(a,d) < \xi(b,c) < \xi(b,d)$, the formulae:
    \begin{equation}
        \label{eq:pushforward-examples}
        \begin{array}{lcr}
            \xi_*\1_{[a,b)\times[c,d)} = \1_{[\xi(a,c),\xi(a,d))} - \1_{[\xi(b,c),\xi(b,d))}, &\quad& \xi_*\1_{[a,b)\times[c,d]} = \1_{[\xi(a,c),\xi(b,c))}, \\[0.5em]
            \xi_*\1_{(a,b]\times[c,d]} = \1_{(\xi(a,d),\xi(b,d)]}, && \xi_*\1_{[a,b]\times[c,d]} = \1_{[\xi(a,c),\xi(b,d)]}.
        \end{array}
    \end{equation}
    Indeed, note that~$\xi_*\1_{[a,b]\times[c,d]} = \1_{[\xi(a,c),\xi(b,d)]}$ is given by~\Cref{lem:pushforward-convex} and the other equalities are obtained by additivity. For instance, $\1_{(a,b]\times[c,d]} = \1_{[a,b]\times[c,d]} - \1_{[a,a]\times[c,d]}$. Equation~\eqref{eq:pushforward-examples} yields:
    \begin{align*}
        \ELaplace[\1_{[a,b)\times[c,d)}]<\xi> &= e^{-\xi(a,c)} - e^{-\xi(a,d)} - e^{-\xi(b,c)} + e^{-\xi(b,d)}, \\[0.5em]
        \ELaplace[\1_{[a,b)\times[c,d]}]<\xi> &= e^{-\xi (a,c)} - e^{-\xi (b,c)}, \\[0.5em]
        \ELaplace[\1_{(a,b]\times[c,d]}]<\xi> &= e^{-\xi (a,d)} - e^{-\xi (b,d)}, \\[0.5em]
        \ELaplace[\1_{[a,b]\times[c,d]}]<\xi> &= e^{-\xi (a,c)} - e^{-\xi (b,d)}. 
    \end{align*} 
    Similar formulae can be obtained when~$\xi$ induces a different order on the vertices of the rectangle~$[a,b]\times[c,d]$. Unlike the classical Laplace transform, the Euler-Laplace transform distinguishes between the presence or absence of the edges of the rectangle.
\end{ex}
\begin{rk}
    The last Euler-Laplace transform computed in \Cref{ex:EL-char-rectangle} yields a counter-example to the formula~$\ELaplace[\phi\boxtimes\psi] = \ELaplace[\phi]\boxtimes\,\ELaplace[\psi]$, which is wrong in general. We will give a correct formula in \Cref{cor:comp-box-EL}.
\end{rk}
\begin{ex}[Sphere]
    \label{ex:EL-sphere}
    Assume that~$\V$ is equipped with a norm~$\|\cdot\|$. Consider~$r >0$ and~$\phi = \1_{\mathbb{S}_r}$ with~$\mathbb{S}_r = \{x\in \V\st \|x\|=r\}$. For any~$\xi\in\dual{\V}$, we have:
    \begin{equation*}
        \xi_*\1_{\mathbb{S}_r} = \left(1+(-1)^{\dim(\V)}\right)\cdot\1_{\left(-r\|\xi\|,r\|\xi\|\right)} + \1_{\left\{-r\|\xi\|\right\}} + \1_{\left\{r\|\xi\|\right\}},
    \end{equation*}
    and hence:
    \begin{equation*}
        \ELaplace[\1_{\mathbb{S}_r}](\xi) = 2\cdot \left(1+(-1)^{\dim(\V)}\right)\cdot \sinh\left(r\|\xi\|\right).
    \end{equation*}
    Note the amount of information extracted by this transform even though the constructible function under consideration is supported on a subset with zero volume.
\end{ex}
The Euler-Laplace and Laplace transforms are equal up to a normalization on some~$\gamma$-constructible functions (\Cref{ex:EL-gamma-voxels}). However, they are not on all~$\CF[\cpct,\g]$, as shown by the following example.
\begin{ex}[$\gamma$-triangle]
    \label{ex:EL-gamma-triangle}
    Let~$b\in\R$. Consider the triangle
    \begin{equation*}
        T = \Conv{\{(0,0),(1,0),(0,b)\}}\setminus\left\{(x,y)\in\R^2\st y = b-bx\right\},
    \end{equation*}
    represented in \Cref{fig:gT}. The subset~$T\subset\R^2$ is subanalytic and~$\gamma$-locally closed for the cone~$\gamma = (\R_{\leq 0})^2$. Then, the Euler-Laplace and classical Laplace transforms compare as follows. For~$\xi = (\xi_x, \xi_y)\in\left(\R_{\geq 0}\right)^2$,
    \begin{equation*}
        \ELaplace[\1_T]<\xi> = 
        \begin{cases}
            1 - e^{-\xi_x} &\mbox{if } b\xi_y \geq \xi_x, \\
            1 - e^{-b\xi_y} &\mbox{otherwise},
        \end{cases}
    \end{equation*}
    and 
    \begin{equation*}
        \Laplace\left[\1_T\right](\xi) \overset{\mathrm{def}}{=} \int_{\R^2} e^{-\dualdot{\xi}{x}}\,\1_T(x)\d x = \frac{\xi_x \left(1 - e^{-b\xi_y}\right) + b\xi_y\left(e^{-\xi_x} - 1\right)}{\xi_x(\xi_x - b\xi_y)\xi_y}.
    \end{equation*}
    These two transforms differ, as~$\ELaplace[\1_T]<\xi>$ does not depend on~$b\geq 1$ for any~$\xi\in\left(\R_{\geq 0}\right)^2$ such that~$\xi_y \geq \xi_x$. Similar formulae hold for other choices of~$\xi\in\R^2$.
\end{ex}
\begin{figure}[ht]
    \centering
    \includegraphics[scale = 0.8]{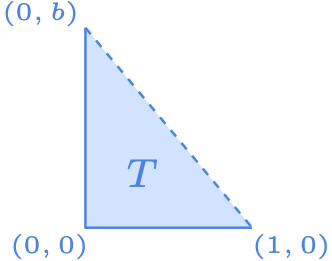} 
    \caption{The triangle~$T$ defined in \Cref{ex:EL-gamma-triangle} is represented as the light blue solid triangle and the dark blue solid angle defined by the points~$(0,b)$,~$(0,0)$ and~$(1,0)$. The dashed line indicates that points on this edge do not belong to~$T$.} 
    \label{fig:gT}
\end{figure}
\begin{ex}[Closed square minus a curve]
    \label{ex:EL-sqcrck}
    Consider the constructible function
    \begin{equation*}
        \phi = \1_S - \1_C,
    \end{equation*}
    where~$S = [-1/2,1/2]^2$ and~$C$ is the closed curve of~$\R^2$ represented by the dotted line in \Cref{fig:sqcrck}. Since~$C$ has zero volume, the classical Laplace transforms of~$\1_S$ and~$\1_S - \1_C$ are equal. However, their Euler-Laplace transforms differ, as shown in \Cref{fig:EL-sqcrck}.
\end{ex}

\begin{figure}[ht] 
    \centering
    \begin{subfigure}[b]{0.46\linewidth}
        \includegraphics[width=.83\linewidth]{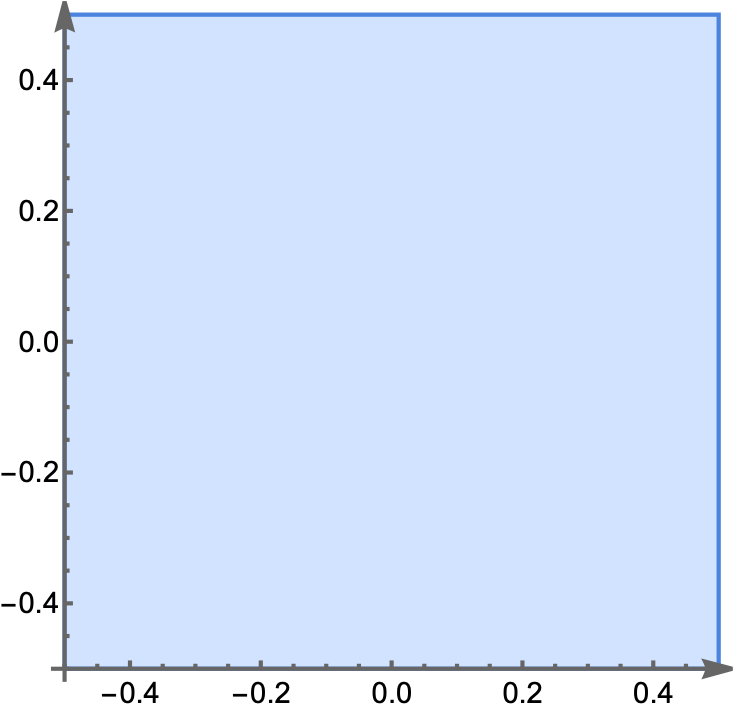} 
        \vspace{0.5em}
        \caption{$\1_S$} 
      \end{subfigure}
      \begin{subfigure}[b]{0.46\linewidth}
        \begin{flushright}
          \includegraphics[width=0.98\linewidth]{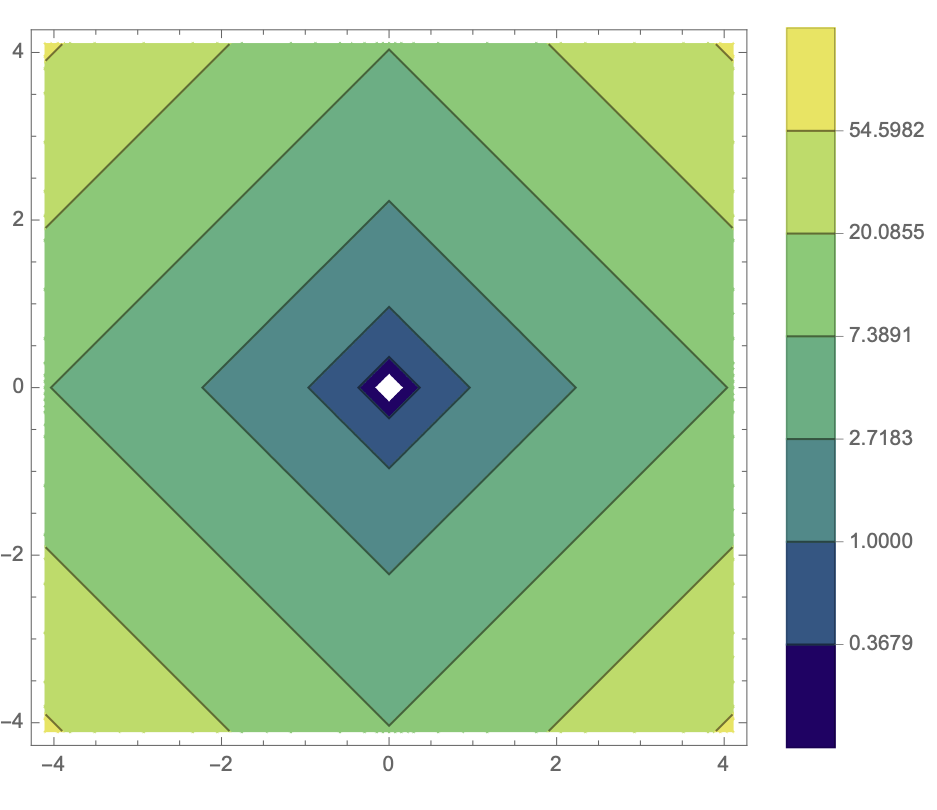} 
          \caption{$\ELaplace[\1_S]$} 
        \end{flushright}
      \end{subfigure} 
      \begin{subfigure}[b]{0.46\linewidth}
        \includegraphics[width=.85\linewidth]{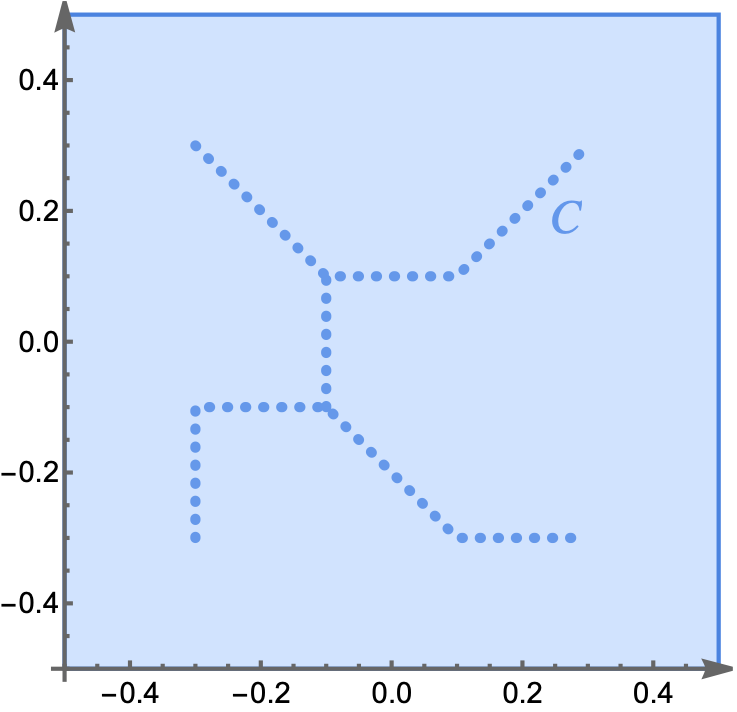} 
        \caption{$\1_S - \1_C$}
        \label{fig:sqcrck}
      \end{subfigure}
      \begin{subfigure}[b]{0.46\linewidth}
        \begin{flushright}
          \includegraphics[width=0.98\linewidth]{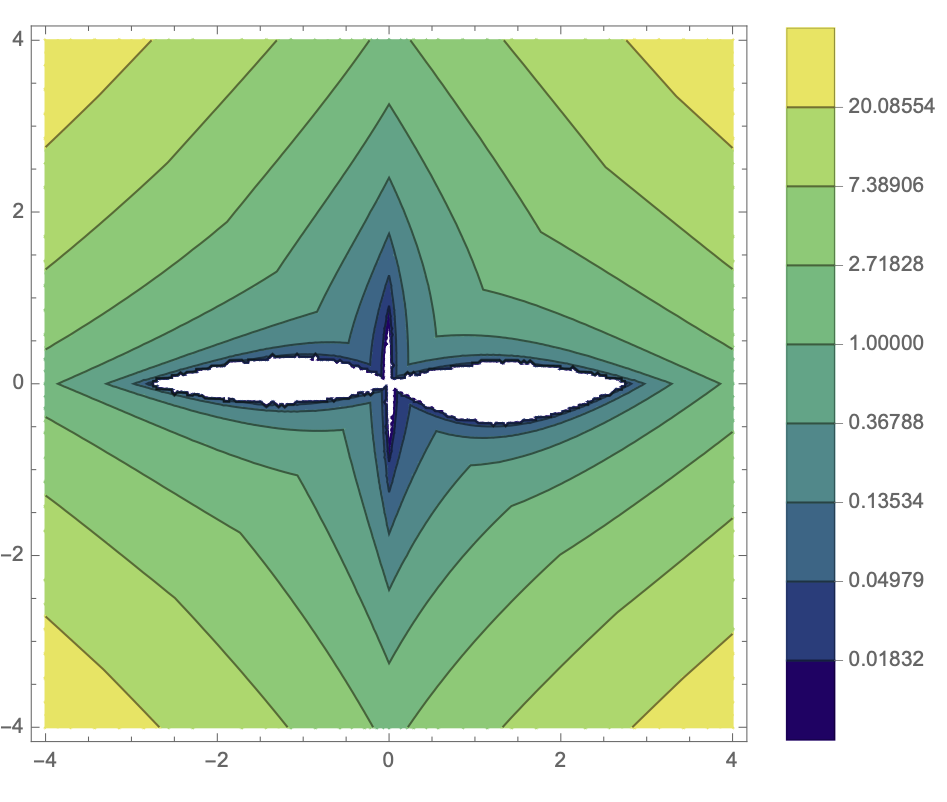} 
          \caption{$\ELaplace[\1_S - \1_C]$} 
        \end{flushright}
      \end{subfigure} 
    \caption{Euler-Laplace transforms of the constructible functions~$\1_S$ and~$\1_S-\1_C$ in \Cref{ex:EL-sqcrck}. The square~$S$ is represented by the light blue solid square and the closed curve~$C$ is represented by the dark blue dotted curve.}
    \label{fig:EL-sqcrck}
\end{figure}
%
%
\paragraph{Generalization of persistent magnitude to constructible sheaves.} 
Recently, Govc and Hepworth introduced the notion of \emph{persistent magnitude} for one-parameter persistence modules \cite[Def.~5.1]{GH21}. They also present a generalization of it to multi-parameter persistence modules using the classical Laplace transform. Here, we use the Euler-Laplace transform to give an alternative generalisation of persistent magnitude that benefits from the compatibility (\Cref{sec:compatibility}) and index formulae (\Cref{thm:index-theoretic-formulae}) that come from hybridization. 

The theory of multi-parameter persistence has been formulated in the language of derived sheaf theory by Kashiwara and Schapira in \cite{KS18A}. Following this formulation (and their notations), we consider a cone~$\gamma$ satisfying~\eqref{hyp:cone} and call \emph{multi-parameter persistence module on~$\V$} a constructible~$\gamma$-sheaf~$F\in\Db[\Rc,\gf]$. This definition is essentially equivalent to the usual one, missing only the so-called \emph{ephemeral persistence modules}; see \cite{BP19} for a detailed comparison of the two approaches. 

Below, we define the notion of magnitude not only for multi-parameter persistence modules but also for (derived) constructible sheaves. 

\begin{defi}
    The \emph{magnitude} of a sheaf~$F\in\Db[\cpct,\Rc]$ is the Euler-Laplace transform~$\ELaplace[\EPind(F)]$ where~$\EPind(F)\in\CF$ denotes the \emph{local Euler-Poincaré index} of~$F$ (see \cite[Sec.~9.7]{KS90}), defined for any~$x\in\V$ by:
    \begin{equation*}
        \EPind(F)(x)=\sum_{j\in\Z}(-1)^j\dim H^j(F_x).
    \end{equation*}
\end{defi}
\begin{rk}
    More generally, the magnitude of~$F\in\Db[\Rc]$ is well-defined when (i)~$F$ is constructible up to infinity in the sense of \cite[Def.~2.8]{S21} and (ii)~$\supp(F) \subseteq K + C$ with~$K$ compact and~$C$ a cone satisfying~\eqref{hyp:cone-empty-int}. Indeed, in that case,~$\EPind(F)$ is constructible up to infinity as proven by point~(i) in the proof of \cite[Thm.~4.4]{S21}, and satisfies \Cref{ass:EL-well-def}.\ref{itm:supp}.
\end{rk}
%
\begin{ex}
    For instance, if~$F \simeq F_c\conv \mathbf{k}_C$ with~$F_c$ compactly supported and constructible,
    then~$\EPind(F)$ satisfies \Cref{ass:EL-well-def} for the cone~$C$ by \Cref{rk:ass-conv-cone}.
\end{ex}

\begin{ex}
    In the case~$\V = \R$ and~$\gamma = \R_{\leq 0}$, we recover the definition introduced in \cite{GH21} of the persistent magnitude for 1-parameter persistence modules. Indeed, considering~$F\simeq \bigoplus_{i=1}^n \mathbf{k}_{[a_i,b_i)}$ with~$-\infty < a_i\leq b_i \leq +\infty$, \Cref{ex:EL-interval} yields for~$t\in \R_{>0}$:
    \begin{equation*}
        \ELaplace[\EPind(F)](t) = \sum_{i=1}^n e^{-a_it}-e^{-b_it},
    \end{equation*}
    with the convention that~$e^{-\infty} = 0$. 
\end{ex}

\subsection{Euler-Fourier transform}\label{sec:EFourier}
\begin{defi}
    \label{def:EFourier}
    The \emph{Euler-Fourier transform} of~$\phi\in\CF[\cpct]$ is defined for~$\xi\in\dual{\V}$ by:
    \begin{equation*}
        \EFourier<\xi> = \int_{\R} e^{-it}\xi_*\phi(t) \d t.
    \end{equation*}
\end{defi}


The following lemma relates the Euler-Fourier transform to the usual Fourier transform, its proofs is analogous to that of \Cref{lem:link-ELaplace-Laplace}.
\begin{lem}
    \label{lem:link-EFourier-Fourier}
    Let~$\phi\in\CF[\cpct]$. Then, for any~$\xi\in\dual{\V}$ and any~$s\ne 0$, one has 
    \begin{equation*}
        \EFourier<s\,\xi> = \EFourier[\xi_*\phi]<s> = |s|\,\Fourier[\xi_*\phi](s),
    \end{equation*}
    where~$\Fourier f(s) = \int_{\R} e^{-ist} f(t) \d t$ is the classical Fourier transform.
\end{lem}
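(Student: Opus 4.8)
The plan is to transcribe the proof of \Cref{lem:link-ELaplace-Laplace} almost verbatim, as announced; the only genuinely new feature is that the Lebesgue change of variables now produces an absolute value $|s|$ — because $s$ ranges over $\R\setminus\{0\}$ rather than $\R_{>0}$ — which is exactly what turns the Laplace-type factor $s$ into the Fourier-type factor $|s|$.

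First I would establish the left-hand equality $\EFourier<s\,\xi> = \EFourier[\xi_*\phi]<s>$ from the functoriality of the pushforward. Writing $m_s : \R\to\R$ for multiplication by the nonzero scalar $s$, an analytic diffeomorphism and hence a proper map, one has $s\,\xi = m_s\circ\xi$ as morphisms $\V\to\R$, hence $(s\,\xi)_*\phi = m_{s*}(\xi_*\phi) = s_*(\xi_*\phi)$ by \cite[Thm.~2.3~(ii)]{S91}. Substituting into \Cref{def:EFourier} gives at once $\EFourier<s\,\xi> = \int_{\R} e^{-it}\, s_*(\xi_*\phi)(t)\,\d t = \EFourier[\xi_*\phi]<s>$.

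For the right-hand equality I would set $\psi := \xi_*\phi$. Since $\supp(\phi)$ is compact, $\supp(\psi)$ is compact by \Cref{lem:support-pushforward}; being constructible, $\psi$ takes finitely many values, hence it is bounded with compact support, so $\psi$ is integrable, $\Fourier[\psi]$ is well-defined, and $t\mapsto e^{-it}\psi(t/s)$ is integrable on $\R$. As $s\ne 0$, the fibre $m_s^{-1}(t) = \{t/s\}$ is a single point, over which the Euler integral of a constructible function reduces to its value, so $s_*\psi(t) = \psi(t/s)$ and therefore
\[
\EFourier[\psi]<s> = \int_{\R} e^{-it}\,\psi(t/s)\,\d t .
\]
The change of variables $u = t/s$, which contributes the Jacobian factor $|s|$ whatever the sign of $s$, then yields
\[
\int_{\R} e^{-it}\,\psi(t/s)\,\d t \;=\; |s| \int_{\R} e^{-isu}\,\psi(u)\,\d u \;=\; |s|\,\Fourier[\psi](s) .
\]

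I do not anticipate any real obstacle: the argument copies the Euler-Laplace case line by line. The two points that deserve a moment's attention are the emergence of $|s|$ instead of $s$ in the change of variables — legitimate here precisely because $s$ may be negative — and the verification that $\xi_*\phi$ is integrable, so that the classical Fourier transform appearing on the right-hand side is meaningful; both are immediate.
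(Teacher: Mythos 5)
Your proof is correct and follows essentially the same route the paper sketches (the paper merely says the proof is analogous to that of \Cref{lem:link-ELaplace-Laplace} without spelling it out): functoriality of the pushforward for the first equality, and the change of variables $u=t/s$ for the second, with the Jacobian factor $|s|$ correctly replacing the $s$ of the Laplace case because $s$ may now be negative. The extra checks you add (compact support of $\xi_*\phi$ via \Cref{lem:support-pushforward}, hence integrability and well-definedness of $\Fourier[\xi_*\phi]$, and the identification of $s_*\psi(t)$ with $\psi(t/s)$ because the fibre of a nonzero scalar map is a point) are exactly the right ones and are consistent with the paper's intent.
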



\begin{ex}[Sphere]
    \label{ex:EF-sphere}
    Consider the setting of \Cref{ex:EL-sphere}. Then, for any~$\xi\in\dual{\V}$,
    \begin{equation*}
        \EFourier[\1_{\mathbb{S}_r}](\xi) = 2\cdot \left(1+(-1)^{\dim(\V)}\right)\cdot \sin\left(r\|\xi\|\right).
    \end{equation*}
\end{ex}
\begin{ex}[$\gamma$-triangle]
    \label{ex:EF-gamma-triangle}
    Consider the setting of \Cref{ex:EL-gamma-triangle}. Then, the Euler-Fourier and classical Fourier transforms compare as follows. For any~$\xi = (\xi_x, \xi_y)\in\left(\R_{\geq 0}\right)^2$,
    \begin{equation*}
        \EFourier[\1_T]<\xi> = 
        \begin{cases}
            i(e^{-i\xi_x}-1) &\mbox{if } b\xi_y \geq \xi_x, \\
            i(e^{-ib\xi_y}-1) &\mbox{else},
        \end{cases}
    \end{equation*}
    and 
    \begin{equation*}
        \Fourier\left[\1_T\right](\xi) \overset{\mathrm{def}}{=} \int_{\R^2} e^{-i\dualdot{\xi}{x}}\,\1_T(x)\d x = \frac{\xi_x \left(1 - e^{-ib\xi_y}\right) + b\xi_y\left(e^{-i\xi_x} - 1\right)}{\xi_x(b\xi_y - \xi_x)\xi_y}.
    \end{equation*}
    Again, these two transforms differ, as~$\EFourier[\1_T]<\xi>$ does not depend on~$b\geq 1$ for any~$\xi$ such that~$\xi_y \geq \xi_x$. See \Cref{fig:EF-gT} for an illustration.
\end{ex}
\begin{figure}
    \centering
    \begin{subfigure}[b]{0.49\linewidth}
      \centering
      \includegraphics[width=.9\linewidth]{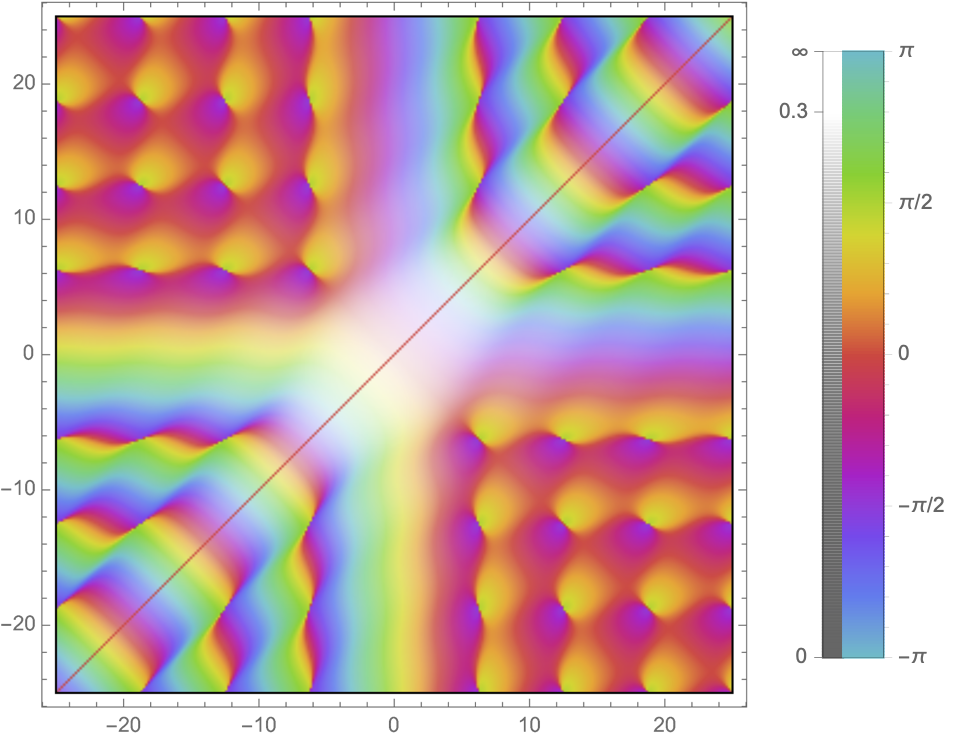} 
      \caption{$\Fourier[\1_T]$}
    \end{subfigure}
    \begin{subfigure}[b]{0.49\linewidth}
      \centering
      \includegraphics[width=.9\linewidth]{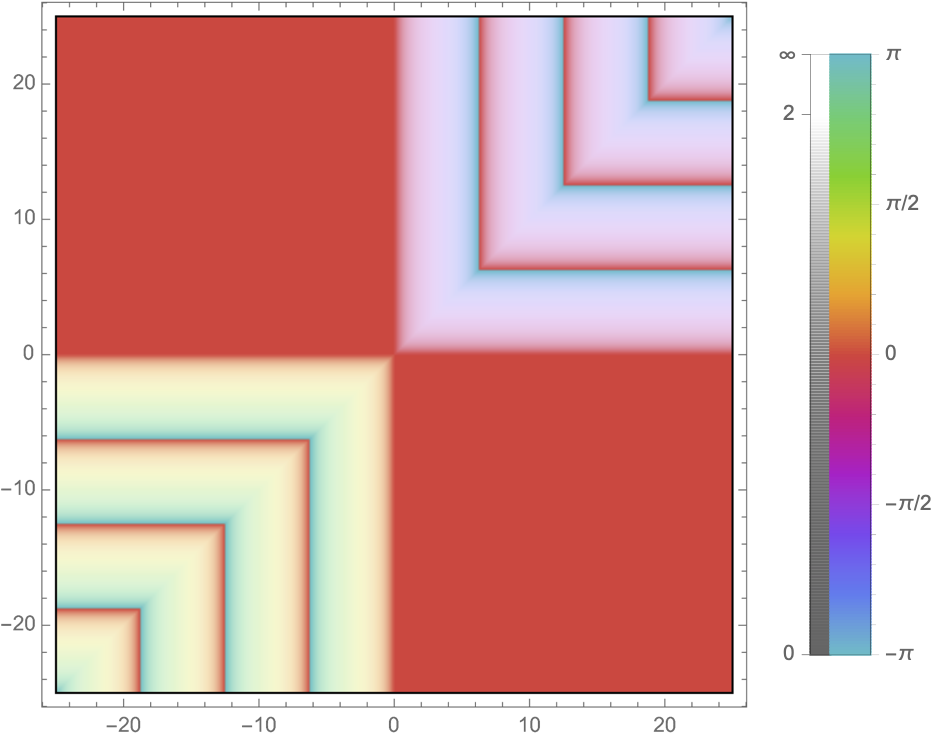}
      \caption{$\EFourier[\1_T]$} 
    \end{subfigure} 
    \caption{Fourier and Euler-Fourier transforms of the constructible function~$\1_T$ defined in \Cref{ex:EL-gamma-triangle}. Again, plots are done following \Cref{rk:computations}.}
    \label{fig:EF-gT} 
\end{figure}
\begin{ex}[Closed square minus a curve]
    \label{ex:EF-sqcrck}
    Consider the setting of \Cref{ex:EL-sqcrck}. Again, the (classical) Fourier transforms of~$\1_S$ and of~$\1_S - \1_C$ are equal. However, their Euler-Fourier transforms differ, as shown in \Cref{fig:EF-sqcrck}.
\end{ex}
\begin{figure}[ht] 
    \centering
    \begin{subfigure}[t]{0.5\linewidth}
        \centering
        \includegraphics[width=0.95\linewidth]{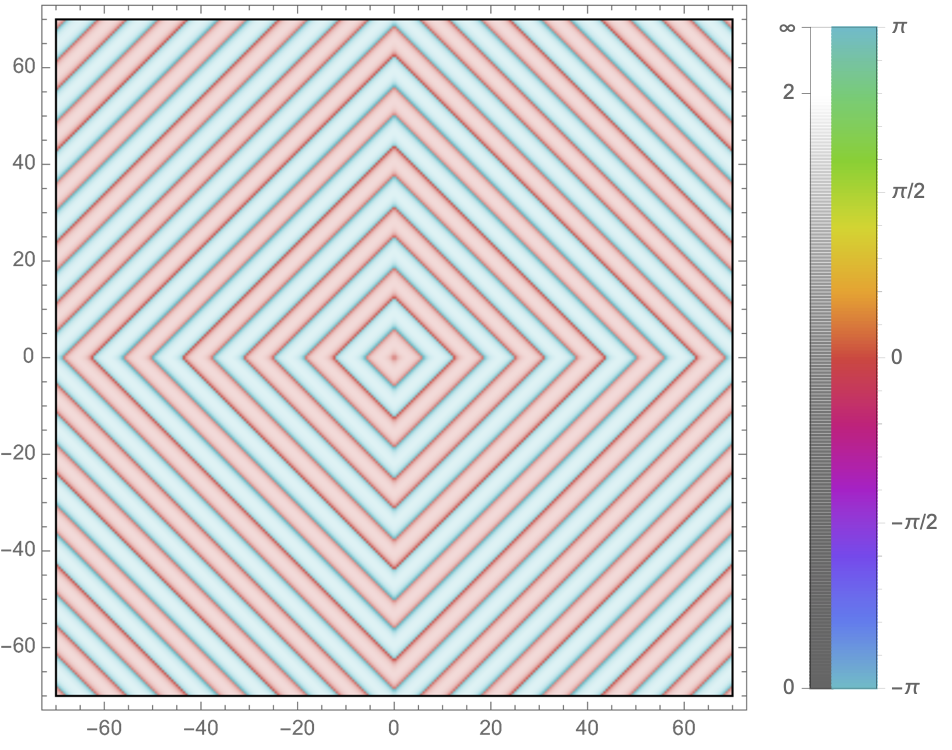} 
      \caption{$\EFourier[\1_S]$}
      \label{fig:EF-sqcrck-phi}
    \end{subfigure}
    \begin{subfigure}[t]{0.5\linewidth}
        \centering
        \includegraphics[width=0.93\linewidth]{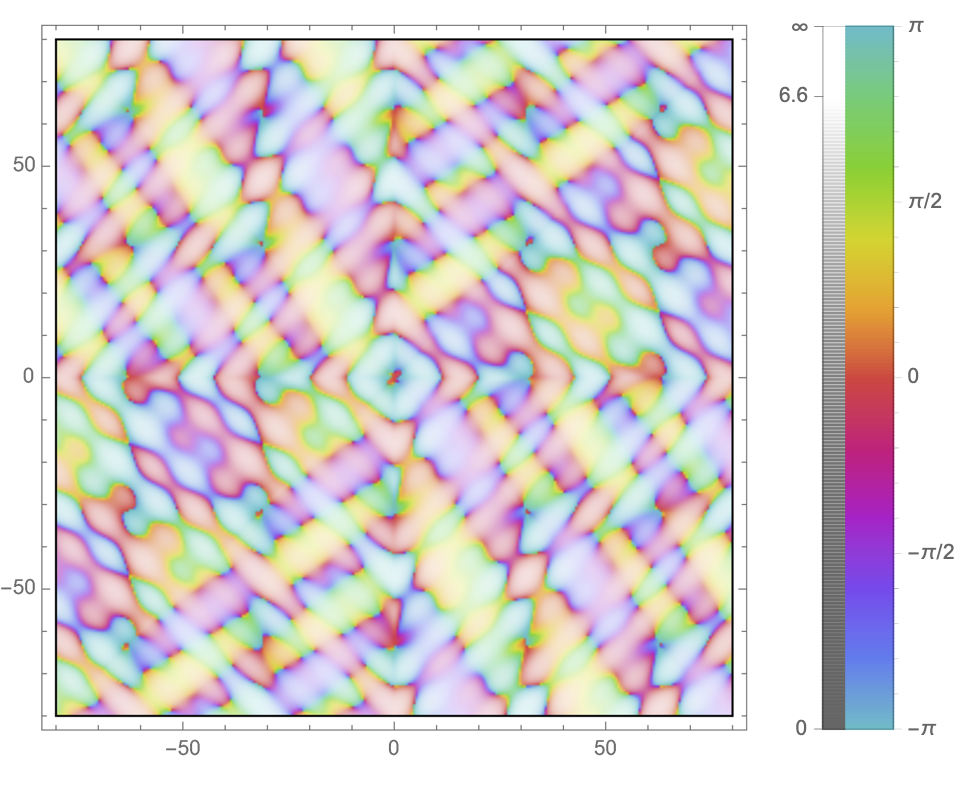} 
        \caption{$\EFourier[\1_S-\1_C]$} 
    \end{subfigure}
    \caption{Euler-Fourier transforms of the constructible functions~$\1_S$ and~$\1_S-\1_C$ in \Cref{ex:EL-sqcrck}. Again, plots are done following \Cref{rk:computations}.}
    \label{fig:EF-sqcrck} 
\end{figure}
\subsection{Computations}
In this section, we explain how to compute hybrid transforms of PL-constructible functions without computing any integral with respect to the Euler characteristic. 

Consider a kernel~$\kappa\in\Lloc$ and~$\phi\in\CF[\PL,\cpct]$. The transform~$\transform$ can be efficiently computed as follows. One can write~$\phi = \sum_{i\in I}m_i\cdot \1_{P_i}$ where the set~$I$ is finite, the coefficients~$m_i$ are integers and the subsets~$P_i$ are compact polytopes. By~$\Z$-linearity of hybrid transforms and the formula for the pushforward of a closed convex subset (\Cref{lem:pushforward-convex}), we have for any~$\xi\in\dual{\V}$,
\begin{equation}
    \label{eq:transform-of-PL}
    \transform<\xi> = \sum_{i\in I}m_i\cdot \int_{-h_{P_i}(-\xi)}^{h_{P_i}(\xi)}\kappa(t)\d t.
\end{equation}
Hence, if one can express on the one hand~$h_{P}(\xi)$ as an explicit function of~$\xi\in\dual{\V}$ and of the vertices of the compact polytope~$P$, and on the other hand the integral $\int_a^b \kappa(t)\d t$ as an explicit function of the real numbers~$a<b$, then one can compute the right-hand side of~\eqref{eq:transform-of-PL}. Each example in this paper is computed following the above methodology, computing explicit closed formulae by hand for~\eqref{eq:transform-of-PL} and plotting them using Mathematica \cite{Mathematica}.

\begin{rk}
    \label{rk:computations}
    Complex valued functions $g:\R^2 \to \C$ are plotted using the function \texttt{ComplexPlot} of the Wolfram language. For $x\in\R^2$, the argument of $g(x)$ is plotted using a fixed color function from $-\pi$ to $\pi$ and the absolute value of $g(x)$ is represented as a level of brightness of this color.
\end{rk}

\paragraph{Software.}
A software that automatically computes hybrid transforms of constructible functions defined on embedded cubical complexes is available on GitHub: \href{https://github.com/HugoPasse/Transforms-of-cubical-complexes}{\texttt{https://git} \texttt{hub.com/HugoPasse/Transforms-of-cubical-complexes}}. It is running in Python and C++. This is joint work with Steve Oudot and Hugo Passe.


%
\section{Regularity}\label{sec:regularity}
In this section, we consider a kernel~$\kappa\in\Lloc$ and study the regularity of hybrid transforms in the particular case of~$\PL$-constructible functions. While being less general, this class of functions is of prime interest in applications. 


\begin{minipage}{\textwidth}
    \begin{prop}[Continuity]
        \label{prop:continuity}
        Let~$\phi\in \CF[\PL]$.
        \begin{enumerate}
            \item\label{itm:continuity-compact} If~$\supp(\phi)$ is compact, then~$\transform$ is continuous on~$\dual{\V}$.
            \item\label{itm:continuity-non-compact} If~$\supp(\phi)\subseteq K + C$ with~$K$ convex compact and~$C \ne\{0\}$ a non-empty closed convex cone, then 
            \begin{enumerate}[label = \textnormal{(\alph*)}]
                \item~$\transform$ is continuous on~$\Int(\dualcone{C})$ when~$\kappa\in\Lp[\R_{\geq 0}]$,
                \item~$\transform$ is continuous on~$\intpolant{C}$ when~$\kappa\in\Lp[\R_{\leq 0}]$.
            \end{enumerate}
        \end{enumerate} 
    \end{prop}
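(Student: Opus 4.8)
The plan is to reduce, by $\Z$-linearity of $\transform*$, to the transform of a single indicator $\1_F$ of a closed convex polyhedron $F$, and then to read off continuity from the pushforward formula of \Cref{lem:pushforward-convex} together with the elementary fact that $x\mapsto\int_0^x\kappa(t)\,\d t$ is continuous on $\R$ whenever $\kappa\in\Lloc$. The crucial preliminary is to write $\phi=\sum_{j\in J}m_j\1_{F_j}$ with $J$ finite, $m_j\in\Z$ and each $F_j$ a \emph{closed} convex polyhedron \emph{contained in} $\supp(\phi)$ — an arbitrary decomposition into closed polyhedra will not do, since a piece sticking out of $\supp(\phi)$ may have $\xi_*\1_{F_j}$, or its integral against $\kappa$, undefined even though the total is fine. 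I would get such a decomposition by choosing a finite polyhedral partition $\V=\bigsqcup_a P_a$ on which $\phi$ is constant, discarding the cells on which $\phi$ vanishes (closures of the others lie in $\supp(\phi)$), and expanding each remaining $\1_{P_a}$ by inclusion--exclusion over the closed faces of $\closure{P_a}$. In case~\ref{itm:continuity-compact} the $F_j$ are then compact polytopes; in case~\ref{itm:continuity-non-compact} they satisfy $F_j\subseteq\supp(\phi)\subseteq K+C$, and since $K+C$ is closed convex with recession cone $C$, one gets $\mathrm{rec}(F_j)\subseteq C$ for every $j$. If $C$ contains a line, then $\Int(\dualcone{C})=\intpolant{C}=\varnothing$ and the statement is vacuous, so I may assume $C$ pointed; then every $F_j$ is line-free, equal to $\mathrm{conv}(V_j)+\mathrm{rec}(F_j)$ for its finite vertex set $V_j$.

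For case~\ref{itm:continuity-compact}, \eqref{eq:transform-of-PL} gives $\transform<\xi>=\sum_j m_j\int_{-h_{F_j}(-\xi)}^{h_{F_j}(\xi)}\kappa(t)\,\d t$, where $h_{F_j}(\xi)=\max_{v\in V_j}\dualdot{\xi}{v}$ and $-h_{F_j}(-\xi)=\min_{v\in V_j}\dualdot{\xi}{v}$ are continuous (piecewise-linear) in $\xi\in\dual{\V}$. With $H(x):=\int_0^x\kappa(t)\,\d t$ continuous on $\R$ (because $\kappa\in\Lloc$), each summand equals $m_j\bigl(H(h_{F_j}(\xi))-H(-h_{F_j}(-\xi))\bigr)$, a continuous function of $\xi$; the finite sum is therefore continuous on $\dual{\V}$.

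For case~\ref{itm:continuity-non-compact} it suffices to treat (a), since (b) follows from it by the change of variables $t\mapsto -t$, which turns $\kappa$ into the $\Lp[\R_{\geq 0}]$-kernel $t\mapsto\kappa(-t)$ and $\intpolant{C}$ into $\Int(\dualcone{C})$ (and $\xi\mapsto-\xi$ is a homeomorphism between these two cones). So fix $\xi\in\Int(\dualcone{C})$; then $\dualdot{\xi}{c}>0$ for every $c\in C\setminus\{0\}$, hence for every $c\in\mathrm{rec}(F_j)\setminus\{0\}$. Writing $\alpha_j(\xi):=\min_{v\in V_j}\dualdot{\xi}{v}$ and $\beta_j(\xi):=\max_{v\in V_j}\dualdot{\xi}{v}$ (continuous in $\xi$), one obtains $-h_{F_j}(-\xi)=\alpha_j(\xi)\in\R$ for all $j$, whereas $h_{F_j}(\xi)=\beta_j(\xi)$ if $F_j$ is bounded and $h_{F_j}(\xi)=+\infty$ for \emph{every} $\xi\in\Int(\dualcone{C})$ if $F_j$ is unbounded. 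In either case $\xi$ is proper on $F_j$ — for unbounded $F_j$ this is \Cref{lem:xi-proper-supp-phi} applied to $\1_{F_j}$ with the cone $\mathrm{rec}(F_j)$, which satisfies~\eqref{hyp:cone-empty-int} and whose polar interior contains $\Int(\dualcone{C})$ — so by \Cref{lem:pushforward-convex} and linearity of the pushforward $\xi_*\phi=\sum_j m_j\1_{[\alpha_j(\xi),\,h_{F_j}(\xi)]}$, with the convention $[x,+\infty]=[x,+\infty)$. Integrating against $\kappa$ yields
\begin{equation*}
    \transform<\xi>=\sum_{j\,:\,F_j\text{ bounded}}m_j\bigl(H(\beta_j(\xi))-H(\alpha_j(\xi))\bigr)\;+\sum_{j\,:\,F_j\text{ unbounded}}m_j\,G(\alpha_j(\xi)),
\end{equation*}
where $H(x)=\int_0^x\kappa$ and $G(a)=\int_a^{+\infty}\kappa=\int_0^{+\infty}\kappa-H(a)$ is well-defined, finite and continuous on $\R$ since $\kappa\in\Lp[\R_{\geq 0}]$ and $H$ is continuous. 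Continuity of $H$, $G$, $\alpha_j$ and $\beta_j$ then gives continuity of this finite sum on $\Int(\dualcone{C})$.

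The one genuinely delicate point is the decomposition in the first paragraph: arranging the $F_j$ to lie inside $\supp(\phi)\subseteq K+C$ is exactly what makes $\xi$ proper on each piece and licenses the term-by-term use of \Cref{lem:pushforward-convex} and \eqref{eq:transform-of-PL}. The remaining ingredients — distinguishing bounded from unbounded pieces via the recession cone, and the continuity of primitives of $\Lloc$ functions — are routine.
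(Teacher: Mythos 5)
Your proof is correct and follows essentially the same route as the paper's: reduce to indicators $\1_P$ of closed convex polyhedra $P\subseteq\supp(\phi)$, use the pushforward formula $\xi_*\1_P=\1_{[-h_P(-\xi),h_P(\xi)]}$ from \Cref{lem:pushforward-convex} to write the transform as $\int_{-h_P(-\xi)}^{h_P(\xi)}\kappa$, and conclude from continuity of the support function and of primitives of $\kappa$. The only difference is one of detail: where the paper invokes continuity of convex functions on the interior of their effective domain (citing Schneider) and the Minkowski--Weyl decomposition (citing Ziegler) and dismisses the non-compact case (b) as ``symmetric,'' you give the explicit vertex formula $h_P(\xi)=\max_{v\in V}\dualdot{\xi}{v}$, the explicit change of variables $t\mapsto-t$, and — usefully — spell out why one can and must choose the polyhedra inside $\supp(\phi)$, which the paper merely asserts.
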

\end{minipage}

\medskip

\begin{proof}
    It is sufficient to prove the result for~$\phi = \1_P$ where~$P$ is a closed convex polyhedron included in~$\supp(\phi)$ since any~$\phi \in \CF[\PL]$ is a finite~$\Z$-linear combination of such functions. By \Cref{lem:pushforward-convex}, for any~$\xi\in\dual{\V}$ proper on~$P$, we have that~$\xi_*\1_{P} = \1_{[-h_P(-\xi),\,h_P(\xi)]}$, and thus
    \begin{equation}
        \label{eq:expression-transform-indic-polyhedron}
        \transform[\1_P]<\xi> = \int_{-h_P(-\xi)}^{h_P(\xi)} \kappa(t) \d t,
    \end{equation}
    so that it is sufficient to study the continuity of~$h_P$. 
    
    As any convex function,~$h_P$ is continuous on~$\Int(\dom(h_P))$, where~$\dom(h_P) = \{\eta \in \dual{\V} \st h_P(\eta) < +\infty\}$ by \cite[Thm.~1.5.4]{Schnei14}. If~$P$ is compact, then~$\dom(h_P) = \dual{\V}$, hence~\ref{itm:continuity-compact}. 
    Now, suppose that~$P$ is not compact and assume~$\kappa\in\Lp[\R_{\geq 0}]$, the other case being symmetric. Using classical polyhedra theory, see \cite[Thm.~1.2]{Z12}, one can write~$P$ as a sum~$K' + C'$ where~$K'$ is a convex polytope and~$C'$ a closed convex polyhedral cone. Since we assumed~$P\subseteq \supp(\phi)$, we also have~$C'\subseteq C$. Then, for any~$\xi\in \Int(\dualcone{C})$, one has~$\xi\in \Int(\dualcone{C'})$, so that~$h_P(\xi) = +\infty$, and~$h_P(-\xi) = h_{K'}(-\xi)$. Since~$K'$ is compact,~$h_{K'}$ is continuous on~$\dual{\V}$, and so is~$h_P\circ a$ on~$\Int(\dualcone{C})$ with~$a(\xi)=-\xi$ the antipodal map. The result follows then from~\eqref{eq:expression-transform-indic-polyhedron}.
\end{proof}


\begin{prop}\label{prop:regularity}
    Let~$\phi\in \CF[\PL]$ and~$p\in\N$. Assume that~$\kappa$ is~$\Ck[p]$. There exists a finite family $\{\Gamma_1, \dots, \Gamma_m\}$ of open convex polyhedral cones of $\dual{\V}$ such that $\dual{\V} = \bigcup_{i=1}^m \closure{\Gamma_i}$ and:
    \begin{enumerate}
        \item\label{itm:smooth-compact} If~$\supp(\phi)$ is compact, then the restriction~$\transform_{|\Gamma_i}$ is~$\Ck[p+1]$ for all $i=1,...,m$.
        \item\label{itm:smooth-non-compact} If~$\supp(\phi)\subseteq K + C$ with~$K$ convex compact and~$C \ne\{0\}$ a non-empty closed convex cone, then 
        \begin{enumerate}[label = \textnormal{(\alph*)}]
            \item the restriction~$\transform_{|\Gamma_i\cap \Int(\dualcone{C})}$ is~$\Ck[p+1]$ for all $i=1,...,m$ when~$\kappa\in\Lp[\R_{\geq 0}]$,
            \item the restriction~$\transform_{|\Gamma_i\cap \intpolant{C}}$ is~$\Ck[p+1]$ for all $i=1,...,m$ when~$\kappa\in\Lp[\R_{\leq 0}]$.
        \end{enumerate}
    \end{enumerate}
        
\end{prop}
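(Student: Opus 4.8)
The plan is to reduce to the case $\phi=\1_P$ for a single closed convex polyhedron $P$ and then to combine the piecewise-linearity of the support function $h_P$ with the fundamental theorem of calculus, exactly in the spirit of the proof of \Cref{prop:continuity}. Since hybrid transforms are $\Z$-linear and any $\phi\in\CF[\PL]$ is a finite $\Z$-linear combination $\sum_k m_k\1_{P_k}$ of indicators of closed convex polyhedra with $P_k\subseteq\supp(\phi)$, it suffices to attach to each $P_k$ a finite \emph{complete} polyhedral fan of $\dual{\V}$ enjoying the required regularity on the interiors of its maximal cones, and then to take the common refinement of these finitely many fans. That refinement is again a finite complete fan, so (as $\dim\dual{\V}\geq 1$) the closures of the interiors $\Gamma_1,\dots,\Gamma_m$ of its maximal cones cover $\dual{\V}$; and on each $\Gamma_i$ --- intersected with $\Int(\dualcone{C})$, resp. $\intpolant{C}$, in case \ref{itm:smooth-non-compact} --- the function $\transform[\phi]$ is a finite sum of $\Ck[p+1]$ functions, hence $\Ck[p+1]$.

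So fix a closed convex polyhedron $P\subseteq\supp(\phi)$ and write, by the Weyl--Minkowski theorem \cite[Thm.~1.2]{Z12}, $P=K'+C'$ with $K'$ a polytope and $C'$ a closed convex polyhedral cone; then $C'\subseteq C$ in case \ref{itm:smooth-non-compact}, and $P$ is compact iff $C'=\{0\}$. One has $h_P=h_{K'}+h_{C'}$, where $h_{C'}(\xi)=0$ if $-\xi\in\dualcone{C'}$ and $h_{C'}(\xi)=+\infty$ otherwise, while $h_{K'}$ is finite and piecewise-linear on all of $\dual{\V}$: on each cone of the normal fan $\mathcal{N}(K')$ of $K'$ it coincides with a linear form $\xi\mapsto\dualdot{\xi}{v}$, with $v$ a vertex of $K'$. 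Let $\mathcal{F}$ be the common refinement of $\mathcal{N}(K')$ and of its reflection $-\mathcal{N}(K')$ (the normal fan of $-K'$); it is a finite complete polyhedral fan such that, on the interior of each of its maximal cones $\sigma$, both $\xi\mapsto h_{K'}(\xi)$ and $\xi\mapsto h_{K'}(-\xi)=h_{-K'}(\xi)$ are linear. Since $\kappa$ is $\Ck[p]$ it has a $\Ck[p+1]$ primitive $G$, e.g. $G(s)=\int_0^s\kappa(t)\d t$. By \eqref{eq:expression-transform-indic-polyhedron}, $\transform[\1_P]<\xi>=\int_{-h_P(-\xi)}^{h_P(\xi)}\kappa(t)\d t$ wherever this makes sense. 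If $P$ is compact, then $h_P(\pm\xi)=h_{K'}(\pm\xi)$, both linear on $\Int(\sigma)$, so $\transform[\1_P]<\xi>=G(h_{K'}(\xi))-G(-h_{K'}(-\xi))$ is $\Ck[p+1]$ on $\Int(\sigma)$, being a composition of linear maps with $G$; this proves \ref{itm:smooth-compact} and also covers any compact $P_k$ arising in case \ref{itm:smooth-non-compact}. If $P$ is not compact, then on $\Int(\sigma)\cap\Int(\dualcone{C})$ one checks, exactly as in the proof of \Cref{prop:continuity}, that $h_P(\xi)=+\infty$ while $h_P(-\xi)=h_{K'}(-\xi)$ is finite, whence $\transform[\1_P]<\xi>=\int_{\R_{\geq 0}}\kappa(t)\d t-G(-h_{K'}(-\xi))$, which is $\Ck[p+1]$ there (the hypothesis $\kappa\in\Lp[\R_{\geq 0}]$ ensuring the constant is finite); the case $\kappa\in\Lp[\R_{\leq 0}]$ is entirely symmetric, with $\intpolant{C}$ replacing $\Int(\dualcone{C})$. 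Taking the $\Gamma_i$ to be the interiors of the maximal cones of the common refinement of all the fans $\mathcal{F}$ obtained this way then finishes the proof.

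The work here is essentially bookkeeping rather than conceptual: one must track carefully the domain and the linearity loci of $h_P$ --- notably that, when $C'\neq\{0\}$, on $\Int(\dualcone{C})$ only the boundary term $-h_P(-\xi)=-h_{K'}(-\xi)$ survives, so that $\transform[\1_P]$ differs on each cone from an honest $\Ck[p+1]$ function only by the additive constant $\int_{\R_{\geq 0}}\kappa$ --- and one must arrange for the fan to be complete so that the $\closure{\Gamma_i}$ cover all of $\dual{\V}$, even though in case \ref{itm:smooth-non-compact} the asserted regularity only lives on the possibly empty (hence, for some $i$, vacuous) intersections $\Gamma_i\cap\Int(\dualcone{C})$, resp. $\Gamma_i\cap\intpolant{C}$.
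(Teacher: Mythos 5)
Your proof is correct and takes essentially the same approach as the paper's: reduce by $\Z$-linearity to $\phi=\1_P$, exploit the piecewise-linearity of the support function on a complete polyhedral fan of $\dual{\V}$, and conclude via the fundamental theorem of calculus with a $\Ck[p+1]$ primitive of $\kappa$, handling the non-compact case through the Weyl--Minkowski decomposition $P=K'+C'$ exactly as in the continuity argument. The only differences are cosmetic: the paper realises the fan as the connected components of $\dual{\V}\setminus E$, where $E$ is the (automatically symmetric) finite union of hyperplanes of linear forms orthogonal to some edge of $P$ --- so that $h_P(\xi)$ and $h_P(-\xi)$ are simultaneously linear on each component --- while you achieve the same simultaneity via the common refinement of $\mathcal{N}(K')$ and $-\mathcal{N}(K')$. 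You are also more explicit about refining over all the $P_k$ to produce a single family $\{\Gamma_i\}$ that works for the whole $\Z$-linear combination, a step the paper leaves implicit.
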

    
\begin{proof}
    The result~\ref{itm:smooth-non-compact} follows from~\ref{itm:smooth-compact} similarly as in the proof of \Cref{prop:continuity}. Moreover, result~\ref{itm:smooth-compact} follows from~\eqref{eq:expression-transform-indic-polyhedron} and the fact that if~$P$ is a polytope, the support function~$h_P$ is smooth outside the closed set~$E$ of linear forms which are orthogonal to at least one face of dimension~$1$ of~$P$. Indeed, on each connected component of $\dual{\V}\setminus E$, there exists a vertex~$v$ of~$P$ such that~$h_P(\xi) = \dualdot{\xi}{v}$. It is easy to check that each connected component $\Gamma_i$ of~$\dual{\V}\setminus E$ is an open convex polyhedral cone. Moreover, the set $E$ being a finite union of subspaces of $\dual{\V}$ of codimension at least $1$, it is closed and has empty interior. Hence, $\dual{\V} = \closure{\dual{\V}\setminus E} = \bigcup_{i=1}^m \closure{\Gamma_i}$.
\end{proof}

\begin{ex}
    For the Euler-Laplace and Euler-Fourier transforms, the previous result is well illustrated by \Cref{fig:EF-spiral,fig:EL-sqcrck,fig:EF-gT,fig:EF-sqcrck}.
\end{ex}

\section{Compatibility with operations}\label{sec:compatibility}
In this section, we consider~$\kappa\in\Lloc$ and investigate the compatibility of hybrid transforms with operations on constructible functions. The results use the general form of hybrid transforms defined in \Cref{rk:generalization-ht}.

\subsection{Direct image, duality and projection}
\begin{prop}[Direct image]
    \label{prop:comp-direct-image}
    Let~$\phi\in\CF$, and let~$f:\V \to \V'$ and~$\zeta : \V'\to\R$ be morphisms of real analytic manifolds. Assume that~$\zeta\circ f$ is proper on~$\supp(\phi)$ and that~$\kappa\cdot\zeta_*f_*\phi\in\Lp$. Then,
    \begin{equation*}
        \transform[f_*\phi]<\zeta> = \transform[\phi]<f^*\zeta>.
    \end{equation*}
    In particular, if~$f$ and~$\zeta$ are linear maps, denoting~$\transpose f : \dual{\V'}\to\dual{\V}$ the dual map, we get:
    \begin{equation*}
        \transform[f_*\phi]<\zeta> = \transform<\transpose \hspace{-.1em}f(\zeta)>.
    \end{equation*}
\end{prop}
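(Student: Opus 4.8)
The plan is to compute $\transform[f_*\phi]<\zeta>$ directly from \Cref{def:hybrid-transforms} (in the generalized sense of \Cref{rk:generalization-ht}) and reduce the identity to the functoriality of the pushforward of constructible functions. First I would unwind the left-hand side: by definition,
\begin{equation*}
    \transform[f_*\phi]<\zeta> = \int_{\R}\kappa(t)\,\zeta_*(f_*\phi)(t)\d t.
\end{equation*}
Since $\zeta\circ f$ is proper on $\supp(\phi)$, the pushforward $f_*\phi$ is well-defined with $\supp(f_*\phi)\subseteq f(\supp(\phi))$ by \Cref{lem:support-pushforward}, and $\zeta$ is proper on $\supp(f_*\phi)$, so $\zeta_*(f_*\phi)$ is well-defined as well. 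Functoriality of the pushforward of constructible functions (\cite[Thm.~2.3~(ii)]{S91}, as recalled in \Cref{rk:int-pushforward}) gives $\zeta_*(f_*\phi) = (\zeta\circ f)_*\phi$. On the other hand, $f^*\zeta = \zeta\circ f$ as a morphism $\V\to\R$, so $\transform[\phi]<f^*\zeta> = \int_\R \kappa(t)\,(\zeta\circ f)_*\phi(t)\d t$, which is exactly the same integral. The integrability hypothesis $\kappa\cdot\zeta_*f_*\phi\in\Lp$ is precisely condition (ii) of \Cref{rk:generalization-ht} for both sides, so both transforms are well-defined and equal.

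For the ``in particular'' statement, I would specialize to linear $f:\V\to\V'$ and linear $\zeta:\V'\to\R$, i.e. $\zeta\in\dual{\V'}$. Then $f^*\zeta = \zeta\circ f$ is by definition the image of $\zeta$ under the dual (transpose) map $\transpose f:\dual{\V'}\to\dual{\V}$, that is $f^*\zeta = \transpose f(\zeta)$. Substituting into the general identity just proved yields $\transform[f_*\phi]<\zeta> = \transform<\transpose\hspace{-.1em}f(\zeta)>$, as claimed.

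There is essentially no hard part here; the statement is a formal consequence of functoriality of the pushforward combined with the observation that ``pullback of a morphism to $\R$'' is just precomposition. The only point requiring a modicum of care is checking that all the objects involved are well-defined under the stated hypotheses — namely that properness of $\zeta\circ f$ on $\supp(\phi)$ guarantees both that $f$ is proper on $\supp(\phi)$ (needed for $f_*\phi$) and that $\zeta$ is proper on $\supp(f_*\phi)$ (needed for $\zeta_*(f_*\phi)$) — which follows from \Cref{lem:support-pushforward} and the fact that a restriction of a proper map to a closed subset of its domain is proper. I do not expect any genuine obstacle.
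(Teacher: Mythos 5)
Your proof is correct and uses the same key idea as the paper, namely functoriality of the pushforward: the paper's proof is the one-line computation $\zeta_*f_* = (\zeta\circ f)_* = (f^*\zeta)_*$. Your extra care in verifying that properness of $\zeta\circ f$ on $\supp(\phi)$ implies properness of $f$ on $\supp(\phi)$ and of $\zeta$ on $\supp(f_*\phi)$ is accurate and a reasonable elaboration, but not a different route.
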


\begin{proof}
    By functoriality of the pushforward, we have~$\zeta_*f_* = \left(\zeta\circ f\right)_* = (f^*\zeta)_*~$.
\end{proof}


\begin{ex}
    Let~$x_0\in \V$ and consider the map~$\tau_{x_0}:\V \to \V$ given by~$x\mapsto x+x_0$. For~$\phi\in\CF[\cpct]$, we have~$\tau_{x_0\,*}\phi(x) = \phi(x- x_0)$ for any~$x\in \V$. Moreover, \Cref{prop:comp-direct-image} yields for any~$\xi\in\dual{\V}$,    
    \begin{align*}
        \ELaplace[\tau_{x_0\,*}\phi]<\xi> &= e^{-\dualdot{\xi}{x_0}}\,\cdot \ELaplace[\phi]<\xi>, \\
        \EFourier[\tau_{x_0\,*}\phi]<\xi> &= e^{-i\dualdot{\xi}{x_0}}\cdot \EFourier[\phi]<\xi>.
    \end{align*}
\end{ex}

\begin{prop}[Duality]
    \label{prop:comp-duality}
    Let~$\phi\in\CF$ and~$\zeta : \V\to\R$ be a morphism of real analytic manifolds. Assume that~$\zeta$ is proper on~$\supp(\phi)$ and that~$\kappa\cdot\zeta_*\phi\in\Lp$. 
    Then,~$\kappa\cdot\zeta_*(\D_\V\phi)\in\Lp$, and:
    \begin{equation*}
        \transform[\D_\V\phi]<\zeta> = -\transform<\zeta>.
    \end{equation*}
\end{prop}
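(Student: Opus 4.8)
The plan is to reduce everything to the behaviour of the Verdier dual under pushforward, which is the operation-theoretic heart of the statement. Recall that for a morphism of real analytic manifolds $f:X\to Y$ which is proper on $\supp(\phi)$, the duality operator on constructible functions satisfies $f_*(\D_X\phi) = \D_Y(f_*\phi)$ (this is the function-level shadow of $\mathrm{R}f_!\circ \D_X \simeq \D_Y\circ \mathrm{R}f_*$ for $f$ proper on the support; see Schapira \cite{S91}). Applying this with $f = \zeta : \V\to\R$ — which is proper on $\supp(\phi)$ by hypothesis — gives $\zeta_*(\D_\V\phi) = \D_\R(\zeta_*\phi)$. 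So the first step is to invoke this compatibility and rewrite the left-hand side as $\transform[\D_\V\phi]<\zeta> = \int_\R \kappa(t)\,\D_\R(\zeta_*\phi)(t)\,\d t$.

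The second step is a purely one-dimensional computation: for a constructible function $\psi\in\CF[][\R]$, I want to identify $\D_\R\psi$ pointwise. The duality operator on $\R$ is given by $\D_\R\psi(t) = \lim_{\eps\to 0^+}\big(\psi(t) \;-\; \tfrac12(\psi(t+\eps)+\psi(t-\eps))\big)$ up to the standard sign/normalisation, but more useful here is the fact that $\D_\R$ differs from $\mathrm{id}$ only on the (locally finite) set of "jump points" of $\psi$, and that at a point of continuity $\D_\R\psi = \psi$ while the contribution of the jump points is a sum of Dirac-type terms — which, crucially, have Lebesgue measure zero. Concretely: writing $\psi$ as a finite $\Z$-linear combination of indicators $\1_{\lceil a,b\rfloor}$ of intervals, one checks on the examples $\D_\R\1_{[a,b]} = \1_{(a,b)}$, $\D_\R\1_{(a,b)} = \1_{[a,b]}$, $\D_\R\1_{[a,b)} = \1_{(a,b]}$, etc. — i.e. duality on $\R$ swaps open and closed boundary behaviour. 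In every case $\D_\R\psi$ and $-\psi$ agree as $\Lloc[\R]$ functions, i.e. they agree almost everywhere: $\D_\R\psi = -\psi$ outside the finite set of endpoints. Hence $\kappa\cdot \zeta_*(\D_\V\phi)$ and $-\kappa\cdot\zeta_*\phi$ coincide a.e., which both gives $\kappa\cdot\zeta_*(\D_\V\phi)\in\Lp[\R]$ (from the hypothesis $\kappa\cdot\zeta_*\phi\in\Lp[\R]$) and yields $\int_\R\kappa\,\zeta_*(\D_\V\phi)\,\d t = -\int_\R \kappa\,\zeta_*\phi\,\d t = -\transform[\phi]<\zeta>$.

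The step I expect to require the most care is making precise the identity "$\D_\R\psi = -\psi$ almost everywhere" for a general $\psi\in\CF[][\R]$ (not just compactly supported): one must check that the jump set is closed and discrete (locally finite), so it has Lebesgue measure zero, and that the pointwise formula for $\D_\R$ on $\R$ really is minus the identity away from jumps. This is essentially the one-dimensional instance of the general fact that $\EPind$ of the Verdier dual relates to $\EPind$ by $(-1)^{\dim}$ on the open stratum; on $\R$ the open strata are $1$-dimensional, hence the sign $-1$, while the $0$-dimensional strata (isolated points) carry the correction but are Lebesgue-null. An alternative, perhaps cleaner, route that avoids the pointwise formula entirely: reduce by $\Z$-linearity to $\phi = \1_K$ with $K$ compact contractible subanalytic (Hardt), use $\D_\V\1_K$ supported on the same set with $\D_\V\1_K = \1_{\Int K} - (\text{lower-dimensional correction})$ type decompositions, and observe that pushing forward by $\zeta$ and integrating against $\kappa\,\d t$ kills all strata of positive codimension in $\R$; but the $\D_\R$-of-intervals computation above is concrete enough that I would simply carry it out directly.
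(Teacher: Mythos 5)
Your strategy is the same as the paper's: invoke the compatibility $\zeta_*(\D_\V\phi)=\D_\R(\zeta_*\phi)$ for $\zeta$ proper on $\supp(\phi)$ (the paper cites Schapira, Thm.\ 2.5(iii)), then reduce to the one-dimensional statement that $\D_\R\psi=-\psi$ outside a locally finite (hence Lebesgue-null) set of points, which simultaneously gives the integrability of $\kappa\cdot\zeta_*(\D_\V\phi)$ and the equality of the two integrals. So the approach, the key lemma, and the structure of the reduction all match.

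However, there is a sign error in the one-dimensional computation that, taken literally, would destroy the conclusion. You write $\D_\R\1_{[a,b]}=\1_{(a,b)}$, $\D_\R\1_{(a,b)}=\1_{[a,b]}$, $\D_\R\1_{[a,b)}=\1_{(a,b]}$; with these formulas, $\D_\R\psi$ agrees almost everywhere with $+\psi$, not $-\psi$, so you would end up proving $\transform[\D_\V\phi]<\zeta>=+\transform<\zeta>$. The correct formulas (in the convention where $\D_\R\phi(x)=\lim_{\eps\to 0^+}\int_{(x-\eps,x+\eps)}\phi\,\d\Euler$, i.e.\ the one consistent with $\EPind(\D_X F)=(-1)^{\dim X}\EPind(F)$ on top-dimensional strata) are
\begin{equation*}
\D_\R\1_{[a,b]}=-\1_{(a,b)},\qquad \D_\R\1_{(a,b)}=-\1_{[a,b]},\qquad \D_\R\1_{[a,b)}=-\1_{(a,b]}.
\end{equation*}
Duality on $\R$ does swap open and closed endpoint behaviour, but it \emph{also} carries the sign $(-1)^{\dim\R}=-1$; your own parenthetical remark about the $(-1)^{\dim}$ factor on open strata already contains the correct sign and contradicts the displayed interval formulas. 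Once the sign is inserted, the argument goes through exactly as you intend and agrees with the paper's proof.
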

%
\begin{proof}
    By \cite[Thm.~2.5~(iii)]{S91}, we have that~$\zeta_*\left(\D_\V\phi\right)=\D_\R(\zeta_*\phi)$, so it suffices to show that:
    \begin{equation}
        \label{eq:equality-integrals-duality}
        \int_{\R}\kappa(t)\D_\R(\zeta_*\phi)(t)\d t = -\int_{\R}\kappa(t)\zeta_*\phi(t)\d t,
    \end{equation}
    provided that the integrals make sense. Yet, a direct computation yields~$\D_\R\1_{[a,b]} = - \D_\R\1_{(a,b)}$ for any two real numbers~$a < b$ and~$\zeta_*\phi$ is equal to a finite~$\Z$-linear combination of such functions outside a discrete set of points as any constructible function on~$\R$. Thus, 
    \begin{equation*}
        \D_\R(\zeta_*\phi) = - \zeta_*\phi,
    \end{equation*}
    outside a set of Lebesgue measure zero. Since by assumption~$\kappa\cdot\zeta_*\phi\in\Lp$, this provides both the integrability of the integrands of \eqref{eq:equality-integrals-duality} and the equality between the integrals involved.
\end{proof}

\begin{prop}[Projection formula]
    \label{prop:comp-projection-formula}
    Let~$\phi\in\CF$,~$\theta\in\CF[][\R]$ and let~$\zeta : \V\to\R$ be a morphism of real analytic manifolds. Assume that~$\zeta$ is proper on~$\supp(\phi)$, that~$\kappa\cdot\zeta_*\phi\in\Lp$ and that~$\kappa\cdot\theta\cdot\zeta_*\phi\in\Lp$. Then,
    \begin{equation*}
        \transform[\phi\cdot\zeta^*\theta]<\zeta> = \transform[\phi]<\zeta>[\kappa\cdot\theta].
    \end{equation*}
\end{prop}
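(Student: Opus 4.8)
The plan is to reduce the identity to a statement about the pushforward $\zeta_*$ alone, and then verify it by a direct computation using the definition of the pushforward together with the standard projection formula for constructible functions. The key observation is that the hybrid transform $\transform[\phi\cdot\zeta^*\theta]<\zeta>$ is, by \Cref{def:hybrid-transforms} (in its generalized form from \Cref{rk:generalization-ht}), equal to $\int_\R \kappa(t)\,\zeta_*(\phi\cdot\zeta^*\theta)(t)\d t$, while $\transform[\phi]<\zeta>[\kappa\cdot\theta]$ equals $\int_\R \kappa(t)\,\theta(t)\,\zeta_*\phi(t)\d t$. So it suffices to establish the pointwise (in $t\in\R$) identity
\begin{equation*}
    \zeta_*(\phi\cdot\zeta^*\theta)(t) = \theta(t)\cdot\zeta_*\phi(t),
\end{equation*}
which is precisely the projection formula for the pushforward of constructible functions; this is one of the classical properties recorded in \cite{S91} (Euler-calculus analogue of $f_*(\phi\otimes f^*\psi)=f_*\phi\otimes\psi$). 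If one prefers an ab initio argument, it follows directly from the definition: for fixed $t$, on the fiber $\zeta^{-1}(t)$ the function $\zeta^*\theta$ is constant equal to $\theta(t)$, so
\begin{equation*}
    \zeta_*(\phi\cdot\zeta^*\theta)(t) = \int_{\V}\1_{\zeta^{-1}(t)}\cdot\phi\cdot\zeta^*\theta\d\Euler = \theta(t)\int_\V \1_{\zeta^{-1}(t)}\cdot\phi\d\Euler = \theta(t)\,\zeta_*\phi(t).
\end{equation*}

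Once this pointwise identity is in hand, the remaining work is purely about integrability and making sense of both sides. First I would note that the hypothesis $\zeta$ proper on $\supp(\phi)$ guarantees $\zeta_*\phi$ is a well-defined constructible function on $\R$, and that $\zeta$ is also proper on $\supp(\phi\cdot\zeta^*\theta)\subseteq\supp(\phi)$, so $\zeta_*(\phi\cdot\zeta^*\theta)$ is likewise well-defined and, by the identity above, equals $\theta\cdot\zeta_*\phi$ everywhere on $\R$. The assumption $\kappa\cdot\theta\cdot\zeta_*\phi\in\Lp$ is then exactly the condition ensuring that $\transform[\phi\cdot\zeta^*\theta]<\zeta>$ is defined (in the sense of \Cref{rk:generalization-ht}), and the assumption $\kappa\cdot\zeta_*\phi\in\Lp$ together with $\theta$ being constructible (hence locally constant off a discrete set, and here we only need it to be a bounded measurable multiplier against an $\Lp$ function on the relevant support) ensures the right-hand side $\transform[\phi]<\zeta>[\kappa\cdot\theta]$ makes sense. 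Multiplying the pointwise identity by $\kappa(t)$ and integrating over $\R$ gives the claimed equality of the two transforms.

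I do not anticipate a genuine obstacle here; the only mild subtlety is bookkeeping about which integrability hypothesis is needed for which side, and confirming that the generalized definition in \Cref{rk:generalization-ht} applies (i.e. that $\zeta$ is proper on the relevant supports and the integrands are in $\Lp$). The heart of the matter — the pointwise projection formula for $\zeta_*$ — is a standard fact about Euler calculus and requires no new ideas; the hybrid transform inherits it simply because it is built by composing $\zeta_*$ with Lebesgue integration against $\kappa$.
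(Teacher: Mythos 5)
Your proof is correct and takes essentially the same approach as the paper: both reduce the statement to the projection formula $\zeta_*(\phi\cdot\zeta^*\theta)=\theta\cdot\zeta_*\phi$ for pushforwards of constructible functions, after which the claimed equality of transforms follows by multiplying by $\kappa$ and integrating. Where the paper deduces this pushforward identity from the sheaf-theoretic projection formula \cite[Prop.~2.6.6]{KS90} via the function--sheaf correspondence \cite[Thm.~9.7.1]{KS90}, you instead verify it directly from the definition of $\zeta_*$ (using that $\zeta^*\theta$ is constant, equal to $\theta(t)$, on each fiber $\zeta^{-1}(t)$), which is an equally valid and more self-contained justification of the same key lemma.
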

\begin{proof}
    The result follows from the formula for constructible functions~$\zeta_*\left(\phi\cdot\zeta^*\theta\right) = \theta\cdot\zeta_*\phi$, which follows from the corresponding property for constructible sheaves \cite[Prop.~2.6.6]{KS90} and the function-sheaf correspondence \cite[Thm.~9.7.1]{KS90}.
\end{proof}

\subsection{Convolution and box product}\label{sec:comp-conv-and-box}

\begin{prop}[Convolution for~$\EL$]
    \label{prop:comp-conv-EL}
    Let~$\phi$ and $\psi$ be two constructible functions on~$\V$ satisfying \Cref{ass:EL-well-def} for a cone~$C$. Then, we have on~$\Int(\dualcone{C})$,
    \begin{equation*}
        \ELaplace[\phi\conv \psi] = \ELaplace[\phi\conv\1_{C}]\cdot\ELaplace[\psi] + \ELaplace[\phi]\cdot\ELaplace[\psi\conv\1_{C}] - \ELaplace[\phi]\cdot\ELaplace[\psi].
    \end{equation*}
\end{prop}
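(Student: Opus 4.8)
The idea is to push the identity forward along a linear form $\xi\in\Int(\dualcone{C})$ and reduce it to an elementary statement about Euler convolution and the classical bilateral Laplace transform on $\R$. First I would check that every transform occurring in the statement is well-defined on $\Int(\dualcone{C})$: since $C$ is proper, $\phi\conv\psi$, $\phi\conv\1_C$ and $\psi\conv\1_C$ make sense, and they again satisfy \Cref{ass:EL-well-def} for $C$ — for instance $\supp(\phi\conv\psi)\subseteq\supp(\phi)+\supp(\psi)\subseteq(K_\phi+K_\psi)+(C+C)=(K_\phi+K_\psi)+C$, and constructibility up to infinity is stable under convolution exactly as in \Cref{rk:ass-conv-cone} — so \Cref{prop:EL-well-def} applies to all of them.

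Now fix $\xi\in\Int(\dualcone{C})$. By \Cref{lem:xi-proper-supp-phi}, $\xi$ is proper on $\supp(\phi)+\supp(\psi)$ and on $\supp(\phi)+C$, so \Cref{cor:pushforward-linear-conv} gives $\xi_*(\phi\conv\psi)=(\xi_*\phi)\conv(\xi_*\psi)$ and $\xi_*(\phi\conv\1_C)=(\xi_*\phi)\conv(\xi_*\1_C)$, the convolutions on the right now being Euler convolutions on $\R$. Moreover $\xi$ is proper on $C$; since $C\ne\{0\}$ one has $h_C(-\xi)=0$ (because $\langle\xi,\cdot\rangle\ge 0$ on $C$ and vanishes at $0$) and $h_C(\xi)=+\infty$ (otherwise $\xi$ would vanish on $C$, impossible for $\xi\in\Int(\dualcone{C})$ when $C\ne\{0\}$), so \Cref{lem:pushforward-convex} gives $\xi_*\1_C=\1_{[0,+\infty)}$. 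Writing $f:=\xi_*\phi$, $g:=\xi_*\psi$, $e:=\1_{[0,+\infty)}$ and $L(h):=\int_\R e^{-t}h(t)\d t$, and recalling that $f,g$ are constructible up to infinity with support bounded below (\Cref{lem:xi-proper-supp-phi} and \cite[Lem.~4.10]{S21}), so that all the integrals converge, the claim at $\xi$ becomes
\begin{equation*}
    L(f\conv g)=L(f\conv e)\,L(g)+L(f)\,L(g\conv e)-L(f)\,L(g).
\end{equation*}

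To prove this last identity I would argue by bilinearity. Both sides are $\Z$-bilinear in $(f,g)$, and a constructible function on $\R$ that is constructible up to infinity and has support bounded below has finitely many jumps, hence is a finite $\Z$-linear combination of translates of $\1_{[0,+\infty)}$ and of $\1_{(0,+\infty)}$. Since $\tau_au\conv\tau_bw=\tau_{a+b}(u\conv w)$, $(\tau_au)\conv e=\tau_a(u\conv e)$ and $L(\tau_ah)=e^{-a}L(h)$, the factor $e^{-(a+b)}$ pulls out of both sides, so it suffices to treat $u,w\in\{\1_{[0,+\infty)},\1_{(0,+\infty)}\}$. For those one computes, using the elementary Euler integrals $\int_\R\1_{[a,b]}\d\Euler=1$, $\int_\R\1_{[a,b)}\d\Euler=0$, $\int_\R\1_{(a,b)}\d\Euler=-1$, that $\1_{[0,+\infty)}\conv\1_{[0,+\infty)}=\1_{[0,+\infty)}$, $\1_{[0,+\infty)}\conv\1_{(0,+\infty)}=0$ and $\1_{(0,+\infty)}\conv\1_{(0,+\infty)}=-\1_{(0,+\infty)}$, while $L(\1_{[0,+\infty)})=L(\1_{(0,+\infty)})=1$; the four cases are then checked by inspection.

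I expect the main difficulty to be the reduction rather than the final computation: one must be careful that $\phi\conv\psi$, $\phi\conv\1_C$ and $\psi\conv\1_C$ all still obey \Cref{ass:EL-well-def} for the possibly non-compactly supported $\phi,\psi$, and that $\xi\in\Int(\dualcone{C})$ is proper on the relevant sums of supports; it is the identity $\xi_*\1_C=\1_{[0,+\infty)}$ — rather than the pushforward of a bounded convex set — that makes the correction terms $\ELaplace[\phi\conv\1_C]$ and $\ELaplace[\psi\conv\1_C]$ appear. If a more conceptual proof of the $\R$-identity is wanted, one can split $f=f(+\infty)\,e+f_0$ with $f_0$ compactly supported, which reduces it to $L(f_0\conv g_0)=L(f_0)L(g_0)+L(f_0)\mathcal J(g_0)+\mathcal J(f_0)L(g_0)$ for compactly supported $f_0,g_0$, where $\mathcal J(h):=L(h\conv e)-L(h)$ behaves like an Euler-theoretic analogue of $L$; but the bilinear bookkeeping above already suffices.
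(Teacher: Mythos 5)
Your proof is correct, and the overall reduction — push forward along $\xi\in\Int(\dualcone{C})$ using \Cref{cor:pushforward-linear-conv}, note $\xi_*\1_C = \1_{[0,+\infty)}$, and boil the statement down to a one-variable Euler–Laplace identity — is exactly the paper's. Where you diverge is in how the one-variable identity is finished. The paper writes $\theta=\sum_i m_i\1_{[a_i,b_i]}$ and $\theta'=\sum_j n_j\1_{[c_j,d_j]}$ with $b_i,d_j\in\R\cup\{+\infty\}$, uses $\1_{[a_i,b_i]}\conv\1_{[c_j,d_j]}=\1_{[a_i+c_j,\,b_i+d_j]}$, and then collapses the resulting double sum by the algebraic identity $AC-BD = A(C-D)+(A-B)C-(A-B)(C-D)$, recognizing each factor as one of the four Euler–Laplace values. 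You instead decompose into translates of the two atoms $\1_{[0,\infty)}$ and $\1_{(0,\infty)}$, observe that both sides of the target identity are $\Z$-bilinear and that translation homogeneity ($\tau_a u\conv\tau_b w=\tau_{a+b}(u\conv w)$, $L(\tau_a h)=e^{-a}L(h)$) pulls out a common $e^{-(a+b)}$ factor, and then verify the four base cases by hand, using the Euler convolutions $\1_{[0,\infty)}\conv\1_{[0,\infty)}=\1_{[0,\infty)}$, $\1_{[0,\infty)}\conv\1_{(0,\infty)}=0$, $\1_{(0,\infty)}\conv\1_{(0,\infty)}=-\1_{(0,\infty)}$ together with $L(\1_{[0,\infty)})=L(\1_{(0,\infty)})=1$. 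The two routes are of comparable length, but yours makes the translation homogeneity and the bilinear structure explicit, replaces the paper's $AC-BD$ factorization by a $2\times 2$ table of base cases, and highlights cleanly the reason the correction terms appear: the two atoms have the same classical Laplace transform yet distinct Euler convolutions. Your checks that $\xi$ is proper on $\supp(\phi)+\supp(\psi)$ and on $\supp(\phi)+C$ (the latter needed for $\xi_*(\phi\conv\1_C)=(\xi_*\phi)\conv\1_{[0,\infty)}$) and that $\phi\conv\psi$, $\phi\conv\1_C$, $\psi\conv\1_C$ still satisfy \Cref{ass:EL-well-def} for $C$ are correct and do close the one gap a reader might worry about.
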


\begin{proof}
    By \Cref{cor:pushforward-linear-conv}, one has~$\xi_*\left(\phi\conv \psi\right) = (\xi_*\phi)\conv(\xi_*\psi)$. Thus, using \Cref{lem:link-ELaplace-Laplace}, we have:
    \begin{equation*}
        \ELaplace[\phi\conv \psi]<\xi> = \ELaplace[(\xi_*\phi)\conv(\xi_*\psi)]<1>.
    \end{equation*}
    The result follows then from the following claim, proven afterwards.
    \begin{claim}
        \label{claim:conv-EL-real-CF}
        If~$\theta,\theta' \in \CF[][\R]$ both satisfy \Cref{ass:EL-well-def} for the cone~$\R_{\geq 0}$, then
        
        \begin{equation*}
            \begin{split}
                \ELaplace[\theta\conv \theta']<1> &= \ELaplace[\theta\conv\1_{\R_{\geq 0}}]<1>\cdot\ELaplace[\theta']<1> \\[0.2em]
                &\qquad\quad + \ELaplace[\theta]<1>\cdot\ELaplace[\theta'\conv\1_{\R_{\geq 0}}]<1> \\[0.2em]
                &\qquad\qquad\quad- \ELaplace[\theta]<1>\cdot\ELaplace[\theta']<1>.
            \end{split}
        \end{equation*}
    \end{claim}
    Indeed, the functions~$\xi_*\phi$ and~$\xi_*\psi$ both satisfy \Cref{ass:EL-well-def} for the cone~$\R_{\geq 0}$ by \Cref{lem:xi-proper-supp-phi}. Thus, \Cref{claim:conv-EL-real-CF} yields:
    \begin{equation*}
        \begin{split}
            \ELaplace[(\xi_*\phi)\conv(\xi_*\psi)]<1> 
            &= \ELaplace[(\xi_*\phi)\conv\1_{\R_{\geq 0}}]<1>\cdot\ELaplace[\xi_*\psi]<1> \\
            &\qquad\quad + \ELaplace[\xi_*\phi]<1>\cdot\ELaplace[(\xi_*\psi)\conv\1_{\R_{\geq 0}}]<1> \\
            &\qquad\qquad\quad - \ELaplace[\xi_*\phi]<1>\cdot\ELaplace[\xi_*\psi]<1>.
        \end{split}
    \end{equation*} 
    Hence the result, by \Cref{lem:link-ELaplace-Laplace}, using \Cref{cor:pushforward-linear-conv} and \Cref{lem:pushforward-convex} to get:    
    \begin{align*}
        (\xi_*\phi)\conv\1_{\R_{\geq 0}} &= \xi_*\left(\phi\conv\1_{C}\right),\\
        (\xi_*\psi)\conv\1_{\R_{\geq 0}} &= \xi_*\left(\psi\conv\1_{C}\right).
    \end{align*}
    
    \bigskip

    Let us now prove \Cref{claim:conv-EL-real-CF}.
    \Cref{lem:support-pushforward} yields that~$\theta\conv\theta'$ satisfies \Cref{ass:EL-well-def} for the cone~$\R_{\geq 0}$, so that~$\ELaplace[\left(\theta\conv \theta'\right)]$ is well-defined on~$\R_{>0} \ni 1$. Consider now decompositions of~$\theta$ and~$\theta'$:        
        \begin{align*}
            \theta &= \sum_{i\in I} m_i \1_{[a_i,b_i]},\\
            \theta' &= \sum_{j\in J} n_j \1_{[c_j,d_j]},
        \end{align*}
        where~$I$ and~$J$ are finite,~$m_i$ and~$n_j$ are integers,~$a_i$,~$c_j$ are real numbers, and~$b_i, d_j \in\R\cup \{+\infty\}$. We abusively denoted~$[x,+\infty] := [x,+\infty)$ for~$x\in\R$ for simplicity. 
        One has:
        
        \begin{equation*}
            \theta\conv\theta' 
            = \sum_{(i,j)\in I\times J} m_i n_j \1_{[a_i,b_i]}\conv\1_{[c_j,d_j]}
            = \sum_{(i,j)\in I\times J} m_i n_j \1_{[a_i+c_j,b_i+d_j]}.
        \end{equation*}
        Therefore, we have:        
        \begin{align*}
            \ELaplace[\theta\conv \theta']<1> 
            &= \int_\R e^{-t}\left(\theta\conv \theta'\right)(t) \d t \\
            &= \sum_{(i,j)\in I\times J} m_i n_j \int_{a_i+c_j}^{b_i+d_j} e^{-t} \d t \\
            &= \sum_{(i,j)\in I\times J} m_i n_j \left(e^{-a_i-c_j} - e^{-b_i-d_j}\right) \\
            &= AC - BD \\
            &= A(C-D) + (A-B)C - (A-B)(C-D),
        \end{align*}
        using the convention that~$e^{-\infty}=0$, and where we denoted:        
        \begin{align*}
            A := \sum_{i\in I} m_i e^{-a_i}, &\quad& B := \sum_{i\in I} m_i e^{-b_i}, \\[1em]
            C := \sum_{j\in J} n_j e^{-c_j}, &\quad& D := \sum_{j\in J} n_j e^{-d_j}.
        \end{align*}
        Moreover, for~$x\in\R$ and~$y\in \R\cup \{+\infty\}$, we have: 
        \begin{equation}
            \label{eq:conv-closed-intervals}
            \1_{[x,y]}\conv\1_{\R_{\geq 0}} = \1_{[x,+\infty)},
        \end{equation}
        so that:        
        \begin{align*}
            \theta\conv\1_{\R_{\geq 0}} &= \sum_{i\in I} m_i \1_{[a_i,+\infty)}, \\
            \theta'\conv\1_{\R_{\geq 0}} &= \sum_{j\in I} n_j \1_{[c_j,+\infty)}.
        \end{align*}
        Thus, \Cref{ex:EL-interval} yields:        
        \begin{align*}
            A &= \ELaplace[\theta\conv\1_{\R_{\geq 0}}]<1>, & \hspace{7em } C &= \ELaplace[\theta'\conv\1_{\R_{\geq 0}}]<1>, \\
            A-B &= \ELaplace[\theta]<1>, &  C - D &= \ELaplace[\theta']<1>,
        \end{align*}
        which proves \Cref{claim:conv-EL-real-CF}, and finishes the proof of \Cref{prop:comp-conv-EL}.
\end{proof}

\begin{rk}
    \label{rk:conv-two-cones-to-one}
    If~$\phi$ and~$\psi$ are constructible functions satisfying \Cref{ass:EL-well-def} for two different cones~$C'$ and~$C''$ respectively, then they satisfy \Cref{ass:EL-well-def} for the cone~$C = \Conv{C' \cup C''}$.
\end{rk}

\begin{cor}
    \label{cor:comp-conv-EL-stable-cone}
    Let~$\phi$ and~$\psi$ be two constructible functions on $\V$ satisfying \Cref{ass:EL-well-def} for cones~$C'$ and~$C''$ respectively. Assume in addition that~$\phi = \phi \conv \1_{C'}$ and $\psi = \psi \conv \1_{C''}$. 
    Then, we have on~$\Int(\dualcone{C'})\cap\Int(\dualcone{C''})$,
    \begin{equation*}
        \ELaplace[\phi\conv \psi] = \ELaplace[\phi]\cdot\ELaplace[\psi].
    \end{equation*}
\end{cor}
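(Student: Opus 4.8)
The plan is to derive this corollary directly from \Cref{prop:comp-conv-EL} by exploiting the extra hypotheses $\phi = \phi\conv\1_{C'}$ and $\psi = \psi\conv\1_{C''}$. First, I would observe that $C'\subseteq C''$ or some common refinement is not needed: both $\phi$ and $\psi$ satisfy \Cref{ass:EL-well-def} for the cone $C = \Conv{C'\cup C''}$ by \Cref{rk:conv-two-cones-to-one}, and on $\Int(\dualcone{C})$ we may apply \Cref{prop:comp-conv-EL}. However, it is cleaner to work on the (possibly larger) domain $\Int(\dualcone{C'})\cap\Int(\dualcone{C''})$ and apply \Cref{prop:comp-conv-EL} in a form adapted to each factor's own cone. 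The key point is that on $\Int(\dualcone{C'})$, the hypothesis $\phi = \phi\conv\1_{C'}$ forces the "correction terms" to collapse.

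Concretely, I would first establish the idempotency-type identity $\phi\conv\1_{C'} = (\phi\conv\1_{C'})\conv\1_{C'} = \phi\conv\1_{C'}$, which is immediate, and then note that since $\phi = \phi\conv\1_{C'}$ we have $\phi\conv\1_{C'} = \phi$. Hence in the formula of \Cref{prop:comp-conv-EL} — applied with the common cone $C$ from \Cref{rk:conv-two-cones-to-one}, after first checking $\phi = \phi\conv\1_{C}$ and $\psi = \psi\conv\1_{C}$ (which follows because $C' \subseteq C$ implies $\1_{C'}\conv\1_C = \1_C$ up to the relevant constructibility, so $\phi\conv\1_C = \phi\conv\1_{C'}\conv\1_C = \phi\conv\1_C$; more carefully, by \Cref{prop:charac-gamma-CF}-type reasoning $\phi\conv\1_{C'} = \phi$ together with $C'\subseteq C$ gives $\phi\conv\1_C = \phi$) — the terms $\ELaplace[\phi\conv\1_C]$ and $\ELaplace[\psi\conv\1_C]$ become $\ELaplace[\phi]$ and $\ELaplace[\psi]$ respectively. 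Substituting into
\[
    \ELaplace[\phi\conv\psi] = \ELaplace[\phi\conv\1_C]\cdot\ELaplace[\psi] + \ELaplace[\phi]\cdot\ELaplace[\psi\conv\1_C] - \ELaplace[\phi]\cdot\ELaplace[\psi],
\]
we get $\ELaplace[\phi\conv\psi] = \ELaplace[\phi]\cdot\ELaplace[\psi] + \ELaplace[\phi]\cdot\ELaplace[\psi] - \ELaplace[\phi]\cdot\ELaplace[\psi] = \ELaplace[\phi]\cdot\ELaplace[\psi]$, as claimed, valid on $\Int(\dualcone{C})$ and a fortiori wherever that set meets $\Int(\dualcone{C'})\cap\Int(\dualcone{C''})$. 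To get the full stated domain $\Int(\dualcone{C'})\cap\Int(\dualcone{C''})$ I would instead invoke \Cref{prop:comp-conv-EL} more carefully: rerun its proof (which reduces via \Cref{lem:link-ELaplace-Laplace} and \Cref{cor:pushforward-linear-conv} to the one-dimensional \Cref{claim:conv-EL-real-CF}) with the pushed-forward functions $\xi_*\phi$ and $\xi_*\psi$, noting that $\xi_*\phi$ satisfies $\xi_*\phi = (\xi_*\phi)\conv\1_{\R_{\geq0}}$ for $\xi\in\Int(\dualcone{C'})$ by \Cref{cor:pushforward-linear-conv} and \Cref{lem:pushforward-convex} applied to $\phi=\phi\conv\1_{C'}$, and similarly for $\psi$ on $\Int(\dualcone{C''})$; then $A = \ELaplace[\xi_*\phi]<1>$ already (i.e. $A = A - B$ since $B$ corresponds to the $e^{-b_i}$ terms which vanish because the intervals are unbounded), making the claim's formula degenerate to the product.

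The main obstacle I anticipate is the bookkeeping around domains and the precise justification that $\phi = \phi\conv\1_{C'}$ together with $\xi\in\Int(\dualcone{C'})$ yields $\xi_*\phi = (\xi_*\phi)\conv\1_{\R_{\geq 0}}$: this needs $\xi_*\1_{C'} = \1_{\R_{\geq 0}}$, which follows from \Cref{lem:pushforward-convex} (support function of the cone $C'$ evaluated at $\xi\in\Int(\dualcone{C'})$ is $0$ in the $\xi$-direction and $+\infty$ in the $-\xi$-direction) combined with \Cref{cor:pushforward-linear-conv} to commute pushforward past convolution — but one must check $\xi$ is proper on $\supp(\phi)$, which is exactly \Cref{lem:xi-proper-supp-phi}. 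Once these compatibility checks are in place, the algebraic collapse is a one-line computation.
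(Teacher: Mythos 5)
Your proposal actually contains two different arguments, and they are not equivalent: the first is wrong, the second is right.

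\paragraph{The first approach has a genuine gap.} You assert that $\phi = \phi\conv\1_{C'}$ together with $C'\subseteq C$ yields $\phi\conv\1_C = \phi$ ``by \Cref{prop:charac-gamma-CF}-type reasoning''. This implication is \emph{false}. The identity $\1_{C'}\conv\1_C = \1_C$ (correct, since $C'\subseteq C$ gives $C'+C=C$) only gives the tautology $\phi\conv\1_C = (\phi\conv\1_{C'})\conv\1_C = \phi\conv(\1_{C'}\conv\1_C) = \phi\conv\1_C$; it does \emph{not} turn $\1_C$ into the identity for $\conv$ against $\phi$. Nor does \Cref{prop:charac-gamma-CF} help: $\phi = \phi\conv\1_{C'}$ says $\phi$ is $(C')^a$-constructible, but since $(C')^a\subseteq C^a$ the $C^a$-topology is \emph{coarser}, so $(C')^a$-constructibility does not imply $C^a$-constructibility. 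Concretely, in $\V=\R^2$ take $C'=\R_{\geq 0}\times\{0\}$, $C''=\{0\}\times\R_{\geq 0}$, $C = (\R_{\geq 0})^2$, and $\phi=\1_{[0,+\infty)\times\{0\}}$. Then $\phi\conv\1_{C'}=\phi$, but $\phi\conv\1_C = \1_{(\R_{\geq 0})^2}\neq\phi$. What is true (and what your first approach silently relies on) is only the weaker statement $\ELaplace[\phi\conv\1_C] = \ELaplace[\phi]$ on $\Int(\dualcone{C})$; this does not follow from any pointwise equality of constructible functions.

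\paragraph{Also, the ``domain'' worry is a red herring.} You present your second argument as being needed to reach the ``full'' domain $\Int(\dualcone{C'})\cap\Int(\dualcone{C''})$, as if $\Int(\dualcone{C})$ might be smaller. In fact $C=C'+C''$, so $\dualcone{C}=\dualcone{C'}\cap\dualcone{C''}$ and $\Int(\dualcone{C})=\Int(\dualcone{C'})\cap\Int(\dualcone{C''})$ exactly. The domains agree; the problem with the first approach is the false pointwise identity, not the domain.

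\paragraph{The second approach is correct and is the right one.} Passing to pushforwards is exactly the fix: for $\xi\in\Int(\dualcone{C'})$ one has $\xi_*\1_{C'}=\1_{\R_{\geq 0}}$ by \Cref{lem:pushforward-convex}, hence $\xi_*\phi = \xi_*(\phi\conv\1_{C'}) = (\xi_*\phi)\conv\1_{\R_{\geq 0}}$ by \Cref{cor:pushforward-linear-conv}, and likewise for $\psi$ on $\Int(\dualcone{C''})$. Plugging $\theta=\xi_*\phi$, $\theta'=\xi_*\psi$ into \Cref{claim:conv-EL-real-CF} then forces $B=D=0$ (all intervals in the decomposition can be taken right-unbounded), collapsing the formula to $AC = \ELaplace[\theta]\langle 1\rangle\cdot\ELaplace[\theta']\langle 1\rangle$. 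This matches the paper's argument (which routes through \Cref{prop:comp-conv-EL} and the cone convolution identity, applied at the level of the one-dimensional reduction). Your second paragraph is the proof; the first should be discarded, as its central step does not hold.
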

\begin{proof}
    Following \Cref{rk:conv-two-cones-to-one}, consider $C = \Conv{C'\cup C''}$. Then $\Int(\dualcone{C}) = \Int(\dualcone{C'})\cap\Int(\dualcone{C''})$ and the result follows from \Cref{prop:comp-conv-EL} and the fact that $\1_{\Gamma}\conv \1_{\Gamma'} = \1_{\Gamma}$, for any two closed convex proper cones~$\Gamma$ and $\Gamma'$ such that $\Gamma' \subseteq \Gamma$.
\end{proof}

\begin{rk}
    If $\phi$ satisfies \Cref{ass:EL-well-def} for a cone~$C'$, then it is constructible up to infinity and its support is $\gamma$-proper for the cone $\gamma = \antipodal{C'}$. Hence, the assumption that $\phi = \phi \conv \1_C$ is equivalent to that of $\phi$ being $\gamma$-constructible by \Cref{prop:charac-gamma-CF}.
\end{rk}
    

Any~$\eta\in\dual{(\V\times\V')}$ can naturally be written~$\eta = s\circ(\xi \times \xi')$ for~$(\xi,\xi')\in\dual{\V}\times\dual{\V'}$ and~$s : \R \times \R \to \R$ the addition, so that we have the following corollary:
\begin{cor}[Box product for~$\EL$]
    \label{cor:comp-box-EL}
    Let~$\phi\in\CF$ and~$\psi\in\CF[][\V']$ both satisfy \Cref{ass:EL-well-def} for cones~$C\subseteq \V$ and~$C'\subseteq\V'$ respectively. For any~$(\xi,\xi')\in\Int(\dualcone{C})\times\Int(\dualcone{C'})$, we have:
    \begin{align*}
        \ELaplace[\phi\boxtimes \psi]<\eta> &= \ELaplace[\phi\conv\1_{C}]<\xi>\cdot\ELaplace[\psi]<\xi'> + \ELaplace[\phi]<\xi>\cdot\ELaplace[\psi\conv\1_{C'}]<\xi'>\\[0.5em]
        &\hspace{18em} - \ELaplace[\phi]<\xi>\cdot\ELaplace[\psi]<\xi'>,
    \end{align*}
    with~$\eta = s\circ (\xi \times \xi')$.
\end{cor}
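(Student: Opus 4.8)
The plan is to reduce the box-product formula to the one-dimensional convolution identity already isolated as \Cref{claim:conv-EL-real-CF}, in the same spirit as \Cref{cor:comp-conv-EL-stable-cone} was deduced from \Cref{prop:comp-conv-EL}.

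First, I would unwind the pushforward. Writing $\eta = s\circ(\xi\times\xi')$, functoriality of the pushforward gives $\eta_* = s_*\circ(\xi\times\xi')_*$, and combining \Cref{lem:int-boxtimes-pushforward} with the definition $\theta\conv\theta' = s_*(\theta\boxtimes\theta')$ of the convolution on~$\R$ yields $\eta_*(\phi\boxtimes\psi) = s_*\big((\xi_*\phi)\boxtimes(\xi'_*\psi)\big) = (\xi_*\phi)\conv(\xi'_*\psi)$, hence $\ELaplace[\phi\boxtimes\psi]<\eta> = \int_\R e^{-t}\,\eta_*(\phi\boxtimes\psi)(t)\,\d t = \ELaplace[(\xi_*\phi)\conv(\xi'_*\psi)]<1>$. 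Beforehand I would check well-definedness: $\supp(\phi\boxtimes\psi)\subseteq(K\times K')+(C\times C')$ with $C\times C'$ satisfying \eqref{hyp:cone-empty-int}, and $\eta$ lies in $\Int(\dualcone{C\times C'}) = \Int(\dualcone{C})\times\Int(\dualcone{C'})$, so \Cref{prop:EL-well-def} applies to $\phi\boxtimes\psi$.

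Second, by \Cref{lem:xi-proper-supp-phi} (and \cite[Lem.~4.10]{S21} for constructibility up to infinity, as in the proof of \Cref{prop:EL-well-def}), the functions $\xi_*\phi$ and $\xi'_*\psi$ are constructible functions on~$\R$ satisfying \Cref{ass:EL-well-def} for the cone $\R_{\geq 0}$. Applying \Cref{claim:conv-EL-real-CF} to them gives
\begin{equation*}
    \ELaplace[(\xi_*\phi)\conv(\xi'_*\psi)]<1> = \ELaplace[(\xi_*\phi)\conv\1_{\R_{\geq 0}}]<1>\cdot\ELaplace[\xi'_*\psi]<1> + \ELaplace[\xi_*\phi]<1>\cdot\ELaplace[(\xi'_*\psi)\conv\1_{\R_{\geq 0}}]<1> - \ELaplace[\xi_*\phi]<1>\cdot\ELaplace[\xi'_*\psi]<1>.
\end{equation*}
Then I would translate each factor back: by definition $\ELaplace[\xi_*\phi]<1> = \ELaplace[\phi]<\xi>$ and $\ELaplace[\xi'_*\psi]<1> = \ELaplace[\psi]<\xi'>$, while, exactly as at the end of the proof of \Cref{prop:comp-conv-EL}, \Cref{cor:pushforward-linear-conv} together with \Cref{lem:pushforward-convex} — which gives $\xi_*\1_C = \1_{\R_{\geq 0}}$ for $\xi\in\Int(\dualcone{C})$, since then $h_C(-\xi)=0$ and $h_C(\xi)=+\infty$, $\xi$ being proper on the cone~$C$ by \Cref{lem:xi-proper-supp-phi} — yields $(\xi_*\phi)\conv\1_{\R_{\geq 0}} = \xi_*(\phi\conv\1_C)$ and $(\xi'_*\psi)\conv\1_{\R_{\geq 0}} = \xi'_*(\psi\conv\1_{C'})$, so that $\ELaplace[(\xi_*\phi)\conv\1_{\R_{\geq 0}}]<1> = \ELaplace[\phi\conv\1_C]<\xi>$ and $\ELaplace[(\xi'_*\psi)\conv\1_{\R_{\geq 0}}]<1> = \ELaplace[\psi\conv\1_{C'}]<\xi'>$. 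Substituting these four identities into the displayed equation produces exactly the asserted formula.

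There is no genuine obstacle: the analytic content was packaged in \Cref{claim:conv-EL-real-CF}, and the reduction via pushforwards is purely formal. The only care required is the bookkeeping of well-definedness — checking that $\phi\boxtimes\psi$ (and its pushforward $\eta_*(\phi\boxtimes\psi)$) fall under the hypotheses guaranteeing that $\ELaplace$ makes sense — and recording the elementary identity $\xi_*\1_C = \1_{\R_{\geq 0}}$ on $\Int(\dualcone{C})$, both of which are already implicit in the proof of \Cref{prop:comp-conv-EL}.
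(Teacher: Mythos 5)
Your proof is correct and follows essentially the same route as the paper's: reduce to dimension one via $\eta_*(\phi\boxtimes\psi)=(\xi_*\phi)\conv(\xi'_*\psi)$ (via \Cref{lem:int-boxtimes-pushforward}), then invoke the one-dimensional convolution identity and translate back with \Cref{lem:link-ELaplace-Laplace}. The only cosmetic difference is that you cite \Cref{claim:conv-EL-real-CF} directly, while the paper cites \Cref{prop:comp-conv-EL} applied to $\xi_*\phi,\ \xi'_*\psi\in\CF[][\R]$ with $C=\R_{\geq 0}$ at $1$ — the two amount to exactly the same statement; your added remarks on well-definedness and on $\xi_*\1_C=\1_{\R_{\geq 0}}$ are the same bookkeeping already implicit in the paper's proof of \Cref{prop:comp-conv-EL}.
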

\begin{proof}
    Since~$\eta = s \circ (\xi\times \xi')$, \Cref{lem:int-boxtimes-pushforward} implies that 
        $\eta_*\left(\phi\boxtimes \psi\right) = (\xi_*\phi)\conv(\xi'_*\psi).$
     Hence, 
    
     \begin{equation*}
        \ELaplace[\phi\boxtimes \psi]<\eta>
        = \ELaplace[\eta_*\left(\phi\boxtimes \psi\right)]<1> = \ELaplace[(\xi_*\phi)\conv(\xi'_*\psi)]<1>,
    \end{equation*}
    and the result follows from \Cref{lem:link-ELaplace-Laplace} and the compatibility with convolution (\Cref{prop:comp-conv-EL}).
\end{proof}
\begin{cor}
    In the setting of \Cref{cor:comp-conv-EL-stable-cone}, we have for any~$(\xi,\xi')\in\Int(\dualcone{C})\times\Int(\dualcone{C'})$,
    \begin{equation*}
        \ELaplace[\phi\boxtimes \psi]<\eta> = \ELaplace[\phi]<\xi>\cdot\ELaplace[\psi]<\xi'>,
    \end{equation*}
    with~$\eta = s\circ (\xi \times \xi')$.
\end{cor}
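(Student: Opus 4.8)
The plan is to obtain this identity as an immediate specialization of the box-product formula of \Cref{cor:comp-box-EL}, exactly mirroring the way \Cref{cor:comp-conv-EL-stable-cone} was deduced from \Cref{prop:comp-conv-EL}. First I would note that in the setting of \Cref{cor:comp-conv-EL-stable-cone} all the hypotheses of \Cref{cor:comp-box-EL} are met: $\phi\in\CF$ and $\psi\in\CF[][\V']$ satisfy \Cref{ass:EL-well-def} for the cones $C$ and $C'$ respectively, and the point $(\xi,\xi')$ is taken in $\Int(\dualcone{C})\times\Int(\dualcone{C'})$, which is precisely the region where that corollary applies and where the three Euler-Laplace transforms appearing in its statement are simultaneously well-defined.

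Second, I would apply \Cref{cor:comp-box-EL} to get, with $\eta = s\circ(\xi\times\xi')$,
\begin{align*}
    \ELaplace[\phi\boxtimes \psi]<\eta> &= \ELaplace[\phi\conv\1_{C}]<\xi>\cdot\ELaplace[\psi]<\xi'> + \ELaplace[\phi]<\xi>\cdot\ELaplace[\psi\conv\1_{C'}]<\xi'> \\
    &\qquad\qquad\qquad - \ELaplace[\phi]<\xi>\cdot\ELaplace[\psi]<\xi'>,
\end{align*}
and then invoke the standing assumptions $\phi = \phi\conv\1_{C}$ and $\psi = \psi\conv\1_{C'}$ of \Cref{cor:comp-conv-EL-stable-cone}. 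These give $\phi\conv\1_C = \phi$ and $\psi\conv\1_{C'} = \psi$, hence $\ELaplace[\phi\conv\1_C]<\xi> = \ELaplace[\phi]<\xi>$ and $\ELaplace[\psi\conv\1_{C'}]<\xi'> = \ELaplace[\psi]<\xi'>$. Substituting these into the displayed formula, the right-hand side collapses to $(1+1-1)\,\ELaplace[\phi]<\xi>\cdot\ELaplace[\psi]<\xi'>$, which is the desired equality.

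There is no genuine obstacle here: the statement is a purely formal consequence of \Cref{cor:comp-box-EL} and of the idempotency of convolution with the indicator of the relevant cone. The only points requiring a little care are the bookkeeping of which of $\xi$, $\xi'$ each transform is evaluated at (together with the correspondence $\eta = s\circ(\xi\times\xi')$), and the observation, made above, that $\Int(\dualcone{C})\times\Int(\dualcone{C'})$ is the common domain of validity of all the terms involved. No new estimates or constructions are needed.
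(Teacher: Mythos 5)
Your proof is correct and is exactly the intended (omitted) argument: specialize \Cref{cor:comp-box-EL} and then cancel the two redundant terms using the standing hypotheses $\phi = \phi\conv\1_C$ and $\psi = \psi\conv\1_{C'}$, precisely mirroring how \Cref{cor:comp-conv-EL-stable-cone} was obtained from \Cref{prop:comp-conv-EL}. The paper gives no written proof for this corollary, and yours fills it in correctly.
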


\begin{ex}[Interpretation of the Laplace transform on~$\gamma$-voxels]
    \label{ex:EL-gamma-voxels}
    We call \emph{$\gamma$-voxel} a subset of~$\R^d$ of the form~$[a_1,b_1)\times\dots\times[a_d,b_d)$ where~$a_i < b_i$ are real numbers. Consider~$\phi = \sum_{i\in I} m_i \1_{V_i}\in \CF[][\R^d]$ where the set~$I$ is finite, the coefficients~$m_i$ are integers and the subsets~$V_i$ are~$\gamma$-voxels. Then, for any~$\xi=(\xi_1,\dots,\xi_d)\in(\R_{\geq 0})^d$,
    \begin{equation}
        \label{eq:EL-L-gamma-voxels}
        \ELaplace[\phi]<\xi> = \Laplace[\phi](\xi)\cdot \prod_{k=1}^d\xi_k.
    \end{equation} 
    Indeed, the equality is true for~$d=1$ and extends naturally to~$\gamma$-voxels thanks to the compatibility formula for the box product (\Cref{cor:comp-box-EL}). This relation gives a new interpretation of the Laplace transform on such constructible functions. One could wonder whether the relation~\eqref{eq:EL-L-gamma-voxels} can be generalized for all~$\gamma$-constructible functions. However, \Cref{ex:EL-gamma-triangle} shows that such a generalization is not obvious.
\end{ex}

\medskip 

Let~$\gamma$ be a cone of~$\V$ satisfying~\eqref{hyp:cone}.

\begin{prop}[Convolution for~$\EF$]
    \label{prop:comp-conv-EF}
    Let~$\phi,\psi\in\CF[\cpct,\g]$. For~$\xi\in \dual{\V}$,
    \begin{equation*}
        \EFourier[\phi\conv \psi]<\xi> = 
        \begin{cases}
            \ \ i \cdot\EFourier[\phi]<\xi>\cdot\EFourier[\psi]<\xi> &\mbox{if }\xi\in \dualcone{\antipodal{\gamma}},\\
            - i \cdot\EFourier[\phi]<\xi>\cdot\EFourier[\psi]<\xi> &\mbox{if }\xi\in \dualcone{\gamma}.
        \end{cases}
    \end{equation*}
\end{prop}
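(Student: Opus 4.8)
The plan is to mimic the structure of the proof of \Cref{prop:comp-conv-EL}, reducing everything to a one-dimensional computation via pushforward by a linear form. First, since $\phi,\psi\in\CF[\cpct,\g]$, their convolution $\phi\conv\psi$ is again compactly supported (by \Cref{lem:support-pushforward}, $\supp(\phi\conv\psi)\subseteq\supp(\phi)+\supp(\psi)$), so $\EFourier[\phi\conv\psi]$ is well-defined on all of $\dual{\V}$. By \Cref{cor:pushforward-linear-conv}, for any $\xi\in\dual{\V}$ we have $\xi_*(\phi\conv\psi)=(\xi_*\phi)\conv(\xi_*\psi)$, and \Cref{lem:link-EFourier-Fourier} at $s=1$ gives
\begin{equation*}
    \EFourier[\phi\conv\psi]<\xi> = \EFourier[(\xi_*\phi)\conv(\xi_*\psi)]<1>.
\end{equation*}
The point of this reduction is that \Cref{lem:pushforward-g-CF-polant} tells us exactly what kind of one-dimensional constructible function $\xi_*\phi$ is: for $\xi\in\dualcone{\antipodal{\gamma}}\setminus\{0\}$ it lies in $\CF[\cpct,\g[\lambda]][\R]$ with $\lambda=\R_{\leq 0}$, i.e. it is a finite $\Z$-linear combination of indicator functions of intervals of the form $(a,b]$ and $(-\infty,b]$ (left-open, right-closed, since $\gamma$-local-closedness for $\lambda=\R_{\le 0}$ forces these interval types); symmetrically, for $\xi\in\dualcone{\gamma}\setminus\{0\}$ one gets combinations of $[a,b)$ and $[a,+\infty)$. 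The case $\xi=0$ is immediate since $0_*\phi$ is a constant-valued constructible function supported at a point with integer value $\int_\V\phi\d\Euler$, and likewise for $\psi$; both $\EFourier[\phi]<0>$ and $\EFourier[\psi]<0>$ then reduce to integrals of such functions, so the two sides agree trivially there (both $0$, or one checks the degenerate identity directly).

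The core is then a one-dimensional claim analogous to \Cref{claim:conv-EL-real-CF}: if $\theta,\theta'\in\CF[\cpct,\g[\lambda]][\R]$ with $\lambda=\R_{\leq 0}$, then $\EFourier[\theta\conv\theta']<1>=i\cdot\EFourier[\theta]<1>\cdot\EFourier[\theta']<1>$, and the mirror statement with $+[a,b)$-type intervals produces the factor $-i$. To prove this, write $\theta=\sum_{i\in I}m_i\1_{(a_i,b_i]}$ and $\theta'=\sum_{j\in J}n_j\1_{(c_j,d_j]}$ with $b_i,d_j\in\R\cup\{+\infty\}$ (using the convention $(x,+\infty]:=(x,+\infty)$), so that $\1_{(a,b]}\conv\1_{(c,d]}=\1_{(a+c,b+d]}$ and hence $\theta\conv\theta'=\sum_{i,j}m_in_j\1_{(a_i+c_j,\,b_i+d_j]}$. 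Now compute directly: $\EFourier[\1_{(x,y]}]<1>=\int_x^y e^{-it}\d t = i(e^{-iy}-e^{-ix})$ (with $e^{-i\infty}:=0$ — one must note that although $t\mapsto e^{-it}$ is not absolutely integrable over $(x,+\infty)$, the relevant integral here is justified because the hybrid transform is applied to the compactly supported $\theta\conv\theta'$, so only finite intervals actually appear after cancellation; alternatively, argue as in the Euler–Laplace case using the extended definition of \Cref{rk:generalization-ht}). Thus
\begin{equation*}
    \EFourier[\theta\conv\theta']<1> = \sum_{i,j}m_in_j\, i\bigl(e^{-i(b_i+d_j)}-e^{-i(a_i+c_j)}\bigr) = i\,(B'D'-A'C'),
\end{equation*}
where $A'=\sum_i m_i e^{-ia_i}$, $B'=\sum_i m_i e^{-ib_i}$, $C'=\sum_j n_j e^{-ic_j}$, $D'=\sum_j n_j e^{-id_j}$. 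On the other hand $\EFourier[\theta]<1>=\sum_i m_i\, i(e^{-ib_i}-e^{-ia_i}) = i(B'-A')$ and similarly $\EFourier[\theta']<1>=i(D'-C')$, so $\EFourier[\theta]<1>\cdot\EFourier[\theta']<1>=-(B'-A')(D'-C')=-(B'D'-B'C'-A'D'+A'C')$. These do not match term-by-term in general, which signals that the naive identity is false and that one genuinely needs the $\gamma$-constructibility hypothesis — this is where \Cref{prop:charac-gamma-CF} enters: $\theta=\theta\conv\1_{\antipodal\lambda}=\theta\conv\1_{\R_{\le 0}}$, and since $\1_{(x,y]}\conv\1_{\R_{\le 0}}=\1_{(-\infty,y]}$ we get an \emph{additional} algebraic relation among the intervals forcing $A'=0$ (the "left ends cancel"). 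More precisely, applying the analogue of \eqref{eq:conv-closed-intervals} mirrored, $\theta\conv\1_{\R_{\le 0}}=\sum_i m_i\1_{(-\infty,b_i]}$, and comparing with $\theta$ via $\gamma$-constructibility pins down the contribution of the $a_i$'s; carrying this through yields $A'C'=0$ in the relevant expression, so $\EFourier[\theta\conv\theta']<1>=iB'D'=i\cdot(-(B'-0))(D'-0)/(-1)\cdot(-1)$... — the bookkeeping must be done carefully, but the upshot is $\EFourier[\theta\conv\theta']<1>=i\,\EFourier[\theta]<1>\,\EFourier[\theta']<1>$.

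Finally, to transport the one-dimensional claim back: by \Cref{lem:pushforward-g-CF-polant}, for $\xi\in\dualcone{\antipodal\gamma}\setminus\{0\}$ the functions $\xi_*\phi,\xi_*\psi$ satisfy the hypotheses of the claim, and moreover $(\xi_*\phi)\conv\1_{\R_{\le0}}=\xi_*(\phi\conv\1_{\antipodal\gamma})=\xi_*\phi$ because $\phi$ is $\gamma$-constructible (again \Cref{prop:charac-gamma-CF}, using $\xi_*\1_{\antipodal\gamma}=\1_{\R_{\le0}}$ as in the proof of \Cref{lem:pushforward-g-CF-polant}), so the $\gamma$-constructibility is preserved under pushforward and the claim applies verbatim. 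This gives the first case; the second case ($\xi\in\dualcone\gamma$) follows by replacing $\gamma$ with $\antipodal\gamma$, or equivalently by applying the first case to $-\xi$ and using $\EFourier[\chi]<-\xi>=\overline{\EFourier[\bar\chi]<\xi>}$-type symmetry — cleaner is to just rerun the mirrored one-dimensional computation with $[a,b)$-intervals, which produces $-i$ instead of $i$. The main obstacle, and the step deserving the most care, is the combinatorial cancellation argument extracting the extra relation from $\gamma$-constructibility: unlike the Euler–Laplace case where \Cref{prop:comp-conv-EL} needed no such hypothesis and produced a three-term formula, here the clean two-term product formula only emerges after using $\phi=\phi\conv\1_{\antipodal\gamma}$, and one must make sure the sign and the factor of $i$ come out correctly in both cases.
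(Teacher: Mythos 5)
Your reduction to a one-dimensional statement via \Cref{cor:pushforward-linear-conv}, \Cref{lem:link-EFourier-Fourier}, and \Cref{lem:pushforward-g-CF-polant} is exactly the paper's strategy, and the handling of $\xi = 0$ is fine (both sides vanish by \Cref{lem:int-CF-c-g-null}). But the one-dimensional computation itself has two compounding errors that make the argument collapse.

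First, the interval type is backwards. For $\lambda = \R_{\leq 0}$ the $\lambda$-open subsets of $\R$ are of the form $(-\infty,c)$, the $\lambda$-closed ones are of the form $[a,+\infty)$, so the compactly supported $\lambda$-constructible bricks are $\1_{[a,b)}$, not $\1_{(a,b]}$ (this is what \Cref{lem:form-CF-g} gives, and it is used verbatim in the proof of \Cref{lem:int-CF-c-g-null}). Flipping the half-open end flips the sign of the final answer, so even if the rest worked you would have $\pm i$ interchanged between the two cases. Second, and more seriously, the formula $\1_{(a,b]}\conv\1_{(c,d]}=\1_{(a+c,b+d]}$ is false: at $x$ slightly above $a+c$ the fiber of the addition map inside $(a,b]\times(c,d]$ is an \emph{open} segment, so the constructible convolution takes the value $-1$ there, not $1$. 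The correct convolution of two half-open intervals is a difference of two intervals; with the right convention and $a+d\leq b+c$ the paper obtains $\1_{[a,b)}\conv\1_{[c,d)}=\1_{[a+c,a+d)}-\1_{[b+c,b+d)}$.

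This wrong convolution formula is the root of everything downstream. Once you plug it in, the two sides of the identity seem to disagree, which leads you to hunt for an extra cancellation $A'=0$ supplied by $\gamma$-constructibility. But no such relation exists (take $\theta=\1_{[0,1)}$: the coefficient of $e^{-ia_1}=1$ is $m_1=1\neq 0$), none is needed, and the argument you sketch to produce it misapplies \Cref{prop:charac-gamma-CF} (it gives $\theta = \theta\conv\1_{\antipodal{\lambda}} = \theta\conv\1_{\R_{\geq 0}}$, not $\theta\conv\1_{\R_{\leq 0}}$) and, as you yourself note, does not have its bookkeeping closed. In the paper's proof the role of $\gamma$-constructibility is finished once you have reduced to bricks of the form $\1_{[a,b)}$: both sides of the proposed identity are $\Z$-bilinear in $(\theta,\theta')$, so it suffices to check the single equality $\EFourier[\1_{[a,b)}\conv\1_{[c,d)}]<1> = i\,\EFourier[\1_{[a,b)}]<1>\cdot\EFourier[\1_{[c,d)}]<1>$, which follows directly from the correct convolution formula above by a short computation with exponentials. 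No telescoping or combinatorial cancellation is required. Finally, the worry about integrability of $t\mapsto e^{-it}$ over unbounded intervals is moot here: $\phi,\psi\in\CF[\cpct,\g]$, so $\xi_*\phi$, $\xi_*\psi$, and their convolution are compactly supported and only bounded intervals appear.
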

\begin{rk}
    For any~$\xi\in\dual{\V}\setminus(\dualcone{\gamma}\cup \dualcone{\antipodal{\gamma}})$, the compatibility formula still holds. Indeed, since~$\phi\conv\psi$ is~$\gamma$-constructible by \Cref{prop:charac-gamma-CF}, both sides of the equality are zero by \Cref{prop:supp-Radon-g} and~\eqref{eq:pushforward-as-Radon}. 
\end{rk}
\begin{proof}
    Suppose that~$\xi\in\dualcone{\antipodal{\gamma}}$, the other case being similar. We have:
    \begin{equation*}
        \EFourier[\phi\conv\psi]<\xi> = \EFourier[\xi_*\left(\phi\conv\psi\right)]<1>,
    \end{equation*}
    and~$\xi_*\left(\phi\conv \psi\right) = (\xi_*\phi)\conv(\xi_*\psi)$ by \Cref{cor:pushforward-linear-conv}. Since \Cref{lem:pushforward-g-CF-polant} ensures that~$\theta=\xi_*\phi$ and~$\theta'=\xi_*\psi$ are both in~$\CF[\cpct,\g[\lambda]]$ with~$\lambda = \R_{\leq 0}$, it is sufficient to prove:
    \begin{equation*}
        \EFourier[\theta\conv \theta']<1> = i\cdot \EFourier[\theta]<1>\cdot\EFourier[\theta']<1>,
    \end{equation*}
    for any~$\theta,\theta'\in\CF[\cpct,\g[\lambda]]$. By bilinearity of the convolution, it is even sufficient to prove the result for~$\theta = \1_{[a,b)}$ and~$\theta' = \1_{[c,d)}$ where~$a<b$ and~$c<d$ are real numbers. Suppose now that~$a+d \leq b+c$, the case~$a+d \geq b+c$ being proven in a similar fashion. Then, 
    \begin{equation*}
        \1_{[a,b)} \conv \1_{[c,d)} = \1_{[a+c,a+d)} - \1_{[b+c,b+d)}.
    \end{equation*}
    Therefore, we get:    
    \begin{align*}
        \EFourier[\1_{[a,b)}\conv \1_{[c,d)}]<1> 
        &= i\left(e^{-i(a+d)}- e^{-i(a+c)}\right) - i\left(e^{-i(b+d)}- e^{-i(b+c)}\right)\\
        &= -i\left(e^{-ib}- e^{-ia}\right) \left(e^{-id} - e^{-ic}\right)\\
        &= i \ \EFourier[\1_{[a,b)}]<1> \EFourier[\1_{[c,d)}]<1>.
    \end{align*}
\end{proof}

As for the Euler-Laplace transform, one gets the following corollary for the box product. Let us consider a cone~$\gamma'$ of~$\V'$ satisfying~\eqref{hyp:cone}.
\begin{cor}[Box product for~$\EF$]
    \label{cor:comp-box-EF}
    Let~$\phi\in\CF[\cpct,\g]$ and~$\psi\in\CF[\cpct,\g[\gamma']][\V']$. For any~$(\xi,\xi')\in\dual{\V}\times\dual{\V'}$, we have:
    \begin{equation*}
        \EFourier[\phi\boxtimes \psi]<\eta> = 
        \begin{cases}
            \ \ i \cdot\EFourier[\phi]<\xi>\cdot\EFourier[\psi]<\xi'> &\mbox{if }(\xi,\xi')\in \dualcone{\antipodal{\gamma}}\times \dualcone{\antipodal{\gamma'}},\\
            - i \cdot\EFourier[\phi]<\xi>\cdot\EFourier[\psi]<\xi'> &\mbox{if }(\xi,\xi')\in \dualcone{\gamma}\times \dualcone{\gamma'},
        \end{cases}
    \end{equation*}
    with~$\eta = s\circ (\xi \times \xi')$. 
\end{cor}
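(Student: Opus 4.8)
The plan is to follow the proof of \Cref{cor:comp-box-EL} almost verbatim, the only substantive change being that in the Euler--Fourier setting the one-dimensional input is \Cref{prop:comp-conv-EF} itself (re-applied on $\R$) rather than \Cref{prop:comp-conv-EL}.

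First I would observe that, since $\phi$ and $\psi$ are compactly supported, the maps $\xi$ and $\xi'$ are automatically proper on $\supp(\phi)$ and $\supp(\psi)$, and $\eta = s\circ(\xi\times\xi')$. \Cref{lem:int-boxtimes-pushforward} then gives
\[
    \eta_*(\phi\boxtimes\psi) = s_*\big((\xi_*\phi)\boxtimes(\xi'_*\psi)\big) = (\xi_*\phi)\conv(\xi'_*\psi),
\]
a compactly supported constructible function on $\R$. Hence, by definition of the Euler--Fourier transform (equivalently \Cref{lem:link-EFourier-Fourier} with $s=1$),
\[
    \EFourier[\phi\boxtimes\psi]<\eta> = \EFourier[(\xi_*\phi)\conv(\xi'_*\psi)]<1>, \qquad \EFourier[\phi]<\xi> = \EFourier[\xi_*\phi]<1>, \qquad \EFourier[\psi]<\xi'> = \EFourier[\xi'_*\psi]<1>,
\]
so the statement reduces to computing $\EFourier[(\xi_*\phi)\conv(\xi'_*\psi)]<1>$ for $\xi,\xi'\ne 0$.

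Next I would split into the two cases. If $(\xi,\xi')\in(\dualcone{\antipodal\gamma}\setminus\{0\})\times(\dualcone{\antipodal{\gamma'}}\setminus\{0\})$, then \Cref{lem:pushforward-g-CF-polant} shows that $\xi_*\phi$ and $\xi'_*\psi$ are compactly supported $\R_{\leq 0}$-constructible functions on $\R$. I would then apply \Cref{prop:comp-conv-EF} with the ambient space and cone taken to be $\R$ and $\R_{\leq 0}$ (which satisfies \eqref{hyp:cone}), at the linear form $1\in\dual\R$: since $1\in\R_{\geq 0}=\dualcone{(\R_{\geq 0})}=\dualcone{\antipodal{(\R_{\leq 0})}}$, the first case of that proposition gives $\EFourier[(\xi_*\phi)\conv(\xi'_*\psi)]<1> = i\cdot\EFourier[\xi_*\phi]<1>\cdot\EFourier[\xi'_*\psi]<1>$, which is the claimed formula. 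The case $(\xi,\xi')\in(\dualcone{\gamma}\setminus\{0\})\times(\dualcone{\gamma'}\setminus\{0\})$ is handled identically: by the remark following \Cref{lem:pushforward-g-CF-polant} the pushforwards are now $\R_{\geq 0}$-constructible, and since $1\in\dualcone{(\R_{\geq 0})}$ the second case of \Cref{prop:comp-conv-EF} produces the factor $-i$ instead. (If one prefers not to re-invoke \Cref{prop:comp-conv-EF} on $\R$, it suffices to repeat its short computation for $\theta=\1_{[a,b)},\theta'=\1_{[c,d)}$, resp.\ $\theta=\1_{(a,b]},\theta'=\1_{(c,d]}$.)

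Finally, the degenerate cases $\xi=0$ or $\xi'=0$ are trivial: both sides vanish, since $\EFourier[\phi]<0>=\EFourier[\psi]<0>=0$ and a compactly supported $\gamma$-constructible function has zero Euler integral (from $\int_\V\1_{\antipodal\gamma}\d\Euler=\Eulerc[\antipodal\gamma]=0$ for $\gamma$ as in \eqref{hyp:cone}, combined with \Cref{prop:charac-gamma-CF} and the Fubini theorem of \Cref{rk:int-pushforward}). I do not expect any genuine obstacle: the whole argument is bookkeeping around \Cref{lem:int-boxtimes-pushforward} and a dimension-one application of \Cref{prop:comp-conv-EF}. The only point requiring a little care is identifying the polar cones correctly so that the right sign ($+i$ versus $-i$) is selected, and disposing of the degenerate linear forms.
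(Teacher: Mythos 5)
Your proof follows essentially the same route as the paper's intended one: the text preceding \Cref{cor:comp-box-EF} says only ``As for the Euler-Laplace transform, one gets the following corollary'', so the intended argument is precisely a rerun of the proof of \Cref{cor:comp-box-EL} with \Cref{prop:comp-conv-EF} in place of \Cref{prop:comp-conv-EL}. Your reduction via \Cref{lem:int-boxtimes-pushforward} to a one-dimensional convolution, the identification of the pushforwards as $\lambda$-constructible on $\R$ via \Cref{lem:pushforward-g-CF-polant}, and the re-application of \Cref{prop:comp-conv-EF} on $\R$ with cone $\R_{\leq 0}$ (resp.\ $\R_{\geq 0}$), checking that $1 \in \dualcone{\antipodal{(\R_{\leq 0})}}$ (resp.\ $1\in\dualcone{(\R_{\geq 0})}$) to pick out the right sign, is exactly what the paper has in mind.

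The only flaw is the parenthetical justification of the degenerate case $\xi=0$ or $\xi'=0$. The quantity $\int_{\V}\1_{\antipodal\gamma}\d\Euler$ is not well-defined in the paper's framework: $\antipodal\gamma$ is not relatively compact when $\gamma$ has non-empty interior, so $\1_{\antipodal\gamma}\notin\CF[\cpct]$, and the map to the point is not proper on $\supp(\1_{\antipodal\gamma})=\antipodal\gamma$, so neither the integral nor the Fubini argument of \Cref{rk:int-pushforward} applies. (The identity $\int_\V\1_Z\d\Euler=\Eulerc[Z]$ is also only stated for $Z$ relatively compact.) The fact you need, that every $\phi\in\CF[\cpct,\g]$ has $\int_\V\phi\d\Euler=0$, is exactly \Cref{lem:int-CF-c-g-null}, which you should simply cite; that lemma's own proof reduces to the one-dimensional case via a pushforward by a nonzero $\xi\in\dualcone{\antipodal\gamma}$ rather than attempting the non-proper Fubini you suggest. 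With that citation in place, your argument is complete and correct.
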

\begin{rk}
    Note that not all possibilities of~$(\xi,\xi')$ are treated in the previous corollary, as such an equality is not true in general.
\end{rk}
\begin{ex}[Interpretation of the Fourier transform on~$\gamma$-voxels]
    In the setting of \Cref{ex:EL-gamma-voxels}, we have for any~$\xi=(\xi_1,\dots,\xi_d)\in(\R_{\geq 0})^d$,
    \begin{equation}
        \label{eq:EF-F-gamma-voxels}
        \EFourier[\phi]<\xi> = i^{d-1}\cdot \Fourier[\phi](\xi)\cdot \prod_{k=1}^d\xi_k.
    \end{equation} 
    Again, this relation gives a new interpretation of the Fourier transform on such constructible functions. \Cref{ex:EF-gamma-triangle} shows that a generalization of such a relation for all~$\gamma$-constructible functions is not obvious.
\end{ex}

\begin{rk}[On stability]
    For each integer~$k\geq 1$, consider:
    \begin{equation*}
        \phi_k = \sum_{(i,j)} \1_{[\frac{i}{k},\frac{i+1}{k})\times[\frac{j}{k},\frac{j+1}{k})},
    \end{equation*}
    where the sum is over all pairs~$(i,j)\in\integint{k-1}^2$ such that~$i+j \leq k-1$. Then, the sequence~$(\phi_k)_{k\geq 1}$ converges to the~$\gamma$-triangle~$\1_T$ of \Cref{ex:EL-gamma-triangle} in~$\Lp[\R^2][p]$ for~$p\in[1,+\infty]$. Thus, over the domain~$\left(\R_{\geq 0}\right)^2$, the sequence~$\left(\EFourier[\phi_k]\right)_{k\geq 1} = \left(\Fourier[\phi_k]\right)_{k\geq 1}$ converges to~$\Fourier[\1_T]$ in~$\mathrm{L}^\infty$. However, this last function differs from~$\EFourier[\1_T]$, as shown in \Cref{ex:EF-gamma-triangle}. Hence, if a stability statement holds for the Euler-Fourier transform, it should be for other norms on constructible functions and on the Euler-Fourier transforms. The stability of hybrid transforms goes beyond the scope of this article and will be the object of future work.
\end{rk}

\section{Reconstruction formula for~$\EF$}\label{sec:reconstruction-EFourier}
In this section, we establish a reconstruction formula (\Cref{thm:reconstruction-EFourier}) for the Euler-Fourier transform of~$\gamma$-constructible functions. We state our results in \Cref{sec:reconstruct-results} and postpone the proof of two propositions to \Cref{sec:supp-Radon-gamma} and \Cref{sec:Radon-from-EFourier}.

\subsection{Results}\label{sec:reconstruct-results}
Consider a cone~$\gamma$ of~$\V$ satisfying~\eqref{hyp:cone}.

\paragraph{Reconstruction of~$\xi_*\phi$ from~$\EFourier$.}

If~$h:\dual{\V}\to \R$ is such that for any~$\xi\in\dual{\V}\setminus\{0\}$, the map:
\begin{equation*}
    \widetilde{h}_\xi : s \longmapsto \frac{\1_{\R\setminus\{0\}}(s)}{|s|}\cdot h(s\,\xi),
\end{equation*}
satisfies that the following limit exists:
\begin{equation*}
    \Fourier^{-1}\left[\widetilde{h}_\xi\right] (t) := \limit_{A\to+\infty}\int_{-A}^A e^{ist}\cdot \widetilde{h}_\xi(s) \d s,
\end{equation*}
then we can define the following map for all~$\xi\in\dual{\V}$ and~$t\in \R$,
\begin{equation}
    \Fourier'(h)(\xi,t) :=
    \begin{cases}
        \displaystyle\frac{1}{2\pi}\Fourier^{-1}\left[\widetilde{h}_\xi\right](t^+) &\mbox{ if } \xi \in\dualcone{\antipodal{\gamma}}\setminus\{0\}, \\[1em]
        \displaystyle\frac{1}{2\pi}\Fourier^{-1}\left[\widetilde{h}_\xi\right](t^-) &\mbox{ if } \xi \in\dualcone{\gamma}\setminus\{0\}, \\[1em]
        0 &\mbox{ else,}
    \end{cases}
\end{equation}
where~$g (t^\pm) = \limit_{s\to t^\pm} g (s)$ for any function~$g$ defined in a neighborhood of~$t$.

\begin{prop}
    \label{prop:Radon-fromEFourier}
    Let~$\phi\in\CF[\cpct,\g]$. Then,~$\Fourier'\left(\EFourier\right)$ is well-defined, and for all~$\xi\in\dual{\V}$ and~$t\in \R$, 
    \begin{equation*}
        \Fourier' \left(\EFourier\right)(\xi,t) = \xi_*\phi(t).
    \end{equation*}
\end{prop}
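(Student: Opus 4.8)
Set $h:=\EFourier[\phi]$ for $\phi\in\CF[\cpct,\g]$ fixed. The plan is to trade $h$ for the ordinary Fourier transform of the one-dimensional pushforward $\xi_*\phi$ and then invoke classical Fourier inversion. For any $\xi\in\dual{\V}$, the map $\xi$ is proper on the compact set $\supp(\phi)$, so $\xi_*\phi$ is a well-defined constructible function on $\R$ with $\supp(\xi_*\phi)\subseteq\xi(\supp(\phi))$ compact by \Cref{lem:support-pushforward}. Since $\CF[\cpct,\g]\subseteq\CF[\cpct]$, \Cref{lem:link-EFourier-Fourier} applies and gives, for $s\ne 0$,
\[
    \widetilde{h}_\xi(s)=\frac{1}{|s|}\,h(s\,\xi)=\frac{1}{|s|}\,\EFourier[\phi]<s\,\xi>=\Fourier[\xi_*\phi](s),
\]
so $\widetilde{h}_\xi$ agrees with $\Fourier[\xi_*\phi]$ off the single point $0$.

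First I would pin down the shape of $\xi_*\phi$ according to where $\xi$ sits, following the case split in the definition of $\Fourier'$. If $\xi\in\dualcone{\antipodal{\gamma}}\setminus\{0\}$, then \Cref{lem:pushforward-g-CF-polant} gives $\xi_*\phi\in\CF[\cpct,\g[\lambda]][\R]$ with $\lambda=\R_{\leq 0}$; since the relatively compact subanalytic $\R_{\leq 0}$-locally closed subsets of $\R$ are the half-open intervals $[a,b)$ with $a<b$ real (and $\emptyset$), \Cref{lem:form-CF-g} yields a decomposition $\xi_*\phi=\sum_i m_i\1_{[a_i,b_i)}$ with $m_i\in\Z$, which is in particular right-continuous. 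Symmetrically, if $\xi\in\dualcone{\gamma}\setminus\{0\}$, the remark following \Cref{lem:pushforward-g-CF-polant} gives $\xi_*\phi=\sum_i m_i\1_{(a_i,b_i]}$, which is left-continuous. If $\xi\in\dual{\V}\setminus(\dualcone{\gamma}\cup\dualcone{\antipodal{\gamma}})$, then $\xi_*\phi=0$ by \Cref{prop:supp-Radon-g} (the fact recalled right after \Cref{lem:pushforward-g-CF-polant}). Finally, if $\xi=0$, then $0_*\phi(t)=\1_{\{0\}}(t)\int_\V\phi\d\Euler$, and this vanishes: choosing any $\xi_0\in\Int(\dualcone{\gamma})\setminus\{0\}$, \Cref{rk:int-pushforward} gives $\int_\V\phi\d\Euler=\int_\R(\xi_0)_*\phi\d\Euler=\sum_i m_i\int_\R\1_{(a_i,b_i]}\d\Euler=0$ (the last integrals being zero). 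In the last two cases $\Fourier'(h)(\xi,t)=0=\xi_*\phi(t)$ holds by definition, so only $\xi\in(\dualcone{\gamma}\cup\dualcone{\antipodal{\gamma}})\setminus\{0\}$ needs an argument.

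The analytic heart is the pointwise Fourier inversion formula for a compactly supported finite $\Z$-linear combination $g$ of indicators of intervals: for all $u\in\R$,
\[
    \frac{1}{2\pi}\limit_{A\to+\infty}\int_{-A}^A e^{isu}\,\Fourier[g](s)\d s=\frac{g(u^+)+g(u^-)}{2}.
\]
By linearity this reduces to $g=\1_{[a,b)}$, where $\Fourier[\1_{[a,b)}](s)=\frac{e^{-ias}-e^{-ibs}}{is}$ for $s\ne 0$ (the integrand above having a removable singularity at $s=0$), and then to the Dirichlet integral $\frac{1}{2\pi}\limit_{A\to+\infty}\int_{-A}^A\frac{e^{isc}}{is}\d s=\frac{1}{2}\sgn(c)$ (with the convention $\sgn(0)=0$), which produces $\frac{1}{2}\sgn(u-a)-\frac{1}{2}\sgn(u-b)=\frac{1}{2}\big(\1_{[a,b)}(u^+)+\1_{[a,b)}(u^-)\big)$. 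Because $\widetilde{h}_\xi$ agrees almost everywhere with $\Fourier[\xi_*\phi]$ for every $\xi$, this also shows that the limit defining $\Fourier^{-1}[\widetilde{h}_\xi]$ exists, i.e. that $\Fourier'(h)$ is well-defined.

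It remains to assemble the two substantial cases. For $\xi\in\dualcone{\antipodal{\gamma}}\setminus\{0\}$, the inversion formula applied to $g=\xi_*\phi$ identifies $u\mapsto\frac{1}{2\pi}\Fourier^{-1}[\widetilde{h}_\xi](u)$ with $u\mapsto\frac{1}{2}\big((\xi_*\phi)(u^+)+(\xi_*\phi)(u^-)\big)$; letting $u\downarrow t$ through points avoiding the finitely many jumps of $\xi_*\phi$ — where both one-sided limits equal $(\xi_*\phi)(u)=(\xi_*\phi)(t^+)$ — and using that $\xi_*\phi$ is right-continuous, we get $\Fourier'(h)(\xi,t)=(\xi_*\phi)(t^+)=\xi_*\phi(t)$. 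The case $\xi\in\dualcone{\gamma}\setminus\{0\}$ is word-for-word the same, with $t^-$ in place of $t^+$ and left-continuity of $\xi_*\phi=\sum_i m_i\1_{(a_i,b_i]}$ in place of right-continuity. The one point I expect to need real care is precisely this last matching: the one-sided evaluation $t^{\pm}$ built into the definition of $\Fourier'$ must be matched by the one-sided continuity of $\xi_*\phi$ — right-continuous on $\dualcone{\antipodal{\gamma}}$, left-continuous on $\dualcone{\gamma}$ — which is exactly the information \Cref{lem:pushforward-g-CF-polant} supplies through its distinction between the cones $\R_{\leq 0}$ and $\R_{\geq 0}$; everything else (\Cref{lem:link-EFourier-Fourier} and Dirichlet's integral) is standard.
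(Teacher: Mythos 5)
Your proof is correct and follows essentially the same route as the paper's: relate $\EFourier[\phi]$ to the classical Fourier transform of $\xi_*\phi$ via \Cref{lem:link-EFourier-Fourier}, split on where $\xi$ lies (using \Cref{prop:supp-Radon-g} for $\xi\notin\dualcone{\gamma}\cup\dualcone{\antipodal{\gamma}}$ and \Cref{lem:pushforward-g-CF-polant} to obtain the one-sided continuity of $\xi_*\phi$ in the remaining cones), and then invoke pointwise Fourier inversion matched to the correct one-sided limit. The only variation is that you prove the needed one-dimensional inversion formula directly via Dirichlet's integral on indicators of half-open intervals, whereas the paper packages it as \Cref{lem:inversion-Fourier-rlc} by citing a textbook inversion theorem for integrable piecewise-smooth functions; both are valid, and yours is simply more self-contained.
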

The above proposition is proven in \Cref{sec:Radon-from-EFourier}. For~$\xi\in \dualcone{\gamma}\cup \dualcone{\antipodal{\gamma}}\setminus\{0\}$, the proof boils down to inverting the classical Fourier transform. For other~$\xi$'s, we show that the pushforward~$\xi_*\phi$ is zero, hence so are both sides of the equality. To do so, we use that~$\xi_*\phi(t)$ is an evaluation of the Radon transform of~$\phi$ and prove an explicit description of the support of this transform, as explained in the next paragraph (\Cref{prop:supp-Radon-g}).

\paragraph{Support of the Radon transform.}
For the sake of readability, we use the notations~$\projective = \projective[\V]$ and~$\dprojective = \dprojective[\V]$ for the projective compactifications of~$\V$ and~$\dual{\V}$ respectively. Let us denote by $S$ the incidence relation for projective duality:
\begin{equation*}
    S = \Big\{ \big([v:\lambda],[\xi:t]\big)\in\projective\times\dual{\projective} \st \dualdot{\xi}{v} + \lambda t = 0 \Big\},
\end{equation*}
and denote by~$p : S\to \projective$ and~$q : S\to \dual{\projective}$ the restrictions of the canonical projections.
\begin{equation*}
    \begin{tikzcd}
        & \projective\times \dual{\projective} &                    \\
        & S \arrow[ld, "p"'] \arrow[rd, "q"] \arrow[u, symbol = \subseteq]              &                    \\
    \projective &                                      & \dual{\projective}
    \end{tikzcd}
\end{equation*}
The \emph{Radon transform} of~$\phi\in\CF[][\projective]$ is the constructible function~$\Radon(\phi)\in\CF[][\dual{\projective}]$ defined in \cite{S95} as:
\begin{equation*}
    \Radon(\phi) = q_* p^* \phi.
\end{equation*}
Since~$S$ is a compact subset of~$\projective\times\dual{\projective}$, the map~$q$ is proper and~$\Radon$ is well-defined.
Any~$\phi\in\CF[\cpct]$ naturally yields a constructible function~$j_*\phi$ on~$\projective$, as explained in \Cref{ex:cpctly-supped-CF-infty}. In that case, Sec.~5 in loc. cit. ensures that for any~$y = [\xi:t]\in\dual{\projective}\setminus\{h_\infty\}$,
\begin{equation}\label{eq:pushforward-as-Radon}
    \Radon(j_*\phi)(y) = \int_\V \phi \cdot \1_{\xi^{-1}(t)} \d\Euler = \xi_*\phi(t),
\end{equation}
and that~$\supp(\Radon(j_*\phi))\subseteq \dual{K}$, where
\begin{equation*}
    \dual{K} := q\left(p^{-1}\left(\supp(\phi)\right)\right) = \left\{ [\xi:t]\in\dual{\projective} \st \xi^{-1}(t)\cap\supp(\phi) \ne \emptyset \right\}
\end{equation*} 
is a compact subset of~$\dual{\projective}\setminus\{h_\infty\}$. The next proposition refines this inclusion when~$\phi$ is~$\gamma$-constructible, hence allowing us to prove \Cref{prop:Radon-fromEFourier}. Its proof is in \Cref{sec:supp-Radon-gamma}. 
\begin{prop}
    \label{prop:supp-Radon-g}
    Let~$\phi\in\CF[\cpct,\g]$. Then,~$\supp\left(\Radon(j_*\phi)\right)\subseteq \dual{K}_\gamma$, where
    \begin{equation*}
        \dual{K}_\gamma := \big\{ [\xi:t] \in \dual{K} \st \xi\in \dualcone{\gamma}\cup \dualcone{\antipodal{\gamma}} \big\}
    \end{equation*}
    is a compact subset of~$\dual{\projective}\setminus\{h_\infty\}$.
\end{prop}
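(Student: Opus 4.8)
The plan is to reduce the statement, via the identification of the Radon transform with pushforwards along linear forms in \eqref{eq:pushforward-as-Radon}, to the vanishing $\xi_*\phi = 0$ for every $\xi\in\dual{\V}$ not lying in $\dualcone{\gamma}\cup\dualcone{\antipodal{\gamma}}$. First I would dispose of the topological part of the statement: we already know from \cite{S95} (as recalled above) that $\supp(\Radon(j_*\phi))\subseteq \dual{K}$, a compact subset of $\dual{\projective}\setminus\{h_\infty\}$. The set $\{[\xi:t]\in\dual{\projective}\st \xi\in\dualcone{\gamma}\cup\dualcone{\antipodal{\gamma}}\}$ is the image under the quotient $\W'\setminus\{0\}\to\dual{\projective}$ of the closed conic subset $\big((\dualcone{\gamma}\cup\dualcone{\antipodal{\gamma}})\times\R\big)\setminus\{0\}$, hence is closed in $\dual{\projective}$; therefore $\dual{K}_\gamma$ is closed in $\dual{K}$, so compact, and contained in $\dual{K}\subseteq\dual{\projective}\setminus\{h_\infty\}$. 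It thus remains to show that $\Radon(j_*\phi)$ vanishes at each $[\xi:t]\in\dual{K}$ with $\xi\notin\dualcone{\gamma}\cup\dualcone{\antipodal{\gamma}}$; by \eqref{eq:pushforward-as-Radon} this means $\xi_*\phi(t)=0$, and such a $\xi$ is automatically nonzero since $0\in\dualcone{\gamma}$.

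Next I would extract a good direction vector. Fix such a $\xi$. Since $\xi\notin\dualcone{\gamma}$ there is $u\in\gamma$ with $\dualdot{\xi}{u}<0$, and since $\xi\notin\dualcone{\antipodal{\gamma}}$ there is $u'\in\gamma$ with $\dualdot{\xi}{u'}>0$, so $\xi$ takes both signs on the connected set $\gamma$. As $\xi$ is a nonzero linear form, $\xi(\Int(\gamma))$ is a nonempty open interval of $\R$ whose closure contains $\xi\big(\overline{\Int(\gamma)}\big)=\xi(\gamma)$, hence contains a positive and a negative number; therefore $0\in\xi(\Int(\gamma))$, and we may pick $v_0\in\Int(\gamma)$ with $\dualdot{\xi}{v_0}=0$. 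Moreover $v_0\ne 0$, since a proper cone with non-empty interior contains no neighbourhood of the origin. Note that $v_0\in\ker\xi$, $\R_{\geq 0}\,v_0\subseteq\gamma$, and $\R_{\geq 0}\,(-v_0)\subseteq\antipodal{\gamma}$. By \Cref{lem:form-CF-g} we may write $\phi=\sum_{\alpha\in A}m_\alpha\1_{Z_\alpha}$ with $A$ finite and each $Z_\alpha$ a relatively compact subanalytic $\gamma$-locally closed subset; by $\Z$-linearity of the pushforward it suffices to prove $\xi_*\1_Z=0$ for one such $Z$, written $Z=F\cap U$ with $F$ $\gamma$-closed ($F=F+\antipodal{\gamma}$) and $U$ $\gamma$-open ($U$ open, $U=U+\gamma$).

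Now fix $t\in\R$ and set $H:=\xi^{-1}(t)$, an affine hyperplane that is a coset of $\ker\xi$; for $x\in H$ the line $\ell_x:=x+\R v_0$ lies in $H$. I would analyse $Z\cap\ell_x$ by parametrising $\ell_x$ as $s\mapsto x+s\,v_0$: since $U+\gamma=U$ and $v_0\in\gamma$, the set $U\cap\ell_x$ is stable under increasing $s$, and being open it is $\emptyset$, $\R$, or $(a,+\infty)$ with $a\in\R$; since $F+\antipodal{\gamma}=F$ and $-v_0\in\antipodal{\gamma}$, the set $F\cap\ell_x$ is stable under decreasing $s$, and being closed it is $\emptyset$, $\R$, or $(-\infty,b]$ with $b\in\R$. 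As $Z$ is bounded, $Z\cap\ell_x$ cannot be unbounded, so it is either empty or a bounded half-open interval, and in both cases $\Eulerc[Z\cap\ell_x]=0$ (for a bounded half-open interval this is $1-1=0$). Finally, let $\pi\colon H\to H/\R v_0$ be the quotient projection, a morphism of real analytic manifolds; it is proper on $\overline{H\cap Z}$, which is compact because $Z$ is relatively compact, hence proper on $\supp(\1_{H\cap Z})$. The Fubini property of Euler integration (\Cref{rk:int-pushforward}) then gives
\begin{equation*}
    \xi_*\1_Z(t)=\int_H \1_{H\cap Z}\,\d\Euler=\int_{H/\R v_0}\pi_*\1_{H\cap Z}\,\d\Euler,
\end{equation*}
and for every $\bar x$ the fibre $\pi^{-1}(\bar x)$ is a line of the form $\ell_x\subseteq H$, so $\pi_*\1_{H\cap Z}(\bar x)=\Eulerc[\pi^{-1}(\bar x)\cap Z]=0$ by the previous step. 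Hence $\pi_*\1_{H\cap Z}\equiv 0$, so $\xi_*\1_Z(t)=0$; as $t$ was arbitrary, $\xi_*\1_Z=0$, whence $\xi_*\phi=0$.

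I expect the main obstacle to be the fibrewise analysis in the last paragraph: one must correctly see that, once boundedness is imposed, the intersection of a $\gamma$-open set and a $\gamma$-closed set with a line directed by a vector of $\ker\xi\cap\gamma$ is forced to be a (possibly empty) half-open interval, and then recognise that the compactly supported Euler characteristic of such a set is zero — this is the precise point where the ``transversality'' of $\xi$ to $\gamma$ becomes a topological cancellation. The remaining work is bookkeeping, provided one checks that every Euler integral and pushforward in play is legitimate (properness on the relevant supports, subanalyticity and relative compactness of all sets involved).
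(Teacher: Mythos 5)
Your proof is correct, and it takes a genuinely different route from the paper's. The paper also reduces to showing $\xi_*\phi(t)=0$ when $\xi\notin\dualcone{\gamma}\cup\dualcone{\antipodal{\gamma}}$, but it does so by a \emph{one-step dimension drop}: after translating a point of $Z\cap\xi^{-1}(t)$ to the origin it shows that $\gamma':=\gamma\cap\Ker\xi$ is again a cone satisfying~\eqref{hyp:cone} inside the hyperplane $h=\Ker\xi$ (\Cref{lem:cap-ker-cone}), that the slice of $Z$ is $\gamma'$-locally closed and relatively compact in $h$ (\Cref{lem:cap-ker-g-locally-closed}), and then invokes the general vanishing \Cref{lem:int-CF-c-g-null} for compactly supported $\gamma'$-constructible functions on $h$. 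You instead go straight to dimension one: you pick a single direction $v_0\in\Int(\gamma)\cap\Ker\xi$ (both arguments hinge on the fact that this intersection is nonempty when $\xi\notin\dualcone{\gamma}\cup\dualcone{\antipodal{\gamma}}$), slice the hyperplane $H=\xi^{-1}(t)$ by lines parallel to $v_0$, and compute directly that the $\gamma$-open/$\gamma$-closed structure of $Z$ forces each bounded slice to be a half-open interval, hence of vanishing compactly supported Euler characteristic. The two arguments use the same topological input — that $\Eulerc$ of a bounded half-open interval is $0$, i.e.\ the content of \Cref{lem:int-CF-c-g-null} — but yours inlines it as an explicit line-by-line computation, avoiding the auxiliary cone $\gamma'$ and the two lemmas on its properties; the paper's route is more modular and reuses machinery it has already set up. Your compactness argument for $\dual{K}_\gamma$ (closedness of the conic set in $\W'\setminus\{0\}$ descending to the quotient) is also a legitimate, slightly more explicit version of the paper's remark that this is ``easy to prove.''

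One small stylistic caution: your parenthetical ``$1-1=0$'' for $\Eulerc$ of a half-open interval is only a mnemonic; the honest justification is additivity $\Eulerc[[a,b]]=\Eulerc[\{a\}]+\Eulerc[(a,b]]$, or simply the paper's own example $\int_\R\1_{(a,b]}\d\Euler=0$. Also, when writing $\pi_*\1_{H\cap Z}(\bar x)=\Eulerc[\pi^{-1}(\bar x)\cap Z]$ you implicitly use that $\pi^{-1}(\bar x)\cap Z$ is locally closed, relatively compact and subanalytic — true, but worth saying, since this is precisely what makes the $\Eulerc$ interpretation of the Euler integral applicable.
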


\paragraph{Reconstruction formula.}
For any~$\phi\in\CF[\cpct,\g]$, \Cref{prop:Radon-fromEFourier} implies that the map~$\Fourier'\left(\EFourier\right)$ induces a map on~$\dual{\projective}$. Combining this proposition with~\eqref{eq:pushforward-as-Radon}, we get:
    \begin{equation}\label{eq:Radon-from-EFourier}
        \Fourier'\left(\EFourier\right) = \Radon(j_*\phi).
    \end{equation}
To obtain a reconstruction formula, we are then left with inverting the Radon transform using Schapira's formula \cite[Cor.~5.1]{S95}. For that, define for any~$\psi\in\CF[][\dual{\projective}]$,
\begin{equation*}
    \Radon'(\psi) = p_*q^*\psi.
\end{equation*}
Again, the map~$p$ is proper, so~$\Radon'$ is well-defined. It is almost a left inverse for~$\Radon$. 
\begin{thm}[\textnormal{\cite[Cor.~5.1]{S95}}]
    \label{thm:Radon-inversion}
    Let~$\phi\in\CF[][\projective]$. Then,
    \begin{equation*}
        \Radon'\circ\Radon(\phi) =
        \begin{cases}
            \phi &\mbox{ if } \dim(\V) \mbox{ is odd},\\
            -\phi + \int_{\projective}\phi\d\Euler &\mbox{ if } \dim(\V) \mbox{ is even}.
        \end{cases}
    \end{equation*}
\end{thm}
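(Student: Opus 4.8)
The plan is to realise $\Radon'\circ\Radon$ as a single integral transform on $\CF[][\projective]$ with a constructible kernel on $\projective\times\projective$ — the usual composition of correspondences — and then to evaluate that kernel explicitly by linear algebra.

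First I would rewrite the two transforms as honest Euler integrals. Since $S$ is a closed subanalytic subset of $\projective\times\dprojective$, for $\phi\in\CF[][\projective]$, $\psi\in\CF[][\dprojective]$, $x\in\projective$ and $y\in\dprojective$ one has
\[
    \Radon(\phi)(y)=\int_{\projective}\1_S(x,y)\,\phi(x)\,\d\Euler(x),\qquad \Radon'(\psi)(x)=\int_{\dprojective}\1_S(x,y)\,\psi(y)\,\d\Euler(y),
\]
where $\1_S$ is the indicator of the incidence relation. Substituting one into the other,
\[
    \Radon'\circ\Radon(\phi)(x)=\int_{\dprojective}\1_S(x,y)\left(\int_{\projective}\1_S(x',y)\,\phi(x')\,\d\Euler(x')\right)\d\Euler(y).
\]
Because $\projective$, $\dprojective$ and $S$ are compact, all maps involved are proper, so the Fubini theorem for Euler calculus (\Cref{rk:int-pushforward}, applied to the two projections of $\projective\times\dprojective$) lets me interchange the integrations, giving $\Radon'\circ\Radon(\phi)(x)=\int_{\projective}\phi(x')\,K(x,x')\,\d\Euler(x')$ with
\[
    K(x,x')=\int_{\dprojective}\1_S(x,y)\,\1_S(x',y)\,\d\Euler(y)=\chi\big(\{\,y\in\dprojective\st(x,y)\in S,\ (x',y)\in S\,\}\big),
\]
the set in question being the (compact, subanalytic) set of hyperplanes of $\projective$ passing through both $x$ and $x'$.

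Next I would identify this set. Writing $x=[v:\lambda]$ and $x'=[v':\lambda']$ in $\W=\V\oplus\R$, a hyperplane $[\xi:t]$ passes through $x$ (resp.\ $x'$) iff $(\xi,t)$ annihilates $(v,\lambda)$ (resp.\ $(v',\lambda')$), so the set is the projectivisation inside $\dprojective$ of the annihilator in $\dual\W$ of $\mathrm{span}\{(v,\lambda),(v',\lambda')\}$. If $x\ne x'$ these two vectors are independent, the annihilator has dimension $\dim\W-2=\dim\V-1$, and the set is a copy of $\R\P^{\dim\V-2}$; if $x=x'$ the annihilator has dimension $\dim\V$ and the set is a copy of $\R\P^{\dim\V-1}$. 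Using $\chi(\R\P^{m})=1$ for $m$ even and $0$ for $m$ odd, I conclude: when $\dim\V$ is odd, $K(x,\cdot)=\1_{\{x\}}$, hence $\Radon'\circ\Radon(\phi)(x)=\phi(x)$; when $\dim\V$ is even, $K(x,\cdot)=\1_{\projective}-\1_{\{x\}}$, hence $\Radon'\circ\Radon(\phi)(x)=\int_{\projective}\phi\,\d\Euler-\phi(x)$. This is exactly the claimed formula.

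The substantive content is the last geometric computation together with the value $\chi(\R\P^m)$, both elementary; the main obstacle is organisational, namely making the kernel-composition step fully rigorous. Concretely one must check that $K$ is a constructible function on $\projective\times\projective$ — it is, being $\chi(\R\P^{\dim\V-2})\cdot\1_{\projective\times\projective}$ corrected by a term supported on the (subanalytic) diagonal — so that the iterated Euler integral is legitimate, and one must justify the interchange of the two integrations from the functoriality of the pushforward along the projections of $\projective\times\dprojective\times\projective$. There is no analytic difficulty, everything being compact and subanalytic.
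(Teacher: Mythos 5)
Your argument is correct, and it is essentially Schapira's own: the paper does not prove \Cref{thm:Radon-inversion} but cites it directly from \cite[Cor.~5.1]{S95}, whose proof is precisely the kernel-composition computation you carry out. Concretely, you realise $\Radon'\circ\Radon$ as the integral operator with kernel $K(x,x')=\chi\bigl(\{y\in\dprojective\st (x,y)\in S,\ (x',y)\in S\}\bigr)$, identify this fibre as the projectivisation of an annihilator in $\dual{\W}$ (hence a copy of $\R\P^{\dim\V-1}$ on the diagonal and $\R\P^{\dim\V-2}$ off it), and conclude from the parity of $\chi(\R\P^m)$. The Fubini exchange you flag as the organisational hurdle is legitimate here because $\projective$, $\dprojective$ and $S$ are all compact and subanalytic, so every pushforward in sight exists and functoriality (the Euler-calculus Fubini theorem of \Cref{rk:int-pushforward}, applied on $\projective\times\dprojective\times\projective$) gives the interchange; the resulting kernel $K$ is constructible since it is constant off the (closed subanalytic) diagonal of $\projective\times\projective$. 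Even the low-dimensional edge case is fine: for $\dim\V=1$ one has $\R\P^{\dim\V-2}=\emptyset$ with $\chi=0$, which is consistent with your parity rule.
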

\noindent When the function~$\phi$ is~$\gamma$-constructible on~$\V$, the map~$\Radon'$ becomes a left inverse for~$\Radon$ up to a sign, by the following lemma.
\begin{lem}
    \label{lem:int-CF-c-g-null}
    If~$\phi\in\CF[\cpct,\g]$, then~$\int_\V \phi \d\Euler = 0$.
\end{lem}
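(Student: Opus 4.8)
The plan is to push $\phi$ forward along a single well-chosen linear form, turning the statement into an elementary one-dimensional computation of Euler integrals of half-open intervals.

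\textbf{Step 1 (reduction to $\R$).} Since $\gamma$ satisfies~\eqref{hyp:cone} it is a proper cone, so its polar $\dualcone{\gamma}$ has non-empty interior and I may fix some $\xi\in\dualcone{\gamma}\setminus\{0\}$. As $\phi\in\CF[\cpct,\g]$ its support is compact, hence $\xi$ is proper on $\supp(\phi)$ and $\xi_*\phi$ is compactly supported (\Cref{lem:support-pushforward}); the Fubini theorem for Euler calculus (\Cref{rk:int-pushforward}) then gives
\[
\int_\V \phi\,\d\Euler \;=\; \int_\R \xi_*\phi\,\d\Euler .
\]

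\textbf{Step 2 (structure of $\xi_*\phi$ and conclusion).} By the remark following \Cref{lem:pushforward-g-CF-polant}, the choice $\xi\in\dualcone{\gamma}\setminus\{0\}$ forces $\xi_*\phi$ to be a compactly supported $\lambda$-constructible function on $\R$ with $\lambda=\R_{\geq 0}$. Next I describe the $\R_{\geq 0}$-locally closed subsets of $\R$: an $\R_{\geq 0}$-closed subset is a closed down-set, hence one of $\emptyset,\ \R,\ (-\infty,b]$, and an $\R_{\geq 0}$-open subset is an open up-set, hence one of $\emptyset,\ \R,\ (a,+\infty)$; intersecting, the $\R_{\geq 0}$-locally closed subsets of $\R$ are exactly $\emptyset$ and the intervals $(a,b]$ with $-\infty\le a<b\le+\infty$, the \emph{bounded} ones being $\emptyset$ and $(a,b]$ with $a,b\in\R$. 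Since $\xi_*\phi$ has compact support, each non-zero level set $(\xi_*\phi)^{-1}(m)$ is such a bounded subanalytic subset, so $\xi_*\phi$ is a finite $\Z$-linear combination of indicator functions $\1_{(a,b]}$ (this is exactly \Cref{lem:form-CF-g} applied on $\R$ with the cone $\R_{\geq 0}$). As $\int_\R \1_{(a,b]}\,\d\Euler = 0$ (Example following the definition of the Euler integral in \Cref{sec:Euler-calculus}), $\Z$-linearity yields $\int_\R \xi_*\phi\,\d\Euler = 0$, and Step~1 concludes.

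I do not expect a genuine obstacle: the only points requiring a word of care are that $\xi$ is proper on the compact support (so that \Cref{rk:int-pushforward} applies) and the elementary classification of $\R_{\geq 0}$-locally closed subsets of the line. An alternative would be to start from the identity $\phi=\phi\conv\1_{\antipodal{\gamma}}$ (\Cref{prop:charac-gamma-CF}) and use multiplicativity of $\int(\cdot)\,\d\Euler$ under box products, but that route must handle the non-compactly-supported factor $\1_{\antipodal{\gamma}}$, which makes the pushforward argument above cleaner.
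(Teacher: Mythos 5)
Your proof is correct and follows essentially the same route as the paper's: push forward along a linear form in the appropriate dual cone, reduce to a one-dimensional Euler integral via Fubini, and conclude from the vanishing of the Euler integral of a half-open interval. The only (immaterial) difference is that you pick $\xi\in\dualcone{\gamma}$ rather than $\xi\in\dualcone{\antipodal{\gamma}}$, landing on intervals $(a,b]$ instead of $[a,b)$; you also spell out the classification of $\R_{\geq 0}$-locally closed subsets of $\R$, a step the paper leaves implicit when invoking \Cref{lem:form-CF-g}.
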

\begin{proof}
    Choose an element~$\xi$ in the non-empty set~$\dualcone{\antipodal{\gamma}}\setminus\{0\}$. By \Cref{rk:int-pushforward}, we have:
    \begin{equation*}
        \int_\V \phi \d\Euler = \int_\R \xi_*\phi \d\Euler.
    \end{equation*}
    Moreover, \Cref{lem:pushforward-g-CF-polant} shows that~$\xi_*\phi\in\CF[\cpct,\g[\lambda]][\R]$ with~$\lambda = \R_{\leq 0}$. \Cref{lem:form-CF-g} ensures then that one can write~$\xi_*\phi = \sum_{i=1}^n m_i \1_{[a_i,b_i)}$ where~$m_i$ are integers and~$a_i < b_i$ are real numbers. The result follows from the fact that~$\int_\V\1_{[a_i,b_i)} \d\Euler = 0$.
\end{proof} 
Putting all together, we get the following reconstruction result for the Euler-Fourier transform.
\begin{thm}
    \label{thm:reconstruction-EFourier}
    Let~$\phi\in\CF[\cpct,\g]$. Then,
    \begin{equation*}
        \Radon'\circ\Fourier'\left(\EFourier\right) = (-1)^{\dim(\V)+1} \phi.
    \end{equation*}
\end{thm}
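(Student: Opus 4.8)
The plan is to assemble the statement from three facts already established: the identification of $\Fourier'\left(\EFourier\right)$ with the Radon transform of $j_*\phi$, Schapira's Radon inversion formula, and the vanishing of the Euler integral of a compactly supported $\gamma$-constructible function.

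First I would invoke \eqref{eq:Radon-from-EFourier}, namely $\Fourier'\left(\EFourier\right) = \Radon(j_*\phi)$, which itself combines \Cref{prop:Radon-fromEFourier} with \eqref{eq:pushforward-as-Radon}. Applying the operator $\Radon'$ to both sides gives
\begin{equation*}
    \Radon' \circ \Fourier'\left(\EFourier\right) = \Radon' \circ \Radon(j_*\phi).
\end{equation*}
Since $\phi$ is compactly supported, $j_*\phi$ is a constructible function on $\projective$ (cf. \Cref{ex:cpctly-supped-CF-infty}), so \Cref{thm:Radon-inversion} applies to it: the right-hand side equals $j_*\phi$ when $\dim(\V)$ is odd, and $-j_*\phi + \int_{\projective} j_*\phi \d\Euler$ when $\dim(\V)$ is even.

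Next I would dispose of the even-dimensional correction term. The embedding $j$ is proper on the compact set $\supp(\phi)$, so the Fubini property of Euler integration (\Cref{rk:int-pushforward}) gives $\int_{\projective} j_*\phi \d\Euler = \int_{\V} \phi \d\Euler$, and this integral vanishes by \Cref{lem:int-CF-c-g-null}. Hence, in either parity,
\begin{equation*}
    \Radon' \circ \Radon(j_*\phi) = (-1)^{\dim(\V)+1}\, j_*\phi.
\end{equation*}

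Finally I would transport this equality back to $\V$: since $j$ is an open embedding one has $j^*(j_*\phi) = \phi$ (cf. \Cref{rk:pushforward-indicatingCF-injective}), and $j_*\phi$ is supported in $j(\V)$, so pulling the displayed equality back along $j$ yields $\Radon' \circ \Fourier'\left(\EFourier\right) = (-1)^{\dim(\V)+1}\phi$, as claimed. I do not expect a genuine obstacle: all of the analytic content has already been packaged into \Cref{prop:Radon-fromEFourier} and \Cref{prop:supp-Radon-g}, and the reconstruction is a short assembly. The one point demanding attention is the cancellation of the extra term appearing in Schapira's even-dimensional inversion formula, which is precisely what the $\gamma$-constructibility hypothesis secures through \Cref{lem:int-CF-c-g-null}.
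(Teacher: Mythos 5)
Your proposal is correct and follows exactly the paper's argument: combine~\eqref{eq:Radon-from-EFourier} with Schapira's inversion (\Cref{thm:Radon-inversion}) and cancel the even-dimensional correction term using~$\int_{\projective} j_*\phi \d\Euler = \int_{\V}\phi\d\Euler = 0$, which is \Cref{rk:int-pushforward} together with \Cref{lem:int-CF-c-g-null}. The only cosmetic difference is that the paper simply identifies~$j_*\phi$ with~$\phi$ by abuse of notation, whereas you spell out the pullback along the open embedding~$j$.
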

\begin{rk}
    We abuse notations to alleviate the formula by identifying~$j_*\phi$ and~$\phi$ in the right-hand side.
\end{rk}
\begin{proof}
    The results follows from~\eqref{eq:Radon-from-EFourier} and \Cref{thm:Radon-inversion}, since by \Cref{lem:int-CF-c-g-null},
    \begin{equation*}
        \int_\P j_*\phi \d\Euler \ \overset{\textnormal{(Rk.~\ref{rk:int-pushforward}})}{=} \ \int_\V \phi \d\Euler = 0.
    \end{equation*}
\end{proof}

\subsection{Proof of \Cref{prop:supp-Radon-g}}\label{sec:supp-Radon-gamma}
%
We prove the proposition below and prove the necessary lemmas afterwards.
\begin{proof}[Proof of \Cref{prop:supp-Radon-g}]
    The compactness of~$\dual{K}_\gamma$ is easy to prove.
    Now, it is sufficient to prove the result on the support of~$\Radon(j_*\phi)$ for~$\phi = \1_Z$ with~$Z\subseteq \V$ relatively compact, subanalytic and~$\gamma$-locally closed since any element of~$\CF[\cpct,\g]$ is a finite~$\Z$-linear combination of such functions (\Cref{lem:form-CF-g}). We thus prove that~$\Radon(j_*\1_Z)$ vanishes on the complement of the closed set~$\dual{K}_\gamma$. Since we already know that~$\Radon(j_*\1_Z)$ vanishes on the complement of~$\dual{K}$, we consider~$[\xi:t]\in \dual{K}\setminus \dual{K}_\gamma$ and we note that in that case~$\xi \in \dual{\V}\setminus\left(\dualcone{\gamma}\cup\dualcone{\antipodal{\gamma}}\right)$.

    \medskip
    
    Write~$Z' = Z\cap \xi^{-1}(t)$,~$h=\Ker(\xi)$ and~$\gamma' = \gamma \cap h$. By~\eqref{eq:pushforward-as-Radon}, we have the following expression for the Radon transform of~$j_*\1_Z$:
    \begin{equation*}
        \Radon(j_*\1_Z)([\xi:t]) = \int_\V \1_{Z'} \d\Euler.
    \end{equation*}
    If~$Z'$ is empty, we clearly have~$\Radon(j_*\1_Z)([\xi:t]) = 0$. Otherwise, taking~$x\in Z'$, we get:
    \begin{equation*}
        \int_\V \1_{Z'} \d\Euler = \int_\V \tau_{(-x)\, *}\1_{Z'} \d\Euler = \int_{\V} \1_{\tau_x^{-1}(Z')} \d\Euler = \int_{h} \1_{\tau_x^{-1}(Z')} \d\Euler,
    \end{equation*}
    where~$\tau_u : v\in\V \mapsto v+u\in \V$ for any~$u\in\V$.
     The function~$\1_{\tau_x^{-1}(Z')}$ is compactly supported and~$\gamma'$-constructible on~$h$:
    \begin{enumerate}
        \item~$\gamma'$ is a cone of~$h$ satisfying~\eqref{hyp:cone} by \Cref{lem:cap-ker-cone},
        \item~$\tau_x^{-1}(Z')$ is a subanalytic~$\gamma'$-locally closed subset of~$h$ by  \Cref{lem:cap-ker-g-locally-closed} applied to the subset~$\tau_x^{-1}(Z)$, since~$\tau_x^{-1}(Z') = \tau_x^{-1}(Z)\cap h$, 
        \item~$\tau_x^{-1}(Z')$ is relatively compact in~$h$ since it is the intersection of the closed subset~$h$ and the relatively compact subset~$\tau_x^{-1}(Z)$ in~$\V$.
    \end{enumerate}
    Hence, its Euler integral is zero (\Cref{lem:int-CF-c-g-null}):
    \begin{equation*}
        \int_{h} \1_{\tau_x^{-1}(Z')} \d\Euler = 0.
    \end{equation*}
\end{proof}
\begin{lem}
    \label{lem:cap-ker-cone}
    Let~$\xi\in\dual{\V}\setminus\left(\dualcone{\gamma}\cup\dualcone{\antipodal{\gamma}}\right)$ and write~$h=\Ker(\xi)$. Then,~$\gamma \cap h$ is a cone of~$h$ satisfying~\eqref{hyp:cone}.
\end{lem}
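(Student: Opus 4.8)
The plan is to check each clause of \eqref{hyp:cone} for $\gamma':=\gamma\cap h$, regarded as a subset of the hyperplane $h\subseteq\V$ (note $h$ is a genuine hyperplane, since $\xi\ne 0$: otherwise $0\in\dualcone{\gamma}$). First I would dispatch the easy clauses. $\gamma'$ is a cone of $h$ because $h$ is a linear subspace and $\gamma$ is a cone; it is closed, convex and subanalytic as a finite intersection of sets with these properties; and it is proper since, using $\antipodal{h}=h$, one has $\antipodal{\gamma'}=\antipodal{\gamma}\cap h$, hence $\gamma'\cap\antipodal{\gamma'}\subseteq\gamma\cap\antipodal{\gamma}=\{0\}$.

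The only clause with genuine content is that $\gamma'$ has non-empty interior relative to $h$, and this is where the hypothesis on $\xi$ enters. Since $\xi\notin\dualcone{\gamma}$ there is $x\in\gamma$ with $\dualdot{\xi}{x}<0$, and since $\xi\notin\dualcone{\antipodal{\gamma}}$ there is $y\in\gamma$ with $\dualdot{\xi}{y}>0$; fix also $z\in\Int(\gamma)$, which is non-empty by \eqref{hyp:cone}. If $\dualdot{\xi}{z}=0$ then $z\in\Int(\gamma)\cap h$ and we are done. Otherwise, assuming $\dualdot{\xi}{z}>0$ (the case $\dualdot{\xi}{z}<0$ being symmetric, with $y$ in the role of $x$), I would look along the half-open segment $t\mapsto w_t:=(1-t)x+tz$, $t\in(0,1]$. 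By the standard fact that a point strictly between a point of $\closure{\gamma}$ and a point of $\Int(\gamma)$ lies in $\Int(\gamma)$ (e.g.\ \cite[Thm.~6.1]{R15}), the whole segment $\{w_t:t\in(0,1]\}$ lies in $\Int(\gamma)$; meanwhile $t\mapsto\dualdot{\xi}{w_t}$ is affine, strictly negative as $t\to 0^+$ and strictly positive at $t=1$, so it vanishes at some $t_0\in(0,1)$, giving a point $w_{t_0}\in\Int(\gamma)\cap h$.

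In all cases $\Int(\gamma)\cap h\ne\emptyset$; since $\Int(\gamma)$ is open in $\V$, this set is open in $h$, and it is contained in $\gamma'$, so $\gamma'$ has non-empty interior in $h$, completing the verification of \eqref{hyp:cone}. The one small obstacle worth flagging is exactly this interior-point issue: the naive candidate --- the point where the segment $[x,y]\subseteq\gamma$ meets $h$ --- need only lie on $\partial\gamma$, so one must route the crossing point through $\Int(\gamma)$ via the convexity fact above; everything else is bookkeeping with intersections.
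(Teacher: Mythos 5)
Your proof is correct and follows essentially the same route as the paper's: dispatch the easy clauses of \eqref{hyp:cone}, show $\Int(\gamma)\cap h\ne\emptyset$ using that the hypothesis on $\xi$ forces $\gamma$ to meet both open half-spaces $\{\xi<0\}$ and $\{\xi>0\}$, and then observe that this intersection is an open subset of $h$ contained in $\gamma\cap h$. The only (cosmetic) difference is how the sign change is pushed into the interior: the paper uses $\closure{\Int(\gamma)}=\gamma$ together with connectedness of $\Int(\gamma)$ and continuity of $\xi$, whereas you run the intermediate value theorem along an explicit segment kept inside $\Int(\gamma)$ by the accessibility lemma for convex sets.
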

\begin{proof}
    Let~$\gamma' := \gamma \cap  h$. The fact that~$\gamma'$ is a closed convex cone of~$ h$ follows from the fact that~$\gamma$ and~$ h$ are, and that these properties are stable under intersection. It is also clear that~$\gamma'$ is proper since~$\gamma$ is. For subanalyticity,~$\gamma'$ is subanalytic in~$h$ by \cite[Prop.~8.2.2 (iii)]{KS90} applied to the inclusion~$h\hookrightarrow \V$. 

    Let us then prove that the interior of~$\gamma'$ in~$ h$, denoted by~$\Int_h(\gamma')$, is non-empty. Since~$\xi\not\in\left(\dualcone{\gamma}\cup\dualcone{\antipodal{\gamma}}\right)$, we have that :    
    \begin{align*}
        \{\xi < 0\} \cap \gamma &\ne \emptyset, \\
        \{\xi > 0\} \cap \gamma &\ne \emptyset.
    \end{align*}
    Since~$\gamma$ is closed and convex with non-empty interior, classical convex analysis ensures that~$\closure{\Int(\gamma)} = \gamma$, which allows to prove that in fact:    
    \begin{align*}
        \{\xi < 0\} \cap \Int(\gamma) &\ne \emptyset, \\
        \{\xi > 0\} \cap \Int(\gamma) &\ne \emptyset.
    \end{align*}
    Since~$\Int(\gamma)$ is convex it is in particular connected, and since~$\xi:\V\to\R$ is continuous, the last two equations yield that~$ h\cap\Int(\gamma)\ne\emptyset$. To conclude, we prove that~$h\cap\Int(\gamma) \subseteq \Int_h(\gamma')$. Indeed, if~$x\in h\cap\Int(\gamma)$, then~$x + \ball{0}{\eps}\subseteq \gamma$ for~$0 < \eps \ll 1$. Since~$x\in h$, we have that 
    \begin{equation*}
        x +  \big(h\cap \ball{0}{\eps}\big) =  h \cap \big(x + \ball{0}{\eps}\big) \subseteq  h\cap\gamma = \gamma',
    \end{equation*}
    hence~$x\in\Int_h(\gamma)$ since~$x +  \big(h\cap \ball{0}{\eps}\big)$ is an open neighborhood of~$x$ in~$h$.
\end{proof}
\begin{lem}
    \label{lem:cap-ker-g-locally-closed}
    Let~$Z\subseteq \V$ be a~$\gamma$-locally closed subset and~$\xi\in\dual{\V}$. Write~$h=\Ker(\xi)$,~$Z' = Z\cap h$ and~$\gamma' = \gamma \cap h$. 
    Then,~$Z'$ is a~$\gamma'$-locally closed subset of~$h$.
\end{lem}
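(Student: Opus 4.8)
The plan is to unwind the definitions and simply intersect everything with $h$. First I would write $Z = S \cap U$ with $S \subseteq \V$ a $\gamma$-closed subset and $U \subseteq \V$ a $\gamma$-open subset, which is possible since $Z$ is $\gamma$-locally closed. Then $Z' = Z \cap h = (S \cap h) \cap (U \cap h)$, so the task reduces to showing that $U \cap h$ is $\gamma'$-open in $h$ and that $S \cap h$ is $\gamma'$-closed in $h$.

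For this I would exploit that $h = \Ker(\xi)$ is a linear subspace: it is stable under addition and satisfies $-h = h$, so that $\antipodal{\gamma'} = -(\gamma\cap h) = \antipodal{\gamma}\cap(-h) = \antipodal{\gamma}\cap h$; moreover $\gamma$ satisfies \eqref{hyp:cone}, hence is a nonempty closed cone and contains $0$, so $0\in\gamma'\subseteq h$ and $0\in\antipodal{\gamma'}$. Now $U\cap h$ is open in $h$, and for $x\in U\cap h$ and $g\in\gamma'$ one has $x+g\in U+\gamma=U$ (because $g\in\gamma$) and $x+g\in h$ (because $x,g\in h$), so $(U\cap h)+\gamma'\subseteq U\cap h$; combined with $0\in\gamma'$ this gives $U\cap h=(U\cap h)+\gamma'$, i.e. $U\cap h$ is $\gamma'$-open. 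The same argument with $\antipodal{\gamma}$ in place of $\gamma$ shows that $S\cap h$ is closed in $h$ and satisfies $(S\cap h)+\antipodal{\gamma'}=S\cap h$, i.e. $S\cap h$ is $\gamma'$-closed. Hence $Z'$ is the intersection of a $\gamma'$-closed and a $\gamma'$-open subset of $h$, as wanted.

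I do not expect a genuine obstacle here. The one point requiring a little care is that we only use the purely set-theoretic notion of ``$\gamma'$-locally closed'' — no non-degeneracy of $\gamma'$ is needed — since the fact that $\gamma'$ again satisfies \eqref{hyp:cone} is the separate content of \Cref{lem:cap-ker-cone}. The substance of the argument is the two observations that $h$ is stable under addition and that $\antipodal{\gamma'}=\antipodal{\gamma}\cap h$; everything else is routine bookkeeping.
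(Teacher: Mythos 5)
Your proof is correct and follows essentially the same route as the paper's: decompose $Z=U\cap S$ with $U$ $\gamma$-open and $S$ $\gamma$-closed, then observe that intersecting with the linear subspace $h$ preserves openness/closedness and the stability under $\gamma'$ (resp.\ $\antipodal{\gamma'}$). You have simply spelled out the bookkeeping --- in particular that $\antipodal{\gamma'}=\antipodal{\gamma}\cap h$ and that $0\in\gamma'$ --- which the paper leaves implicit.
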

\begin{proof}
    Write the~$\gamma$-locally closed subset~$Z$ as~$Z = U\cap S$ with~$U$ open and~$S$ closed such that~$U+\gamma\subseteq U$ and~$S + \antipodal{\gamma} \subseteq S$, so that~$Z' = \big(U\cap h\big) \cap \big(S\cap h\big)$.
    The subsets~$U\cap h$ and~$S\cap h$ are respectively open and closed in~$h$, the subset~$S\cap h$ is stable by~$\antipodal{\gamma'}$ and~the subset~$U\cap h$ is stable by~$\gamma'$.
\end{proof}

\subsection{Proof of \Cref{prop:Radon-fromEFourier}}
\label{sec:Radon-from-EFourier}
Recall from classical Fourier theory the following easy lemma. 
\begin{lem}
    \label{lem:inversion-Fourier-rlc}
    If~$f:\R\to \R$ is a integrable, piecewise smooth and right-continuous (resp. left-continuous) function, then for all~$t\in \R$,
    \begin{equation*}
        f(t) = \frac{1}{2\pi}\Fourier^{-1}\Fourier(f)(t^+) \qquad \Big(\mbox{resp. } \frac{1}{2\pi}\Fourier^{-1}\Fourier(f)(t^-)\Big).
    \end{equation*}
\end{lem}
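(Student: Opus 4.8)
The plan is to deduce this from the classical pointwise Fourier inversion theorem of Dirichlet--Jordan type. The point to keep in mind is that the operator $\Fourier^{-1}$ used here is not the $\Lp$-inverse but the symmetric Dirichlet integral $\Fourier^{-1}(g)(t) = \limit_{A\to+\infty}\int_{-A}^{A} e^{ist}g(s)\,\d s$, which is exactly the form for which pointwise convergence holds at every point where $f$ is locally regular enough.

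First I would fix $t\in\R$, set $\widehat f = \Fourier(f)$, and apply Fubini's theorem on the bounded strip $[-A,A]\times\R$ (legitimate since $f\in\Lp$) together with the change of variables $u = t-y$ to get the Dirichlet-kernel representation
\begin{equation*}
    \frac{1}{2\pi}\int_{-A}^{A} e^{ist}\,\widehat f(s)\,\d s \;=\; \int_{\R} f(t-u)\,D_A(u)\,\d u,\qquad D_A(u) = \frac{\sin(Au)}{\pi u}.
\end{equation*}
Next I would pick $\delta>0$ small enough that, by piecewise smoothness, $f$ is $C^1$ on each of $(t-\delta,t)$ and $(t,t+\delta)$, and split the integral at radius $\delta$. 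On $\{|u|\ge\delta\}$ the map $u\mapsto f(t-u)/(\pi u)$ belongs to $\Lp[\{|u|\ge\delta\}]$, so the Riemann--Lebesgue lemma kills that part as $A\to+\infty$ (localization principle). On $(0,\delta)$ I would write $f(t-u) = f(t^-) + \big(f(t-u)-f(t^-)\big)$; the one-sided $C^1$ bound makes $u\mapsto \big(f(t-u)-f(t^-)\big)/(\pi u)$ bounded, hence integrable, so Riemann--Lebesgue removes its contribution as well, and what survives is $f(t^-)\int_0^{\delta}D_A(u)\,\d u \to \tfrac12 f(t^-)$; symmetrically the integral over $(-\delta,0)$ tends to $\tfrac12 f(t^+)$. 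Since $t$ was arbitrary this gives
\begin{equation*}
    \frac{1}{2\pi}\,\Fourier^{-1}\Fourier(f)(s) \;=\; \frac{f(s^+)+f(s^-)}{2}\qquad\text{for all } s\in\R .
\end{equation*}

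Finally I would pass from this pointwise value to the one-sided limit at $t$. By piecewise smoothness there is $\delta>0$ with $f$ continuous on $(t,t+\delta)$, so $f(s^+)=f(s^-)=f(s)$ there, hence $\tfrac1{2\pi}\Fourier^{-1}\Fourier(f)(s)=f(s)$ for $s\in(t,t+\delta)$; letting $s\to t^+$ and using right-continuity of $f$ yields $\tfrac1{2\pi}\Fourier^{-1}\Fourier(f)(t^+) = \limit_{s\to t^+}f(s) = f(t^+) = f(t)$. The left-continuous case is identical after replacing every right limit by a left limit. The only delicate point is the localization step, namely checking that the relevant difference quotients lie in $\Lp$ so that Riemann--Lebesgue applies; the rest is a change of variables and routine estimates, which is why this lemma is classical and merely recalled here.
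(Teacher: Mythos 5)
Your argument is correct and follows the same route as the paper: it reduces the claim to the classical Dirichlet--Jordan pointwise inversion formula $\tfrac{1}{2\pi}\Fourier^{-1}\Fourier(f)(t)=\tfrac{1}{2}\big(f(t^-)+f(t^+)\big)$ and then passes to the one-sided limit using piecewise smoothness together with right- (resp. left-) continuity. The only difference is that the paper simply cites this averaged inversion formula from a textbook reference while you re-derive it via the Dirichlet kernel and Riemann--Lebesgue localization, and you make explicit the final limit step that the paper leaves terse.
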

\begin{proof}
    For~$f : \R \to \R$ integrable and piecewise smooth, the following inversion formula \cite[Thm.~7.5]{V03} holds for the Fourier transform:
    \begin{equation}
        \label{eq:inversion-Fourier-1d}
        \frac{1}{2\pi}\Fourier^{-1}\Fourier(f)(t) = \frac{1}{2}\left(f (t^-) + f (t^+) \right),
    \end{equation}
    for all~$t\in \R$. In particular, for every point~$t$ at which~$f$ is continuous, the right-hand side is equal to~$f(t)$, hence the result.
\end{proof}

\medskip

\begin{proof}[Proof of \Cref{prop:Radon-fromEFourier}]
    \Cref{lem:link-EFourier-Fourier} yields:
    \begin{equation}
        \label{eq:EFourier-Fourier}
        \widetilde{\left(\EFourier\right)}_\xi = \1_{\R\setminus\{0\}}\cdot\Fourier(\xi_*\phi),
    \end{equation}
    and the map on the right-hand side is almost everywhere equal to~$\Fourier(\xi_*\phi)$, the Fourier transform of an integrable and piecewise smooth function over~$\R$. Hence, \cite[Thm.~7.5]{V03} ensures that the limit
    \begin{equation*}
        \limit_{A\to+\infty}\int_{-A}^A e^{ist}\cdot \widetilde{\left(\EFourier\right)}_\xi(s) \d s
    \end{equation*}
    exists, so that~$\Fourier'\left(\EFourier\right)$ is well-defined.

    \medskip
    
    Let now~$\xi\in\dual{\V}$ and~$t\in\R$. If both are zero, the result is clear. Otherwise, denote~$y = [\xi:t]\in\dual{\projective}$. If~$y\not\in\dual{K}_\gamma$, then \Cref{prop:supp-Radon-g} yields the result by~\eqref{eq:pushforward-as-Radon} and the definition of~$\Fourier'$. If~$y\in\dual{K}_\gamma$, then the constructible function~$\xi_*\phi$ is right-continuous when~$\xi\in \dualcone{\antipodal{\gamma}}\setminus\{0\}$ (resp. left-continuous when~$\xi\in \dualcone{\gamma}\setminus\{0\}$) since \Cref{lem:pushforward-g-CF-polant} ensures that it is compactly supported and~$\lambda$-constructible on~$\R$ with~$\lambda = \R_{\leq 0}$ (resp.~$\lambda = \R_{\geq 0}$). Therefore, \Cref{lem:inversion-Fourier-rlc} ensures that 
    \begin{equation}
        \label{eq:pushforward-inverse-Fourier}
        \xi_*\phi(t) = 
        \begin{cases}
            \displaystyle\frac{1}{2\pi}\Fourier^{-1}\Fourier(\xi_*\phi)(t^+) &\mbox{ if } \xi \in\dualcone{\antipodal{\gamma}}\setminus\{0\} \\[1em]
            \displaystyle\frac{1}{2\pi}\Fourier^{-1}\Fourier(\xi_*\phi)(t^-) &\mbox{ if } \xi \in\dualcone{\gamma}\setminus\{0\}.
        \end{cases}
    \end{equation}
    Hence the result, since
    \begin{equation*}
        \Fourier^{-1}\Fourier(\xi_*\phi) = \Fourier^{-1}\left(\1_{\R\setminus\{0\}}\cdot\Fourier(\xi_*\phi)\right) \overset{\eqref{eq:EFourier-Fourier}}{=} \Fourier^{-1}\left(\widetilde{\left(\EFourier\right)}_\xi\right).
    \end{equation*}
\end{proof}

\section{Sublevel-sets persistent cohomology}\label{sec:persistent-cohomology}

Recall that $M$ is a compact real analytic manifold. Let~$Z$ be a locally closed subanalytic subset of~$M$ and let~$f:M\to \V$ be a continuous subanalytic map. Let~$\gamma$ be a cone of~$\V$ satisfying~\eqref{hyp:cone-empty-int}. Note that~$\gamma$ may have empty interior in this section.

\subsection{Sublevel-sets constructible function}\label{sec:def-sublevel-CF}
We recall the sheaf-theoretic formulation of the sublevel-sets persistent homology due to Kashiwara and Schapira \cite[Sec.~1.2]{KS18A}, focusing here only on constructible functions instead of constructible sheaves. Define the \emph{$\gamma$-epigraph} of~$f$ by:

\begin{align*}
    \epigraph[f][\gamma] = \left\{ (x,v)\in M\times\V;\ f(x)-v\in\gamma \right\}.
\end{align*}
Denote also by~$\epigraph[f][]$ the usual graph of~$f$. The set~$\epigraph[f][\gamma]$ is closed and subanalytic in~$M\times \V$ by \cite[Prop.~8.2.2~(iii)]{KS90} since, denoting~$\sigma : (x,v,w) \in M\times \V\times \V \mapsto (x,v+w)\in M\times \V$, we have $\epigraph[f][\gamma] = \sigma(\epigraph[f][]\times \antipodal{\gamma})$.
Denote by~$p:M\times \V \to \V$ and~$q:M\times \V \to M$ the canonical projections, so that we get the following diagram: 
\begin{equation*}
    \begin{tikzcd}
        & \epigraph[f][\gamma] &                    \\
        & M\times \V \arrow[ld, "q"'] \arrow[rd, "p"] \arrow[u, symbol = \supset]              &                    \\
    M &                                      & \V
    \end{tikzcd}
\end{equation*}

Define the \emph{sublevel-sets constructible function associated to~$f$ on~$Z$} as:
\begin{equation}
    \label{eq:def-sublevelCF}
    \sublevelCF{f_{|Z}} 
    = p_*\left(\1_{\epigraph[f][\gamma]}\cdot q^*\1_Z\right)
    = p_*\1_{\epigraph[f][\gamma]\cap (Z\times \V)},
\end{equation}
and the \emph{level-sets constructible function associated to~$f$ on~$Z$} as:
\begin{equation}
    \levelCF{f_{|Z}}
    = p_*\left(\1_{\epigraph[f][]}\cdot q^*\1_Z\right)
    = p_*\1_{\epigraph[f][]\cap (Z\times \V)},
\end{equation}
The previous constructible functions are well-defined since~$p$ is proper on~$\epigraph[f][\gamma]$ \cite[Thm.~1.11]{KS18A} and since the properness still holds on the closure of~$\epigraph[f][\gamma]\cap(Z\times \V)$ and also for~$\gamma = \{0\}$. Note also that \Cref{lem:support-pushforward} ensures that the level-sets constructible function is compactly supported.
\begin{rk}
    \label{rk:explicit-expression-sublevelCF}
    For~$v\in \V$, one has~$\sublevelCF{f_{|Z}}(v) = \Euler\big(f^{-1}(v+\gamma)\cap Z\big)$.
\end{rk}
\begin{nota}
    When~$\V = \R$ and~$\gamma = \R_{\leq 0}$ (resp.~$\gamma = \R_{\geq 0}$), we denote~$\sublevelCF{g_{|Z}}[-] = \sublevelCF{g_{|Z}}[\gamma]$ (resp.~$\sublevelCF{g_{|Z}}[+] = \sublevelCF{g_{|Z}}[\gamma]$) the sublevel-sets constructible function associated to a continuous subanalytic map~$g:M\to\R$ on~$Z$.
\end{nota}
\begin{ex}
    The function~$\sublevelCF{f}$ is sometimes called \emph{Euler characteristic curve} for~$\V = \R$, and \emph{Euler characteristic surfaces} for~$\V = \R^2$, see for instance \cite{Bel21}.
\end{ex}
\begin{ex}
    \label{ex:ECT-LECT-SELECT}
    Considering a~subset~$Z \subseteq \R^d$ relatively compact subanalytic and locally closed, the \emph{Euler characteristic transform} defined in \cite{T14} is, for~$\xi\in\dual{\V}$ and~$t\in \R$,
    \begin{equation*}
        \ECT(Z)(\xi,t) = \Euler \big(\{x \in Z \st \dualdot{\xi}{x} \leq t\}\big) = \sublevelCF{\xi}[-](t).
    \end{equation*}
    If~$f : \R^d \to \R$ is continuous subanalytic and~$\xi\in\R^d$, denote by~$(\xi,f):\R^d \to \R^2$ the map defined by~$x\mapsto (\dualdot{\xi}{x},f(x))$. The \emph{Lifted Euler Characteristic Transform along~$f$} recently defined in \cite{KM21} is then, for any~$(h,t)\in\R^2$,
    \begin{equation*}
        \mathrm{LECT}(f)(\xi,h,t) = \sublevelCF{(\xi,f)}(h,t),
    \end{equation*} 
    where~$\gamma = \R_{\leq 0} \times \{0\} \subset \R^2$. The \emph{Super Lifted Euler Characteristic Transform along~$f$} defined in loc. cit. is, for any~$(h,t)\in\R^2$,
    \begin{equation*}
        \mathrm{SELECT}(f)(\xi,h,t) = \sublevelCF{(\xi,f)}[\gamma'](h,t),
    \end{equation*}
    where~$\gamma' = \R_{\leq 0} \times \R_{\geq 0} \subset \R^2$.
\end{ex}
The following proposition precises the relationship between the level-sets and the sublevel-sets constructible functions. It is key to the study of hybrid transforms in the context of sublevel-sets persistence, and more specifically to the proof of \Cref{prop:proj-sublevelsets-cf}, which is itself key to index-theoretic formulae (\Cref{thm:index-theoretic-formulae}). It can be derived from an analogous result on sheaves by Berkouk and Petit \cite[Prop.~4.17]{BP22} via the function-sheaf correspondence. Here, we give a proof that does not make use of the correspondence.
\begin{prop}
    \label{prop:sublevelCF-as-gammaification}
    In the preceding situation,~$
        \sublevelCF{f_{|Z}} = \levelCF{f_{|Z}} \conv \1_{\antipodal{\gamma}}$.
\end{prop}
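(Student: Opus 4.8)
\emph{Approach.} The plan is to unfold both sides of the asserted identity as chains of pushforwards of indicator functions along subanalytic maps, and to observe that convolving with $\1_{\antipodal{\gamma}}$ is precisely what turns the graph $\epigraph[f][]$ into the $\gamma$-epigraph $\epigraph[f][\gamma]$, via the identity $\epigraph[f][\gamma] = \sigma(\epigraph[f][]\times\antipodal{\gamma})$ recorded just before the statement, where $\sigma:(x,v,w)\in M\times\V\times\V\mapsto(x,v+w)\in M\times\V$. Throughout, set $G = \epigraph[f][]\cap(Z\times\V)$, so that $\levelCF{f_{|Z}} = p_*\1_G$ by definition.

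\emph{Key steps.} First I would expand, by the very definition of the convolution, $\levelCF{f_{|Z}}\conv\1_{\antipodal{\gamma}} = s_*\big(\levelCF{f_{|Z}}\boxtimes\1_{\antipodal{\gamma}}\big)$ with $s:\V\times\V\to\V$ the addition. Writing $\1_{\antipodal{\gamma}}$ as its own pushforward along $\id_\V$ and applying \Cref{lem:int-boxtimes-pushforward} to $p$ and $\id_\V$, this becomes $s_*(p\times\id_\V)_*\1_{G\times\antipodal{\gamma}}$, where $G\times\antipodal{\gamma}$ is viewed inside $M\times\V\times\V$ after the canonical reassociation of the factors. Next I would use that $s\circ(p\times\id_\V) = p\circ\sigma$ — both maps send $(x,v,w)$ to $v+w$ — so that functoriality of the pushforward gives $\levelCF{f_{|Z}}\conv\1_{\antipodal{\gamma}} = p_*\sigma_*\1_{G\times\antipodal{\gamma}}$. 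Then I would note that $\sigma$ is injective on $\epigraph[f][]\times\V\supseteq G\times\antipodal{\gamma}$ (the first coordinate determines the second through $f$, and $v+w$ then determines $w$), so \Cref{rk:pushforward-indicatingCF-injective} yields $\sigma_*\1_{G\times\antipodal{\gamma}} = \1_{\sigma(G\times\antipodal{\gamma})}$; finally a direct computation — substituting $u=f(x)+w$, for which $w\in\antipodal{\gamma}$ is equivalent to $f(x)-u\in\gamma$ — identifies $\sigma(G\times\antipodal{\gamma})$ with $\epigraph[f][\gamma]\cap(Z\times\V)$. Combined with~\eqref{eq:def-sublevelCF} this gives $\levelCF{f_{|Z}}\conv\1_{\antipodal{\gamma}} = p_*\1_{\epigraph[f][\gamma]\cap(Z\times\V)} = \sublevelCF{f_{|Z}}$.

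\emph{Main obstacle.} The only point requiring genuine care is the bookkeeping of properness, ensuring that every pushforward in the chain is legitimately defined: $s$ must be proper on $\supp\big(\levelCF{f_{|Z}}\boxtimes\1_{\antipodal{\gamma}}\big)$, which holds because $\levelCF{f_{|Z}}$ is compactly supported (as recalled after its definition); $\sigma$ must be proper on $\closure{G\times\antipodal{\gamma}}\subseteq\epigraph[f][]\times\antipodal{\gamma}$, which holds because $\epigraph[f][]$ is compact (the image of the compact manifold $M$ under the continuous map $x\mapsto(x,f(x))$) and $\antipodal{\gamma}$ is closed; and properness of $p$ on $\closure{\epigraph[f][\gamma]\cap(Z\times\V)}$ is exactly the hypothesis already invoked to make $\sublevelCF{f_{|Z}}$ well-defined. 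Once this is checked, the remaining manipulations of $\boxtimes$, $\conv$ and $p_*$ are purely formal, and the argument also covers the degenerate case $\gamma=\{0\}$ uniformly.
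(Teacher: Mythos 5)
Your proof is correct and follows essentially the same approach as the paper: both hinge on the identity $\epigraph[f][\gamma]\cap(Z\times\V)=\sigma\big(G\times\antipodal{\gamma}\big)$, the relation $p\circ\sigma=s\circ(p\times\id_\V)$, \Cref{rk:pushforward-indicatingCF-injective}, and \Cref{lem:int-boxtimes-pushforward}. The only difference is direction — you unfold $\levelCF{f_{|Z}}\conv\1_{\antipodal{\gamma}}$ into $\sublevelCF{f_{|Z}}$, while the paper runs the same chain of equalities from $\sublevelCF{f_{|Z}}$ to the convolution — and your explicit attention to the properness bookkeeping is a welcome addition.
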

%
\begin{proof}
    Since~$\epigraph[f][\gamma]\cap (Z\times\V) = \sigma\left(\epigraph[f][]\cap (Z\times\V)\times\antipodal{\gamma}\right)$, \Cref{rk:pushforward-indicatingCF-injective} yields that:
    \begin{equation}
        \label{eq:indic-epigraph-as-gammaification}
        \1_{\epigraph[f][\gamma]\cap (Z\times\V)} = \sigma_*\1_{\epigraph[f][]\cap (Z\times\V)\times\antipodal{\gamma}} = \sigma_*\left(\1_{\epigraph[f][]\cap (Z\times\V)}\boxtimes\1_{\antipodal{\gamma}}\right).
    \end{equation}
    Moreover, by \Cref{lem:int-boxtimes-pushforward},
    \begin{equation}
        \label{eq:p-times-id-on-gammaification}
        (p \times \id_\V)_*\left(\1_{\epigraph[f][]\cap (Z\times\V)}\boxtimes\1_{\antipodal{\gamma}}\right) = \left(p_*\1_{\epigraph[f][]\cap (Z\times\V)}\right) \boxtimes \1_{\antipodal{\gamma}}.
    \end{equation} 
    Thus, denoting by~$s : \V\times \V \to \V$ the addition, we have~$p\circ \sigma = s \circ (p \times \id_\V)$, and:
    \begin{align*}
        \sublevelCF{f_{|Z}} 
        &\hspace{0.2em}\overset{\eqref{eq:indic-epigraph-as-gammaification}}{=}\hspace{0.2em} p_*\sigma_*\left(\1_{\epigraph[f][]\cap (Z\times\V)}\boxtimes\1_{\antipodal{\gamma}}\right)\\
        &\hspace{0.2em}\overset{\eqref{eq:p-times-id-on-gammaification}}{=}\hspace{0.2em} s_*\left(p_*\1_{\epigraph[f][]\cap (Z\times\V)} \boxtimes \1_{\antipodal{\gamma}} \right)\\
        &\hspace{0.55em}=\hspace{0.55em} \levelCF{f_{|Z}} \conv \1_{\antipodal{\gamma}}.
    \end{align*}
\end{proof}
\begin{rk}
    The above proposition ensures that~$\sublevelCF{f_{|Z}}$ satisfies \Cref{ass:EL-well-def} for the cone~$\antipodal{\gamma}$ by \Cref{rk:ass-conv-cone}.
\end{rk}

\subsection{From multi-parameter to one-parameter persistence}
In this section, we show that the cone~$\gamma$ used to define sublevel-sets persistent homology of a multivalued map~$f : M \to \V$ is not relevant to define hybrid transforms with linear parameters~$\xi \in \dual{\V}$ (\Cref{rk:transform-def-and-back-to-R}). To that end, we show the following two results, stating that the pushforward by a linear form sends multi-parameter (sub)level-sets constructible functions to one-parameter ones. The first is a well-known lemma proven for completeness and used in the next one.

\begin{lem}
    \label{lem:pushforward-levelCF}
    For any morphism of real analytic manifolds~$\zeta : \V \to \R$, we have that~$\zeta_*\levelCF{f_{|Z}} = \levelCF{\zeta\circ f_{|Z}}$.
\end{lem}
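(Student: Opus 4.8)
The plan is to factor the pushforward $\zeta_*$ through the intermediate product $M\times\R$ and then reduce the statement to functoriality of the Euler pushforward. Set $W := \epigraph[f][]\cap(Z\times\V)\subseteq M\times\V$ and $W' := \epigraph[\zeta\circ f][]\cap(Z\times\R)\subseteq M\times\R$, so that by definition $\levelCF{f_{|Z}} = p_*\1_W$ and $\levelCF{\zeta\circ f_{|Z}} = p'_*\1_{W'}$, where $p:M\times\V\to\V$ and $p':M\times\R\to\R$ denote the canonical projections. Since $\zeta$ is a morphism of real analytic manifolds, so is $g := \id_M\times\zeta : M\times\V\to M\times\R$, and one checks immediately that $p'\circ g = \zeta\circ p$.

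The first step I would carry out is the identity $g_*\1_W = \1_{W'}$, proved directly from the defining formula for the pushforward. For $(x_0,t_0)\in M\times\R$, the fibre $g^{-1}(x_0,t_0) = \{x_0\}\times\zeta^{-1}(t_0)$ meets the graph $\epigraph[f][]$ in the single point $(x_0,f(x_0))$ when $\zeta(f(x_0)) = t_0$ and is disjoint from it otherwise, while intersecting further with $Z\times\V$ only adds the constraint $x_0\in Z$; hence $g^{-1}(x_0,t_0)\cap W$ is either empty or a singleton and
\[
g_*\1_W(x_0,t_0) = \int_{M\times\V}\1_{g^{-1}(x_0,t_0)\cap W}\,\d\Euler = \begin{cases}1 & \text{if } x_0\in Z \text{ and } \zeta(f(x_0)) = t_0,\\ 0 & \text{otherwise,}\end{cases}
\]
which is exactly $\1_{W'}(x_0,t_0)$. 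For this to be legitimate I would also record that $g$ is proper on $\supp(\1_W)$: the support is contained in the closed graph $\epigraph[f][]$, on which $g$ is the map $(x,f(x))\mapsto(x,(\zeta\circ f)(x))$, and the preimage of any $M\times[a,b]$ under this map is homeomorphic to $(\zeta\circ f)^{-1}([a,b])$, which is compact because $M$ is.

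The second and last step is to chain the pushforwards, using that $p$ and $p'$ are proper (their fibres are copies of the compact manifold $M$), that $\zeta$ is proper on the compact set $\supp(\levelCF{f_{|Z}})$, and that $g$ is proper on $\supp(\1_W)$ by the previous step; functoriality of the pushforward \cite[Thm.~2.3~(ii)]{S91} then yields
\[
\zeta_*\levelCF{f_{|Z}} = \zeta_*p_*\1_W = (\zeta\circ p)_*\1_W = (p'\circ g)_*\1_W = p'_*(g_*\1_W) = p'_*\1_{W'} = \levelCF{\zeta\circ f_{|Z}}.
\]
The argument is essentially bookkeeping; the only point that demands any care is the properness of $g$ on $\supp(\1_W)$, which is exactly where compactness of $M$ enters. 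A tidier but slightly less self-contained alternative to the fibrewise computation of $g_*\1_W$ would be to note that $g$ is injective on the graph $\epigraph[f][]\supseteq W$ and to invoke \Cref{rk:pushforward-indicatingCF-injective}.
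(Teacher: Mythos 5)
Your proof is correct and follows the same route as the paper: factor $\zeta_*$ through $g=\id_M\times\zeta$, use functoriality of the pushforward, and observe that $g_*\1_W=\1_{W'}$ because $g$ is injective on the graph. The paper just cites \Cref{rk:pushforward-indicatingCF-injective} for the last step, whereas you verify it fibrewise, which is a bit more self-contained (and sidesteps the fact that the remark as stated assumes global injectivity, while $\id_M\times\zeta$ is only injective on the graph).
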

\begin{proof}
    Consider the following commutative diagram
    \begin{equation}
        \label{eq:diag-commuting-xi-sum}
        \begin{tikzcd}
            (Z\times \R)\cap \epigraph[f][] \arrow[d, "{(\id_M\times \zeta)}"] \arrow[r, "p"] & \V \arrow[d, "\zeta"] \\
            (Z\times \R)\cap \epigraph[\zeta\circ f][]\arrow[r, "p'"]                             & \R    
        \end{tikzcd},
    \end{equation}
    where~$p':M\times\R\to\R$ is the canonical projection. We have:
    \begin{align*}
        \zeta_*\levelCF{f_{|Z}} 
        &= \zeta_*p_*\1_{\epigraph[f][]\cap (Z\times \V)} \\
        &= p'_*(\id_M\times\zeta)_*\1_{\epigraph[f][]\cap (Z\times \V)} \\
        &= p'_*\1_{\epigraph[\zeta\circ f][]\cap (Z\times \R)} \\
        &= \levelCF{\zeta\circ f_{|Z}},
    \end{align*}
    where the third equality follows from \Cref{rk:pushforward-indicatingCF-injective} and the equality:
    \begin{equation*}
        \left(\id_M\times\zeta\right)\left(\epigraph[f][]\cap (Z\times \V)\right) = \epigraph[\zeta\circ f][]\cap (Z\times \R).
    \end{equation*}
\end{proof}
\begin{prop}$~$
    \label{prop:proj-sublevelsets-cf}
    \begin{enumerate}
        \item For all~$\xi \in \Int(\dualcone{\antipodal{\gamma}})$,~$\xi_*\sublevelCF{f_{|Z}} = \sublevelCF{\xi\circ f_{|Z}}[-]$.
        \item For all~$\xi\in \Int(\dualcone{\gamma})$,~$\xi_*\sublevelCF{f_{|Z}} = \sublevelCF{\xi\circ f_{|Z}}[+]$.
    \end{enumerate}
\end{prop}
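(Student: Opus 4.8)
The plan is to reduce both statements to the identity $\sublevelCF{f_{|Z}}=\levelCF{f_{|Z}}\conv\1_{\antipodal\gamma}$ of \Cref{prop:sublevelCF-as-gammaification}. Applying the linear form $\xi$ to both sides, I would commute it past the convolution via \Cref{cor:pushforward-linear-conv}, past the level-sets construction via \Cref{lem:pushforward-levelCF}, compute $\xi_*\1_{\antipodal\gamma}$ by hand, and recognize the outcome as a one-parameter sublevel-sets constructible function by a second application of \Cref{prop:sublevelCF-as-gammaification}.

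The first ingredient is the pushforward of $\1_{\antipodal\gamma}$. Fix $\xi\in\Int(\dualcone{\antipodal\gamma})$. Since $\antipodal\gamma$ satisfies~\eqref{hyp:cone-empty-int}, \Cref{lem:xi-proper-supp-phi} applied to $\1_{\antipodal\gamma}$ (with $K=\{0\}$ and $C=\antipodal\gamma$) shows $\xi$ is proper on $\antipodal\gamma$, so \Cref{lem:pushforward-convex} gives $\xi_*\1_{\antipodal\gamma}=\1_{[-h_{\antipodal\gamma}(-\xi),\,h_{\antipodal\gamma}(\xi)]}$. Here $\xi(\antipodal\gamma)\subseteq\R_{\geq 0}$ forces $h_{\antipodal\gamma}(-\xi)=0$, while the strict positivity of $\xi$ on $\antipodal\gamma\setminus\{0\}$ together with the stability of $\antipodal\gamma$ under positive dilation forces $h_{\antipodal\gamma}(\xi)=+\infty$; with the convention $[0,+\infty]=[0,+\infty)$ this gives $\xi_*\1_{\antipodal\gamma}=\1_{\R_{\geq 0}}$. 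Symmetrically, $\xi_*\1_{\antipodal\gamma}=\1_{\R_{\leq 0}}$ for $\xi\in\Int(\dualcone\gamma)$.

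For (1), fix $\xi\in\Int(\dualcone{\antipodal\gamma})$ and first check that $\xi$ is proper on $\supp(\levelCF{f_{|Z}})+\antipodal\gamma$: this set equals $K+\antipodal\gamma$ with $K=\supp(\levelCF{f_{|Z}})$ compact (the level-sets constructible function being compactly supported), it is closed, and the compactness argument in the proof of \Cref{lem:xi-proper-supp-phi} applies verbatim to show $\xi^{-1}[a,b]\cap(K+\antipodal\gamma)$ is compact for all $a<b$. Then \Cref{cor:pushforward-linear-conv} applies to $\levelCF{f_{|Z}}\conv\1_{\antipodal\gamma}$, and combining it with \Cref{lem:pushforward-levelCF} (with $\zeta=\xi$) and the previous paragraph,
\[
\xi_*\sublevelCF{f_{|Z}}=(\xi_*\levelCF{f_{|Z}})\conv(\xi_*\1_{\antipodal\gamma})=\levelCF{\xi\circ f_{|Z}}\conv\1_{\R_{\geq 0}}.
\]
As $\R_{\leq 0}\subseteq\R$ satisfies~\eqref{hyp:cone-empty-int} and $\xi\circ f\colon M\to\R$ is continuous subanalytic, \Cref{prop:sublevelCF-as-gammaification} applied to $\xi\circ f$ and the cone $\R_{\leq 0}$ identifies the right-hand side with $\sublevelCF{\xi\circ f_{|Z}}[\R_{\leq 0}]=\sublevelCF{\xi\circ f_{|Z}}[-]$, which is (1). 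Part (2) follows by the same argument, now with $\xi_*\1_{\antipodal\gamma}=\1_{\R_{\leq 0}}$ and \Cref{prop:sublevelCF-as-gammaification} applied with the cone $\R_{\geq 0}$.

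The main obstacle, such as it is, lies in the two properness verifications — $\xi$ proper on $\antipodal\gamma$ and on $\supp(\levelCF{f_{|Z}})+\antipodal\gamma$ — which is exactly where one uses $\xi\in\Int(\dualcone{\antipodal\gamma})$ rather than merely $\xi\in\dualcone{\antipodal\gamma}$, handled by re-running the argument of \Cref{lem:xi-proper-supp-phi}; everything else is bookkeeping with results already established.
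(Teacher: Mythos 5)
Your proof is correct and follows the same route as the paper's: both start from $\sublevelCF{f_{|Z}}=\levelCF{f_{|Z}}\conv\1_{\antipodal\gamma}$ (\Cref{prop:sublevelCF-as-gammaification}), push forward by $\xi$ using \Cref{cor:pushforward-linear-conv}, compute $\xi_*\1_{\antipodal\gamma}$ via \Cref{lem:pushforward-convex}, identify $\xi_*\levelCF{f_{|Z}}=\levelCF{\xi\circ f_{|Z}}$ by \Cref{lem:pushforward-levelCF}, and close the loop with a second use of \Cref{prop:sublevelCF-as-gammaification} on $\R$. The only difference is that you spell out the properness checks and the support-function calculation $h_{\antipodal\gamma}(-\xi)=0$, $h_{\antipodal\gamma}(\xi)=+\infty$, which the paper's four-line chain of equalities leaves implicit.
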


\begin{proof}
    For any~$\xi \in \Int(\dualcone{\antipodal{\gamma}})$, we have
    \begin{equation*}
        \begin{array}{rcl}
            \xi_*\sublevelCF{f}
            &\overset{\textnormal{Cor.}~\ref{cor:pushforward-linear-conv}}{=} &\xi_*\levelCF{f_{|Z}} \conv \xi_*\1_{\antipodal{\gamma}}\\[0.4em]
            &\overset{\textnormal{Lem.}~\ref{lem:pushforward-convex}}{=} & \xi_*\levelCF{f_{|Z}} \conv \1_{\R_{\geq 0}} \\[0.4em]
            &\overset{\textnormal{Lem.}~\ref{lem:pushforward-levelCF}}{=} & \levelCF{\xi\circ f_{|Z}} \conv \1_{\R_{\geq 0}} \\[0.4em]
            &\overset{\textnormal{Prop.}~\ref{prop:sublevelCF-as-gammaification}}{=}&  \sublevelCF{\xi\circ f_{|Z}}[-] ,
            
        \end{array}
    \end{equation*}
    and similarly replacing~$\R_{\geq 0}$ by~$\R_{\leq 0}$ if~$\xi \in \Int(\dualcone{\gamma})$.
\end{proof}
\begin{ex}
    As in \Cref{ex:ECT-LECT-SELECT}, consider a subset~$Z \subseteq \R^d$ relatively compact subanalytic and locally closed. Then, the Euler characteristic transform is the pushforward, for~$\xi\in\dual{\V}$ and~$t\in \R$,
    \begin{equation*}
        \ECT(Z)(\xi,t) = \xi_*\sublevelCF{f}(t),
    \end{equation*}
    where~$f = \id_{\R^d}$ and where~$\gamma$ is any cone satisfying~\eqref{hyp:cone-empty-int} such that~$\intpolant{\gamma}\ni \xi$.
\end{ex}

\begin{cor}
    \label{cor:transform-def-and-back-to-R}
    Let~$\kappa  \in\Lloc\cap\Lp[\R_{\geq 0}]$. The transform~$\transform[\sublevelCF{f_{|Z}}]$ is well-defined on~$\Int(\dualcone{\antipodal{\gamma}})$, and for any~$\xi\in\Int(\dualcone{\antipodal{\gamma}})$,
    \begin{equation*}
        \transform[\sublevelCF{f_{|Z}}]<\xi>=\transform[\sublevelCF{\xi\circ f_{|Z}}[-]]<1> = \int_{\R} \kappa(t)\sublevelCF{\xi\circ f_{|Z}}[-](t)\d t.
    \end{equation*}
    A similar result holds with~$\sublevelCF{\xi\circ f}[+]$ for~$\kappa  \in\Lloc\cap\Lp[\R_{\leq 0}]$ and~$\xi\in \Int(\dualcone{\gamma})$.
\end{cor}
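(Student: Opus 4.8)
The plan is to deduce the statement from \Cref{prop:proj-sublevelsets-cf} together with the generalized definition of a hybrid transform, the only real work being a one-sided integrability check.

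First I would record that, by the remark following \Cref{prop:sublevelCF-as-gammaification}, one has $\sublevelCF{f_{|Z}} = \levelCF{f_{|Z}}\conv\1_{\antipodal{\gamma}}$ with $\levelCF{f_{|Z}}$ compactly supported, so $\sublevelCF{f_{|Z}}$ satisfies \Cref{ass:EL-well-def} for the cone $C=\antipodal{\gamma}$. Then \Cref{lem:xi-proper-supp-phi} applies: for any $\xi\in\Int(\dualcone{\antipodal{\gamma}})=\Int(\dualcone{C})$, the form $\xi$ is proper on $\supp(\sublevelCF{f_{|Z}})$ and $\supp(\xi_*\sublevelCF{f_{|Z}})\subseteq[a,+\infty)$ for some $a\in\R$. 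Being the pushforward of a function constructible up to infinity along a map proper on its support, $\xi_*\sublevelCF{f_{|Z}}$ is again constructible up to infinity by \cite[Lem.~4.10]{S21}, hence takes finitely many values and is in particular bounded.

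Next I would check well-definedness by splitting $\kappa\cdot\xi_*\sublevelCF{f_{|Z}}$ over $(-\infty,0)$ and $\R_{\geq 0}$: on the compact interval $[\min(a,0),0]$ the factor $\kappa$ is integrable (as $\kappa\in\Lloc$) and $\xi_*\sublevelCF{f_{|Z}}$ is bounded and supported in $[a,+\infty)$, whereas on $\R_{\geq 0}$ the product is dominated by $\|\xi_*\sublevelCF{f_{|Z}}\|_\infty\,\1_{\R_{\geq 0}}\,|\kappa|\in\Lp$ since $\kappa\in\Lp[\R_{\geq 0}]$. Hence $\kappa\cdot\xi_*\sublevelCF{f_{|Z}}\in\Lp$, so $\transform[\sublevelCF{f_{|Z}}]$ is well-defined on $\Int(\dualcone{\antipodal{\gamma}})$ in the sense of \Cref{rk:generalization-ht}.

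Finally I would conclude by a direct computation: by \Cref{def:hybrid-transforms} in its generalized form, $\transform[\sublevelCF{f_{|Z}}]<\xi>=\int_{\R}\kappa(t)\,\xi_*\sublevelCF{f_{|Z}}(t)\d t$, and \Cref{prop:proj-sublevelsets-cf}.(i) gives $\xi_*\sublevelCF{f_{|Z}}=\sublevelCF{\xi\circ f_{|Z}}[-]$, so that $\transform[\sublevelCF{f_{|Z}}]<\xi>=\int_{\R}\kappa(t)\,\sublevelCF{\xi\circ f_{|Z}}[-](t)\d t=\transform[\sublevelCF{\xi\circ f_{|Z}}[-]]<1>$, using that $\transform[\psi]<1>=\int_{\R}\kappa\,\psi$ for $\psi\in\CF[][\R]$ (take $\zeta=\id_{\R}$, so $\zeta_*\psi=\psi$). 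The symmetric case is handled identically: for $\kappa\in\Lloc\cap\Lp[\R_{\leq 0}]$ and $\xi\in\Int(\dualcone{\gamma})$, the other half of \Cref{lem:xi-proper-supp-phi} (still with $C=\antipodal{\gamma}$, since then $\Int(\dualcone{\gamma})$ is exactly the relevant interior) gives $\supp(\xi_*\sublevelCF{f_{|Z}})\subseteq(-\infty,a]$, the analogous domination gives $\kappa\cdot\xi_*\sublevelCF{f_{|Z}}\in\Lp$, and \Cref{prop:proj-sublevelsets-cf}.(ii) gives $\xi_*\sublevelCF{f_{|Z}}=\sublevelCF{\xi\circ f_{|Z}}[+]$. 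I expect the main (indeed only) delicate point to be this one-sided integrability check matching $\kappa\in\Lp[\R_{\geq 0}]$ with the half-line support of the pushforward; everything else is just assembling results already established.
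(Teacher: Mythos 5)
Your proposal is correct and follows essentially the same route as the paper: reduce via \Cref{prop:sublevelCF-as-gammaification} and \Cref{rk:ass-conv-cone} to \Cref{ass:EL-well-def} for $C=\antipodal{\gamma}$, invoke \Cref{lem:xi-proper-supp-phi} and constructibility up to infinity of the pushforward for the integrability, and finish with \Cref{prop:proj-sublevelsets-cf}. One mild point in your favour: the paper's proof cites \Cref{prop:EL-well-def} for well-definedness, but that proposition is stated only for the kernel $e^{-t}$, so it does not literally apply to a general $\kappa\in\Lloc\cap\Lp[\R_{\geq 0}]$ — your explicit splitting of the integral over $[\min(a,0),0]$ and $\R_{\geq 0}$ is precisely the missing generalization and is the correct way to justify the claim.
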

\begin{proof}
    The well-definedness follows from \Cref{prop:sublevelCF-as-gammaification} and \Cref{prop:EL-well-def}, while the formula follows from \Cref{prop:proj-sublevelsets-cf}. 
\end{proof}
\begin{rk}
    \label{rk:transform-def-and-back-to-R}
    \Cref{cor:transform-def-and-back-to-R} implies that for any~$\xi\in\intpolant{\gamma}$, the transform~$\transform[\sublevelCF{f}]<\xi>$ is nothing but the integral (with respect to the Lebesgue measure) transform with kernel~$\kappa$ of~$\sublevelCF{\xi\circ f}[-]$. In particular, the cone~$\gamma$ such that~$\intpolant{\gamma}\ni \xi$ does not play any role. Hence, the study of hybrid transforms of sublevel-sets constructible functions for vector-valued filtrations can be reduced to the ones for real-valued filtrations.
\end{rk}
This study motivates the following definition, which gets rid of superfluous information when dealing with hybrid transforms of sublevel-sets constructible functions. Although \emph{a priori} invisible, the hybrid nature underpins the definition. 
\begin{defi}[Sublevel-sets transform]
    \label{def:sublevel-sets-transform}
    Let~$\kappa  \in\Lloc\cap\Lp[\R_{\geq 0}]$. We call \emph{sublevel-sets transform of~$f$ over~$Z$}, and denote by~$\subleveltransform$, the transform defined for~$\xi \in \dual{\V}$~by:
    \begin{equation*}
        \subleveltransform<\xi> = \transform[\sublevelCF{\xi\circ f_{|Z}}[-]]<1> = \int_\R \kappa(t) \, \Euler[\{\xi\circ f\leq t\}\cap Z] \d t.
    \end{equation*}
\end{defi}

\begin{ex}[Generalization of Morse magnitude]
    \label{def:sublevel-sets-magnitude}
    We can define the \emph{sublevel-sets magnitude of~$(Z,f)$} as the sublevel-sets transform of~$f$ over~$Z$ with kernel~$\kappa : t \mapsto e^{-t}$. In other words, for~$\xi\in\dual{\V}$,
    \begin{equation}
        \label{eq:magnitude-multi-filtration}
        |\xi\cdot (Z,f)| = \subleveltransform<\xi>[f_{|Z}] = \int_\R e^{-t} \,\Euler[\{\xi\circ f\leq t\}\cap Z] \d t.
    \end{equation}
    When~$Z = M$ and~$f : M \to \R$ is a Morse function, the previous definition specializes in the notion of Morse magnitude of \cite[Sec.~6]{GH21}. More generally, when~$\xi\circ f$ is a Morse function, we have: 
    \begin{equation*}
        |\xi\cdot (M,f)| = |(M,\xi\circ f)|_{\mathrm{Morse}},
    \end{equation*}
    with the notations of loc. cit.. 
\end{ex}

\begin{ex}[Generalization of Euler characteristic of barcodes]
    \label{def:multi-parameter-ecb}
    In \cite{BobBor12}, Bobrowski and Borman introduced the \emph{Euler characteristic of barcodes} for sublevel-sets persistent homology associated to a continuous subanalytic map~$g:M\to \R$ restricted to the range~$(-\infty, a)$ for~$a\in\R$ as follows:
    \begin{equation}
        \Euler^a_g = \int_{-\infty}^{a} \sublevelCF{g}[-](t) \d t = \subleveltransform[\1_{(-\infty,a)}]<1>[g].
    \end{equation}
    \Cref{def:sublevel-sets-transform} allows to generalize this notion to multi-parameter sublevel-sets persistent cohomology as follows. For a continuous subanalytic map~$f:M\to \V$ eventually restricted to a subanalytic locally closed subset~$Z$ of~$M$, we define for~$\xi\in\dual{\V}$,
    \begin{equation}
        \label{eq:gen-EC-barcodes}
        \Euler^a_{f_{|Z}}(\xi) = \subleveltransform[\1_{(-\infty,a)}]<\xi>[f_{|Z}].
    \end{equation}
\end{ex}

\section{Index-theoretic formulae}\label{sec:index-theoretic}
\subsection{Main results}\label{sec:index-theoretic-sublevelsets}
In this section, we define continuous Euler integration of continuous subanalytic functions and prove the formulae expressing (sub)level-sets transforms as continuous Euler integral transforms. We call these formulae \emph{index-theoretic formulae} following the terminology of \cite{GR11} to emphasize that the link between hybrid transforms and continuous Euler integrals is based on their expressions as sums of homological critical values. 

The extension of Euler integration to definable functions was introduced by Baryshnikov and Ghrist in \cite{Bar10}. Then, Bobrowski and Borman defined in \cite{BobBor12} a similar extension to the so-called \emph{tame} functions, which coincides with the first definition on tame functions which are also definable. We will use the definition of \cite{BobBor12} of continuous Euler integration in this paper, restricting ourselves to continuous subanalytic functions on compact real analytic manifolds. Although slightly less general, this framework allows us to use the theory of constructible functions.
\begin{defi}
    Let~$Z$ be a locally closed subanalytic subset of~$M$ and~$g : M\to\R$ be continuous and subanalytic. The \emph{continuous Euler upper integral of~$g$ on~$Z$} is defined~by:
    \begin{equation*}
        \int_Z g \cdEuler = \int_{0}^{+\infty} \Eulerc[\{g > u\} \cap Z] - \Eulerc[\{g \leq -u \} \cap Z] \d u,
    \end{equation*}
    and the \emph{continuous Euler lower integral of~$g$ on~$Z$} by:
    \begin{equation*}
        \int_Z g \fdEuler = \int_{0}^{+\infty} \Eulerc[\{g \geq u\}\cap Z] - \Eulerc[\{g < -u\}\cap Z] \d u.
    \end{equation*}
\end{defi}

\begin{rk}
    \label{rk:decomp-int-cdEuler}
    If~$Z'$ is a locally closed relatively compact subanalytic subset of~$M$ and~$Z$ a closed subset of~$Z'$ that is subanalytic in~$M$, then the classical distinguished triangle \cite[Eq.~(2.6.33)]{KS90} yields:
    \begin{equation*}
        \Eulerc[Z'] = \Eulerc[Z'\setminus Z] + \Eulerc[Z].
    \end{equation*}
    Therefore, in such a situation, we have:
    \begin{equation*}
        \int_{Z'} g \cdEuler = \int_{Z'\setminus Z} g \cdEuler + \int_Z g \cdEuler, 
    \end{equation*}
    and a similar equation for the lower integral.
\end{rk}

Until the end of this section, let~$Z$ be a locally closed subanalytic subset of~$M$ and~$f:M\to \V$ be a continuous subanalytic map. Moreover, consider a \emph{real valued} kernel~$\kappa\in\Lloc$. The case of a complex kernel follows from the study of its real and imaginary parts. Choose~$x_0 \in \R\cup\{\pm\infty\}$ such that~$\mathcal{K}:x\in \R \mapsto \int_{x_0}^x \kappa(t) \d t$ is well-defined over~$\R$. For the sake of readability, we extend the definition of~$\mathcal{K}$ to any~$x\in \R\cup\{\pm\infty\}$ such that the integral is well-defined. 
\begin{thm}[Index-theoretic formula for sublevel-sets]
    \label{thm:index-theoretic-formulae}
    Let~$-\infty \leq a < b \leq +\infty$. Assume that~$\kappa\cdot\1_{(-\infty,b)}\in\Lp[\R_{\geq 0}]$ and that~$\mathcal{K}$ is subanalytic. For any~$\xi\in\dual{\V}$,
    \begin{enumerate}
        \item\label{itm:index-theoretic-formula-increasing} if~$\mathcal{K}_{|(a,b)}$ is strictly increasing, then
        \begin{align*}
            \subleveltransform[\left[\kappa\1_{(a,b)}\right]]<\xi>[f_{|Z}] &= \mathcal{K}(b)\cdot\Euler(\{\xi\circ f\leq b\}\cap Z)- \mathcal{K}(a)\cdot\Euler(\{\xi\circ f\leq a\}\cap Z) \\[0.5em]
            &\hspace{8em} - \int_{\{a < \xi\circ f \leq b\}\cap Z} \mathcal{K}(\xi\circ f) \cdEuler,
        \end{align*}
        \item\label{itm:index-theoretic-formula-decreasing} if~$\mathcal{K}_{|(a,b)}$ is strictly decreasing, then
        \begin{align*}
            \subleveltransform[\left[\kappa\1_{(a,b)}\right]]<\xi>[f_{|Z}] &= \mathcal{K}(b)\cdot\Euler(\{\xi\circ f\leq b\}\cap Z) - \mathcal{K}(a)\cdot\Euler(\{\xi\circ f\leq a\}\cap Z) \\[0.5em]
            &\hspace{8em} - \int_{\{a < \xi\circ f \leq b\}\cap Z} \mathcal{K}(\xi\circ f) \fdEuler,
        \end{align*}
    \end{enumerate} 
    with~$|\mathcal{K}(b)| < + \infty$ and the convention that~$\mathcal{K}(a)\cdot \Euler(\{\xi\circ f\leq a\}\cap Z) = 0$ when~$a = -\infty$.
\end{thm}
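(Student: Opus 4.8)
The plan is to transport the statement to a one-dimensional problem about the sublevel-sets constructible function of $\xi\circ f$ on $Z$, and then to express that one-dimensional integral ``as a sum of homological critical values'' in two ways. Write $g:=\xi\circ f:M\to\R$, which is continuous and subanalytic, and set $\chi_t:=\Euler(\{g\le t\}\cap Z)$; this is the value at $t$ of the sublevel-sets constructible function $\sublevelCF{g_{|Z}}[-]\in\CF[][\R]$, so by \Cref{def:sublevel-sets-transform} the left-hand side of the asserted formula is $\int_a^b\kappa(t)\,\chi_t\,\d t$, which is finite since $\kappa\cdot\1_{(a,b)}\in\Lloc\cap\Lp[\R_{\ge0}]$ and $\chi$ is bounded and vanishes for $t<\min_M g$. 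By \Cref{prop:sublevelCF-as-gammaification} (applied to $g$ and the cone $\R_{\le0}\subseteq\R$) one has $\sublevelCF{g_{|Z}}[-]=\levelCF{g_{|Z}}\conv\1_{\R_{\ge0}}$ with $\levelCF{g_{|Z}}$ compactly supported; decomposing $\levelCF{g_{|Z}}$ into indicators of intervals and using $\1_{[x,y]}\conv\1_{\R_{\ge0}}=\1_{[x,+\infty)}$ from \eqref{eq:conv-closed-intervals} together with its variants for points and half-open intervals, one gets a normal form $\chi=\sum_{i\in I}m_i\,\1_{[a_i,+\infty)}$ with $I$ finite and the $a_i$ real. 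A direct computation of $\int_a^b\kappa\,\chi_t\,\d t$ from this normal form (equivalently, an integration by parts of the step function $\chi$ against $\d\mathcal{K}$) gives
\begin{equation*}
\int_a^b\kappa(t)\,\chi_t\,\d t=\mathcal{K}(b)\,\chi_b-\mathcal{K}(a)\,\chi_a-\sum_{j=1}^N\mathcal{K}(t_j)\,\Delta_j,
\tag{$\star$}
\end{equation*}
where $t_1<\dots<t_N$ are the distinct $a_i$ lying in $(a,b]$ and $\Delta_j:=\sum_{i:\,a_i=t_j}m_i=\chi_{t_j}-\chi_{t_j^-}$; the conventions for $a=-\infty$ (then $\chi_a=0$) and $b=+\infty$ (then $\mathcal{K}(b)$ is finite by hypothesis) make every term of $(\star)$ finite.

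The heart of the proof is to identify $\sum_j\mathcal{K}(t_j)\Delta_j$ with the continuous Euler integral. Put $W:=\{a<g\le b\}\cap Z$ and $h:=\mathcal{K}\circ g$; since $\mathcal{K}$ is subanalytic and $M$ is compact, $h:M\to\R$ is continuous and subanalytic, so $\int_W h\cdEuler$ and $\int_W h\fdEuler$ are well defined, and the corresponding integrands in $u$ are bounded with bounded support (as $h$ is bounded on $M$), hence these integrals are finite. Assume first we are in case \ref{itm:index-theoretic-formula-increasing}, i.e.\ $\mathcal{K}_{|(a,b)}$ is strictly increasing; by continuity $\mathcal{K}$ then restricts to a strictly increasing bijection $[a,b]\to[\mathcal{K}(a),\mathcal{K}(b)]$ with continuous inverse $\mathcal{K}^{-1}$. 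Using this monotonicity, $\{h>u\}\cap W$ equals $W$ for $u\le\mathcal{K}(a)$, equals $\{\mathcal{K}^{-1}(u)<g\le b\}\cap Z$ for $u\in[\mathcal{K}(a),\mathcal{K}(b)]$, and is empty for $u\ge\mathcal{K}(b)$; applying additivity of the Euler characteristic (\Cref{rk:decomp-int-cdEuler}) to the closed subanalytic pair $\{g\le\mathcal{K}^{-1}(u)\}\cap Z\subseteq\{g\le b\}\cap Z$ and substituting the normal form of $\chi$, one obtains the single formula
\begin{equation*}
\Eulerc[\{h>u\}\cap W]=\sum_{j=1}^N\Delta_j\,\1_{(-\infty,\mathcal{K}(t_j))}(u)\qquad\text{for all }u\in\R.
\end{equation*}
Since $\Eulerc[\{h\le -u\}\cap W]=\Eulerc[W]-\Eulerc[\{h>-u\}\cap W]=\sum_j\Delta_j\,\1_{(-\infty,-\mathcal{K}(t_j)]}(u)$ by the same additivity, plugging both into the definition of $\cdEuler$ and using the elementary identity $\int_0^{+\infty}\1_{(-\infty,c)}(u)\,\d u=\int_0^{+\infty}\1_{(-\infty,c]}(u)\,\d u=c^+$ (with $c^+:=\max(c,0)$) gives $\int_W h\cdEuler=\sum_j\Delta_j\bigl(\mathcal{K}(t_j)^+-(-\mathcal{K}(t_j))^+\bigr)=\sum_j\mathcal{K}(t_j)\Delta_j$, by $x^+-(-x)^+=x$. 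Combining this with $(\star)$ and the identifications $\chi_b=\Euler(\{\xi\circ f\le b\}\cap Z)$, $\chi_a=\Euler(\{\xi\circ f\le a\}\cap Z)$, $h=\mathcal{K}(\xi\circ f)$, $W=\{a<\xi\circ f\le b\}\cap Z$, proves \ref{itm:index-theoretic-formula-increasing}.

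For case \ref{itm:index-theoretic-formula-decreasing} I would avoid redoing the computation and instead note that $\int_W h\fdEuler=-\int_W(-h)\cdEuler$, directly from the two definitions. If $\mathcal{K}_{|(a,b)}$ is strictly decreasing then $-\mathcal{K}$ is strictly increasing on $(a,b)$, remains subanalytic with $|-\mathcal{K}(b)|<+\infty$ and satisfies $(-\kappa)\cdot\1_{(-\infty,b)}\in\Lp[\R_{\ge0}]$, and the data $t_j,\Delta_j$ depend only on $\chi=\sublevelCF{g_{|Z}}[-]$, not on $\mathcal{K}$; hence the identity established in case \ref{itm:index-theoretic-formula-increasing}, applied with $\mathcal{K}$ replaced by $-\mathcal{K}$ (so $h$ replaced by $-h$), gives $\int_W(-h)\cdEuler=\sum_j(-\mathcal{K}(t_j))\Delta_j=-\sum_j\mathcal{K}(t_j)\Delta_j$, whence $\int_W h\fdEuler=\sum_j\mathcal{K}(t_j)\Delta_j$. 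Combining with $(\star)$ as before yields \ref{itm:index-theoretic-formula-decreasing}.

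The main obstacle is the displayed formula for $\Eulerc[\{h>u\}\cap W]$ and making it hold \emph{uniformly} in $u\in\R$: one must reconcile the three regimes $u\le\mathcal{K}(a)$, $\mathcal{K}(a)\le u\le\mathcal{K}(b)$, $u\ge\mathcal{K}(b)$; check that the jump points $a_i\le a$ drop out — they would appear in a naive rewriting of $\chi_{\mathcal{K}^{-1}(u)}$, but since $g>a$ on $W$ one has $\mathcal{K}^{-1}(u)\ge a$ in the relevant range, hence $\mathcal{K}(a_i)\le\mathcal{K}(a)\le u$, so those terms contribute $0$ — and handle the degenerate cases $a=-\infty$ and $b=+\infty$ with the stated conventions. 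Once this is in place, the remaining steps are formal consequences of additivity of the Euler characteristic, of the fundamental theorem of calculus for $\mathcal{K}$, and of the elementary identity $x^+-(-x)^+=x$; and I would also dispatch at the outset the (easy) verification that the Lebesgue integral on the left and the continuous Euler integrals on the right are all finite.
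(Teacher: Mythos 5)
Your proof is correct, and it arrives at the theorem by the same overall strategy as the paper: reduce to the scalar case $g=\xi\circ f$, put $\sublevelCF{g_{|Z}}[-]$ in the normal form $\sum_i m_i\1_{[a_i,+\infty)}$ via \Cref{prop:sublevelCF-as-gammaification}, telescope (your $(\star)$, which is the same Abel-summation the paper performs by expanding $\int_a^b\kappa\,\chi_t\,\d t$ over the decomposition), and then identify the residual sum $\sum_j\mathcal{K}(t_j)\Delta_j$ with a continuous Euler integral. The one genuine difference is in that last identification. The paper proves it indirectly: it establishes \Cref{lem:expression-sublevelCF}.\ref{itm:sublevelCF-restric} (restriction to $Z_{a,b}$) and \Cref{lem:expression-sublevelCF}.\ref{itm:expression-sublevelCF-composition-homeo} (composition with a strictly monotone subanalytic $\mathcal{K}$) to obtain the normal form of $\sublevelCF{\mathcal{K}(g)_{|Z_{a,b}}}[-]$, and then invokes \Cref{rk:index-formula-cdEuler}, which cites Bobrowski--Borman's formula $\int_Z g\cdEuler=\sum_i m_ic_i$. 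You instead bypass both the restriction/composition lemmas and the external citation: you compute $\Eulerc[\{h>u\}\cap W]$ and $\Eulerc[\{h\le -u\}\cap W]$ directly from additivity of $\Eulerc$ on closed subanalytic pairs, feed them into the definition of $\lceil\d\Euler\rceil$, and evaluate via $\int_0^{\infty}\1_{(-\infty,c)}=c^+$ and $c^+-(-c)^+=c$. This is, in effect, a from-scratch reproof (specialised to the present setting) of the ingredient the paper cites. It makes the argument more self-contained and more elementary, at the cost of some case-checking (the three regimes in $u$, the endpoints, $a=-\infty$) that the paper's lemma-based route packages away. Your handling of the strictly decreasing case via $\int h\fdEuler=-\int(-h)\cdEuler$ and the observation that $(\star)$ does not depend on the monotonicity of $\mathcal{K}$ matches the paper's reduction exactly.

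One small point worth making explicit: for the continuous Euler integrals on the right-hand side to be defined, $h=\mathcal{K}\circ g$ must be a continuous \emph{subanalytic} function on $M$; this is where the hypothesis that $\mathcal{K}$ is subanalytic enters your argument (it enters the paper's through \Cref{lem:expression-sublevelCF}.\ref{itm:expression-sublevelCF-composition-homeo}). You use $\mathcal{K}$'s subanalyticity only implicitly, so it would be worth stating that the composition of continuous subanalytic maps is subanalytic, hence $\int_W h\cdEuler$ and $\int_W h\fdEuler$ make sense.
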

\noindent The proof relies on the following technical lemma describing sublevel-sets constructible functions, stated and proven before the proof of \Cref{thm:index-theoretic-formulae}.
\begin{lem}
    \label{lem:expression-sublevelCF}
    Let~$g : M \to \R$ be a continuous subanalytic function. There exist a finite family of integers~$\{m_i\}_{1\leq i \leq n}$ and of real numbers~$-\infty < c_1  \leq \dots \leq c_n  \leq c_{n+1} < +\infty$, such that:
    \begin{enumerate}
        \item\label{itm:sublevelCF-as-finite-sum} One has $\levelCF{g_{|Z}} = \sum_{i=1}^n m_i \1_{[c_i,c_{i+1}]}$, and $\sublevelCF{g_{|Z}}[-] = \sum_{i=1}^n m_i \1_{[c_i,+\infty)}$.
        \item\label{itm:sublevelCF-restric} For all~$-\infty \leq a < b \leq +\infty$, denoting~$Z_{a,b} = \{ a < g \leq b \}\cap Z$, one has:
        \begin{equation*}
            \sublevelCF{g_{|Z_{a,b}}}[-] = \sum_{a < c_i \leq b } m_i\1_{[c_i,+\infty)}.
        \end{equation*}
        \item\label{itm:expression-sublevelCF-composition-homeo} If~$\mathcal{K}:\R\to\R$ is a continuous subanalytic function that is strictly monotonic on an interval containing~$\Ima(g)$, then
        \begin{equation*}
            \sublevelCF{\mathcal{K}(g)_{|Z}}[-] = \sum_{i=1}^n m_i \1_{[\mathcal{K}(c_i),+\infty)}.
        \end{equation*}
    \end{enumerate}
\end{lem}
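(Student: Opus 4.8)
The plan is to reduce all three assertions to elementary statements about the compactly supported constructible function $\levelCF{g_{|Z}}$ on $\R$, and then transfer them to $\sublevelCF{g_{|Z}}[-]$ via the identity $\sublevelCF{g_{|Z}}[-] = \levelCF{g_{|Z}} \conv \1_{\R_{\geq 0}}$ of \Cref{prop:sublevelCF-as-gammaification} (applied with $\gamma = \R_{\leq 0}$, so that $\antipodal{\gamma} = \R_{\geq 0}$). The concrete input I will use throughout is the fiberwise description $\levelCF{g_{|Z}}(v) = \Eulerc[\{g=v\}\cap Z]$, immediate from $\levelCF{g_{|Z}} = p_*\1_{\epigraph[g][]\cap(Z\times\R)}$, the homeomorphism $\epigraph[g][]\simeq M$ induced by $q$, and the definition of the pushforward.

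For assertion~\ref{itm:sublevelCF-as-finite-sum}: by \Cref{lem:support-pushforward}, $\supp(\levelCF{g_{|Z}}) \subseteq g(M)$, which is compact since $M$ is compact and $g$ continuous, hence $\levelCF{g_{|Z}} \in \CF[\cpct][\R]$. So there are finitely many reals $d_1 < \dots < d_k$, all lying in $g(M)$, such that $\levelCF{g_{|Z}}$ is constant on each open interval determined by them and vanishes outside $[d_1,d_k]$. I would then build the required chain by repeating each breakpoint, setting $c_{2j-1} = c_{2j} = d_j$, so that the consecutive closed intervals $[c_i,c_{i+1}]$ enumerate precisely the singletons $\{d_j\}$ and the segments $[d_j,d_{j+1}]$; a triangular linear system (match the value on each open stratum, then at each $d_j$) determines unique integers $m_i$ with $\levelCF{g_{|Z}} = \sum_{i=1}^n m_i\1_{[c_i,c_{i+1}]}$. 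Convolving with $\1_{\R_{\geq 0}}$ and applying \eqref{eq:conv-closed-intervals} then gives $\sublevelCF{g_{|Z}}[-] = \sum_{i=1}^n m_i\1_{[c_i,+\infty)}$.

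For assertion~\ref{itm:sublevelCF-restric}: the crux is the pointwise identity $\levelCF{g_{|Z_{a,b}}} = \levelCF{g_{|Z}}\cdot\1_{(a,b]}$, immediate from the fiberwise formula since $\{g=v\}\cap\{a<g\leq b\}$ equals $\{g=v\}$ when $a<v\leq b$ and is empty otherwise (with the obvious reading when $a = -\infty$ or $b = +\infty$). Since $Z_{a,b}$ is again subanalytic and locally closed in $M$, \Cref{prop:sublevelCF-as-gammaification} applies to it and gives $\sublevelCF{g_{|Z_{a,b}}}[-] = \sum_i m_i\bigl(\1_{[c_i,c_{i+1}]\cap(a,b]}\conv\1_{\R_{\geq 0}}\bigr)$. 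A short case check on the position of $c_i$ and $c_{i+1}$ relative to $(a,b]$ finishes it: when $a<c_i\leq b$ the truncated set is a nonempty closed subinterval with left endpoint $c_i$, so its convolution with $\1_{\R_{\geq 0}}$ equals $\1_{[c_i,+\infty)}$ by \eqref{eq:conv-closed-intervals}; when $c_i\leq a$ the truncated set, if nonempty, is a half-open interval of the form $(a,\beta]$, whose Euler integral over every $(-\infty,v]$ vanishes, so the convolution is $0$; and when $c_i>b$ the set is empty. Summation yields $\sum_{a<c_i\leq b} m_i\1_{[c_i,+\infty)}$.

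For assertion~\ref{itm:expression-sublevelCF-composition-homeo}: let $I$ be an interval with $\Ima(g)\subseteq I$ on which $\mathcal{K}$ is strictly monotone, so $\mathcal{K}|_I$ is a homeomorphism onto the interval $\mathcal{K}(I)$. When $\mathcal{K}$ is increasing, $\mathcal{K}(g(x)) = w$ is equivalent to $g(x) = (\mathcal{K}|_I)^{-1}(w)$ (no solution if $w\notin\mathcal{K}(I)$), so the fiberwise formula gives $\levelCF{\mathcal{K}(g)_{|Z}} = \levelCF{g_{|Z}}\circ(\mathcal{K}|_I)^{-1}$ (extended by $0$ off $\mathcal{K}(I)$), which equals $\sum_i m_i\1_{[\mathcal{K}(c_i),\mathcal{K}(c_{i+1})]}$ by monotonicity of $\mathcal{K}$ and $[c_i,c_{i+1}]\subseteq I$; since $\mathcal{K}(g)$ is continuous subanalytic on $M$, assertion~\ref{itm:sublevelCF-as-finite-sum} applied to it then yields $\sublevelCF{\mathcal{K}(g)_{|Z}}[-] = \sum_i m_i\1_{[\mathcal{K}(c_i),+\infty)}$. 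The decreasing case runs along the same lines, with $\mathcal{K}$ now reversing the endpoints of each $[c_i,c_{i+1}]$ (equivalently, relating the sublevel sets of $\mathcal{K}(g)$ to the superlevel sets of $g$), so that the chain $(c_i)$ must be re-enumerated in the order induced by $\mathcal{K}$ before reapplying assertion~\ref{itm:sublevelCF-as-finite-sum}. Keeping track of this reindexing, together with the half-open-versus-closed endpoint bookkeeping in the convolutions of assertion~\ref{itm:sublevelCF-restric}, is the only genuinely delicate point; everything else is routine once the fiberwise formula and \Cref{prop:sublevelCF-as-gammaification} are in hand.
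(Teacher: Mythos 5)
Your handling of assertions (i) and (ii), and of the strictly increasing case of (iii), is sound and runs parallel to the paper. For (ii) you obtain the identity $\levelCF{g_{|Z_{a,b}}} = \1_{(a,b]}\cdot\levelCF{g_{|Z}}$ directly from the fiberwise description $\levelCF{g_{|Z}}(v) = \Eulerc[\{g=v\}\cap Z]$, whereas the paper derives it from the set-theoretic equality $\epigraph[g][]\cap(Z_{a,b}\times\R) = \epigraph[g][]\cap(Z\times(a,b])$ together with the projection formula; the two routes are equivalent, and your case analysis of the truncated intervals (in particular $\1_{(a,\beta]}\conv\1_{\R_{\geq 0}} = 0$) is correct. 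For the increasing case of (iii) you pass through $\levelCF{\mathcal{K}(g)_{|Z}}$ and convolve once more with $\1_{\R_{\geq 0}}$, whereas the paper evaluates the Euler characteristics of the sublevel sets of $\mathcal{K}(g)$ directly via $\mathcal{K}^{-1}$; again equivalent.

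The decreasing case of (iii), however, is a genuine gap rather than a matter of bookkeeping, and your re-enumeration remark is already circling it without resolving it. For strictly decreasing $\mathcal{K}$ one has $\mathcal{K}([c_i,c_{i+1}]) = [\mathcal{K}(c_{i+1}),\mathcal{K}(c_i)]$, so that
\begin{equation*}
\levelCF{\mathcal{K}(g)_{|Z}} = \sum_{i=1}^n m_i\1_{[\mathcal{K}(c_{i+1}),\mathcal{K}(c_i)]},\qquad
\sublevelCF{\mathcal{K}(g)_{|Z}}[-] = \sum_{i=1}^n m_i\1_{[\mathcal{K}(c_{i+1}),+\infty)},
\end{equation*}
which is not the claimed $\sum_{i=1}^n m_i\1_{[\mathcal{K}(c_i),+\infty)}$ in general. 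Concretely, take $M = Z = [0,1]$, $g = \id$, $\mathcal{K} = -\id$: then $\levelCF{g_{|Z}} = \1_{[0,1]}$, so $n = 1$, $m_1 = 1$, $(c_1,c_2) = (0,1)$, yet $\sublevelCF{\mathcal{K}(g)_{|Z}}[-] = \1_{[-1,+\infty)}$ while the formula of (iii) would give $\1_{[0,+\infty)}$. Note that the paper's own proof in fact only treats the strictly increasing case, and only that case is invoked downstream (part (ii) of \Cref{thm:index-theoretic-formulae} is reduced to it by replacing $\mathcal{K}$ with $-\mathcal{K}$). So either restrict (iii) to strictly increasing $\mathcal{K}$, or state the decreasing variant with $\mathcal{K}(c_{i+1})$ in place of $\mathcal{K}(c_i)$; your "re-enumerate and reapply (i)" step produces a different chain whose pairing with the $m_i$ is shifted by one, and that shift is exactly what the statement as written misses.
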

\begin{proof}[Proof of \Cref{lem:expression-sublevelCF}.]
    Result~\ref{itm:sublevelCF-as-finite-sum} is a straightforward consequence of \Cref{prop:sublevelCF-as-gammaification} and of the convolution of indicator functions of closed intervals~\eqref{eq:conv-closed-intervals}. To prove~\ref{itm:sublevelCF-restric}, note that~$\epigraph[g][]\cap Z_{a,b}\times \R = \epigraph[g][]\cap Z\times (a,b]$, so that~$\1_{\epigraph[g][]\cap Z_{a,b}\times \R} = \1_{\epigraph[g][]\cap Z\times \R}\cdot p^*\1_{(a,b]}$, and hence:
    \begin{equation*}
        \levelCF{g_{|Z_{a,b}}} 
        = p_*\left(\1_{\epigraph[g][]\cap Z\times \R}\cdot p^*\1_{(a,b]}\right) 
        = \1_{(a,b]} \cdot \levelCF{g_{|Z}}.
    \end{equation*}
    By \Cref{prop:sublevelCF-as-gammaification}, we have~$\sublevelCF{g_{|Z_{a,b}}}[-] = \levelCF{g_{|Z_{a,b}}} \conv \1_{\R_{\geq 0}}$, so that:
    \begin{equation*}
        \sublevelCF{g_{|Z_{a,b}}}[-] = \sum_{i=1}^n m_i \1_{[c_i,c_{i+1}] \cap (a,b]} \conv \1_{\R_{\geq 0}}.
    \end{equation*}
    The result follows then from direct calculations, since:
    \begin{equation*}
        [c_i,c_{i+1}] \cap (a,b] = 
        \begin{cases}
            [c_i,c_{i+1}] &\mbox{if } a \leq c_i \leq c_{i+1} \leq b, \\
            [c_i,b] &\mbox{if } a \leq c_i \leq b \leq c_{i+1}, \\
            (a,c_{i+1}] &\mbox{if } c_i \leq a \leq c_{i+1} \leq b, \\
            (a,b] &\mbox{if } c_i \leq a < b \leq c_{i+1}. 
        \end{cases}
    \end{equation*}

    Let us now prove~\ref{itm:expression-sublevelCF-composition-homeo}. Suppose that~$a,b\in \R$ are such that~$\Ima(g)\subseteq [a,b]$ and~$\mathcal{K}$ is strictly increasing on~$[a,b]$. In this setting, the~$c_i$'s appearing in the decomposition of~$\sublevelCF{g_{|Z}}[-]$ can be chosen in~$[a,b]$. If~$u < \mathcal{K}(a)$, one has that~$\Euler[\{\mathcal{K}(g)\leq u\}\cap Z] = 0$ and if~$u > \mathcal{K}(b)$, then one has~$\Euler[\{\mathcal{K}(g)\leq u\}\cap Z] = \Eulerc(Z)$, hence the equality for such values of~$u$ by the fact that $\Eulerc[Z] = \sum_{i=1}^n m_i$.
    Now, if~$u \in \big[\mathcal{K}(a),\mathcal{K}(b)\big]$, we have:
    \begin{equation*}
        \1_{[c_i,+\infty)}\left(\mathcal{K}^{-1}(u)\right) = \1_{[c_i,b]}\left(\mathcal{K}^{-1}(u)\right) = \1_{[\mathcal{K}(c_i),\mathcal{K}(b)]}(u) = \1_{[\mathcal{K}(c_i),+\infty)}(u),
    \end{equation*}
    so that we compute:
    \begin{align*}
        \Euler[\left\{\mathcal{K}(g)\leq u\right\}\cap Z]
        &= \Euler[\left\{g\leq \mathcal{K}^{-1}(u)\right\}\cap Z] \\
        &= \sum_{i=1}^n m_i\cdot \1_{[c_i,+\infty)}\left(\mathcal{K}^{-1}(u)\right)\\
        &= \sum_{i=1}^n m_i\cdot \1_{[\mathcal{K}(c_i),+\infty)}(u).
    \end{align*}
\end{proof}
\begin{rk}
    \label{rk:index-formula-cdEuler}
    In the setting of the previous lemma, \cite[Prop.~2.4]{BobBor12} ensures that:
    \begin{equation*}
        \int_Z g \cdEuler = \sum_{i=1}^n m_i\cdot c_i.
    \end{equation*}
\end{rk}
\begin{proof}[Proof of \Cref{thm:index-theoretic-formulae}]
    Let us first prove~\ref{itm:index-theoretic-formula-increasing}. The function~$g = \xi\circ f$ being continuous and subanalytic, we can consider its sublevel-sets constructible function on~$Z$ written as in \Cref{lem:expression-sublevelCF}.\ref{itm:sublevelCF-as-finite-sum}. The fact that~$\kappa\cdot\1_{(-\infty,b)}\in\Lp[\R_{\geq 0}]$ then ensures that~$\kappa\cdot\1_{(-\infty,b)}\cdot\sublevelCF{g_{|Z}}[-]$ is integrable over~$\R$. Thus, the left-hand side of the equation to be proven is well-defined. 
    Hence the result, by the computations:
    \begin{align*}
        \int_a^b \kappa(t)\,\sublevelCF{g_{|Z}}[-](t)\d t
        &= \sum_{i=1}^n m_i \int_\R \1_{(a,b)\cap [c_i,+\infty)}(t)\,\kappa(t)\d t \\
        &= \sum_{a < c_i \leq b} m_i \left(\int_{c_i}^{b} \kappa(t)\d t\right) + \sum_{c_i \leq a} m_i \left(\int_{a}^{b} \kappa(t)\d t\right) \\
        &= \sum_{a < c_i \leq b} m_i \big(\mathcal{K}(b)-\mathcal{K}(c_i)\big) + \sum_{c_i \leq a} m_i \big(\mathcal{K}(b)- \mathcal{K}(a)\big) \\
        &= \mathcal{K}(b)\cdot\sum_{c_i \leq b} m_i - \mathcal{K}(a)\cdot\sum_{c_i \leq a} m_i - \sum_{a < c_i \leq b} m_i\,\mathcal{K}(c_i) \\
        &= \mathcal{K}(b)\cdot\Euler(\{g\leq b\}\cap Z) - \mathcal{K}(a)\cdot\Euler(\{g\leq a\}\cap Z) - \int_{Z_{a,b}} \mathcal{K}(g) \cdEuler,
    \end{align*}
    where~$Z_{a,b} = \{a < g \leq b\}\cap Z$, and where the last equality follows from \Cref{rk:index-formula-cdEuler} and \Cref{lem:expression-sublevelCF}.\ref{itm:sublevelCF-restric} and~\Cref{lem:expression-sublevelCF}.\ref{itm:expression-sublevelCF-composition-homeo} that yield:
    \begin{equation*}
        \sublevelCF{\mathcal{K}(g)_{|Z_{a,b}}}[-] = \sum_{a < c_i \leq b } m_i\1_{[\mathcal{K}(c_i),+\infty)}.
    \end{equation*}
    For~\ref{itm:index-theoretic-formula-decreasing}, note that~$-\mathcal{K}$ satisfies the assumption of~\ref{itm:index-theoretic-formula-increasing} and that:
    \begin{equation}
        \label{eq:link-lower-upper-Euler-int}
        \int_{Z'} \mathcal{K}(g) \fdEuler = -\int_{Z'} -\mathcal{K}(g) \cdEuler.
    \end{equation}
    for any subanalytic locally closed subset~$Z'$ of~$M$. This last equality is clear from the definition, and has first been proven in \cite[Lem.~4]{Bar10}.
\end{proof}
%
%
From the index-theoretic formula for sublevel-sets transforms, we deduce a formula for hybrid transforms of the level-sets constructible function~$\levelCF{f}$ for more general parameters~$\zeta : \V \to \R$ morphisms of real analytic manifolds. 
\begin{cor}[Index-theoretic formula for level-sets]
	\label{cor:index-theoretic-levelCF}
    Let~$-\infty \leq a < b \leq +\infty$ and assume that~$\mathcal{K}$ is subanalytic. 
    For any~$\zeta : \V\to \R$ morphism of real analytic manifolds,
    \begin{enumerate}
        \item if~$\mathcal{K}_{|(a,b)}$ is strictly increasing, then
        \begin{equation*}
            \transform[\levelCF{f_{|Z}}]<\zeta>[\kappa\1_{(a,b)}] = \int_{\widetilde{Z}_{a,b}} \mathcal{K}(\zeta\circ f) \fdEuler - \int_{\widetilde{Z}_{a,b}} \mathcal{K}(\zeta\circ f) \cdEuler,
        \end{equation*}
        \item if~$\mathcal{K}_{|(a,b)}$ is strictly decreasing, then
        \begin{equation*}
            \transform[\levelCF{f_{|Z}}]<\zeta>[\kappa\1_{(a,b)}] = \int_{\widetilde{Z}_{a,b}} \mathcal{K}(\zeta\circ f) \cdEuler - \int_{\widetilde{Z}_{a,b}} \mathcal{K}(\zeta\circ f) \fdEuler,
        \end{equation*}
    \end{enumerate}
    where~$\widetilde{Z}_{a,b} = \{a \leq \zeta\circ f \leq b\}\cap Z$.
\end{cor}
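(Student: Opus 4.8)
The plan is to reduce to the real-valued situation via \Cref{lem:pushforward-levelCF}, recognise the resulting integral as a difference of two continuous Euler integrals, and absorb the kernel by a change of variables.

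\emph{Step 1 (reduction and the key identity).} Set $g=\zeta\circ f$, which is continuous and subanalytic. By \Cref{lem:pushforward-levelCF} we have $\zeta_*\levelCF{f_{|Z}}=\levelCF{g_{|Z}}$, which is compactly supported; hence $\kappa\1_{(a,b)}\cdot\zeta_*\levelCF{f_{|Z}}\in\Lp$, the second point of \Cref{rk:generalization-ht} applies to the proper morphism $\zeta$, and
\[
\transform[\levelCF{f_{|Z}}]<\zeta>[\kappa\1_{(a,b)}]=\int_{\R}\kappa(t)\1_{(a,b)}(t)\,\levelCF{g_{|Z}}(t)\d t=\int_a^b\kappa(t)\,\levelCF{g_{|Z}}(t)\d t,
\]
a finite integral since $\levelCF{g_{|Z}}$ is bounded with bounded support and $\kappa\in\Lloc$. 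The identity I would then prove is: for every continuous subanalytic $h\colon M\to\R$ and every locally closed subanalytic $W\subseteq M$,
\[
\int_{\R}\levelCF{h_{|W}}(t)\d t=\int_W h\fdEuler-\int_W h\cdEuler .
\]

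\emph{Step 2 (proof of the identity).} Write out the two continuous Euler integrals and subtract. By additivity of $\Eulerc$ (\Cref{rk:decomp-int-cdEuler}, applied to the closed subsets $\{h=u\}\cap W\subseteq\{h\ge u\}\cap W$ and $\{h=-u\}\cap W\subseteq\{h\le -u\}\cap W$), the integrand at $u$ reduces to $\Eulerc[\{h=u\}\cap W]+\Eulerc[\{h=-u\}\cap W]$, so
\[
\int_W h\fdEuler-\int_W h\cdEuler=\int_0^{+\infty}\!\big(\Eulerc[\{h=u\}\cap W]+\Eulerc[\{h=-u\}\cap W]\big)\d u=\int_{\R}\Eulerc[\{h=t\}\cap W]\d t .
\]
Finally $\Eulerc[\{h=t\}\cap W]=\int_M\1_{h^{-1}(t)\cap W}\d\Euler=\levelCF{h_{|W}}(t)$, the set $h^{-1}(t)\cap W$ being locally closed, relatively compact and subanalytic. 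All integrals converge because $M$ is compact, so $h$ is bounded and $\levelCF{h_{|W}}$ is a compactly supported constructible function.

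\emph{Step 3 (incorporating the kernel and concluding).} Apply the identity with $h=\mathcal{K}\circ g$, continuous subanalytic since $\mathcal{K}$ is subanalytic, and $W=\widetilde{Z}_{a,b}=\{a\le g\le b\}\cap Z$, which is locally closed subanalytic as $M$ is compact. On $\widetilde{Z}_{a,b}$ the map $g$ takes values in a compact subinterval $[\alpha,\beta]\subseteq[a,b]$; since $\mathcal{K}$ is continuous and strictly increasing on $(a,b)$ it restricts to a homeomorphism of $[\alpha,\beta]$ onto $[\mathcal{K}(\alpha),\mathcal{K}(\beta)]$, so that $\levelCF{(\mathcal{K}\circ g)_{|\widetilde{Z}_{a,b}}}(\mathcal{K}(t))=\levelCF{g_{|\widetilde{Z}_{a,b}}}(t)$ for $t\in[\alpha,\beta]$, both functions vanishing outside these intervals. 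The change of variables $w=\mathcal{K}(t)$ — valid because $\mathcal{K}$ is absolutely continuous with a.e. derivative $\kappa$ — then gives $\int_{\R}\levelCF{(\mathcal{K}\circ g)_{|\widetilde{Z}_{a,b}}}(w)\d w=\int_{\R}\levelCF{g_{|\widetilde{Z}_{a,b}}}(t)\,\kappa(t)\d t$. The projection formula for constructible functions yields $\levelCF{g_{|\widetilde{Z}_{a,b}}}=\1_{[a,b]}\cdot\levelCF{g_{|Z}}$, exactly as in the proof of \Cref{lem:expression-sublevelCF}.\ref{itm:sublevelCF-restric} (using $\epigraph[g][]\cap(\widetilde{Z}_{a,b}\times\R)=\epigraph[g][]\cap(Z\times[a,b])$), and since $\1_{[a,b]}=\1_{(a,b)}$ almost everywhere this equals $\int_a^b\kappa(t)\levelCF{g_{|Z}}(t)\d t$. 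Chaining with Steps 1--2 proves the strictly increasing case. The strictly decreasing case follows by applying it to the kernel $-\kappa$, whose primitive is $-\mathcal{K}$ (strictly increasing, still subanalytic): $\R$-linearity of $\transform$ in the kernel flips the left-hand side, while \eqref{eq:link-lower-upper-Euler-int} (read as $\int_W(-\psi)\cdEuler=-\int_W\psi\fdEuler$) swaps $\fdEuler$ with $\cdEuler$ on the right and supplies the matching sign.

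\emph{Main obstacle.} There is no substantial difficulty: the work is bookkeeping. The points needing care are (a) that $\mathcal{K}$ is merely continuous, subanalytic and monotone on $(a,b)$ — not smooth, not globally monotone — so the change of variables must be performed on the subinterval $[\alpha,\beta]$ actually hit by $g$ on $\widetilde{Z}_{a,b}$; (b) the closed constraint $\{a\le g\le b\}$ defining $\widetilde{Z}_{a,b}$ versus the half-open $\{a<g\le b\}$ of \Cref{lem:expression-sublevelCF}, reconciled by $\1_{[a,b]}=\1_{(a,b)}$ a.e.; and (c) convergence of all the integrals, which is automatic from the compactness of $M$.
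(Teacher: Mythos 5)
Your proof is correct, but it takes a genuinely different route from the paper's. The paper deduces the level-sets formula from \Cref{thm:index-theoretic-formulae}: after the same reduction to $g=\zeta\circ f$ via \Cref{lem:pushforward-levelCF}, it truncates $(a,b)$ to a bounded interval containing $\Ima(g)$, writes $\levelCF{g_{|Z}}=\sublevelCF{g_{|Z}}[-]+\sublevelCF{g_{|Z}}[+]-\Euler[Z]\cdot\1_\R$, applies the sublevel-sets theorem twice (the second time after the reflection $t\mapsto-t$), and recombines the resulting boundary terms using \Cref{rk:decomp-int-cdEuler}. You bypass \Cref{thm:index-theoretic-formulae} entirely, and with it the critical-value bookkeeping of \Cref{lem:expression-sublevelCF} and \Cref{rk:index-formula-cdEuler}: your Step~2 identity $\int_\R \levelCF{h_{|W}}(t)\d t=\int_W h\fdEuler-\int_W h\cdEuler$ comes straight from the definitions, since the difference of the upper and lower integrands localizes on the level sets $\{h=\pm u\}\cap W$ by additivity of $\Eulerc$, and the kernel is then absorbed by the monotone absolutely continuous substitution $w=\mathcal{K}(t)$ combined with $\levelCF{g_{|\widetilde{Z}_{a,b}}}=\1_{[a,b]}\cdot\levelCF{g_{|Z}}$; the decreasing case follows by the sign flip you describe. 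This is more self-contained, needs no truncation step, and isolates a clean intermediate identity of independent interest (the difference of continuous Euler integrals is the Lebesgue integral of the level-set Euler characteristic curve), at the price of redoing by hand what the paper obtains by reusing its already-proved theorem, whose boundary terms the paper's route also exhibits along the way. Two small points deserve an explicit line in a final write-up: strict monotonicity of $\mathcal{K}$ extends from $(a,b)$ to the closed endpoints by continuity, which is what gives injectivity on $[\alpha,\beta]$ when $\alpha=a$ or $\beta=b$; and subanalyticity of $\mathcal{K}\circ g$ uses compactness of $M$ to make the relevant graph projection proper. Both are immediate, so there is no gap.
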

\begin{proof}
    We show the first result, the second being proven identically. By \Cref{lem:pushforward-levelCF}, it is sufficient to prove the case~$f = g : M \to \R$ and~$\zeta = \id_\R$. To use \Cref{thm:index-theoretic-formulae}, one must first reduce to a situation where its conditions are met. Since~$\Ima(g)$ is compact, consider~$-\infty < a' < b' < + \infty$ such that~$(a',b') \supseteq \Ima(g) \supseteq \supp(\levelCF{g})$ and thus:
	\begin{equation*}
		\transform[\levelCF{g_{|Z}}]<1>[\left[\kappa\1_{(a,b)}\right]] = \transform[\levelCF{g_{|Z}}]<1>[\left[\kappa\1_{(c,d)}\right]],
	\end{equation*}
    where~$c = \max(a,a')$ and~$d = \min(b,b')$. Now, for any~$t\in \R\cup\{\pm\infty\}$, we have:
    \begin{equation}
        \label{eq:EC-levels-sublevels}
        \Euler[\{g = t\}\cap Z] = \Euler[\{g\leq t\}\cap Z] + \Euler[\{g\geq t\}\cap Z] - \Euler[Z].
    \end{equation}
    This yields~$\levelCF{g_{|Z}} = \sublevelCF{g_{|Z}}[-] + \sublevelCF{g_{|Z}}[+] - \Euler[Z]\cdot\1_\R$, and thus:
    \begin{equation}
        \label{eq:decomposition-transform-levelCF}
		\transform[\levelCF{g_{|Z}}]<1>[\left[\kappa\1_{(c,d)}\right]] = \transform[\sublevelCF{g_{|Z}}[-]]<1>[\left[\kappa\1_{(c,d)}\right]] + \transform[\sublevelCF{g_{|Z}}[+]]<1>[\left[\kappa\1_{(c,d)}\right]] - \Euler[Z] \int_c^d \kappa(t) \d t.
    \end{equation}
    \Cref{thm:index-theoretic-formulae} yields first:
	\begin{equation}
        \label{eq:sublevelCF-index-theoretic-levelCF}
        \begin{split}
            \transform[\sublevelCF{g_{|Z}}[-]]<1>[\left[\kappa\1_{(c,d)}\right]] 
            &= \mathcal{K}(d)\cdot\Euler(\{g\leq d\}\cap Z) - \mathcal{K}(c)\cdot\Euler(\{g\leq c\}\cap Z) \\[0.5em]
            &\qquad - \int_{\{c < g \leq d\}\cap Z} \mathcal{K}(g) \cdEuler.
        \end{split}
	\end{equation}
	Moreover, using that~$\sublevelCF{g_{|Z}}[+](t) = \sublevelCF{-g_{|Z}}[-](-t)$, we get:
    \begin{equation*}
        \transform[\sublevelCF{g_{|Z}}[+]]<1>[\left[\kappa\1_{(c,d)}\right]] = \transform[\sublevelCF{-g_{|Z}}[-]]<1>[\left[\widetilde{\kappa}\1_{(-d,-c)}\right]],
    \end{equation*}
    where we denoted~$\widetilde{\kappa}(t) = \kappa(-t)$. In that case,~$\widetilde{\mathcal{K}}(x) = \int_{x_0}^x \widetilde{\kappa}(t)\d t$ satisfies~$\widetilde{\mathcal{K}}(t) = -\mathcal{K}(-t)$, so it is subanalytic and strictly increasing. Thus, \Cref{thm:index-theoretic-formulae} yields:
    \begin{equation}
        \label{eq:suplevelCF-index-theoretic-levelCF}
        \begin{split}
            \transform[\sublevelCF{g_{|Z}}[+]]<1>[\left[\kappa\1_{(c,d)}\right]] 
            &= \mathcal{K}(d)\cdot\Euler(\{g\geq d\}\cap Z) - \mathcal{K}(c)\cdot\Euler(\{g\geq c\}\cap Z) \\[0.5em]
            &\qquad + \int_{\{c \leq g < d\}\cap Z} \mathcal{K}(g) \fdEuler.
        \end{split}
    \end{equation}
    Putting~\eqref{eq:sublevelCF-index-theoretic-levelCF} and~\eqref{eq:suplevelCF-index-theoretic-levelCF} back into~\eqref{eq:decomposition-transform-levelCF} yields:
    \begin{align*}
        \transform[\levelCF{g_{|Z}}]<1>[\left[\kappa\1_{(c,d)}\right]] 
        &= \mathcal{K}(d)\cdot\big(\Euler[\{g\leq d\}\cap Z] + \Euler[\{g\geq d\}\cap Z] - \Euler[Z]\big)\qquad \\
        &\qquad - \mathcal{K}(c)\cdot\big(\Euler[\{g\leq c\}\cap Z] + \Euler[\{g\geq c\}\cap Z] - \Euler[Z]\big)\\
        &\qquad \quad + \int_{\{c \leq g < d\}\cap Z} \mathcal{K}(g) \fdEuler - \int_{\{c < g \leq d\}\cap Z} \mathcal{K}(g) \cdEuler.
    \end{align*}
    Yet, \Cref{rk:decomp-int-cdEuler} and~\eqref{eq:EC-levels-sublevels} yield:
    \begin{align*}
        \int_{\widetilde{Z}_{c,d}} \mathcal{K}(g) \fdEuler &= \mathcal{K}(d)\cdot \Euler[\{g=d\}\cap Z] + \int_{\{c \leq g < d\}\cap Z} \mathcal{K}(g) \fdEuler, \\[0.5em]
        \int_{\widetilde{Z}_{c,d}} \mathcal{K}(g) \cdEuler &= \mathcal{K}(c)\cdot \Euler[\{g=c\}\cap Z] + \int_{\{c < g \leq d\}\cap Z} \mathcal{K}(g) \cdEuler.
    \end{align*}
    Thus, we get:
    \begin{equation*}
        \transform[\levelCF{g}]<1>[\left[\kappa\1_{(c,d)}\right]] = \int_{\widetilde{Z}_{c,d}} \mathcal{K}(g) \fdEuler - \int_{\widetilde{Z}_{c,d}} \mathcal{K}(g) \cdEuler.
    \end{equation*}
    Hence the result, since~$\{g \geq a'\} = \{g \leq b'\} = M$ by definition of~$a'$ and~$b'$, so that:
    \begin{equation*}
        \widetilde{Z}_{c,d} \overset{\mathrm{def}}{=} \{c \leq g \leq d\}\cap Z = \{a \leq g \leq b\}\cap Z \overset{\mathrm{def}}{=} \widetilde{Z}_{a,b}.
    \end{equation*}
\end{proof}
\begin{rk}
	This corollary and \Cref{lem:pushforward-levelCF} also hold with identical proofs for~$f : M \to X$ and~$\zeta : X \to \R$ morphisms of real analytic manifolds, but we shall not make use of such a general statement in this paper.
\end{rk}
%
\subsection{Applications to known transforms}\label{sec:applications-known-transforms}
In this section, we show that the results of \Cref{sec:index-theoretic-sublevelsets} yield new results for the GR-Euler-Fourier and the Euler-Bessel transforms.

\begin{cor}
    \label{cor:index-theoretic-GR-EFourier-I}
    Let~$\gamma$ be a cone satisfying~\eqref{hyp:cone-empty-int}. For any~$\xi\in\Int(\dualcone{\gamma})$, we have:
    \begin{equation*}
        \EF^\mathrm{GR}\left[\sublevelCF{f_{|Z}}\right](\xi) = \int_{\{\xi\circ f\geq 0\}\cap Z} \xi\circ f \fdEuler.      
    \end{equation*}
\end{cor}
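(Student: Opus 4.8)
The plan is to reduce the GR-Euler-Fourier transform of $\sublevelCF{f_{|Z}}$ at a parameter $\xi\in\Int(\dualcone{\gamma})$ to a one-parameter quantity, and then recognise the right-hand side directly as a continuous Euler lower integral; a reduction to \Cref{thm:index-theoretic-formulae} is also possible and more in the spirit of this section.

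First I would use \Cref{ex:def-GR-EFourier}, which exhibits the GR-Euler-Fourier transform as the hybrid transform with kernel $\1_{\R_{\geq 0}}$, namely $\GREFourier[\phi]<\xi> = \transform[\phi]<\xi>[{\R_{\geq 0}}] = \int_{0}^{+\infty}\xi_*\phi(t)\,\d t$, valid whenever the last integral makes sense. To apply this with $\phi = \sublevelCF{f_{|Z}}$, recall from \Cref{prop:sublevelCF-as-gammaification} that $\sublevelCF{f_{|Z}} = \levelCF{f_{|Z}}\conv\1_{\antipodal{\gamma}}$, so by \Cref{rk:ass-conv-cone} it satisfies \Cref{ass:EL-well-def} for the cone $C=\antipodal{\gamma}$, whose antipodal is $\gamma$; hence the discussion in \Cref{ex:def-GR-EFourier} makes $\GREFourier[\sublevelCF{f_{|Z}}]<\xi>$ well-defined for every $\xi\in\Int(\dualcone{\gamma})$, with value $\int_0^{+\infty}\xi_*\sublevelCF{f_{|Z}}(t)\,\d t$. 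Now \Cref{prop:proj-sublevelsets-cf}.(ii) gives $\xi_*\sublevelCF{f_{|Z}} = \sublevelCF{\xi\circ f_{|Z}}[+]$, and \Cref{rk:explicit-expression-sublevelCF} (applied in $\V=\R$ with $\gamma=\R_{\geq 0}$) gives $\sublevelCF{\xi\circ f_{|Z}}[+](t) = \Euler(\{\xi\circ f\geq t\}\cap Z)$, so that
\[
\GREFourier[\sublevelCF{f_{|Z}}]<\xi> = \int_0^{+\infty}\Euler(\{\xi\circ f\geq t\}\cap Z)\,\d t.
\]

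Then I would match this against the claimed expression. Writing $g=\xi\circ f$ and $Z'=\{g\geq 0\}\cap Z$, the definition of the continuous Euler lower integral reads $\int_{Z'}g\fdEuler = \int_0^{+\infty}\big(\Eulerc[\{g\geq u\}\cap Z'] - \Eulerc[\{g<-u\}\cap Z']\big)\,\d u$; for $u>0$ one has $\{g\geq u\}\cap Z' = \{g\geq u\}\cap Z$ and $\{g<-u\}\cap Z' = \emptyset$, so the second term drops. Since $M$ is compact, each $\{g\geq u\}\cap Z$ is relatively compact, locally closed and subanalytic, whence $\Eulerc[\{g\geq u\}\cap Z]$ is the value of the constructible integral of its indicator, i.e. exactly the $\Euler(\cdot)$ occurring above. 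Therefore $\int_{Z'}g\fdEuler = \int_0^{+\infty}\Euler(\{g\geq u\}\cap Z)\,\d u$, which coincides with the previous display, proving the identity.

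I do not foresee a genuine obstacle: the content lies entirely in the well-definedness reduction from the vector-valued $f$ to the real-valued $\xi\circ f$ (\Cref{ex:def-GR-EFourier} together with \Cref{prop:proj-sublevelsets-cf}) and in the elementary identification of $\Euler$ with $\Eulerc$ on relatively compact subanalytic subsets of the compact manifold $M$. Alternatively, after rewriting $\GREFourier[\sublevelCF{f_{|Z}}]<\xi>$ as $\subleveltransform[\1_{(-\infty,0)}]<1>[g'_{|Z}]$ with $g'=-\xi\circ f$ — using $\sublevelCF{g_{|Z}}[+](t) = \sublevelCF{g'_{|Z}}[-](-t)$ and the substitution $s=-t$ — one applies \Cref{thm:index-theoretic-formulae}.(i) with $\kappa=\1$, $(a,b)=(-\infty,0)$ and $\mathcal{K}(x)=x$ (its hypotheses holding since $\1\cdot\1_{(-\infty,0)}$ vanishes on $\R_{\geq 0}$ and $\mathcal{K}$ is polynomial, hence subanalytic and strictly increasing); since $\mathcal{K}(0)=0$ and the $a=-\infty$ term vanishes by convention, the formula collapses to $-\int_{\{\xi\circ f\geq 0\}\cap Z}(-\xi\circ f)\cdEuler = \int_{\{\xi\circ f\geq 0\}\cap Z}(\xi\circ f)\fdEuler$ by \eqref{eq:link-lower-upper-Euler-int}.
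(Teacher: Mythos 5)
Your proposal is correct, and it actually contains two valid routes. Your alternative route at the end is essentially what the paper does: the paper's one-line proof rewrites $\EF^\mathrm{GR}\left[\sublevelCF{f_{|Z}}\right](\xi)$ as the Euler characteristic of barcodes $\Euler^0_{f_{|Z}}(-\xi)$ via \Cref{prop:proj-sublevelsets-cf}, and then (implicitly) applies \Cref{thm:index-theoretic-formulae}.\ref{itm:index-theoretic-formula-increasing} with $\kappa=\1$, $(a,b)=(-\infty,0)$, $\mathcal{K}=\id$, together with \eqref{eq:link-lower-upper-Euler-int}. Your main, direct route is genuinely different: after using \Cref{prop:proj-sublevelsets-cf}.(ii) and \Cref{rk:explicit-expression-sublevelCF} to write the transform as $\int_0^{+\infty}\Euler(\{\xi\circ f\geq t\}\cap Z)\,\d t$, you verify by hand that this matches the definition of $\int_{\{\xi\circ f\geq 0\}\cap Z}\xi\circ f\,\fdEuler$ (the negative half of the integrand drops on $Z'=\{\xi\circ f\geq 0\}\cap Z$, and $\Euler(\cdot)=\Eulerc[\cdot]$ on relatively compact locally closed subanalytic subsets). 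This bypasses \Cref{thm:index-theoretic-formulae} entirely and is more elementary and self-contained; the paper's route is shorter on the page but delegates the nontrivial content to the index-theoretic theorem. Both are correct; the direct computation is a nice consistency check of the theorem in this special case.
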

\begin{proof}
    \Cref{prop:proj-sublevelsets-cf} yields~$(-\xi)_*\sublevelCF{f_{|Z}} = \sublevelCF{-\xi\circ f_{|Z}}[-]$, so that~$\EF^\mathrm{GR}\left[\sublevelCF{f_{|Z}}\right](\xi) = \Euler^0_{f_{|Z}}(-\xi)$.
\end{proof}


\begin{cor}
    \label{cor:index-theoretic-GR-EFourier-II}
    For any~$\xi\in\dual{\V}$, we have:
    \begin{equation*}
        \EF^\mathrm{GR}\left[\levelCF{f_{|Z}}\right](\xi) = \int_{\{\xi\circ f \geq 0\}\cap Z} \xi\circ f \fdEuler - \int_{\{\xi\circ f \geq 0\}\cap Z} \xi\circ f \cdEuler.
    \end{equation*}
\end{cor}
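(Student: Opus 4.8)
The plan is to obtain this as a direct specialization of the index-theoretic formula for level-sets, \Cref{cor:index-theoretic-levelCF}, once the GR-Euler-Fourier transform is recognized as a hybrid transform with a suitable kernel.

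First I would recall from \Cref{ex:def-GR-EFourier} that $\EF^\mathrm{GR}[\levelCF{f_{|Z}}](\xi) = \transform[\levelCF{f_{|Z}}]<\xi>[{\R_{\geq 0}}]$, i.e. the hybrid transform of $\levelCF{f_{|Z}}$ with kernel $\1_{\R_{\geq 0}}$ evaluated at $\xi$. This is well-defined because $\levelCF{f_{|Z}}$ is compactly supported (as noted right after~\eqref{eq:def-sublevelCF}), hence $\xi$ is proper on $\supp(\levelCF{f_{|Z}})$ and $\xi_*\levelCF{f_{|Z}}$ is a compactly supported, thus integrable, constructible function on $\R$. Since $\1_{\R_{\geq 0}}$ and $\1_{(0,+\infty)}$ agree outside the Lebesgue-null set $\{0\}$, the transform is unchanged if the kernel $\1_{\R_{\geq 0}}$ is replaced by $\kappa\cdot\1_{(0,+\infty)}$ with the constant kernel $\kappa\equiv 1\in\Lloc$.

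I would then apply the first part of \Cref{cor:index-theoretic-levelCF} with $\zeta = \xi$, kernel $\kappa\equiv 1$, and interval $(a,b) = (0,+\infty)$. Taking $x_0 = 0$ in the definition of the primitive gives $\mathcal{K}(x) = \int_0^x \d t = x$, which is subanalytic and strictly increasing on $(0,+\infty)$, so the hypotheses of the corollary are met; moreover $\widetilde{Z}_{a,b} = \{0 \leq \xi\circ f \leq +\infty\}\cap Z = \{\xi\circ f \geq 0\}\cap Z$ and $\mathcal{K}(\xi\circ f) = \xi\circ f$. The corollary then yields exactly
\begin{equation*}
    \transform[\levelCF{f_{|Z}}]<\xi>[1\cdot\1_{(0,+\infty)}] = \int_{\{\xi\circ f\geq 0\}\cap Z}\xi\circ f\fdEuler - \int_{\{\xi\circ f\geq 0\}\cap Z}\xi\circ f\cdEuler,
\end{equation*}
which is the claimed formula.

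I do not expect a genuine obstacle: the substantive work is carried by \Cref{cor:index-theoretic-levelCF}, whose own proof already performs the reduction to a bounded interval on which \Cref{thm:index-theoretic-formulae} applies. The only points needing a moment's care are that the constant kernel indeed produces a \emph{subanalytic} primitive $\mathcal{K}$ and that the endpoint $t = 0$ may be freely included or excluded. It is worth noting the contrast with \Cref{cor:index-theoretic-GR-EFourier-I}, where one reduces to the sublevel-sets case using $(-\xi)_*\sublevelCF{f_{|Z}} = \sublevelCF{-\xi\circ f_{|Z}}[-]$; here no such reduction is available for $\levelCF{f_{|Z}}$, which is precisely why the level-sets version of the index formula, rather than \Cref{thm:index-theoretic-formulae} directly, is the right input.
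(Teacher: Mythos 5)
Your derivation is correct and matches what the paper intends: the corollary carries no explicit proof precisely because it is the direct specialization of \Cref{cor:index-theoretic-levelCF} with $\zeta=\xi$, constant kernel $\kappa\equiv 1$, interval $(a,b)=(0,+\infty)$, and primitive $\mathcal{K}(x)=x$, exactly as you carry out. Your side remarks --- replacing $\1_{\R_{\geq 0}}$ by $\1_{(0,+\infty)}$ up to a Lebesgue-null set, and noting that the level-sets version of the index formula rather than \Cref{thm:index-theoretic-formulae} directly is the right input since $\levelCF{f_{|Z}}$ admits no reduction to sublevel-sets of a real-valued map --- are both apt.
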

\begin{ex}
    \label{ex:index-theoretic-GR-EFourier-II-Euclidean-space}
    Let $Z$ be a relatively compact subset of $\V$. Consider a compact real analytic submanifold~$M$ of~$\V$ containing~$Z$ and~$f : M \hookrightarrow \V$ the inclusion, so that~$\levelCF{f_{|Z}} = \1_Z$. \Cref{cor:index-theoretic-GR-EFourier-II} yields the following generalization \cite[Thm.~4.4]{GR11}:
    \begin{equation*}
        \EF^\mathrm{GR}\left[\1_Z\right](\xi) = \int_{\{\xi \geq 0\}\cap Z} \xi \fdEuler - \int_{\{\xi \geq 0\}\cap Z} \xi \cdEuler.
    \end{equation*}
\end{ex}

The parameter~$\zeta : \V \to \R$ appearing in the definition of the Euler-Bessel transform is not linear, so an index-theoretic formula for sublevel-sets constructible functions is out of reach. Yet, \Cref{cor:index-theoretic-levelCF} yields the following formula for level-sets constructible functions.
\begin{cor}
    \label{cor:index-theoretic-EBessel}
    For any~$v\in \V$, we have:
	\begin{equation*}
		\EBessel[\levelCF{f_{|Z}}]<v> = \int_Z \|v-f\| \fdEuler - \int_Z \|v-f\| \cdEuler,
	\end{equation*}
	where~$\|v-f\| : x\in M \mapsto \|v-f(x)\|$.
\end{cor}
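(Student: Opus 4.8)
The plan is to recognise the Euler-Bessel transform of $\levelCF{f_{|Z}}$ as a hybrid transform with a specific kernel and parameter, and then to apply \Cref{cor:index-theoretic-levelCF} essentially verbatim. Recall from \Cref{ex:def-Euler-Bessel} that $\EBessel[\levelCF{f_{|Z}}]<v> = \transform[\levelCF{f_{|Z}}]<\zeta_v>$, where the parameter $\zeta_v = \|v-\cdot\|^2 : \V \to \R$ is a morphism of real analytic manifolds (the norm being analytic) and the kernel is $\kappa : t\mapsto \1_{(0,+\infty)}(t)\cdot\frac{1}{2\sqrt{t}}\in\Lloc$. Since $\levelCF{f_{|Z}}$ is compactly supported, this transform is well-defined.

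We would first set up the primitive of the kernel as in the paragraph preceding \Cref{thm:index-theoretic-formulae}: choosing $x_0 = 0$, one gets $\mathcal{K}(x) = \int_0^x \kappa(t)\d t$, that is $\mathcal{K}(x) = \sqrt{x}$ for $x\geq 0$ and $\mathcal{K}(x) = 0$ for $x\leq 0$. We then check the (elementary) hypotheses needed to apply \Cref{cor:index-theoretic-levelCF} with $a = 0$ and $b = +\infty$: the function $\mathcal{K}$ is continuous and subanalytic on $\R$, its restriction $\mathcal{K}_{|(0,+\infty)}$ is strictly increasing, and $\kappa\1_{(0,+\infty)} = \kappa$.

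Next, we apply the increasing case of \Cref{cor:index-theoretic-levelCF} with $\zeta = \zeta_v$, which gives
\begin{equation*}
    \EBessel[\levelCF{f_{|Z}}]<v> = \transform[\levelCF{f_{|Z}}]<\zeta_v>[\kappa\1_{(0,+\infty)}] = \int_{\widetilde{Z}} \mathcal{K}(\zeta_v\circ f)\fdEuler - \int_{\widetilde{Z}} \mathcal{K}(\zeta_v\circ f)\cdEuler,
\end{equation*}
with $\widetilde{Z} = \{0 \leq \zeta_v\circ f \leq +\infty\}\cap Z$. Finally, we simplify the right-hand side: since $\zeta_v\circ f = \|v-f\|^2 \geq 0$ pointwise, we have $\widetilde{Z} = Z$ and $\mathcal{K}(\zeta_v\circ f) = \sqrt{\|v-f\|^2} = \|v-f\|$, which is exactly the asserted identity. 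The only mildly delicate points are that $\zeta_v$ is a morphism of real analytic manifolds and that $\mathcal{K}$ is subanalytic; both follow immediately from the analyticity of the norm, so there is no substantial obstacle beyond this bookkeeping.
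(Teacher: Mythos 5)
Your proof is correct and follows exactly the route the paper intends: the corollary is introduced with the remark that ``Corollary~\ref{cor:index-theoretic-levelCF} yields the following formula for level-sets constructible functions,'' and the paper leaves the substitution implicit. You fill it in precisely — recognising $\EBessel[\cdot]<v>$ as $\transform*[\kappa]$ with $\zeta_v = \|v-\cdot\|^2$ and $\kappa = \1_{(0,+\infty)}\cdot\frac{1}{2\sqrt{\cdot}}$ per \Cref{ex:def-Euler-Bessel}, checking that $\mathcal{K}(x)=\sqrt{\max(x,0)}$ is subanalytic and strictly increasing on $(0,+\infty)$, and then observing $\widetilde{Z}_{0,+\infty}=Z$ and $\mathcal{K}(\zeta_v\circ f)=\|v-f\|$ — so there is nothing to add.
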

\begin{ex}
    In the setting of \Cref{ex:index-theoretic-GR-EFourier-II-Euclidean-space}, \Cref{cor:index-theoretic-EBessel} yields the following generalization of \cite[Thm.~4.2]{GR11}:
    \begin{equation*}
        \EBessel[\1_Z]<v> = \int_Z \|v-\cdot\| \fdEuler - \int_{Z} \|v-\cdot\| \cdEuler.
    \end{equation*}
\end{ex}

\subsection{Mean Euler-Bessel transform for random filtrations}
Bobrowski and Borman computed the expected value of continuous Euler integrals of Gaussian related fields \cite[Thm.~4.1]{BobBor12}. Combined with an index theoretic formula \cite[Prop.~6.2]{BobBor12}, their result allowed them to prove an expectation result for the Euler characteristic of barcodes \cite[Thm.~6.3]{BobBor12}. In this section, we proceed in the same way to state expectation results for the Euler-Bessel transform, using our index-theoretic formula (\Cref{cor:index-theoretic-EBessel}).

Denote by~$d$ the dimension of~$M$ and let~$f:M\to \R^k$ with~$k\geq 1$ be a~$k$-dimensional Gaussian field with iid components all having zero mean and unit variance. Note that if~$\xi\in\dual{\left(\R^k\right)}$ is such that~$\|\xi\| = 1$, then a direct checking ensures that~$\xi\circ f : M\to \R$ is also a centered Gaussian field with unit variance. 
To state the result, we use classical geometric quantities known as the Lipschitz-Killing curvatures~$\LKcurv{M}$ with respect to the metric induced by the centered unit-variance Gaussian field~$\xi\circ f : M \to \R$. We refer to \cite[Sec.~7.6]{AT10} for the definition of the curvatures and to \cite[Sec.~12.2]{AT10} for the definition of the metric induced by a Gaussian field. 
\begin{cor}[Mean Euler-Bessel of level-sets]
    \label{cor:mean-EBessel}
    Suppose that the components of~$f$ are almost surely Morse functions. Then, for any~$v\in \V$,
    \begin{equation*}
        \expect{\EBessel[\levelCF{f}]<v>} =
        \begin{cases}
            \displaystyle\  2\cdot \sum_{j=1}^d \left(2\pi\right)^{-j/2}\LKcurv{M}c_j(v) &\mbox{if } d \mbox{ is odd}, \\[1em]
            \ 0 &\mbox{if } d \mbox{ is even}.
        \end{cases}
    \end{equation*}
    where 
    \begin{equation*}
        \begin{split}
            c_j(v) &= \sum_{i=0}^{+\infty} \frac{e^{-\|v\|^2/2}\|v\|^{2i}}{2^i i!} \sum_{l=0}^{\partint{(j-1)/2}}\sum_{m=0}^{j-1-2l}\1_{\{k\geq j-m-2l-2i\}} \binom{k+2i-1}{j-1-m-2l} \\[0.5em]
            &\qquad \times \frac{(-1)^{m+l}(j-1)!\Gamma((k+2i-j-2m+2l+1)/2)}{m!l!2^{(j-1-2m)/2}\Gamma((k+2i)/2)}.
        \end{split}
    \end{equation*}
\end{cor}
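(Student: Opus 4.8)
The plan is to feed the index-theoretic formula for level-sets transforms into Bobrowski and Borman's expectation formula for continuous Euler integrals of Gaussian-related fields. First I would apply \Cref{cor:index-theoretic-EBessel} with $Z = M$: since $\levelCF{f} = \levelCF{f_{|M}}$, this gives
\begin{equation*}
    \EBessel[\levelCF{f}]<v> = \int_M \|v-f\| \fdEuler - \int_M \|v-f\| \cdEuler .
\end{equation*}
I would write $g := \|v-f\| = F\circ f$ with $F\colon\R^k\to\R$, $F(y)=\|y-v\|$, which presents the integrand as a fixed function of the iid centered unit-variance Gaussian fields $f_1,\dots,f_k$, and then take expectations, exchanging $\E$ with the continuous Euler integrals (justified by the a.s.\ boundedness of $g$ on the compact $M$ and the a.s.\ finiteness of its critical values). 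This reduces the problem to computing $\expect{\int_M g\,\fdEuler}$ and $\expect{\int_M g\,\cdEuler}$.

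The next point, which produces the parity dichotomy, is the following. Since $g \geq 0$, the contributions of the sets $\{g<-u\}$ and $\{g\le-u\}$ to the two integrals vanish, so $\expect{\int_M g\,\fdEuler} = \int_0^{+\infty}\expect{\Eulerc[\{g\ge u\}]}\,\d u$ and $\expect{\int_M g\,\cdEuler} = \int_0^{+\infty}\expect{\Eulerc[\{g>u\}]}\,\d u$. For a.e.\ $u>0$ (a regular value of the a.s.\ smooth $g$), $\{g\ge u\}$ is a compact manifold with boundary $\{g=u\}$, and Poincar\'e--Lefschetz duality gives $\Eulerc[\{g>u\}] = (-1)^d\,\Euler[\{g\ge u\}]$ --- the compactly supported Euler characteristic of the interior of a compact $d$-manifold with boundary is $(-1)^d$ times the ordinary one. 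Hence $\expect{\int_M g\,\cdEuler} = (-1)^d\expect{\int_M g\,\fdEuler}$, so that
\begin{equation*}
    \expect{\EBessel[\levelCF{f}]<v>} = \big(1-(-1)^d\big)\int_0^{+\infty}\expect{\Euler[\{\|v-f\|\ge u\}]}\,\d u ,
\end{equation*}
which already gives the vanishing for $d$ even.

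For $d$ odd I would invoke \cite[Thm.~4.1]{BobBor12} (equivalently, the Gaussian kinematic formula applied to the Gaussian-related field $g=F\circ f$), yielding $\expect{\Euler[\{F\circ f\ge u\}]} = \sum_{j=0}^d (2\pi)^{-j/2}\,\LKcurv[j]{M}\,\mathcal{M}_j^\gamma(\{F\ge u\})$, where $\LKcurv[j]{M}$ are the Lipschitz--Killing curvatures of $M$ for the metric induced by any $f_i$ (equivalently by $\xi\circ f$ with $\|\xi\|=1$, all inducing the same metric) and $\mathcal{M}_j^\gamma$ is the $j$-th Gaussian Minkowski functional on $\R^k$. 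Integrating in $u$ and using $\LKcurv[0]{M}=\Euler(M)=0$ since $M$ is closed odd-dimensional, I would get the stated formula with $c_j(v) = \int_0^{+\infty}\mathcal{M}_j^\gamma(\{y\in\R^k\st\|y-v\|\ge u\})\,\d u$. It then remains to evaluate these functionals of the shifted distance function: since $f$ is standard Gaussian, $\|v-f\|^2$ is a non-central $\chi^2_k$ variable, whose Poisson-mixture representation ($\|v-f\|^2$ distributed as $\chi^2_{k+2N}$ with $N\sim\mathrm{Poisson}(\|v\|^2/2)$) produces the outer sum over $i$; for each $i$ the $\chi_{k+2i}$ contribution to $\mathcal{M}_j^\gamma(\{\|y\|\ge u\})$ is computed in polar coordinates on $\R^{k+2i}$, the radial integral supplying the ratio of $\Gamma$-values and the expansion of the Hermite polynomials in the Gaussian Steiner formula for tubes around spheres supplying the binomial coefficients, the support conditions, and the double sum over $l$ and $m$.

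The hard part will be this last step: the special-function bookkeeping needed to match Bobrowski and Borman's abstract Gaussian Minkowski functionals of $\|\cdot-v\|$ against the explicit closed form, in particular the interplay of the Poisson mixture, the Hermite-polynomial expansion, and the polar-coordinate $\Gamma$-integrals. A secondary technical point will be checking that $\|v-f\|$ satisfies the hypotheses of \cite[Thm.~4.1]{BobBor12} (tameness/admissibility together with the a.s.-Morse assumption, which also ensures that a.e.\ $u>0$ is a.s.\ a regular value), and justifying the Fubini-type exchange of expectation and Euler integration.
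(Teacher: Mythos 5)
Your proposal is correct and reaches the stated formula by essentially the same pipeline as the paper's proof (plug the index-theoretic formula \Cref{cor:index-theoretic-EBessel} into \cite[Thm.~4.1]{BobBor12}, then evaluate the resulting Gaussian Minkowski functionals of $d_v = \|\cdot - v\|$ via the noncentral $\chi_k^2$ density as in \cite[Sec.~5.2]{T06}); but you obtain the parity dichotomy by a genuinely different and arguably cleaner route. The paper derives the factor $(1+(-1)^{j-1})$ from the sign relation
\begin{equation*}
    \GMcurv{d_v^{-1}[u,+\infty)} = (-1)^{j-1}\,\GMcurv{d_v^{-1}(-\infty,u]},
\end{equation*}
which holds because each is $\pm$ the $(j-1)$-st derivative of the noncentral $\chi_k$ density, and then kills the even-$d$ case using the classical vanishing of $\LKcurv{M}$ for $j\not\equiv d\pmod 2$. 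You instead apply Poincar\'e--Lefschetz duality pathwise: for a.e.\ $u>0$ (regular value), $\Eulerc[\{g>u\}] = (-1)^d\,\Euler[\{g\geq u\}]$, hence $\int_M g\cdEuler = (-1)^d\int_M g\fdEuler$ already at the deterministic level, before any expectation or Gaussian-field input enters. This buys you a strictly stronger statement for $d$ even --- $\EBessel[\levelCF{f}]<v> = 0$ almost surely, not merely in expectation --- and it avoids invoking the parity vanishing of the Lipschitz--Killing curvatures altogether. The two approaches give the same $c_j(v) = \int_0^{+\infty}\GMcurv{d_v^{-1}[u,+\infty)}\,\d u$ (for $j$ odd, which are the only contributors), so the remaining special-function bookkeeping is identical. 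One small point worth making explicit in your write-up: $g = d_v\circ f$ is only smooth away from $\{f=v\}$, but since you only need $\{g\geq u\}$ for $u>0$ to be a compact manifold with smooth boundary, and $\{f=v\}\cap\{g\geq u\}=\emptyset$ for $u>0$, this causes no trouble; Sard's theorem applied to $g^2 = \|v-f\|^2$ (which is globally smooth) gives the a.e.\ regularity you need.
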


\begin{proof}
    Let us first remark that the following equations are straightforward consequences of \cite[Thm.~4.1]{BobBor12}:
    \begin{align*}
        \expect{\int_M d_v(f) \cdEuler} &= \Euler[M]\expect{d_v(f)} - \sum_{j=1}^d (2\pi)^{-j/2}\LKcurv{M}\int_\R \GMcurv{d_v^{-1}(-\infty, u]} \d u,\\
        \expect{\int_M d_v(f) \fdEuler} &= \Euler[M]\expect{d_v(f)} + \sum_{j=1}^d (2\pi)^{-j/2}\LKcurv{M}\int_\R \GMcurv{d_v^{-1}[u,+\infty)} \d u,
    \end{align*}
    where we denoted by~$d_v : u \in \R^k \mapsto \|u-v\| \in \R$, and where the Gaussian Minkowski functionals~$\mathcal{M}_j$ are defined by the tube formula \cite[eq.~(10.9.11)]{AT10}. Thus, the index theoretic formula for the Euler-Bessel transform (\Cref{cor:index-theoretic-EBessel}) yields:
    \begin{equation}
        \label{eq:use-index-EBessel}
        \begin{split}
            \expect{\EBessel[\levelCF{f}]<v>} 
            &= \expect{\int_M d_v(f) \fdEuler} - \expect{\int_M d_v(f) \cdEuler} \\
            &= \sum_{j=1}^d (2\pi)^{-j/2}\LKcurv{M} \int_\R \GMcurv{d_v^{-1}[u,+\infty)} + \GMcurv{d_v^{-1}(-\infty, u]} \hspace{-0.4em}\d u.
        \end{split}
    \end{equation}
    
    Following \cite[Sec.~5.2]{T06}, we use the expressions of the densities and their derivatives of noncentral~$\chi_k^2$ random variables and tube formulae to compute the Gaussian-Minkowski curvatures involved:
    \begin{equation}
        \label{eq:GMcurv-nc-chi-2}
        \begin{split}
            \GMcurv{d_v^{-1}(-\infty,u]} &= \1_{(0,+\infty)}(u) \cdot \diff*[j-1]{f_{v,k}}{x}{x=u}, \\
            \GMcurv{d_v^{-1}[u,+\infty)} &= (-1)^{j-1}\1_{(0,+\infty)}(u) \cdot \diff*[j-1]{f_{v,k}}{x}{x=u},
        \end{split}
    \end{equation}
    for any~$u\in \R$, where we denoted by~$f_{v,k}$ the density of the square root of a noncentral~$\chi_k^2$ random variable with noncentrality parameter~$\|v\|^2$, i.e. for any~$x\in \R$,
    \begin{equation*}
        f_{v,k}(x) = \sum_{i=0}^{+\infty} e^{-\|v\|^2/2} \frac{\|v\|^{2i}}{2^{i} i!} \cdot \frac{x^{k+2i-1}e^{-x^2/2}}{2^{(k+2i-2)/2}\Gamma((k+2i)/2)}.
    \end{equation*}
    This yields the following expression:
    \begin{equation*}
        \expect{\EBessel[\levelCF{f}]<v>} = \sum_{j=1}^d (2\pi)^{-j/2}\LKcurv{M} \left(1+(-1)^{j-1}\right)\int_0^{+\infty} \GMcurv{d_v^{-1}(-\infty,u]} \d u.
    \end{equation*}
    The result for even dimensions~$d$ follows then from the fact that Lipschitz-Killing curvatures~$\LKcurv{M}$ vanish for odd values of~$j$ and~$1+(-1)^{j-1}$ vanishes for even values of~$j$. For odd~$d$, the result follows from the expression:
    \begin{align*}
            \diff*[j-1]{f_{v,k}}{x}{x=u} &= \sum_{i=0}^{+\infty} e^{-\|v\|^2/2} \frac{\|v\|^{2i}}{2^{i} i!} \cdot \frac{u^{k+2i-j}e^{-u^2/2}}{\Gamma((k+2i)/2)2^{(k+2i-2)/2}} \sum_{l=0}^{\partint{(j-1)/2}} \sum_{m=0}^{j-1-2l} \1_{\{k \geq j-m-2l-2i\}}  \\
            & \qquad \times \binom{k+2i-1}{j-1-m-2l}\frac{(-1)^{m+l}(j-1)!}{m!l!2^l}u^{2m+2l},
    \end{align*}
    for~$u\geq 0$ (see \cite[Sec.~5.2]{T06}) and the clear following one:
    \begin{equation*}
        \int_0^{+\infty} u^{k+2i-j+2m+2l}e^{-u^2/2} \d u = 2^{(k+2i-j+2m+2l-1)/2}\Gamma\left((k+2i-j+2m+2l+1)/2\right).
    \end{equation*}
\end{proof}

\printbibliography

\end{document}